\documentclass[11pt,reqno]{amsart}

\usepackage[T1]{fontenc}
\usepackage{lmodern}
\usepackage[margin=1.06in]{geometry}
\usepackage{microtype}
\usepackage{mathtools}
\usepackage{amssymb}
\usepackage{enumitem}
\usepackage{booktabs}
\usepackage{tabularx}
\usepackage{xcolor}
\usepackage[colorlinks=true,
 linkcolor=blue!55!black,citecolor=blue!55!black,
 urlcolor=blue!65!black]{hyperref}
\hypersetup{
 pdftitle={A unified Ricci-positive square-root cable in dimensions four and five},
 pdfauthor={Nan Wu},
 pdfsubject={Counterexamples to the Milnor conjecture in dimensions at least four},
 pdfkeywords={positive Ricci curvature, fundamental groups, Milnor conjecture,
 Pruefer group, complete noncompact manifolds}}

\newtheorem{theorem}{Theorem}[section]
\newtheorem{lemma}[theorem]{Lemma}
\newtheorem{proposition}[theorem]{Proposition}
\newtheorem{corollary}[theorem]{Corollary}
\theoremstyle{definition}
\newtheorem{definition}[theorem]{Definition}
\newtheorem{remark}[theorem]{Remark}
\numberwithin{equation}{section}

\DeclareMathOperator{\Ric}{Ric}
\DeclareMathOperator{\Rm}{Rm}
\DeclareMathOperator{\II}{II}
\DeclareMathOperator{\Hess}{Hess}
\DeclareMathOperator{\tr}{tr}
\DeclareMathOperator{\divsymb}{div}
\DeclareMathOperator{\sgn}{sgn}
\DeclareMathOperator{\Id}{Id}
\DeclareMathOperator{\Sym}{Sym}
\DeclareMathOperator{\vspan}{span}
\newcommand{\dd}{\mathop{}\!\mathrm d}
\newcommand{\rhoCab}{{\rho_{\mathrm{cab}}}}
\newcommand{\CPrufer}{C_{2^\infty}}

\title[Counterexamples to Milnor's Conjecture]
{Pr\"ufer $2$-group and Milnor's Conjecture on Fundamental Groups}

\author[N. Wu]{Nan Wu}
\address[N. Wu]{Department of Mathematics \& IMS \\ Nanjing University \\ Nanjing \\ 210093 \\ China}
\email{nwu2@nju.edu.cn}
\author[Z. Yan]{Zetian Yan}
\address[Z. Yan]{Department of Mathematics \\ The Chinese University of Hong Kong \\ Shatin, N.T. \\ Hong Kong}
\email{zetianyan@cuhk.edu.hk}

\subjclass[2020]{Primary 53C20; Secondary 53C21, 57R19}
\keywords{positive Ricci curvature, fundamental groups, Milnor conjecture,
Pr\"ufer group, complete noncompact manifolds}
\date{August 15, 2026}

\begin{document}
\raggedbottom

\begin{abstract}
We construct complete, one-ended Riemannian manifolds in dimensions four
and five having strictly positive Ricci curvature, with fundamental group
isomorphic to the Pr\"ufer $2$-group $C_{2^\infty}$. This gives counterexamples
to Milnor's conjecture in the two remaining dimensions. 
\end{abstract}

\maketitle
\tableofcontents
\enlargethispage{4pt}

\section{Introduction}
\label{sec:statement}
Milnor conjectured that the fundamental group of every complete
Riemannian manifold with nonnegative Ricci curvature is finitely
generated \cite{Milnor1968}.  The conjecture is known to hold in
dimensions at most three
\cite{CohnVossen1935,Liu2013,Pan2020}.  More recently, Bru\`e, Naber,
and Semola constructed counterexamples in dimensions great than six 
\cite{BNS7,BNS6}.  Their construction proceeds on the universal cover
through an equivariant smooth snowflake, leaving dimensions four and
five open.

The aim of this paper is to handle the remaining case by constructing counterexamples
in dimension four and five via the direct quotient.  Moreover, the manifold we constructed actually have strictly positive Ricci
curvature. The fundamental group shown in our construction is the following Pr\"ufer $2$-group.
\begin{definition}
We define the Pr\"ufer $2$-group as
\[
 \CPrufer
 :=\varinjlim_j
 \left(
 \mathbb Z/2^{j+1}\mathbb Z
 \longrightarrow
 \mathbb Z/2^{j+2}\mathbb Z
 \right),
 \qquad
 [k]\longmapsto[2k].
\]
Equivalently,
\[
 \CPrufer
 =
 \bigcup_{j\ge0}
 \frac{1}{2^{j+1}}\mathbb Z\big/\mathbb Z
 \subset \mathbb Q/\mathbb Z.
\]
\end{definition}
Every finitely generated subgroup of $\CPrufer$ is contained in a finite
cyclic subgroup of order $2^N$ for some $N$, whereas $\CPrufer$ contains
elements of arbitrarily large order.  Consequently, $\CPrufer$ is not
finitely generated.

Our main Theorem can be stated as follows.
\begin{theorem}\label{thm:main}
For each $m\in\{2,3\}$, there exists a connected, orientable, one-ended
smooth manifold $M^{m+2}$ admitting a complete Riemannian metric $g$, such
that
\[
 \Ric_g>0,
 \qquad
 \pi_1(M^{m+2})\cong\CPrufer.
\]
In particular, Milnor's conjecture fails in dimensions four and five, respectively.
\end{theorem}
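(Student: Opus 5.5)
We will build $M^{m+2}$ as an increasing union of compact pieces, with the fundamental group produced directly on the quotient rather than equivariantly on a universal cover. The topological model is a telescope. Write $N=S^1\times D^{m+1}$, so $\pi_1(N)\cong\mathbb Z$. Let $f\colon N\to N$ be a smooth embedding whose core circle wraps twice around the core of $N$, so that $f_*$ is multiplication by $2$ on $\pi_1$. A square-root cable of this kind exists as a smooth embedding once $m+1\ge 3$, since the core circle can be pushed off itself in the normal disc; this is our reading of the ``square-root cable'' in the hypersetup. Gluing successive annular collars $N_{j+1}\setminus f(N_j)$ gives a one-ended open manifold with $\pi_1=\varinjlim(\mathbb Z\xrightarrow{2}\mathbb Z)=\mathbb Z[1/2]$. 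To obtain $\CPrufer=\mathbb Z[1/2]/\mathbb Z$, we would first kill the generator at stage $0$. Concretely, take stage $0$ to be a compact piece $K_0$ with $\pi_1(K_0)=\mathbb Z/2$, for instance the disc bundle over $\mathbb{RP}^2$ (when $m=2$) or a lens-type quotient, glued along a boundary whose $\pi_1$ surjects. By van Kampen, $\pi_1(M)=\varinjlim_j \mathbb Z/2^{j+1}\cong\CPrufer$, matching the Definition. One-endedness and orientability follow because each gluing region is a connected boundary collar and the cabling map preserves orientation.

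The metric we would build inductively, region by region. On each region $R_j$ we choose a doubly warped product $\dd r^2+a_j(r)^2\dd\theta^2+b_j(r)^2 g_{S^m}$, taken in a model where the $S^1$ fibre is the cabled circle. The standard Ricci formulas for such metrics give $\Ric>0$ provided $a,b$ are concave, $b'<1$ with $b$ growing sublinearly (for example $b\sim r^{\alpha}$), and the mixed terms $-a''/a-m\,a'b'/(ab)$ remain positive. Completeness comes from having $r$ run over $[0,\infty)$ overall, and positivity of Ricci is an open condition that survives small $C^2$ perturbations. The circle radius $a$ must shrink relative to $b$ at each stage, so that the doubled circle is ``shorter'' than the previous core. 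This is consistent with $\Ric>0$ because the loops of order $2^{j}$ live farther and farther out, where the volume-growth obstruction of Milnor no longer applies.

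The main obstacle is the transition across each cabling region. There we must pass from a metric adapted to a circle of length $\ell_j$ to one adapted to the doubled circle $2\ell_{j}$, and this region is not a warped product. The plan is to realise it as a quotient: lift the transition region to a double cover in which the cable is a standard product. On that cover, take a $\mathbb Z/2$-invariant metric of the form $\dd r^2+a^2\dd\theta^2+b^2 g_{S^m}$, twisted by the free isometric action rotating $\theta$ by $\pi$ while acting by an involution on the $S^m$ factor, in the spirit of the quotient trick replacing the BNS snowflake. Then interpolate the warping functions using the $S^m$ factor, which has large positive curvature when $m\ge 2$, to absorb the negative terms generated by the bending of $a$. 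We would verify $\Ric>0$ on the transition by an explicit curvature estimate with uniform constants that scale across stages. The key quantitative point is that the gluing errors must be summable while each stage sits at a larger scale, and that check is the heart of the argument. Finally we would smooth the junctions by $C^2$-small perturbations preserving strict positivity.
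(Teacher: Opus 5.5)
Your topological outline agrees with the paper's. It is a telescope of winding-two cobordisms capped at stage $0$, whose $N$-th stage has $\pi_1\cong\mathbb Z/2^{N+1}$ with inclusions $[k]\mapsto[2k]$. The geometric half, however, has a genuine gap at exactly the step you flag as the main obstacle.

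\textbf{The cabling region is not a product.} The region $N_{j+1}\setminus\operatorname{int}f(N_j)$ is the paper's $C_m$, the mapping torus of a half-turn $\phi$ on $P_m=D^{m+1}\setminus(\operatorname{int}D^{m+1}_+\sqcup\operatorname{int}D^{m+1}_-)$. Its double cover is $S^1\times P_m$, which has three boundary components, so it is not a product $[r_0,r_1]\times S^1\times\mathbb S^m$. Any metric $\dd r^2+a^2\dd\theta^2+b^2g_{\mathbb S^m}$ with $a,b>0$ lives on a product cobordism $[r_0,r_1]\times Y$. The same holds for any quotient of it by a free isometric involution preserving the $r$-levels. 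The two boundary inclusions of a product cobordism are $\pi_1$-isomorphisms. So no choice of warping functions can make the incoming circle the square of the outgoing one, and there is nothing for the $\mathbb S^m$ curvature to ``absorb''.

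\textbf{What is actually needed.} You need a Ricci-positive metric on the branched base $P_m$ itself, with the following properties:
\begin{itemize}
\item an isometric involution exchanging the two inner spheres;
\item round inner boundaries of prescribed radius with $\II>-\varepsilon_*h_m$;
\item a round, strictly convex outer boundary on which the involution is the half-turn ($m=2$) or the antipodal map ($m=3$).
\end{itemize}
The paper builds this from Perelman's angular balls with Ricci-positive necks, Hamilton's Ricci flow, and a path cylinder (Theorem~\ref{thm:unified-directed-base}). Only then do the circle warp $h_m+f^2\dd t^2$ and the screw quotient by $(t,x)\mapsto(t+T,\sigma_mx)$ realize the square-root relation. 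This input is genuinely non-perturbative: for $m=1$, Gauss--Bonnet forbids such a base.

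\textbf{Two further missing ingredients.} First, the screw quotient's outer boundary carries half-turn holonomy $g_{\ell,\rho,1/2}$, while the next block must be attached along an untwisted product. The paper needs a holonomy connector here. Varying the twist $\omega$ produces mixed Ricci components, and these vanish only for the least-energy twist satisfying $(\rho^{m+2}\omega'/\ell)'=0$.

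Second, Ricci-positive pieces cannot be joined by ``$C^2$-small'' smoothing. With matching boundary metrics, the glued metric is merely continuous in its normal derivative across the interface. Positivity survives smoothing only under Perelman's condition $\II_1+\Psi^*\II_2>0$, which is why every piece must be built with controlled boundary second fundamental forms. Once that condition holds, each gluing is exact and supported in a collar. There are then no errors to sum, and completeness follows simply from inserting connector slabs of divergent length.

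\textbf{The stage-$0$ piece.} Your proposed cap does not work as stated. The boundary of a $D^2$-bundle over $\mathbb{RP}^2$ is a circle bundle over $\mathbb{RP}^2$, which is never $S^1\times\mathbb S^2$. The simply connected cap $D^2\times\mathbb S^m$, with a doubly warped Ricci-positive metric and convex boundary, already imposes $a_0^2=1$ and suffices.
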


\begin{corollary}\label{cor:all-dimensions}
For every integer $n\ge4$, there exists a connected, orientable, one-ended
smooth $n$-manifold admitting a complete Riemannian metric of strictly
positive Ricci curvature and having fundamental group isomorphic to
$\CPrufer$.
\end{corollary}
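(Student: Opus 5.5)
Corollary~\ref{cor:all-dimensions} should follow from Theorem~\ref{thm:main} by a product construction in which one factor is a compact, simply connected manifold with $\Ric>0$. For $n=4,5$ we take the manifolds of Theorem~\ref{thm:main} with $m=2,3$ directly. For $n\ge6$ we write $n=4+k$ with $k\ge2$ and set
\[
 N^n := M^4\times S^k,\qquad h := g\oplus g_{S^k},
\]
where $(M^4,g)$ is the four-dimensional example of Theorem~\ref{thm:main} and $g_{S^k}$ is the round metric. (Alternatively, for $k$ odd one can use $M^5\times S^{k-1}$; this is not needed.)

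We then check the required properties one at a time.

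\textbf{Topology.} Since $S^k$ is simply connected for $k\ge2$, we have $\pi_1(N)\cong\pi_1(M^4)\times\pi_1(S^k)\cong\CPrufer$. $N$ is connected, and it is orientable as a product of orientable manifolds.

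\textbf{Completeness.} A Riemannian product of two complete manifolds is complete, because $S^k$ is compact and $g$ is complete.

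\textbf{Curvature.} The Ricci tensor of a product splits as $\Ric_h=\Ric_g\oplus\Ric_{g_{S^k}}=\Ric_g\oplus(k-1)g_{S^k}$. Both summands are positive definite when $k\ge2$, so $\Ric_h>0$ pointwise.

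\textbf{One end.} Let $K\subset N$ be compact. Its projection to $M$ is contained in some compact $K'\subset M$, so $K\subset K'\times S^k$. The complement $M\setminus K'$ has exactly one unbounded component $U$, because $M$ is one-ended. Then $U\times S^k$ is connected, and it is the unique unbounded component of $N\setminus(K'\times S^k)$; the remaining components are relatively compact. Every unbounded component of $N\setminus K$ must meet $U\times S^k$, which is connected and disjoint from $K$. Hence there is only one such component, and $N$ has exactly one end.

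The only step requiring any care is the end count, and it is elementary. The essential input is Theorem~\ref{thm:main} in dimension four; requiring $k\ge2$ guarantees both simple connectivity of the sphere factor and strict positivity of its Ricci curvature.
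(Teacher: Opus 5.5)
Your proposal is correct and follows the paper's proof exactly: Theorem~\ref{thm:main} handles $n=4,5$, and for $n\ge6$ one takes the Riemannian product $M^4\times\mathbb S^{n-4}$, whose Ricci tensor splits as $\Ric_{g_4}\oplus(n-5)g_{\mathbb S^{n-4}}$. Your end count adds detail that the paper only asserts; its one implicit step, that $K'\times\mathbb S^k$ together with the bounded components of its complement is relatively compact, becomes immediate if you take $K'$ to be one of the exhaustion sets $M_N$ from Section~\ref{sec:infinite-telescope}, whose complements are connected.
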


\begin{proof}
For $n=4,5$, this follows directly from
Theorem~\ref{thm:main}.  Let $n\ge6$, and let $(M^4,g_4)$ be the
four-dimensional example furnished by Theorem~\ref{thm:main}.  Equip
\[
 N^n:=M^4\times\mathbb S^{n-4}
\]
with the standard product metric $g_4\times g_{\mathbb S^{n-4}}$. The resulting metric is complete, and $N^n$
remains connected, orientable, and one-ended.  Since $n-4\ge2$, the
sphere is simply connected, and hence
\[
 \pi_1(N^n)
 \cong\pi_1(M^4)
 \cong\CPrufer.
\]
Moreover, for every $(v,w)\in T_xM^4\oplus T_y\mathbb S^{n-4}$,
\[
 \Ric_{N^n}\bigl((v,w),(v,w)\bigr)
 =
 \Ric_{g_4}(v,v)
 +(n-5)\lvert w\rvert^2.
\]
Because $\Ric_{g_4}>0$ and $n-5>0$, this quantity is positive whenever
$(v,w)\ne0$.
\end{proof}

Before sketching the construction in Theorem~\ref{thm:main}, we first
briefly recall several major developments surrounding Milnor's
conjecture.  Milnor's original argument showed that every finitely
generated subgroup of the fundamental group has polynomial growth
\cite{Milnor1968}; Gromov's theorem then implies that every such
subgroup is virtually nilpotent \cite{Gromov1981}.  Wilking subsequently
reduced the search for counterexamples to the case of abelian
fundamental groups, while the generalized Margulis lemma of
Kapovitch--Wilking provided dimensionally uniform control of the
underlying nilpotent structure
\cite{Wilking2000,KapovitchWilking2011}.  On the affirmative side, the
conjecture holds in dimension two by the work of Cohn--Vossen and in
dimension three by the work of Schoen--Yau, Liu, and Pan
\cite{CohnVossen1935,SchoenYau1982,Liu2013,Pan2020}.  A parallel line
of results established finite generation under additional geometric
hypotheses: Li and Anderson proved finiteness under Euclidean volume
growth, Sormani treated small linear diameter growth, and subsequent
work of Pan and Huang imposed various stability, conicality, and
polarity assumptions on the universal cover
\cite{Li1986,AndersonTopology1990,SormaniDiameter,PanStability,
PanAlmostStability,HuangPolar}.  More recently, Huang--Huang proved
finite generation in dimension four when the universal cover has
Euclidean volume growth, while Navarro--Pan--Zhu obtained a stronger
structural description in the linear-volume-growth setting
\cite{HuangHuang2025,NavarroPanZhu2024}.  The picture changed
decisively with the constructions of Bru\`e--Naber--Semola
\cite{BNS7,BNS6}: their
seven- and six-dimensional examples realize arbitrary subgroups of
\(\mathbb Q/\mathbb Z\) and \(\mathbb Q/\mathbb Z\), respectively,
leaving only dimensions four and five open.
Theorem~\ref{thm:main}
closes this dimensional gap and shows, moreover, that the conjecture
already fails under strictly positive Ricci curvature.

We now give a sketch of the construction in Theorem~\ref{thm:main}. The starting point of the construction is a simple algebraic feature of
the Pr\"ufer $2$-group.  It admits the presentation
\[
 \CPrufer
 =
 \left\langle
 a_0,a_1,a_2,\ldots
 \ \middle|\
 a_0^2=1,\quad a_j=a_{j+1}^2\ \text{for all }j\ge0
 \right\rangle.
\]
Thus each $a_{j+1}$ can be viewed as a square root of $a_j$, while the
relation $a_0^2=1$ supplies the initial torsion.  Such presentation
suggests the following geometric construction.  A \emph{product parametrization of the boundary} of a compact cobordism $B$ with boundary components
$\partial_iB$ and $\partial_oB$, each diffeomorphic to
$\mathbb S^1\times\mathbb S^m$, is a specified pair of diffeomorphisms
\[
 \Phi_I:\mathbb S^1\times\mathbb S^m\longrightarrow\partial_iB,
 \qquad
 \Phi_O:\mathbb S^1\times\mathbb S^m\longrightarrow\partial_oB.
\]
For a fixed $y\in\mathbb S^m$, these parametrizations  determine the
incoming and outgoing product circles
\[
 \lambda_{\mathrm{in}}(\theta)=\Phi_I(\theta,y),
 \qquad
 \lambda_{\mathrm{out}}(\theta)=\Phi_O(\theta,y).
\]
We call $(B;\Phi_I,\Phi_O)$ a \emph{square-root block} if
\begin{equation}\label{def:square-root-block}
 \pi_1(B)\cong\mathbb Z=\langle a\rangle,
 \qquad
 [\lambda_{\mathrm{out}}]=a,
 \qquad
 [\lambda_{\mathrm{in}}]=a^2.
\end{equation}
Thus attaching the incoming boundary of one such block to the outgoing
boundary of the preceding block introduces the relation
$a_j=a_{j+1}^2$.  The construction below supplies the boundary metrics
and second fundamental forms required to repeat this attachment.

The incoming boundary of the first block is filled by a compact
manifold diffeomorphic to \(D^2\times\mathbb S^m\).  Under the natural
identification
\[
 \partial(D^2\times\mathbb S^m)
 =\mathbb S^1\times\mathbb S^m,
\]
each circle \(\mathbb S^1\times\{y\}\) bounds the embedded disk
\(D^2\times\{y\}\).  We call this filling the \emph{initial cap}. Its
attachment imposes $a_0^2=1$.  Repeating the block construction then
generates the entire group.

If the relation $a_0^2=1$ is omitted, the remaining presentation is
\[
 \left\langle a_0,a_1,\ldots
 \ \middle|\
 a_j=a_{j+1}^2\ \text{for all }j\ge0
 \right\rangle
 \cong\mathbb Z[1/2].
\]
This presentation has a classical three-dimensional realization.
In the dyadic solenoid
construction, one considers a nested sequence of unknotted solid tori,
each winding twice inside its predecessor.  The fundamental group of
each successive complement is infinite cyclic, and its distinguished
generators satisfy
\[
 [\gamma_j]=2[\gamma_{j+1}].
\]
The resulting direct limit is $\mathbb Z[1/2]$.  This model was discussed
by Shen and Sormani in connection with the Milnor conjecture
\cite{ShenSormani2006}, appears in Semola's lecture notes as a
topological precursor to the construction of Bru\`e, Naber, and Semola
\cite{SemolaNotes2025}, and belongs to the broader theory of solenoid
complements studied in \cite{ConnerMeilstrupRepovs2015}.

In the notation used below, this three-dimensional model arises formally
by setting $m=1$.  In that case,
\[
 P_1
 =
 D^2\setminus
 \left(
 \operatorname{int}D^2_+
 \sqcup
 \operatorname{int}D^2_-
 \right)
\]
is an ordinary pair of pants, and the exterior of the winding-two circle
is its mapping torus under an involution exchanging the two inner
boundary circles.  This is precisely the elementary block underlying
the dyadic solenoid complement.  We point out that this
model is different with the Whitehead manifold.  Such dyadic pattern has algebraic
winding number two and gives rise to the direct-limit group
$\mathbb Z[1/2]$, whereas the Whitehead construction has algebraic winding
number zero and produces a contractible open manifold.

The formal case $m=1$ also identifies the key modification required for
our construction.  Let
\[
 c_m\subset \mathbb S^1\times D^{m+1}
\]
denote the winding-two circle, and let $\mathcal N(c_m)$ be a closed
tubular neighborhood of $c_m$.  When $m=1$, the circle $c_1$ has
codimension two, and removing the interior of $\mathcal N(c_1)$ introduces
an additional meridian.  For $m\ge2$, by contrast, $c_m$ has
codimension $m+1\ge3$, and general position shows that the inclusion
induces an isomorphism
\[
 \pi_1\left(
   (\mathbb S^1\times D^{m+1})
   \setminus\operatorname{int}\mathcal N(c_m)
 \right)
 \xrightarrow{\;\cong\;}
 \pi_1(\mathbb S^1\times D^{m+1})
 \cong\mathbb Z.
\]
Let $\lambda_{\mathrm{out}}$ and $\lambda_{\mathrm{in}}$ denote the
distinguished outer and inner longitudes, respectively.  If
\[
 \gamma:=[\lambda_{\mathrm{out}}],
\]
then $\gamma$ generates the fundamental group of the exterior, whereas
\[
 [\lambda_{\mathrm{in}}]=\gamma^2.
\]
Thus, viewed as a cobordism from its inner boundary to its outer
boundary, this exterior carries the incoming longitude class
\(\gamma^2\) to the outgoing longitude class \(\gamma\).  In other
words, it realizes the square-root relation without introducing an
additional meridian; see Proposition~\ref{prop:double-cable}.

Under the preceding identification with $\mathbb Z[1/2]$, the relation
$a_0^2=1$ imposed by the initial cap kills the integral subgroup
$\mathbb Z$, leaving
\[
 \mathbb Z[1/2]\big/\mathbb Z
 \cong
 \CPrufer.
\]
This quotient is the algebraic cornerstone for the construction; the
resulting manifold is not obtained by taking a literal quotient of the
three-dimensional solenoid complement.

It remains to realize such a building block in dimension $m+2$ with
positive Ricci curvature and with boundary metrics and second fundamental
forms that permit successive attachments. We carry out this realization in three steps: we construct an equivariant
Ricci-positive metric on the directed base \(P_m\) (see
(\ref{def:directed-base})); form the corresponding screw
quotient, which realizes the square-root relation (see
(\ref{def:screw-quotient})); and attach a Ricci-positive
holonomy connector that removes the resulting boundary holonomy (see (\ref{def:holonomy-connector})).

For each $m\in\{2,3\}$, we first construct
\[
 P_m\cong
 D^{m+1}\setminus
 \left(
 \operatorname{int}D^{m+1}_+
 \sqcup
 \operatorname{int}D^{m+1}_-
 \right),
\]
with boundary decomposition
\[
 \partial P_m=\partial_oP_m\sqcup I_+\sqcup I_-.
\]
We designate $\partial_oP_m$ as the outgoing boundary and $I_+$ and
$I_-$ as the incoming boundaries.  The tuple
\begin{equation}\label{def:directed-base}
 (P_m,\sigma_m;\partial_oP_m,I_+,I_-),
\end{equation}
where $\sigma_m$ is an involution preserving $\partial_oP_m$ and
exchanging $I_+$ and $I_-$, is called a \emph{directed base}; the word
``directed'' refers precisely to this assignment of incoming and outgoing
boundary components.

Given any $R_*>0$ and $\varepsilon_*>0$, we will show that the manifold $P_m$ carries a
metric $h_m$ with $\Ric_{h_m}>0$ and an isometric involution $\sigma_m$
with these properties.  The incoming boundary components
are round $m$-spheres of radius $R_*$ and satisfy
\[
 \mathrm{II}_{I_\pm}
 >
 -\varepsilon_*\,h_m|_{I_\pm},
\]
whereas the outer boundary is round and strictly convex. In this paper, we fix the convention
\[
 \II(X,Y)=g(\nabla_X\nu,Y),
\]
where \(\nu\) is the outward unit normal; thus \(\II>0\) means strict convexity.  
On the outer
boundary, the involution restricts to an axial half-turn when $m=2$ and
to the antipodal map when $m=3$; see
Theorem~\ref{thm:unified-directed-base}.

Although the underlying topological model also makes sense for $m=1$,
the boundary geometry required above does not.  Indeed, $P_1$ is a pair
of pants, and positive Ricci curvature in dimension two is equivalent
to positive Gaussian curvature.  Suppose that an analogous metric
existed on $P_1$, with two inner boundary circles of radius $R_*$,
strictly convex outer boundary, and
\[
 \mathrm{II}_{I_\pm}
 >
 -\varepsilon_*\,h_1|_{I_\pm}.
\]
With our sign convention, the geodesic curvature of each boundary
component is $\mathrm{II}(T,T)$ for a unit tangent vector $T$.
Since $\chi(P_1)=-1$ and
$\operatorname{Length}(I_\pm)=2\pi R_*$, the Gauss--Bonnet theorem would
give
\[
 -2\pi
 =
 2\pi\chi(P_1)
 =
 \int_{P_1}K_{h_1}\,\dd A
 +
 \int_{\partial P_1}k_g\,\dd s
 >
 -4\pi R_*\varepsilon_*.
\]
This is impossible whenever
$\varepsilon_*\le (2R_*)^{-1}$.  Thus the restriction $m\ge2$ is
essential in the construction of the directed base: the failure of
such estimates when $m=1$ reflects a genuine
two-dimensional obstruction.

For $m=2,3$, the metric on the directed base $P_m$ is constructed using Perelman's
angular-neck construction \cite{Perelman1997}: a sectionally positively
curved angular ball contains two smaller angular balls exchanged by an
involution.  After removing the smaller balls, we attach identical
Ricci-positive necks along the resulting inner hypersurfaces, thereby
normalizing the induced metrics and second fundamental forms on
$I_+$ and $I_-$.  The metric induced on $\partial_oP_m$ is
then joined to a round metric by the equivariant Ricci-positive cylinder
construction of Theorem~\ref{thm:path-cylinder-unified}.  The required
invariant paths of boundary metrics are
provided by Ricci flow: Hamilton's surface theorem is used when $m=2$,
and his three-dimensional convergence theorem when $m=3$
\cite{Hamilton1982, Hamilton1988Surfaces}.

Starting from $(P_m,h_m,\sigma_m)$, we introduce a circle direction and
choose a positive $\sigma_m$-invariant function $f$, such that the warped
metric
\[
 G=h_m+f^2\dd t^2
\]
on $\mathbb R\times P_m$ has positive Ricci curvature and the required
boundary second fundamental forms, by Lemma~\ref{lem:un-warp}.  For
$T>0$, the map
\[
 \mathcal T(t,x):=(t+T,\sigma_m x)
\]
is an isometry of $(\mathbb R\times P_m,G)$.  We call
\begin{equation}\label{def:screw-quotient}
 W_m:=(\mathbb R\times P_m)/\langle\mathcal T\rangle
\end{equation}
the \emph{screw quotient}.  On its outer boundary, the distinguished
mapping-torus loop closes after one application of $\mathcal T$.  On the
inner boundary, the first application of $\mathcal T$ exchanges the two
components, so the first return to either component is given by
$\mathcal T^2$.  This difference in
return times is the geometric origin of the relation
\[
 [\lambda_{\mathrm{in}}]
 =
 [\lambda_{\mathrm{out}}]^2.
\]
The product parametrization of the boundary and this square relation are recorded
in Lemma~\ref{lem:un-boundary-components}.

The two quotient boundaries have the desired relative periods but
with different holonomy: the outer boundary carries a half-turn, whereas
the inner boundary is an untwisted product.  To interpolate between
them, we introduce the \emph{holonomy connector}
\begin{equation}\label{def:holonomy-connector}
 \mathcal C=[0,s_*]\times\mathbb S^1\times\mathbb S^m,
 \qquad
 g_{\mathcal C}=\dd s^2+g_{\ell(s),\rho(s),\omega(s)},
\end{equation}
where $\omega(0)=1/2$ and $\omega(s_*)=0$.  Thus the twist parameter $\omega$ changes
the half-turn holonomy on the outer boundary of $W_m$ into the untwisted
product structure required at the outgoing boundary of the square-root
block.  Once the circle and sphere radius parameters $\ell$ and $\rho$
have been fixed, the twist parameter $\omega$ is chosen to minimize the
corresponding weighted Dirichlet energy \eqref{eq:weighted Dirichlet energy}.  Its Euler--Lagrange equation
is
\[
 \left(
   \frac{\rho^{m+2}\omega'}{\ell}
 \right)'=0,
\]
which makes the mixed Ricci component vanish identically. The variational characterization is given in Lemma~\ref{lem:least-energy-twist}, while the corresponding
mixed-Ricci cancellation follows from Lemma~\ref{lem:un-ricci}. The resulting Ricci-positive holonomy connector is constructed in
Proposition~\ref{prop:un-connector}.  Its compatibility with the screw
quotient and the subsequent smoothing follow from
Lemmas~\ref{lem:un-left-boundary-gluing}
and~\ref{lem:relative-gluing-unified}.

The screw quotient and the holonomy connector together form a compact building block whose untwisted product parametrizations of the boundary permit the
construction to be iterated, as established in
Proposition~\ref{prop:square-root-block}.  The required initial cap is
supplied by Lemma~\ref{lem:un-cap}.  Attaching this cap and then an
infinite chain of the blocks from
Proposition~\ref{prop:square-root-block} produces a locally finite,
one-ended smooth manifold with positive Ricci curvature. The
fundamental group of its $N$-th compact stage is
\[
 \mathbb Z/2^{N+1}\mathbb Z,
\]
and the inclusion into the next stage is multiplication by two.
Consequently,
\[
 \pi_1(M)
 \cong
 \varinjlim_N
 \left(
   \mathbb Z/2^{N+1}\mathbb Z,
   [k]\longmapsto[2k]
 \right)
 \cong
 \CPrufer.
\]
The finite-stage computation and the direct limit are
carried out in
\eqref{eq:un-finite-pi1}--\eqref{eq:un-prufer}.
Finally, each holonomy connector contains a central subcylinder disjoint
from all subsequent smoothing collars.  Choosing these subcylinders so
that their accumulative
crossing distance diverges forces every curve passing through
infinitely many stages to have infinite length. The resulting metric is therefore complete; see Section~\ref{sec:infinite-telescope} for detailed argument.

The present quotient construction is related to the smooth
snowflake construction of Bru\`e, Naber, and Semola
\cite{BNS7,BNS6}, but the two
constructions organize the branching in fundamentally different ways.
In their seven-dimensional construction \cite{BNS7}, one begins with an arbitrary
subgroup $\Gamma\le\mathbb Q/\mathbb Z$ and an exhaustion by finite
cyclic subgroups whose generators satisfy
\[
 \gamma_j^{k_j}=\gamma_{j-1}
\]
for a sequence of integers $k_j\ge2$.  The universal cover is then
constructed together with a compatible free action of $\Gamma$, and
the variable branching orders $k_j$ are encoded by an equivariant smooth
snowflake and a highly twisted gluing procedure.  Their six-dimensional
construction carries out a similar argument for
$\Gamma=\mathbb Q/\mathbb Z$ \cite{BNS6}.

Our construction instead works directly on the quotient and specializes
to the choices $k_j\equiv2$.  The involution $\sigma_m$ exchanging
the two incoming boundary components realizes this order-two interchange
directly on the quotient, and the same construction is
repeated at
every stage.  Consequently, we do not need to construct compatible
actions of increasingly large cyclic groups on the universal cover or
to untwist general elements of a higher-dimensional mapping class group.
This reduction to a single involution and a single block construction is
what makes the direct four- and five-dimensional construction possible.

\begin{remark}\label{rem:beyond-binary}
It is natural to ask whether such quotient construction can realize the
full range of groups obtained by Bru\`e, Naber, and Semola
\cite{BNS7,BNS6}.  Given a
relation
\[
 \gamma_j^{k_j}=\gamma_{j-1},
\]
the topological analogue of our building block would be a directed base in the form of
\[
 P_{m,k_j}
 \cong
 D^{m+1}\setminus
 \biggl(
   \bigsqcup_{\alpha=1}^{k_j}
   \operatorname{int}D^{m+1}_\alpha
 \biggr),
\]
together with an order-$k_j$ symmetry cyclically permuting its inner
boundary components.  The associated screw quotient would have first
return time $k_jT$ on the inner boundary and would therefore encode the
required $k_j$-th-root relation.

The main difficulty is geometric.  One would need Ricci-positive
metrics on $P_{m,k}$, for arbitrary $k$, with cyclic symmetry and with
the same roundness and second fundamental form control required for
iteration.  One would also need holonomy connectors that correct the
resulting order-$k$ outer holonomy while preserving positive Ricci
curvature.  The angular-neck and holonomy-connector constructions developed
here are tailored to the two-fold symmetry of a half-turn or antipodal
map and do not presently provide the corresponding quantitative control
as $k$ varies.  Even a fixed prime $p>2$ requires a new $p$-fold directed
base, while realizing $\mathbb Q/\mathbb Z$ requires a sequence of
branching orders that may be unbounded.

We nevertheless expect that for every $m\in\{2,3\}$ and every subgroup
$\Gamma\le\mathbb Q/\mathbb Z$, there exists a complete, one-ended
Riemannian manifold $M^{m+2}_\Gamma$ satisfying
\[
 \Ric_{M^{m+2}_\Gamma}>0,
 \qquad
 \pi_1(M^{m+2}_\Gamma)\cong\Gamma.
\]
\end{remark}

The final section records several further features of the constructed
manifolds. First, their universal covers $\widetilde M$ are diffeomorphic to
\[
 \mathbb R^{m+2}\#_{\infty}
 (\mathbb S^2\times\mathbb S^m),
\]
and are therefore one-ended and simply connected at infinity.  Next, the
free parameters in the holonomy connectors also allow any prescribed
sequence in \([1,3/2]\) to be realized as logarithmic volume-growth
exponents along suitable subsequences of radii, simultaneously on the
manifold $M$ and its universal cover $\widetilde M$. Furthermore, for the asymptotic volume ratio defined by
\begin{equation*}
    \operatorname{AVR}(M):=\lim_{r\to\infty}
 \frac{\operatorname{Vol_M}(B_r(x))}
 {\operatorname{Vol}(B_r^{\mathbb R^n})},
\end{equation*}
the manifold $M$ has zero $\operatorname{AVR}(M)$ for every admissible choice of
parameters and has super linear growth:
\begin{equation*}
    \limsup_{r\to \infty} \frac{\operatorname{Vol_M}(B_r(x))}{r}=\infty.
\end{equation*}
When \(m=2\), the universal cover $\widetilde M$ has zero $\operatorname{AVR}(\widetilde M)$
for every admissible choice of parameters as well. However, for arbitrary parameter choices when \(m=3\), the present argument does not determine the asymptotic volume ratio of \(\widetilde M\). Nevertheless, the alternating choice \[ e_{2j}=1, \qquad e_{2j-1}=\frac32 \qquad (j\ge1) \] in Proposition~\ref{prop:volume-growth-flexibility} produces examples with \(m=3\) for which \[ \operatorname{AVR}(\widetilde M)=0. \]

These results give a concrete
description of the topology and large-scale geometry of the examples
and clarify how they lie outside several known finite-generation
regimes.

\subsection*{Organization of the paper}

Section~2 isolates the topological square-root relation.
Section~3 establishes the relative gluing and path-cylinder tools.
Section~4 constructs the Ricci-positive directed base, with the
angular-neck calculation deferred to Appendix~A.
Sections~5 and~6 construct the screw quotient and the holonomy
connector, respectively.
Section~7 assembles the initial cap and the square-root blocks into a
complete infinite telescope.
Finally, Section~8 studies the topology of the universal cover, the
flexibility of the volume growth, and the asymptotic volume ratio.

\subsection*{Acknowledgments}
The authors would like to thank Xingyu Zhu for many helpful discussions, Xianzhe Dai and Guofang Wei for bringing this problem to their attention, and Xuezhang Chen, Qing Han, and Jiayin Pan for valuable comments and suggestions.

\subsection*{Disclosure on AI assistance}

The authors used AI-assisted tools, principally ChatGPT/Codex, for
brainstorming, literature searches and source triage, objection
generation, preliminary checks of calculations and parameter
estimates, notation and exposition review, and manuscript editing.
AI output was not treated as mathematical authority.  The authors
independently worked and verified all theorem statements, proofs,
calculations, constants, citations, and final prose, and take full
responsibility for the contents of the paper.

\section{The topological square-root relation}
\label{sec:double-cable}

Before introducing the metric, we isolate the topological origin of the
square-root relation. Consider an embedded circle
\[
c\subset\mathbb S^{1}\times D^{m+1}
\]
such that the restriction to \(c\) of the projection
\(\mathbb S^{1}\times D^{m+1}\to\mathbb S^{1}\) has degree two, whereas its projection
to \(D^{m+1}\) parametrizes an embedded circle once. Over each point of
\(S^{1}\), the curve \(c\) meets the corresponding fiber in two points.
Consequently, the complement of a sufficiently small tubular
neighborhood of \(c\) fibers over \(\mathbb S^{1}\) with fiber
\[
D^{m+1}\setminus
\left(
\operatorname{int}D^{m+1}_{+}
\sqcup
\operatorname{int}D^{m+1}_{-}
\right).
\]
Its monodromy interchanges the two inner boundary components. We now
describe this fibration explicitly; the resulting fiber, together with
its order-two monodromy, will serve as the topological model for the
directed base constructed later.

Let $m\ge2$.  For every $\rho>0$, denote by
\[
 D^{m+1}_\rho:=\{x\in\mathbb R^{m+1}:|x|\le\rho\}
\]
the closed Euclidean $(m+1)$-ball of radius $\rho$.  Abbreviate
$D^{m+1}:=D^{m+1}_1$ and put
\[
 V:=\mathbb S^1\times D^{m+1}.
\]
Let $e_1,e_{m+1}$ be the first and last standard basis vectors of
$\mathbb R^{m+1}$, write $\mathrm i=\sqrt{-1}$, and define the
$\mathbb R^{m+1}$-valued function
\[
 u:\mathbb R/2\pi\mathbb Z\longrightarrow\mathbb R^{m+1},
 \qquad
 u(\theta):=(\cos\theta,\sin\theta,0,\ldots,0).
\]
Further, we fix
\[
 0<\varepsilon<1,
 \qquad
 0<\rhoCab<\min\{\varepsilon,1-\varepsilon\},
\]
and define
\begin{align}
 c_2(\theta)
   &:=\bigl(e^{2\mathrm i\theta},\varepsilon u(\theta)\bigr),\label{eq:double-cable}\\
 \mathcal F_{\rhoCab}(\theta,v)
   &:=\bigl(e^{2\mathrm i\theta},\varepsilon u(\theta)+v\bigr),\label{eq:explicit-tube}\\
 \nu_{\rhoCab}(c_2)
   &:=\mathcal F_{\rhoCab}
     \bigl((\mathbb R/2\pi\mathbb Z)\times
                     D^{m+1}_{\rhoCab}\bigr),\notag\\
 C_m
   &:=V\setminus\mathcal F_{\rhoCab}
     \bigl((\mathbb R/2\pi\mathbb Z)\times
              \operatorname{int}D^{m+1}_{\rhoCab}\bigr).\label{eq:explicit-exterior}
\end{align}
The first assertion of Proposition~\ref{prop:double-cable} below shows
that $\mathcal F_{\rhoCab}$ is a tubular embedding.  We introduce the two boundary components of $C_m$:
\[
 \partial_oC_m:=\mathbb S^1\times\partial D^{m+1},
 \qquad
 \partial_iC_m:=\mathcal F_{\rhoCab}
 \bigl((\mathbb R/2\pi\mathbb Z)\times
                         \partial D^{m+1}_{\rhoCab}\bigr).
\]

We use the base points, longitudes, and connecting path
\begin{align*}
 x_O&:=(1,e_{m+1}),
 &\lambda_O(\varphi)&:=\bigl(e^{\mathrm i\varphi},e_{m+1}\bigr),\\
 x_I&:=(1,\varepsilon e_1+\rhoCab e_{m+1}),
 &\lambda_I(\theta)&:=
   \mathcal F_{\rhoCab}(\theta,\rhoCab e_{m+1}),\\
 \beta(a)&:=\left(1,
 a\varepsilon e_1+(1-a+a\rhoCab)e_{m+1}\right),
 &\theta,\varphi&\in\mathbb R/2\pi\mathbb Z,
 &0&\le a\le1.
\end{align*}
Here $\lambda_O$ is the positively oriented outer longitude based at
$x_O$, while $\lambda_I$ is the positively oriented longitude of the
drilled tube based at $x_I$ and determined by the constant normal vector
$\rhoCab e_{m+1}$.

For $\alpha\in\mathbb R$, let $R_\alpha\in SO(m+1)$ be rotation through
angle $\alpha$ in the $(e_1,e_2)$-plane and the identity on its
orthogonal complement.  Explicitly,
\[
 R_\alpha(x_1,x_2,x_3,\ldots,x_{m+1})
 =\bigl(x_1\cos\alpha-x_2\sin\alpha,
        x_1\sin\alpha+x_2\cos\alpha,
        x_3,\ldots,x_{m+1}\bigr).
\]
Define
\begin{align*}
 D^{m+1}_+&:=\{x\in D^{m+1}:|x-\varepsilon e_1|\le\rhoCab\},
 &D^{m+1}_-&:=\{x\in D^{m+1}:|x+\varepsilon e_1|\le\rhoCab\},\\
 P_m&:=D^{m+1}\setminus
 \bigl(\operatorname{int}D^{m+1}_+
       \sqcup\operatorname{int}D^{m+1}_-\bigr),
 &I_\pm&:=\partial D^{m+1}_\pm.
\end{align*}
The inequalities imposed on $\rhoCab$ ensure that $D^{m+1}_+$ and
$D^{m+1}_-$ are disjoint and contained in the interior of $D^{m+1}$.
Thus $P_m$ is an $(m+1)$-disk with the interiors of two disjoint
$(m+1)$-disks removed.  The half-turn $R_\pi$ preserves $P_m$ and
exchanges $I_+$ and $I_-$.  Put
\[
 \phi:=R_\pi|_{P_m},
 \qquad
 T_\phi:=\bigl(P_m\times[0,2\pi]\bigr)/
       \bigl((x,2\pi)\sim(\phi(x),0)\bigr).
\]
This Euclidean $P_m$ is the fixed topological model used throughout the
paper.  In Section~\ref{sec:unified-directed-base}, the same symbol
denotes a manifold equipped with a Ricci-positive metric and a specified
diffeomorphism to this model.

\begin{proposition}
\label{prop:double-cable}
The exterior $C_m$ defined above has the following properties.
\begin{enumerate}[label=\textup{(\roman*)}]
\item $\mathcal F_{\rhoCab}$ is a smooth embedding,
\[
 \partial C_m=\partial_oC_m\sqcup\partial_iC_m,
 \qquad
 \partial_oC_m\cong\mathbb S^1\times\mathbb S^m,
 \qquad
 \partial_iC_m\cong\mathbb S^1\times\mathbb S^m.
\]
\item If $j:C_m\hookrightarrow V$ is the inclusion, then
\[
 j_*:\pi_1(C_m,x_O)\xrightarrow{\ \cong\ }\pi_1(V,x_O),
 \qquad
 \pi_1(C_m,x_O)=\langle g\rangle\cong\mathbb Z,
 \qquad
 g=[\lambda_O].
\]
\item The path $\beta$ is contained in $C_m$, satisfies
\[
 \beta(0)=x_O,
 \qquad
 \beta(1)=x_I,
\]
and, with $\beta^{-1}(a)=\beta(1-a)$,
\[
 \bigl[\beta*\lambda_I*\beta^{-1}\bigr]=g^2
 \quad\text{in }\pi_1(C_m,x_O).
\]
For every path $\widehat\beta:[0,1]\to C_m$ from $x_O$ to $x_I$,
\[
 \bigl[\widehat\beta*\lambda_I*\widehat\beta^{-1}\bigr]=g^2.
\]
\item The map
\[
 \Phi:T_\phi\longrightarrow C_m,
 \qquad
 \Phi([x,\varphi])=(e^{\mathrm i\varphi},R_{\varphi/2}x),
\]
is a diffeomorphism, and
\[
 \phi(I_+)=I_-,
 \qquad
 \phi(I_-)=I_+.
\]
\end{enumerate}
\end{proposition}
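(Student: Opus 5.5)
I would prove the four items in the order (iv), (i), (ii), (iii). Item (iv) provides an explicit global parametrization of $C_m$ that makes the others nearly immediate.

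\emph{Step 1 (item (iv)).} First I check that $\Phi$ is well defined on $T_\phi$. Replacing $(x,2\pi)$ by $(\phi(x),0)$ gives $(e^{2\pi\mathrm i},R_{\pi}x)=(1,R_\pi x)$ and $(1,R_0\phi(x))=(1,R_\pi x)$, which agree. Next, the map
\[
 \widetilde\Phi:[0,2\pi]\times D^{m+1}\to V,\qquad (\varphi,x)\mapsto(e^{\mathrm i\varphi},R_{\varphi/2}x),
\]
descends to a diffeomorphism of the mapping torus of $R_\pi$ on $D^{m+1}$ onto $V$. Its inverse is $(e^{\mathrm i\varphi},y)\mapsto[R_{-\varphi/2}y,\varphi]$, which is well defined precisely because of the gluing. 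The key identity is $R_{\varphi/2}(\pm\varepsilon e_1)=\pm\varepsilon u(\varphi/2)$. Writing $\varphi=2\theta$, the fibre of $\nu_{\rhoCab}(c_2)$ over $e^{\mathrm i\varphi}$ consists of the two balls of radius $\rhoCab$ centred at $\varepsilon u(\theta)$ and $\varepsilon u(\theta+\pi)=-\varepsilon u(\theta)$. These are exactly $R_{\varphi/2}D^{m+1}_\pm$. Hence $\widetilde\Phi$ carries $D^{m+1}_+\sqcup D^{m+1}_-$ fibrewise onto the tube, and restricts to a diffeomorphism $T_\phi\to C_m$. The relation $\phi(I_\pm)=I_\mp$ follows from $R_\pi e_1=-e_1$.

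\emph{Step 2 (item (i)).} The computation in Step 1 also shows that $\mathcal F_{\rhoCab}$ is injective: over a given first coordinate $e^{2\mathrm i\theta}$, the two preimages $\theta$ and $\theta+\pi$ give disjoint balls, because $\rhoCab<\varepsilon$. The differential of $\mathcal F_{\rhoCab}$ has rank $m+2$, since the $v$-directions are injective and the $\theta$-direction has nonzero $\mathbb S^1$-component. Being an injective immersion of a compact manifold, $\mathcal F_{\rhoCab}$ is an embedding. The tube lies in the interior of $V$ because $\varepsilon+\rhoCab<1$. The boundary decomposition then follows: $\partial_iC_m$ is the image of $(\mathbb R/2\pi\mathbb Z)\times\partial D^{m+1}_{\rhoCab}\cong\mathbb S^1\times\mathbb S^m$.

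\emph{Step 3 (items (ii) and (iii)).} By Step 1, $C_m$ fibres over $\mathbb S^1$ with fibre $P_m$. Since $m\ge2$, $P_m$ is simply connected: it is an $(m+1)$-ball with two balls removed, so by van Kampen (or by general position in codimension $m+1\ge3$) it is homotopy equivalent to $\mathbb S^m\vee\mathbb S^m$. The long exact sequence of the fibration then gives $\pi_1(C_m)\cong\pi_1(\mathbb S^1)=\mathbb Z$, and the same holds for $V$, compatibly with $j$. So $j_*$ is an isomorphism, and a generator is any loop whose projection to $\mathbb S^1$ has degree one. The loop $\lambda_O$ is such a loop.

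For (iii), the path $\beta$ stays in $C_m$ because its $e_{m+1}$-coordinate $1-a+a\rhoCab$ keeps it at distance at least $\rhoCab$ from $\pm\varepsilon e_1$, and it lies inside $D^{m+1}$. The projection of $\lambda_I$ to $\mathbb S^1$ is $\theta\mapsto e^{2\mathrm i\theta}$, which has degree two. The loop $\beta*\lambda_I*\beta^{-1}$ therefore projects to a loop of degree two, so its class is $g^2$. Independence of $\widehat\beta$ holds because $\pi_1(C_m)$ is abelian, so conjugation by the loop $\widehat\beta*\beta^{-1}$ acts trivially.

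\emph{Main obstacle.} I expect Step 1 to be the only delicate point. It requires the bookkeeping that the twist $R_{\varphi/2}$ matches the half-speed parametrization $\theta=\varphi/2$ of the cable, so that the tube is exactly the fibrewise union of the rotated balls. It also requires checking smoothness across the seam $\varphi=2\pi$. For the seam, I would use local lifts $\varphi\in(\pi,3\pi)$, on which the formula is visibly smooth.
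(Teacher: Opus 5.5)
Your proposal is correct. Items (i), (iii) and (iv) run essentially as in the paper: the same fibrewise identity $R_{\varphi/2}(\pm\varepsilon e_1)=\pm\varepsilon u(\varphi/2)$, the same injectivity and rank check for $\mathcal F_{\rhoCab}$, and the same degree count under $p\colon C_m\to\mathbb S^1$ together with commutativity of $\pi_1$. For (iv) you restrict an explicitly inverted diffeomorphism from the mapping torus of $R_\pi$ on $D^{m+1}$ onto $V$, whereas the paper checks that $\Phi$ is a bijective local diffeomorphism; this difference is cosmetic.

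The genuine difference is in (ii). The paper proves (ii) before, and independently of, the mapping-torus picture. It deformation retracts $V\setminus\operatorname{im}c_2$ onto $C_m$ radially inside the tube, and uses relative transversality to push loops and null-homotopies off the circle $\operatorname{im}c_2$, which has codimension $m+1\ge3$. You instead prove (iv) first and read off (ii) from the bundle $P_m\to C_m\to\mathbb S^1$ with simply connected fibre, concluding via $p=p_V\circ j$.

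The paper's argument is more general. It shows that drilling out any circle of codimension at least three leaves $\pi_1$ unchanged, without using the braided position of $c_2$; this "no extra meridian" phenomenon is what the introduction stresses. Your argument needs no transversality theorem, and it is the same reasoning the paper later uses for $\pi_1(W_m)$ in Lemma~\ref{lem:un-boundary-components}. It also isolates the role of $m\ge2$ in the single fact $\pi_1(P_m)=0$: for $m=1$ the kernel of $p_*$ would be the free group $\pi_1(P_1)$, generated by the meridian loops around the two holes.
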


\begin{proof}
\emph{Part \textup{(i)}.}
For $(\theta,v),(\theta',v')\in
(\mathbb R/2\pi\mathbb Z)\times D^{m+1}_{\rhoCab}$, the equality
$\mathcal F_{\rhoCab}(\theta,v)
 =\mathcal F_{\rhoCab}(\theta',v')$ gives
\[
 e^{2\mathrm i\theta}=e^{2\mathrm i\theta'},
 \qquad
 \theta'-\theta\in\{0,\pi\}\pmod{2\pi}.
\]
If $\theta'=\theta$, then $v'=v$.  If
$\theta'=\theta+\pi$, then
\[
 u(\theta')=-u(\theta),
 \qquad
 v'-v=2\varepsilon u(\theta),
\]
and hence
\[
 2\varepsilon=|v'-v|\le |v'|+|v|\le2\rhoCab<2\varepsilon,
\]
a contradiction.  Thus $\mathcal F_{\rhoCab}$ is injective.  For
$a\in\mathbb R$ and $w\in\mathbb R^{m+1}$, its differential is
\[
 d\mathcal F_{\rhoCab,(\theta,v)}(a,w)
 =\bigl(2a\mathrm i e^{2\mathrm i\theta},
         \varepsilon a u'(\theta)+w\bigr),
\]
and
\[
 d\mathcal F_{\rhoCab,(\theta,v)}(a,w)=0
 \quad\Longrightarrow\quad a=0\quad\Longrightarrow\quad w=0.
\]
Consequently
\[
 \operatorname{rank}d\mathcal F_{\rhoCab,(\theta,v)}=m+2.
\]
Since the domain of $\mathcal F_{\rhoCab}$ is compact and $V$ is Hausdorff,
$\mathcal F_{\rhoCab}$ is a smooth embedding.  The estimate
\[
 |\varepsilon u(\theta)+v|\le\varepsilon+\rhoCab<1
\]
gives
\[
 \nu_{\rhoCab}(c_2)\subset\mathbb S^1\times\operatorname{int}D^{m+1}.
\]
It follows from \eqref{eq:explicit-exterior} that
\[
 \partial C_m
 =\bigl(\mathbb S^1\times\partial D^{m+1}\bigr)
  \sqcup
  \mathcal F_{\rhoCab}
  \bigl((\mathbb R/2\pi\mathbb Z)\times
                         \partial D^{m+1}_{\rhoCab}\bigr)
 =\partial_oC_m\sqcup\partial_iC_m,
\]
and
\[
 \partial_oC_m\cong\mathbb S^1\times\mathbb S^m,
 \qquad
 \partial_iC_m\cong\mathbb S^1\times\mathbb S^m.
\]

\emph{Part \textup{(ii)}.}
Define
\[
\mathcal D_\tau:V\setminus\operatorname{im}c_2\longrightarrow
                  V\setminus\operatorname{im}c_2,
 \qquad 0\le\tau\le1,
\]
by
\[
\mathcal D_\tau(y)=
 \begin{cases}
 \displaystyle
 \mathcal F_{\rhoCab}\left(\theta,
  \left((1-\tau)|v|+\tau\rhoCab\right)\frac{v}{|v|}\right),
 &y=\mathcal F_{\rhoCab}(\theta,v),\quad0<|v|\le\rhoCab,\\[3mm]
 y,&y\in C_m.
 \end{cases}
\]
On $|v|=\rhoCab$ the two formulas agree, so $\mathcal D_\tau$ is
continuous.  Hence $\mathcal D_\tau$ is a strong deformation retraction
of $V\setminus\operatorname{im}c_2$ onto $C_m$; explicitly,
\begin{align*}
 \mathcal D_0&=\operatorname{id}_{V\setminus\operatorname{im}c_2},\\
 \mathcal D_\tau|_{C_m}&=\operatorname{id}_{C_m},\\
 \mathcal D_1\bigl(V\setminus\operatorname{im}c_2\bigr)&=C_m.
\end{align*}

Let $[\gamma]\in\pi_1(V,x_O)$.  Since $x_O\notin\operatorname{im}c_2$,
relative transversality gives
$\gamma'\simeq\gamma$ relative to $x_O$ with
$\gamma'\pitchfork\operatorname{im}c_2$.  Therefore
\[
 \dim (\gamma')^{-1}\bigl(\operatorname{im}c_2\bigr)
 =1+1-(m+2)=-m<0,
\]
so
\[
 (\gamma')^{-1}\bigl(\operatorname{im}c_2\bigr)=\varnothing,
 \qquad
 \mathcal D_1\circ\gamma':\mathbb S^1\longrightarrow C_m.
\]
Thus $j_*$ is surjective.

If $[\gamma]\in\ker j_*$, there is a map
\[
 H:(D^2,\partial D^2)\longrightarrow(V,C_m),
 \qquad H|_{\partial D^2}=\gamma.
\]
The boundary of $H$ is contained in $C_m$ and is therefore disjoint from
$\operatorname{im}c_2$.  Relative transversality gives
$H'\simeq H$ relative to $\partial D^2$ with
$H'\pitchfork\operatorname{im}c_2$.  Hence
\[
 \dim (H')^{-1}\bigl(\operatorname{im}c_2\bigr)
 =2+1-(m+2)=1-m<0,
\]
and therefore
\[
 (H')^{-1}\bigl(\operatorname{im}c_2\bigr)=\varnothing.
\]
The map $\mathcal D_1\circ H':D^2\to C_m$ satisfies
\[
 (\mathcal D_1\circ H')|_{\partial D^2}=\gamma,
\]
so $[\gamma]=1$ in $\pi_1(C_m,x_O)$.  Thus $j_*$ is injective.
Since
\[
 V\simeq\mathbb S^1,
 \qquad
 \pi_1(V,x_O)\cong\mathbb Z,
\]
we obtain
\[
 j_*:\pi_1(C_m,x_O)\xrightarrow{\ \cong\ }\pi_1(V,x_O),
 \qquad
 \pi_1(C_m,x_O)\cong\mathbb Z.
\]

\emph{Part \textup{(iii)}.}
The equation $e^{2\mathrm i\theta}=1$ has the two solutions
$\theta=0,\pi$ modulo $2\pi$.  Consequently,
\[
 \nu_{\rhoCab}(c_2)\cap(\{1\}\times D^{m+1})
 =\{1\}\times(D^{m+1}_+\sqcup D^{m+1}_-).
\]
Write
\[
 b(a)=a\varepsilon e_1+(1-a+a\rhoCab)e_{m+1},
 \qquad \beta(a)=(1,b(a)).
\]
Then
\[
 |b(a)|\le(1-a)+a(\varepsilon+\rhoCab)\le1,
\]
and
\begin{align*}
 |b(a)-\varepsilon e_1|^2
 &=\varepsilon^2(1-a)^2+(1-a+a\rhoCab)^2
 \ge\rhoCab^2,\\
 |b(a)+\varepsilon e_1|
 &\ge(1+a)\varepsilon>\rhoCab.
\end{align*}
Therefore
\[
 \beta([0,1])\subset C_m,
 \qquad
 \beta(0)=x_O,
 \qquad
 \beta(1)=x_I.
\]

Let
\[
 p_V:V\longrightarrow\mathbb S^1,
 \qquad p_V(z,x)=z,
 \qquad p=p_V|_{C_m}.
\]
Since $p_*=(p_V)_*\circ j_*$ and
$(p_V)_*:\pi_1(V,x_O)\to\pi_1(\mathbb S^1,1)$ is an isomorphism,
\[
 p_*:\pi_1(C_m,x_O)\xrightarrow{\ \cong\ }\mathbb Z.
\]
The identities
\begin{align*}
 (p\circ\lambda_O)(\varphi)&=e^{\mathrm i\varphi},\\
 p\circ\beta&\equiv1,\\
 (p\circ\lambda_I)(\theta)&=e^{2\mathrm i\theta}
\end{align*}
give
\begin{align*}
 p_*[\lambda_O]&=1,\\
 p_*\bigl[\beta*\lambda_I*\beta^{-1}\bigr]&=2.
\end{align*}
Consequently, for $g=[\lambda_O]$,
\[
 \bigl[\beta*\lambda_I*\beta^{-1}\bigr]=g^2.
\]
If $\widehat\beta$ is another path from $x_O$ to $x_I$, then
\[
 \bigl[\widehat\beta*\lambda_I*\widehat\beta^{-1}\bigr]
 =
 [\widehat\beta*\beta^{-1}]\,
 \bigl[\beta*\lambda_I*\beta^{-1}\bigr]\,
 [\widehat\beta*\beta^{-1}]^{-1}.
\]
The group $\pi_1(C_m,x_O)\cong\mathbb Z$ is abelian, so the right-hand
side equals $g^2$.

\emph{Part \textup{(iv)}.}
For $0\le\varphi\le2\pi$, the two solutions of
$e^{2\mathrm i\theta}=e^{\mathrm i\varphi}$ are
\[
 \theta=\frac\varphi2,
 \qquad
 \theta=\frac\varphi2+\pi
 \pmod{2\pi}.
\]
Since
\[
 \varepsilon u(\varphi/2)=R_{\varphi/2}(\varepsilon e_1),
 \qquad
 \varepsilon u(\varphi/2+\pi)=R_{\varphi/2}(-\varepsilon e_1),
\]
we have
\begin{align}
 \nu_{\rhoCab}(c_2)\cap
 \bigl(\{e^{\mathrm i\varphi}\}\times D^{m+1}\bigr)
 &=\{e^{\mathrm i\varphi}\}\times
 R_{\varphi/2}\bigl(D^{m+1}_+\sqcup D^{m+1}_-\bigr),
                                                   \label{eq:double-cable-tube-fiber}\\
 C_m\cap\bigl(\{e^{\mathrm i\varphi}\}\times D^{m+1}\bigr)
 &=\{e^{\mathrm i\varphi}\}\times R_{\varphi/2}P_m.
                                                   \label{eq:double-cable-exterior-fiber}
\end{align}
Furthermore,
\begin{align*}
 R_\pi(D^{m+1}_+)&=D^{m+1}_-,
 &R_\pi(D^{m+1}_-)&=D^{m+1}_+,\\
 \phi(I_+)&=I_-,
 &\phi(I_-)&=I_+.
\end{align*}
The map
\[
 \widetilde\Phi:P_m\times\mathbb R\longrightarrow C_m,
 \qquad
 \widetilde\Phi(x,\varphi)
   =(e^{\mathrm i\varphi},R_{\varphi/2}x),
\]
satisfies
\[
 \widetilde\Phi(x,\varphi+2\pi)
 =(e^{\mathrm i\varphi},R_{\varphi/2}R_\pi x)
 =\widetilde\Phi(\phi(x),\varphi).
\]
It therefore induces
\[
 \Phi:T_\phi\longrightarrow C_m,
 \qquad
 \Phi([x,\varphi])
   =(e^{\mathrm i\varphi},R_{\varphi/2}x).
\]
For fixed $\varphi$, the differential in the $P_m$-directions is the
isomorphism induced by $R_{\varphi/2}$, whereas the differential in the
$\varphi$-direction has nonzero component tangent to the first circle
factor.  Thus $\widetilde\Phi$, and hence $\Phi$, is a local
diffeomorphism.  Equation~\eqref{eq:double-cable-exterior-fiber} shows
that for $0\le\varphi<2\pi$ every point
$(e^{\mathrm i\varphi},y)\in C_m$ has the unique preimage
\[
 [R_{-\varphi/2}y,\varphi]\in T_\phi.
\]
Therefore $\Phi$ is bijective.  A bijective local diffeomorphism is a
diffeomorphism.
\end{proof}

The preceding proposition isolates the square-root relation in a form
adapted to the subsequent geometric construction.  Indeed, the
diffeomorphism in part~(iv) identifies \(C_m\) with the mapping torus of
\(\phi\colon P_m\to P_m\), where \(\phi\) interchanges the two inner
boundary components \(I_+\) and \(I_-\).  Under this identification, a
loop traversing the mapping-torus direction once represents the
generator \(\gamma\), whereas the distinguished longitude of the resulting
inner boundary represents \(\gamma^2\).  Thus, when regarded as a cobordism
from its inner boundary to its outer boundary, this topological model
replaces the distinguished inner loop representing \(\gamma^2\) by the
distinguished outer loop representing \(\gamma\).  It remains to realize this
model by a Ricci-positive metric whose induced boundary metrics and
second fundamental forms satisfy the hypotheses for successive
Ricci-positive gluings.

\section{Relative boundary tools}
\label{sec:relative-tools}
The construction of Section~\ref{sec:double-cable} determines the topology of
the basic block and the relation between its distinguished boundary
loops, but
it does not yet provide the boundary geometry needed for Ricci-positive
gluing.  To attach the basic blocks successively, we must be able to
modify a metric inside prescribed collar neighborhoods while controlling
both the induced boundary metric and the corresponding second
fundamental form.  Two such operations will be used repeatedly below:
gluing Ricci-positive manifolds along isometric boundary components, and
transporting a boundary metric along a path of Ricci-positive metrics
with controlled convexity at the two ends.  Both constructions must also
respect the relevant symmetries.

We develop these tools in this section.  Their proofs are based on the
Ricci-curvature formulas for a one-parameter family of metrics on the
slices of a collar.  Thus, consider a metric of the form
\[
g=ds^2+h(s),
\]
and let \(\nu=\partial_s\).  With the convention fixed above, the second
fundamental form of an \(s\)-slice with respect to \(\nu\) is
\[
L:=\II_{\nu}=\frac12\dot h.
\]
We begin by recording the corresponding Ricci-curvature formulas.

\begin{lemma}\label{lem:gaussian-ricci-unified}
For vectors $X,Y$ tangent to an $s$-slice,
\begin{align}
 \Ric_g(\partial_s,\partial_s)
 &=-\frac12\tr_h\ddot h
   +\frac14\tr(h^{-1}\dot h\,h^{-1}\dot h),\label{eq:gaussian-normal}\\
 \Ric_g(\partial_s,X)
 &=(\divsymb_hL)(X)-\dd(\tr_hL)(X),\label{eq:gaussian-mixed}\\
 \Ric_g(X,Y)
 &=\Ric_h(X,Y)-\frac12\ddot h(X,Y)
   +\frac12(\dot h\,h^{-1}\dot h)(X,Y)\notag\\
 &\hspace{31mm}
   -\frac14\tr_h(\dot h)\dot h(X,Y).\label{eq:gaussian-tangential}
\end{align}
\end{lemma}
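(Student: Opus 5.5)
The plan is to prove the three formulas through the Gauss, Codazzi and Riccati equations for the foliation by $s$-slices. We first verify that $\partial_s$ is a unit geodesic field and that the slices are equidistant. Since $g(\partial_s,\partial_s)=1$ and $g(\partial_s,X)=0$ for $X$ tangent to the slices, the Koszul formula gives $\nabla_{\partial_s}\partial_s=0$. It also gives
\[
 g(\nabla_X\partial_s,Y)=\tfrac12\dot h(X,Y),
\]
using coordinate fields $X,Y$, which commute with $\partial_s$. This confirms $L=\tfrac12\dot h$ with the stated sign convention. Writing $S=h^{-1}L$ for the shape operator, we record $\tr_h L=\tfrac12\tr(h^{-1}\dot h)$ and $L\,h^{-1}L=\tfrac14\,\dot h\,h^{-1}\dot h$.

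For the normal component \eqref{eq:gaussian-normal}, we use the Riccati equation $\nabla_{\partial_s}S+S^2=-R_{\partial_s}$. Here $R_{\partial_s}(X)=R(X,\partial_s)\partial_s$, and $(\nabla_{\partial_s}S)X=\nabla_{\partial_s}(SX)-S(\nabla_{\partial_s}X)$. In coordinates, $S=\tfrac12h^{-1}\dot h$, and the covariant $s$-derivative of $S$ along the geodesics agrees with the ordinary derivative $\dot S$ when we use coordinate fields. Indeed, $\nabla_{\partial_s}X=\nabla_X\partial_s=SX$, so the correction terms cancel and $(\nabla_{\partial_s}S)=\dot S$. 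Taking traces gives
\[
 \Ric(\partial_s,\partial_s)=-\tr\dot S-\tr S^2 .
\]
Then $\dot S=-\tfrac12h^{-1}\dot h h^{-1}\dot h+\tfrac12h^{-1}\ddot h$. Substituting yields $-\tfrac12\tr_h\ddot h+\tfrac12\tr(h^{-1}\dot hh^{-1}\dot h)-\tfrac14\tr(h^{-1}\dot hh^{-1}\dot h)$, which is \eqref{eq:gaussian-normal}.

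For the mixed component \eqref{eq:gaussian-mixed}, we use the Codazzi equation on each slice,
\[
 g(R(X,Y)\partial_s,Z)=(\nabla^h_XL)(Y,Z)-(\nabla^h_YL)(X,Z).
\]
We trace over $X,Z$ with respect to $h$ and track the sign convention for $R$ so that the trace gives $\Ric(Y,\partial_s)$. This produces $\divsymb_hL(Y)-\dd(\tr_hL)(Y)$. The only care needed is consistency of the sign conventions. As a check, we can test the result on a warped product $ds^2+f(s)^2\,g_0$ with $f$ depending on one base coordinate, or simply confirm that the sign yields zero when $L$ is parallel.

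For the tangential component \eqref{eq:gaussian-tangential}, we combine the Gauss equation with the Riccati/Jacobi term. First,
\[
 \Ric_g(X,Y)=\sum_i R_g(e_i,X,Y,e_i)+R_g(\partial_s,X,Y,\partial_s).
\]
Gauss gives the slice sum as $\Ric_h(X,Y)-\tr_hL\,L(X,Y)+(Lh^{-1}L)(X,Y)$. The radial term is $-h(\dot SX+S^2X,Y)$ from Riccati, which equals $-\tfrac12\ddot h(X,Y)+\tfrac12(\dot hh^{-1}\dot h)(X,Y)-\tfrac14(\dot hh^{-1}\dot h)(X,Y)$. Adding these gives
\[
 \Ric_h-\tfrac12\ddot h+\tfrac12\dot hh^{-1}\dot h-\tfrac14\tr_h(\dot h)\,\dot h,
\]
since $+\tfrac14-\tfrac14+\tfrac12$ collapses the $\dot hh^{-1}\dot h$ coefficients to $\tfrac12$. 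The main obstacle is purely bookkeeping: keeping the sign conventions for $R$, for the outward normal $\nu=\partial_s$, and for the identification $\dot S$ versus $\nabla_{\partial_s}S$ consistent. We will cross-check the final formulas against the round cone $ds^2+s^2g_{\mathbb S^n}$, where all Ricci components must vanish.
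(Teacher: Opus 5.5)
Your proposal is correct and is essentially the paper's argument. The Riccati equation $\nabla_{\partial_s}S+S^2=-R_{\partial_s}$ for $S=h^{-1}L$, with your check that $\nabla_{\partial_s}S=\dot S$ on coordinate fields, is just the endomorphism form of the paper's identity $\Rm_g(\partial_s,X,Y,\partial_s)=-\dot L(X,Y)+(Lh^{-1}L)(X,Y)$. The mixed and tangential components then come from the same traced Codazzi and Gauss equations the paper uses.

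One small remark: checking that the mixed term vanishes when $L$ is parallel cannot detect a sign error. Fix that sign directly from $g(R(X,Y)\nu,Z)=(\nabla^h_XL)(Y,Z)-(\nabla^h_YL)(X,Z)$ together with $\Ric(Y,Z)=\sum_i g(R(e_i,Y)Z,e_i)$, or use your warped-product test instead.
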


\begin{proof}
Extend the tangent vectors to be independent of $s$ in product
coordinates.  Then the Christoffel symbols of $g$ satisfy
\[
 \Gamma^s_{ij}=-L_{ij},\qquad
 \Gamma^k_{si}=\Gamma^k_{is}=(h^{-1}L)^k{}_i,
 \qquad \Gamma^k_{ij}=\Gamma^k_{ij}(h).
\]
The Weingarten equation and the Gauss--Codazzi equation yield
\begin{align*}
 \Rm_g(\partial_s,X,Y,\partial_s)
   &=-\dot L(X,Y)+(Lh^{-1}L)(X,Y),\\
 \Ric_g(\partial_s,X)&=(\divsymb_hL)(X)-\dd(\tr_hL)(X),\\
 \Rm_g(X,Y,Z,W)&=\Rm_h(X,Y,Z,W)
 -L(X,W)L(Y,Z)+L(X,Z)L(Y,W).
\end{align*}
Taking the $h$-trace of the first identity gives
\eqref{eq:gaussian-normal}.  The second identity is
\eqref{eq:gaussian-mixed}; combining the first identity with the
$h$-trace of the Gauss equation gives
\[
 \Ric_g(X,Y)=\Ric_h(X,Y)-\dot L(X,Y)
 +2(Lh^{-1}L)(X,Y)-(\tr_hL)L(X,Y).
\]
Substituting $L=\tfrac12\dot h$ yields
\eqref{eq:gaussian-tangential}.
\end{proof}
We next reprove Perelman's Ricci-positive gluing theorem
\cite[Section~4]{Perelman1997} in a form adapted to our later
applications, keeping track of the support of the metric modification
and the equivariance of the construction.  A more general form of this
gluing theorem is proved in \cite{ReiserWraithGluing}.
\begin{lemma}
\label{lem:relative-gluing-unified}
Let $(M_i,g_i)$, $i=1,2$, be compact Riemannian manifolds with boundary
and $\Ric_{g_i}>0$.  For each $i$, let $N_i$ be a union of connected
components of $\partial M_i$, and let
\[
 \Psi:(N_1,g_1|_{TN_1})\longrightarrow(N_2,g_2|_{TN_2})
\]
be an isometry.  Let $\II_i$ denote the second fundamental form of $N_i$
with respect to the outward unit normal of $M_i$.  Suppose that
$\II_1+\Psi^*\II_2$ is positive definite on $TN_1$.

Then for every choice of collar neighborhoods $U_i\subset M_i$ of $N_i$,
the smooth gluing manifold $M_1\cup_\Psi M_2$ admits a Riemannian metric
$\widetilde g$ such that
\[
 \Ric_{\widetilde g}>0,
 \qquad \widetilde g|_{M_i\setminus U_i}=g_i|_{M_i\setminus U_i}
 \quad(i=1,2).
\]
Suppose, in addition, that a finite group $\Gamma$ acts isometrically on
each $(M_i,g_i)$, preserves $N_i$, and satisfies
$\Psi(\gamma x)=\gamma\Psi(x)$ for all $\gamma\in\Gamma$ and
$x\in N_1$.  Then $\widetilde g$ can be chosen to be $\Gamma$-invariant.  The
same conclusions hold for finitely many pairwise disjoint boundary
identifications, with the modifications contained in prescribed
pairwise disjoint collar neighborhoods.
Moreover, for every \(\varepsilon_{\mathrm{vol}}>0\), the collar
neighborhoods and the smoothing parameters can be chosen so that
\begin{equation}\label{eq:volume pinching}
 \left|
 \operatorname{Vol}_{\widetilde g}(M_1\cup_\Psi M_2)
 -\operatorname{Vol}_{g_1}(M_1)
 -\operatorname{Vol}_{g_2}(M_2)
 \right|<\varepsilon_{\mathrm{vol}}.
\end{equation}
For finitely many pairwise disjoint identifications, the sum of the
absolute volume changes can be required to be less than
\(\varepsilon_{\mathrm{vol}}\).
\end{lemma}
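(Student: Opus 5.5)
We follow Perelman's two-step scheme: bend the collars so the boundaries become nearly totally geodesic with matching second fundamental forms, then glue the two smooth collars across the common hypersurface. We work in Gaussian (normal exponential) coordinates. Fix $\delta>0$ so small that the normal exponential maps give collars $[0,\delta)\times N_i\subset U_i$ on which $g_i=ds^2+h_i(s)$. Here $s$ is the distance to $N_i$ and $h_i(0)=g_i|_{TN_i}$. By Lemma~\ref{lem:gaussian-ricci-unified}, $\Ric_{g_i}$ depends only on $h_i,\dot h_i,\ddot h_i$. Transport $h_2$ to $N_1$ by $\Psi$ and write $h:=h_1(0)=\Psi^*h_2(0)$. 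The hypothesis $\II_1+\Psi^*\II_2>0$ means $L_1(0)+L_2(0)>0$, where $L_i=-\tfrac12\dot h_i(0)$ is computed with respect to the outward normal $-\partial_s$.

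\textbf{Step 1 (pushing convexity).} We construct a metric on $\mathbb R_t\times N_1$ interpolating the two collars. Place $M_1$'s collar at $t\le0$ via $t=-s$ and $M_2$'s at $t\ge0$ via $t=s$. The naive concatenated $g=dt^2+k(t)$ is $C^0$, with a corner in $\dot k$ at $t=0$. Its jump is $\dot k(0^+)-\dot k(0^-)=\dot h_2(0)-(-\dot h_1(0))=-2(L_1+L_2)<0$ as a form. So $\ddot k$ is a negative-definite delta measure, and by \eqref{eq:gaussian-normal}–\eqref{eq:gaussian-tangential} the terms $-\tfrac12\ddot k$ contribute positive-definite singular mass to both $\Ric(\partial_t,\partial_t)$ and $\Ric(X,X)$. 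We therefore mollify: set $k_\eta:=k\ast\varphi_\eta$ in $t$ on a window $|t|<\eta$, with $\varphi_\eta$ an even mollifier, which keeps $\Gamma$-invariance automatic because $\Gamma$ acts only on $N_1$ and commutes with $t$-convolution. On $|t|<\eta$ the term $-\tfrac12\ddot k_\eta$ has size $\gtrsim (L_1+L_2)/\eta$ and is positive definite. All other terms — $\Ric_{k_\eta}$, the quadratic terms in $\dot k_\eta$, and the mixed term \eqref{eq:gaussian-mixed}, which involves only first $t$-derivatives and tangential derivatives — are $O(1)$ uniformly in $\eta$. Since $\ddot k_\eta$ is bounded by the mixed-term size times $1/\eta$, a Cauchy–Schwarz estimate on the $2\times2$ block handles the off-diagonal component: $O(1)^2\ll (c/\eta)\cdot\Ric(X,X)$. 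Outside $|t|<\eta$, but within distance $\eta$ of the window, $k_\eta-k$ is $C^2$-small only if $k$ is $C^2$ on each side. It is, so $\Ric$ there remains close to $\Ric_{g_i}>0$ by compactness.

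\textbf{Step 2 (the obstacle and its fix).} Mollification as stated actually needs more care near $|t|=\eta$. There the cutoff between $k$ and $k\ast\varphi_\eta$ produces $C^2$ errors of order $\sup|\dot k|$ jump$/\eta$ with no definite sign. This is the main difficulty. We follow Perelman: before gluing, we first deform each $h_i$ near $s=0$ by a conformal-type bending $h_i(s)\mapsto e^{2\psi(s)}h_i(s)$ with a convex profile $\psi$ supported in $s<\delta$. This makes each $L_i$ close to $\tfrac12(L_1+L_2)$ times a positive multiple of $h$, i.e.\ nearly umbilic with equal positive part, while keeping $\Ric>0$; positivity is fed by $-\ddot\psi$ terms of the right sign. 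The alternative, if that proves messy, is to cite \cite{ReiserWraithGluing} for the local smoothing. We then glue by a smoothing of the $t$-profile that is a monotone convex-in-the-right-direction interpolation of $\dot k$ (e.g.\ $\dot k_\eta=\chi_\eta\dot k^++(1-\chi_\eta)\dot k^-$ with $\chi_\eta$ a monotone step), integrated once. With this choice $\ddot k_\eta=\chi_\eta'(\dot k^+-\dot k^-)+O(1)$, which is negative definite up to $O(1)$ with no cutoff error, so Step 1's estimates apply verbatim. Here $k$ is corrected by an $O(\eta)$ shift, absorbed into the choice of $\delta$.

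\textbf{Step 3 (support, equivariance, volume, multiple components).} All modifications occur in $\{|t|<\delta\}\subset U_1\cup U_2$, so $\widetilde g=g_i$ off $U_i$. Every ingredient — the Gaussian collars, $\psi$, and $\chi_\eta$ — depends only on $t$ and on $\Gamma$-invariant data, and $\Psi$ is $\Gamma$-equivariant, so $\widetilde g$ is $\Gamma$-invariant. The volume change is bounded by $C\,\delta\cdot\operatorname{Vol}(N_1)$, since the metrics stay uniformly comparable to $g_i$ on the collar. Hence shrinking $\delta$ gives \eqref{eq:volume pinching}. For finitely many disjoint identifications we run the construction independently in disjoint collars and split $\varepsilon_{\mathrm{vol}}$ among them.
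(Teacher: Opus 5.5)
The gap is the positivity of $\Ric$ across the transition window, which is the heart of the lemma. In Step~1 you assert $-\tfrac12\ddot k_\eta\gtrsim (L_1+L_2)/\eta$ on all of $|t|<\eta$. In Step~2 you say that ``Step~1's estimates apply verbatim.'' Neither holds. Write $\chi_\eta$ for your monotone step (for the mollifier, $\chi_\eta(t)=\int_{-\infty}^t\varphi_\eta$). Then $-\tfrac12\ddot k_\eta=(L_1+L_2)\chi_\eta'+O(1)$, and $\chi_\eta'$ must tend to $0$ at the ends of the window for the metric to be smooth. At those points, however, $\dot k_\eta=\dot k^-+\chi_\eta(\dot k^+-\dot k^-)+O(\eta)$ is an intermediate value. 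So the quadratic first-order terms in \eqref{eq:gaussian-normal}--\eqref{eq:gaussian-tangential} and the mixed term \eqref{eq:gaussian-mixed} belong to neither collar: they differ from the nearer one-sided values by $O(\min\{\chi_\eta,1-\chi_\eta\})$, with no sign, while the positive term $(L_1+L_2)\chi_\eta'$ is small. The missing argument is exactly this bookkeeping. Where $\chi_\eta$ lies within a small $c$ of $0$ or $1$, you must fall back on $\Ric_{g_i}\ge c_0>0$. Elsewhere you need $\chi_\eta'\ge c'/\eta$, which requires $\chi'>0$ (resp.\ $\varphi>0$) on the interior of the window.

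Two further steps are wrong as written. First, after the window your integrated profile is $k_\eta=k^++c_\eta$, with $c_\eta$ a nonzero $t$-independent tensor on $N$. Unless you remove it by an explicit cutoff (whose Ricci cost $O(\eta/\delta^2)$ forces $\eta\ll\delta^2$), the metric differs from $g_2$ on all of $M_2$, contradicting $\widetilde g=g_2$ off $U_2$. Second, the conformal bending cannot do what you claim. Replacing $h_i(s)$ by $e^{2\psi(s)}h_i(s)$ with $\psi(0)=0$ only changes $\dot h_i(0)$ by $2\dot\psi(0)h$, so it cannot make $L_i$ nearly umbilic. It is also unnecessary, and citing \cite{ReiserWraithGluing} as a fallback does not prove the statement.

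Your diagnosis of the ``main difficulty'' is also off. Where the convolution sees only one side, evenness of $\varphi$ gives $k*\varphi_\eta-k=O(\eta^2)$ in $C^2$, so cutting back to $k$ over a scale $\sqrt\eta$ costs only $O(\eta)$; this is how the paper handles the cutoff. The paper avoids the edge degeneracy by separating the two tasks. It first replaces both collars on $[-\varepsilon,\varepsilon]$ by the cubic Hermite interpolant matching $h_\pm$ and $\dot h_\pm$ at $\pm\varepsilon$. Its second derivative is $-(\dot h_-(0)-\dot h_+(0))/(2\varepsilon)+O(1)$ uniformly \emph{up to the endpoints}, so the Schur-complement argument works on the whole interval and no shift appears. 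The result is $C^1$, with jumps only in the second derivative at $\pm\varepsilon$, where both one-sided Ricci tensors are already positive. There mollification is harmless, because the Ricci tensor is affine in $\ddot k$ while $k_\eta$ and $\dot k_\eta$ converge uniformly. Your direct route can be completed with the edge argument above, but as proposed the key inequality is unproved near $|t|=\eta$.
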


\begin{proof}
We first introduce one identification to simplify the argument.  Use $\Psi$ to identify
$N_1$ and $N_2$ as one closed manifold $N$, and choose a signed Fermi
coordinate $s$ that increases from $M_1$ to $M_2$.  After pulling back
the tensors on $N_2$ by $\Psi$, the metrics have the form
\[
 g=\dd s^2+h_-(s)\quad(s\le0),\qquad
 g=\dd s^2+h_+(s)\quad(s\ge0),\qquad h_-(0)=h_+(0).
\]
Because the outward normals are $+\partial_s$ and $-\partial_s$,
\begin{equation}
 \dot h_-(0)-\dot h_+(0)
   =2(\II_1+\Psi^*\II_2)>0.                    \label{eq:normal-derivative-gap}
\end{equation}

For sufficiently small $\varepsilon>0$, let $h_\varepsilon(s)$ be the
unique cubic polynomial in $s$, with coefficients in
$\Gamma(\Sym^2T^*N)$, satisfying
\[
 \begin{aligned}
 h_\varepsilon(-\varepsilon)&=h_-(-\varepsilon),&
 \dot h_\varepsilon(-\varepsilon)&=\dot h_-(-\varepsilon),\\
 h_\varepsilon(\varepsilon)&=h_+(\varepsilon),&
 \dot h_\varepsilon(\varepsilon)&=\dot h_+(\varepsilon).
 \end{aligned}
\]
Taylor expansion at $s=0$ gives, uniformly for
$|s|\le\varepsilon$,
\begin{equation}
 \begin{aligned}
 h_\varepsilon(s)&=h_-(0)+O(\varepsilon),\\
 \dot h_\varepsilon(s)&=O(1),\\
 \ddot h_\varepsilon(s)
   &=-\frac{\dot h_-(0)-\dot h_+(0)}{2\varepsilon}+O(1).
 \end{aligned}                                      \label{eq:hermite-estimates}
\end{equation}
 Here the three estimates are taken in the $C^2(N)$, $C^1(N)$, and $C^0(N)$ senses,
respectively.  In particular, $h_\varepsilon(s)$ is positive definite
when $\varepsilon$ is small.

Replace the two original metrics on $|s|\le\varepsilon$ by
$\dd s^2+h_\varepsilon(s)$, and retain the original metrics for
$|s|\ge\varepsilon$.  The resulting metric $g_\varepsilon$ is $C^1$
and is smooth away from $s=\pm\varepsilon$.  Equations
\eqref{eq:gaussian-normal}--\eqref{eq:gaussian-tangential} and
\eqref{eq:hermite-estimates} give, on $|s|<\varepsilon$,
\begin{align}
 \Ric_{g_\varepsilon}(\partial_s,\partial_s)
  &=\frac{1}{4\varepsilon}
    \tr_{h_\varepsilon}\!\bigl(\dot h_-(0)-\dot h_+(0)\bigr)+O(1),
    \notag\\
 \left.\Ric_{g_\varepsilon}\right|_{TN\times TN}
  &=\frac{\dot h_-(0)-\dot h_+(0)}{4\varepsilon}+O(1),
    \label{eq:interpolated-ricci}\\
 \Ric_{g_\varepsilon}(\partial_s,\cdot)&=O(1).\notag
\end{align}
All three estimates in \eqref{eq:interpolated-ricci} are uniform on $[-\varepsilon,\varepsilon]\times N$.
Set $Q_\varepsilon:=\left.\Ric_{g_\varepsilon}\right|_{TN\times TN}$ and
$\beta_\varepsilon:=\Ric_{g_\varepsilon}(\partial_s,\mathord\cdot)|_{TN}$.
By \eqref{eq:normal-derivative-gap}, compactness of $N$, and the uniform
equivalence of $h_\varepsilon$ and $h_-(0)$, there are constants $c,C>0$,
independent of sufficiently small $\varepsilon$, such that
\[
 \Ric_{g_\varepsilon}(\partial_s,\partial_s)
    \ge \frac{c}{\varepsilon}-C,
 \qquad
 Q_\varepsilon\ge\left(\frac{c}{\varepsilon}-C\right)h_\varepsilon,
 \qquad
 |\beta_\varepsilon|_{h_\varepsilon}\le C.
\]
After decreasing $\varepsilon$, the first two lower bounds are at least
$c/(2\varepsilon)$.  In particular, $Q_\varepsilon$ is positive definite
and, as a quadratic form on $T^*N$,
\[
 Q_\varepsilon^{-1}\le \frac{2\varepsilon}{c}\,h_\varepsilon^{-1}.
\]
Consequently, the Schur complement of $Q_\varepsilon$ in the decomposition
$\mathbb R\partial_s\oplus TN$ satisfies
\[
 \Ric_{g_\varepsilon}(\partial_s,\partial_s)
 -Q_\varepsilon^{-1}(\beta_\varepsilon,\beta_\varepsilon)
 \ge \frac{c}{2\varepsilon}-\frac{2C^2\varepsilon}{c}>0
\]
for all sufficiently small $\varepsilon$.  Since $Q_\varepsilon>0$, the
block-matrix criterion for positive definiteness now shows that
$\Ric_{g_\varepsilon}>0$ wherever $g_\varepsilon$ is smooth.

It remains to smooth $g_\varepsilon$ at the two hypersurfaces
$s=\pm\varepsilon$.  After translating either one to $t=0$, we write
$g_\varepsilon=\dd t^2+k(t)$, where $k$ is $C^1$ and is smooth on each
of the half-intervals $t\le0$ and $t\ge0$.  The two one-sided Ricci
tensors extend continuously to positive-definite tensors at $t=0$;
after restricting the interval, they therefore have a common positive
lower bound.

Choose a nonnegative even function
$\varphi\in C_c^\infty((-1,1))$ with $\int\varphi\dd t=1$, set
$\varphi_\eta(t)=\eta^{-1}\varphi(t/\eta)$, and define
\[
 k_\eta(t)=\int_{\mathbb R}\varphi_\eta(r)k(t-r)\dd r.
\]
Since $k$ is $C^1$, its distributional second derivative $D^2 k$ has the expression
\[
D^2 k=\ddot{k}_{\mathrm{pw}},
\]
where
\[
\ddot{k}_{\mathrm{pw}}(t)= \begin{cases}\ddot{k}_{-}(t), & t<0, \\ \ddot{k}_{+}(t), & t>0 .\end{cases}
\]
Also,
\begin{equation}
 \ddot k_\eta(t)
   =\int_{\mathbb R}\varphi_\eta(r)\ddot{k}_{\mathrm{pw}}(t-r)\dd r.
\label{eq:mollified-second-derivative}
\end{equation}
The formulas in Lemma~\ref{lem:gaussian-ricci-unified} are affine in
$\ddot k$, $k_\eta$ and $\dot k_\eta$ converge
uniformly to $k$ and $\dot k$ as $\eta\to0$.  Consequently,
\begin{equation}
 \Ric_{\dd t^2+k_\eta(t)}
 =\int_{\mathbb R}\varphi_\eta(r)
      \Ric_{\dd t^2+k(t-r)}\dd r+o_\eta(1),
                                                        \label{eq:mollified-ricci}
\end{equation}
uniformly near $t=0$.  The integral on the right is positive definite
with a uniform lower bound, so $\dd t^2+k_\eta(t)$ has positive Ricci
curvature near $t=0$ when $\eta$ is sufficiently small.

To make the metric unchanged outside a smaller collar, choose
$\chi_\eta\in C_c^\infty((-\sqrt\eta,\sqrt\eta))$ with
$0\le\chi_\eta\le1$, $\chi_\eta=1$ on $[-2\eta,2\eta]$, and
\[
 |\dot\chi_\eta|\le C\eta^{-1/2},
 \qquad |\ddot\chi_\eta|\le C\eta^{-1}.
\]
Set
\[
 \widetilde k_\eta
   =k+\chi_\eta(k_\eta-k).
\]
On $2\eta\le|t|\le\sqrt\eta$, the convolution defining $k_\eta$ uses
only one smooth half-interval.  The evenness of $\varphi$ and Taylor's
expansion imply
\[
 \|\partial_t^j(k_\eta-k)\|_{C^{2-j}(N)}=O(\eta^2),
 \qquad j=0,1,2.
\]
It follows that $\widetilde k_\eta-k=O(\eta)$ in $C^2$ on this region.
Since positive Ricci curvature is open in the $C^2$ topology,
$\dd t^2+\widetilde k_\eta(t)$ has positive Ricci curvature there for
small $\eta$.  It agrees with $\dd t^2+k_\eta(t)$ for $|t|\le2\eta$
and with $\dd t^2+k(t)$ for $|t|\ge\sqrt\eta$, so it gives the required
smooth local replacement.

Choose the initial Fermi collars inside $U_1$ and $U_2$, and perform the
preceding smoothing argument at both hypersurfaces $s=\pm\varepsilon$.  This
produces a smooth metric $\widetilde g$ satisfying
$\widetilde g=g_i$ on $M_i\setminus U_i$.

Under the hypotheses involving $\Gamma$, the normal exponential maps
give $\Gamma$-equivariant Fermi collars.  Hermite interpolation,
convolution in the normal coordinate, and multiplication by the scalar
function $\chi_\eta$ commute with the $\Gamma$-action.  Thus
$\widetilde g$ is $\Gamma$-invariant.  For finitely many pairwise
disjoint identifications, carry out the construction independently in
the prescribed pairwise disjoint collars.

It remains to verify \eqref{eq:volume pinching}. The first estimate in
\eqref{eq:hermite-estimates} shows that the interpolated tangential
metrics converge uniformly to the common boundary metric while their
supports shrink to \(N\).  At the two auxiliary hypersurfaces, the
mollified tensors and the final cutoff tensors also converge uniformly
to the piecewise smooth tensor.  Hence the associated volume densities
are uniformly bounded and converge uniformly on their supports, while
the widths of all modified collars tend to zero.  The absolute change
of volume therefore tends to zero.  Choose the collar widths and the
smoothing parameters so that this change is less than
\(\varepsilon_{\mathrm{vol}}\).  In the case of finitely many disjoint
identifications, assign a positive volume bound to each identification
whose sum is less than \(\varepsilon_{\mathrm{vol}}\).
\end{proof}

We next prove an ``isotopy implies concordance'' theorem for metrics of
positive Ricci curvature.  It realizes a path of Ricci-positive metrics
as a Ricci-positive cylinder whose induced boundary metrics agree with
the endpoints up to scaling and whose outward second fundamental forms
satisfy the required one-sided bounds.  The argument follows
\cite[Theorem~A.1]{Burdick} and \cite[Assertion]{Perelman1997}.

\begin{theorem}
\label{thm:path-cylinder-unified}
Let $N^n$ be a closed manifold, and let $(g_s)_{s\in[0,1]}$ be a smooth
path of Riemannian metrics satisfying $\Ric_{g_s}>0$.  Suppose that the
path is constant on a neighborhood of each endpoint.  For every
$\delta_{\mathrm{cyl}}>0$ there are $\Lambda_{\mathrm{cyl}}>0$ and a
smooth Riemannian metric $\widehat g$ on
$[0,1]\times N$ with $\Ric_{\widehat g}>0$ such that:
\begin{enumerate}[label=\textup{(\roman*)}]
\item the metric induced by $\widehat g$ on $\{0\}\times N$ is $g_0$,
and the second fundamental form with respect to the outward unit normal
is greater than $-\delta_{\mathrm{cyl}}g_0$;
\item the metric induced by $\widehat g$ on $\{1\}\times N$ is
$\Lambda_{\mathrm{cyl}}^2g_1$, and the second fundamental form with respect to the outward
unit normal is positive definite.
\end{enumerate}
Moreover, the construction is equivariant under every compact group of
diffeomorphisms of $N$ that preserves each metric $g_s$.
\end{theorem}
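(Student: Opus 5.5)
The plan is to follow Perelman's and Burdick's ``slow path times warping'' construction.

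\emph{Ansatz.} Reparametrize the path slowly and add a conformal warping factor. Fix a smooth nondecreasing function $\tau:[0,L]\to[0,1]$ with $\tau=0$ near $0$ and $\tau=1$ near $L$. Consider metrics on $[0,L]\times N$ of the form
\[
 \widehat g=\dd t^2+\phi(t)^2 g_{\tau(t)},
\]
and rescale $t$ to $[0,1]$ at the end. Lemma~\ref{lem:gaussian-ricci-unified} applies with $h(t)=\phi^2 g_{\tau(t)}$. The tangential Ricci term has three parts:
\begin{itemize}
 \item the intrinsic part $\Ric_{g_\tau}$, which is bounded below by $c\,g_\tau$ uniformly on the compact family;
 \item the warping parts $-\phi\ddot\phi\, g-(n-1)\dot\phi^2 g$;
 \item error terms involving $\tau'$, $\tau''$, of size $O(|\tau'|\phi|\dot\phi|+|\tau'|^2\phi^2+|\tau''|\phi^2)$.
\end{itemize}
The normal component is $-n\ddot\phi/\phi$ plus similar errors, and the mixed term $\Ric(\partial_t,X)$ is $O(|\tau'|)$ times derivatives of $g_s$ in $N$.

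\emph{Choice of functions.} Take $\phi$ concave and increasing with $\ddot\phi<0$, and $\phi$ close to a linear function $\phi(t)=1+\delta t$ with a small strictly concave correction. The $\phi$-part then contributes $\Ric(\partial_t,\partial_t)\ge n|\ddot\phi|/\phi>0$. The tangential part is $\ge c-(n-1)\delta^2+\phi|\ddot\phi|$ after dividing by $\phi^2$. Choose $\delta$ small compared with $\sqrt c$, and $\tau$ with $|\tau'|,|\tau''|\le\eta$ where $\eta\ll|\ddot\phi|$; this requires $L\sim 1/\eta$ large. All the $\tau$-error terms are then dominated, and the mixed term is handled by the same Schur complement argument as in Lemma~\ref{lem:relative-gluing-unified}: $\Ric(\partial_t,\partial_t)\cdot Q\gtrsim |\ddot\phi|\,c\gg \eta^2$.

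The hardest step is the normal direction. I first try $\ddot\phi=-\kappa$ constant, with $\kappa$ small but $\gg\eta$. I must also keep $\phi$ increasing on $[0,L]$, so $\dot\phi(0)=\delta$ has to exceed $\kappa L\sim\kappa/\eta$. This forces $\kappa\le\delta\eta$, which conflicts with $\kappa\gg\eta$. My fix is to bound the $\tau$-errors by $\eta\,\delta\,\phi$-weighted quantities instead. The point is to exploit the fact that the normal Ricci error from $\tau$ is $\tfrac14|\partial_s g|^2\tau'^2-\tfrac12\tau''\tr(\partial_s g)$, of order $\eta^2+|\tau''|$. Choosing $\tau''$ of order $\eta^2$, which is possible over length $1/\eta$, makes the requirement only $\kappa\gg\eta^2$. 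Together with $\kappa\le\delta\eta$ and $\delta$ fixed, this is consistent. If the constants still clash, my fallback is Perelman's trick: let $\phi$ grow like $t^{\alpha}$ with $\alpha<1$, letting the path be traversed in logarithmic time.

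\emph{Boundary conditions.}
\begin{itemize}
 \item At $t=L$ the outward normal is $+\partial_t$, so $\II=\phi\dot\phi\, g_1>0$. The induced metric is $\phi(L)^2g_1$, which gives $\Lambda_{\mathrm{cyl}}=\phi(L)$; rescaling the $t$-variable to $[0,1]$ does not change the induced metrics.
 \item At $t=0$ we have $\phi(0)=1$, so the induced metric is $g_0$. The outward normal is $-\partial_t$, so $\II=-\dot\phi(0)\,g_0=-\delta g_0$. Taking $\delta<\delta_{\mathrm{cyl}}$ gives (i).
\end{itemize}
Constancy of the path near the endpoints makes everything smooth there.

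\emph{Equivariance.} Equivariance is automatic: $\widehat g$ is built from $g_{\tau(t)}$ and scalar functions of $t$ only. Hence any compact group preserving every $g_s$ acts isometrically on $([0,1]\times N,\widehat g)$ through the $N$ factor.
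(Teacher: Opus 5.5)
Your primary construction does not close, and the repair you propose rests on an incomplete curvature formula. With $h=\phi^2g_{\tau(t)}$, Lemma~\ref{lem:gaussian-ricci-unified} gives
\[
 \Ric(\partial_t,\partial_t)
 =-n\frac{\ddot\phi}{\phi}
 -\frac{\dot\phi}{\phi}\,\tau'\tr_{g_\tau}(\partial_sg)
 -\frac12\tau''\tr_{g_\tau}(\partial_sg)
 -\frac12\tau'^2\tr_{g_\tau}(\partial_s^2g)
 +\frac14\tau'^2|\partial_sg|_{g_\tau}^2 .
\]
Besides the terms you list, there is an error that is \emph{first} order in $\tau'$. For $g_s=e^{2s}g$ the whole expression equals $-n\tilde\phi''/\tilde\phi$ with $\tilde\phi=\phi e^{\tau}$, which visibly contains $-2n\tau'\dot\phi/\phi$.

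More importantly, the gain is $n\kappa/\phi$, not $n\kappa$. With $\delta$ fixed and $L\sim1/\eta$, $\phi$ grows to size $\sim\delta L$. Since $|\tau'|\le\eta$, essentially the whole path is traversed where $\phi\approx\delta t$. That region is the cone $\dd t^2+\delta^2t^2g_\tau$, and the path moves over a range of bounded length in $\log t$. There the equations are scale invariant. The gain $n\kappa/\phi$ (with your $\kappa\le\delta\eta$) is at most $\sim 2n/(Lt)$. The cross term is $\approx\tau'|\tr\partial_sg|/t$ with $L\tau'$ of average one. The $\tau''$ and $\tau'^2$ terms are of the same order near $t\sim L$, and the Schur comparison for the mixed term is balanced in the same way.

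Enlarging $L$ therefore produces no small parameter. Positivity would reduce to inequalities like $2n$ versus $|\tr_{g_s}\partial_sg_s|$, which fail for a general path. Keeping $\phi$ bounded ($\delta L\lesssim1$) is worse, since then $n\kappa\le n\delta/L$ cannot beat $|\tau''|\gtrsim L^{-2}$. Your condition ``$\kappa\gg\eta^2$'' drops exactly the factor $1/\phi\sim\eta/\delta$.

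The missing idea is the one you mention only as a fallback: the path must move slowly in $\log t$, not in $t$. The paper works on $[e^\ell,e^{2\ell}]$ with $\lambda(t)=2(1-\ell/\log t)$, so $t\lambda'\le2/\ell$ and $t^2|\lambda''|\lesssim1/\ell$. It takes $f=ate^{-\sigma\lambda(t)}$, so that $-nf''/f=n\sigma(\lambda'/t)(1-2/\log t)+O(\lambda'^2)$ is tied to the path speed. Because $\lambda''\approx-\lambda'/t$, the cross term and the $\lambda''$ term combine to $-\frac12(\lambda'/t)(1-2/\log t)\tr(\partial_sg)+O(\lambda'^2)$. Choosing $n\sigma-\frac12\sup|\tr_{g_s}\partial_sg_s|\ge1$ then gives $\Ric(\partial_t,\partial_t)\ge1/(8t^2\ell)$. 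The $\lambda'^2$ errors and the squared mixed term are smaller by a factor $1/\ell$, which is what makes the Schur complement positive.

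A small $a$ with $\mu/a^2>2n$ gives the tangential bound. After rescaling by $(ae^\ell)^{-2}$, the inner boundary has $\II=-a(1-2\sigma/\ell)g_0$, so $a<\delta_{\mathrm{cyl}}$ yields (i).

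Your $t^\alpha$ fallback, with a fixed $\alpha<1$ and the same logarithmic reparametrization, would also work if carried out. The gain $n\alpha(1-\alpha)/t^2$ beats path errors $O(1/(\ell t^2))$, the tangential term $\mu t^{-2\alpha}$ dominates $t^{-2}$, and the inner $\II=-\alpha t_0^{\alpha-1}g_0$ tends to $0$ after rescaling. As written, however, it is a one-line remark, so the proposal does not yet prove the theorem. Your boundary computations and the equivariance argument are fine.
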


\begin{proof}
Compactness of $[0,1]\times N$ gives constants $\mu,C>0$ such that,
after increasing $C$ when necessary,
\begin{equation}
 \Ric_{g_s}\ge\mu g_s,
 \qquad
 |\partial_sg_s|_{g_s}
 +|\nabla^{g_s}(\partial_sg_s)|_{g_s}
 +|\partial_s^2g_s|_{g_s}\le C
 \quad (0\le s\le1).
 \label{eq:path-uniform-bounds}
\end{equation}
Choose $a>0$ and $\sigma>0$ so that
\begin{equation}
 a<\delta_{\mathrm{cyl}},
 \qquad \frac{\mu}{a^2}>2n,
 \qquad
 n\sigma-\frac12\sup_{[0,1]\times N}
   |\tr_{g_s}(\partial_sg_s)|\ge1.
 \label{eq:path-parameter-choice}
\end{equation}

For a sufficiently large number $\ell$, consider
$[e^\ell,e^{2\ell}]\times N$ and define
\begin{equation}
 \lambda(t)=2\left(1-\frac{\ell}{\log t}\right),
 \qquad
 f(t)=at\,e^{-\sigma\lambda(t)},
 \qquad
 \overline g=\dd t^2+f(t)^2g_{\lambda(t)}.
 \label{eq:path-cylinder-metric}
\end{equation}
Then $\lambda(e^\ell)=0$, $\lambda(e^{2\ell})=1$, and
\begin{equation}
 \lambda'(t)=\frac{2\ell}{t(\log t)^2},
 \qquad
 \lambda''(t)=-\frac{\lambda'(t)}t
                   \left(1+\frac2{\log t}\right),
 \qquad
 \frac{f'(t)}{f(t)}=\frac1t-\sigma\lambda'(t).
 \label{eq:path-cylinder-derivatives}
\end{equation}
The shape endomorphism of a $t$-slice with respect to $\partial_t$ is
therefore
\begin{equation}
 \frac12\bigl(f^2g_{\lambda(t)}\bigr)^{-1}
       \partial_t\bigl(f^2g_{\lambda(t)}\bigr)
 =\left(\frac1t-\sigma\lambda'(t)\right)\Id
  +\frac{\lambda'(t)}2g_{\lambda(t)}^{-1}
       \left.\partial_sg_s\right|_{s=\lambda(t)}.
 \label{eq:path-slice-shape}
\end{equation}

Substitute \eqref{eq:path-cylinder-derivatives} and
\eqref{eq:path-slice-shape} directly into the three formulae of
Lemma~\ref{lem:gaussian-ricci-unified}.  The normal component is
\begin{align}
 \Ric_{\overline g}(\partial_t,\partial_t)
 ={}&\left(n\sigma-\frac12\tr_{g_{\lambda(t)}}
       \left(\left.\partial_sg_s\right|_{s=\lambda(t)}\right)\right)
       \frac{\lambda'(t)}t\left(1-\frac2{\log t}\right)
       +O\bigl(\lambda'(t)^2\bigr).
 \label{eq:path-normal-ricci}
\end{align}
The constant in the error term is independent of $\ell$.  Since
$\log t\in[\ell,2\ell]$, 
\eqref{eq:path-parameter-choice} and \eqref{eq:path-normal-ricci} give
\begin{equation}
 \Ric_{\overline g}(\partial_t,\partial_t)
 \ge\frac1{8t^2\ell}
 \label{eq:path-normal-lower}
\end{equation}
when $\ell$ is sufficiently large.

For the tangential component, use $f(t)^2g_{\lambda(t)}$ to raise one
index in \eqref{eq:gaussian-tangential}.  Since
$\lambda'(t)=O((t\ell)^{-1})$ and
$\lambda''(t)=O((t^2\ell)^{-1})$, 
\eqref{eq:path-uniform-bounds} and
\eqref{eq:path-cylinder-derivatives} give
\begin{align*}
 &\bigl(f(t)^2g_{\lambda(t)}\bigr)^{-1}
   \partial_t\bigl(f(t)^2g_{\lambda(t)}\bigr)
   =\frac2t\Id+O\left(\frac1{t\ell}\right),\\
 &\bigl(f(t)^2g_{\lambda(t)}\bigr)^{-1}
   \partial_t^2\bigl(f(t)^2g_{\lambda(t)}\bigr)
   =\frac2{t^2}\Id+O\left(\frac1{t^2\ell}\right),\\
 &\tr_{f(t)^2g_{\lambda(t)}}
   \partial_t\bigl(f(t)^2g_{\lambda(t)}\bigr)
   =\frac{2n}{t}+O\left(\frac1{t\ell}\right).
\end{align*}
The endomorphism-valued error terms are measured with respect to
$g_{\lambda(t)}$, and all constants are independent of $t$ and $\ell$.
Thus the last three terms of \eqref{eq:gaussian-tangential}, after one
index is raised, equal
\[
 -\frac1{t^2}\Id+\frac2{t^2}\Id-\frac n{t^2}\Id
 +O\left(\frac1{t^2\ell}\right)
 =-\frac{n-1}{t^2}\Id+O\left(\frac1{t^2\ell}\right).
\]
Moreover, \eqref{eq:path-uniform-bounds} implies
\[
 \bigl(f(t)^2g_{\lambda(t)}\bigr)^{-1}
 \Ric_{g_{\lambda(t)}}\ge\frac{\mu}{f(t)^2}\Id.
\]
After increasing $C$, we therefore obtain
\begin{align}
 \left.\Ric_{\overline g}\right|_{TN\times TN}
 &\ge\left(\frac{\mu}{f(t)^2}
       -\frac{n-1+C/\ell}{t^2}\right)f(t)^2g_{\lambda(t)}
   \ge\frac n{t^2}f(t)^2g_{\lambda(t)}.
 \label{eq:path-tangential-lower}
\end{align}
Indeed, $f(t)\le at$ gives
\[
 \frac{\mu}{f(t)^2}-\frac{n-1+C/\ell}{t^2}
 \ge\frac1{t^2}
 \left(\frac{\mu}{a^2}-n+1-\frac C\ell\right)
 \ge\frac n{t^2},
\]
where the last inequality follows from
$\mu/a^2>2n$ once $\ell$ is large enough that $C/\ell<1$.
The mixed component has the exact expression
\begin{align}
 \Ric_{\overline g}(\partial_t,\mathord\cdot)
 =\frac{\lambda'(t)}2\Bigl(&
 \divsymb_{g_{\lambda(t)}}
   \left(\left.\partial_sg_s\right|_{s=\lambda(t)}\right)-\dd\tr_{g_{\lambda(t)}}\left(\left.\partial_sg_s\right|_{s=\lambda(t)}\right)\Bigr),
 \label{eq:path-mixed-exact}
\end{align}
because multiplying a slice metric by the spatially constant factor
$f(t)^2$ does not change its Levi-Civita connection.  Consequently, we have
\begin{equation}
 |\Ric_{\overline g}(\partial_t,\mathord\cdot)|_{f(t)^2g_{\lambda(t)}}
 \le\frac{C}{t^2\ell}.
 \label{eq:path-mixed-bound}
\end{equation}

By \eqref{eq:path-tangential-lower}, the inverse of the tangential Ricci
block, viewed as a quadratic form on $T^*N$, is bounded above by
 $\frac{t^2}{n}(f(t)^2g_{\lambda(t)})^{-1}$.  Thus
\begin{align}
 &\Ric_{\overline g}(\partial_t,\partial_t)
 -\left(\left.\Ric_{\overline g}\right|_{TN\times TN}\right)^{-1}
  \bigl(\Ric_{\overline g}(\partial_t,\mathord\cdot),
        \Ric_{\overline g}(\partial_t,\mathord\cdot)\bigr)\ge
 \frac1{8t^2\ell}-\frac{C^2}{nt^2\ell^2}>0
 \label{eq:path-schur-complement}
\end{align}
for sufficiently large $\ell$.  Together with
\eqref{eq:path-tangential-lower}, it implies that
$\Ric_{\overline g}>0$.

Now set $\widehat g=(ae^\ell)^{-2}\overline g$.  Because
$\partial_sg_s=0$ at $s=0,1$, the induced metrics and outward second
fundamental forms at the two boundary components are
\begin{align*}
 \left.\widehat g\right|_{\{e^\ell\}\times N}
 &=g_0,&
 \II^{\widehat g}_{\{e^\ell\}\times N}
 &=-a\left(1-\frac{2\sigma}{\ell}\right)g_0,\\
 \left.\widehat g\right|_{\{e^{2\ell}\}\times N}
 &=\Lambda_{\mathrm{cyl}}^2g_1,&
 \II^{\widehat g}_{\{e^{2\ell}\}\times N}
 &=ae^{-\ell}\left(1-\frac{\sigma}{2\ell}\right)
       \Lambda_{\mathrm{cyl}}^2g_1,
 \qquad \Lambda_{\mathrm{cyl}}=e^{\ell-\sigma}.
\end{align*}
For $\ell>2\sigma$, these satisfy the two required inequalities because
$a<\delta_{\mathrm{cyl}}$.  Pulling back $\widehat g$ by an
orientation-preserving diffeomorphism from $[0,1]$ to
$[e^\ell,e^{2\ell}]$ gives \textup{(i)} and \textup{(ii)}.

Finally, if a compact group of diffeomorphisms preserves every $g_s$,
then it preserves \eqref{eq:path-cylinder-metric} and hence also the
rescaled and pulled-back metric.
\end{proof}

We call the Riemannian cylinder $([0,1]\times N,\widehat g)$ furnished
by Theorem~\ref{thm:path-cylinder-unified} a \emph{Ricci-positive path
cylinder} associated with the path $(g_s)_{s\in[0,1]}$.

\section{The Ricci-positive directed base}
\label{sec:unified-directed-base}

Recall that the directed base is the tuple
$(P_m,\sigma_m;\partial_oP_m,I_+,I_-)$, where $\partial_oP_m$ is the
outgoing boundary and the involution $\sigma_m$ exchanges the two incoming
boundaries $I_+$ and $I_-$.  The purpose of this section is to equip this
fixed topological object with the Ricci-positive metric and boundary
geometry required by the screw quotient defined in (\ref{def:screw-quotient}).
We first specify the model involutions on the round boundary spheres.
We then introduce an equivariant angular-neck construction, whose proof
is deferred to Appendix~\ref{app:perelman-neck}, and use it to normalize
the two incoming boundary components.  Finally, an equivariant
Ricci-positive path cylinder rounds the outgoing boundary.
These ingredients are assembled in
Theorem~\ref{thm:unified-directed-base}.  Throughout the section, fix
\[
m\in\{2,3\},
\qquad
k:=m-1,
\]
and let \(g_{\mathbb S^k}\) denote the unit-round metric on
\(\mathbb S^k\).  The cases \(m=2\) and \(m=3\) produce bases of
dimensions three and four, respectively.  In either case, the
distinguished boundary hypersurfaces have dimension \(m\), while the
auxiliary round factor has dimension \(k\).  The principal ingredients
are the relative equivariant Ricci-positive gluing result,
Lemma~\ref{lem:relative-gluing-unified}, and the path-cylinder
construction, Theorem~\ref{thm:path-cylinder-unified}, established in
Section~\ref{sec:relative-tools}.

Identify $\mathbb R^3=\mathbb C\oplus\mathbb R$ and
$\mathbb R^4=\mathbb C^2$, and define skew-symmetric endomorphisms by
\[
 K_2(z,u)=(\mathrm i z,0),\qquad
 K_3(z_1,z_2)=(\mathrm i z_1,\mathrm i z_2).
\]
For $x\in\mathbb S^m$, the vector $K_mx$ is tangent to $\mathbb S^m$,
and is actually a Killing vector field on $\mathbb S^m$.  Its
time-$\tau$ flow is the restriction to $\mathbb S^m$ of the orthogonal
linear map $\exp(\tau K_m)$, where $\exp$ denotes the matrix
exponential.  In particular,
\begin{equation}
\exp(\pi K_2)(z,u)=(-z,u),\qquad
 \exp(\pi K_3)(z_1,z_2)=(-z_1,-z_2).                            \label{eq:unified-base-3-1}
\end{equation}
Thus $\exp(\pi K_2)$ is rotation through angle $\pi$ about the axis
$\{0\}\oplus\mathbb R\subset\mathbb C\oplus\mathbb R$ and fixes the two points
$(0,\pm1)\in\mathbb S^2$, whereas $\exp(\pi K_3)$ is the antipodal map
of $\mathbb S^3$.

These order-two isometries will be the prescribed actions on the round
outer boundary.  We next construct a local Ricci-positive model carrying
the same symmetry.  The local model is expressed in terms of angular
distance from the collapsed axis.  Its level sets provide the inner
hypersurfaces to which the equivariant necks will be attached.

The auxiliary function \(\mathcal R\) is stated in
Lemma~\ref{lem:concave-warping}; its construction is given in
Appendix~\ref{app:perelman-neck}.  Its curvature inequalities produce
a translation-invariant family of angular \(m\)-spheres with positively
curved induced metrics and controlled outward second fundamental forms.
The following definition specifies these spheres, and
Proposition~\ref{prop:unified-neck} connects each of them to a round
sphere by an equivariant Ricci-positive neck.

\begin{definition}
On
\[\mathcal Q:=\bigl([0,\pi/2)\times\mathbb R\times\mathbb S^k\bigr)/\!\sim,
\]
where each set $\{0\}\times\{z\}\times\mathbb S^k$ is collapsed to one
point,
denote by $o_{z_0}$ the point represented by
$\{0\}\times\{z_0\}\times\mathbb S^k$.  On the strip
$|z-z_0|<\pi$, define
\[
 \varrho_{z_0}(t,z)
 :=\arccos\bigl(\cos t\cos(z-z_0)\bigr).
\]
For $0<r<\pi/2$, the sets
\[
 B_r^{\angle}(o_{z_0}):=\{\varrho_{z_0}\le r\},
 \qquad
 S_r^{\angle}(o_{z_0}):=\{\varrho_{z_0}=r\}
\]
are called, respectively, the angular ball and the angular sphere of
radius $r$ centered at $o_{z_0}$.
\end{definition}
We have the following proposition for constructing the neck metric by Perelman \cite[Assertion]{Perelman1997} and \cite[Section 3]{Perelman1997}. The proof of Proposition~\ref{prop:unified-neck} is given in
Appendix~\ref{app:perelman-neck} for the sake of completeness.
\begin{proposition}
\label{prop:unified-neck}
There is $r_*>0$ with the following property.  Fix
$m\in\{2,3\}$, let $k=m-1$, and choose
\[
             0<r<r_*,\qquad 0<\rho<1,\qquad \pi\rho<r.
\]
For every sufficiently small $w>0$ satisfying
\begin{equation}
w^{m-1}<\rho^m,                            \label{eq:unified-base-3-10}
\end{equation}
there is a function $\mathcal R$ satisfying
Lemma~\ref{lem:concave-warping} for which the following assertions hold
on the subset $\{0\le t\le5r\}$ of $\mathcal Q$.

\begin{enumerate}[label=\textup{(\roman*)}]
\item On a neighborhood of $B_{4r}^{\angle}(o_0)$, the metric
\begin{equation}
g_{\mathrm{amb},m}=\cot^2r\left(
 \dd t^2+\cos^2t\,\dd z^2+\mathcal R(t)^2g_{\mathbb S^k}\right)         \label{eq:unified-base-3-11}
\end{equation}
is smooth and satisfies $\sec_{g_{\mathrm{amb},m}}>0$.

\item For every $z_0\in\mathbb R$, the angular sphere
$S_r^{\angle}(o_{z_0})$ is diffeomorphic to $\mathbb S^m$.  Define its
latitude coordinate $\varphi\in[-\pi/2,\pi/2]$ by
\[
 \cos\varphi=\frac{\mathcal R(t)}{\mathcal R(r)},\qquad
 \sgn\varphi=\sgn(z-z_0).
\]
In this coordinate its induced metric is independent of $z_0$ and has
the form
\begin{equation}
g_{\mathrm{ang},m}=A_{\mathrm{ang}}(\varphi)^2\dd\varphi^2
           +w^2\cos^2\varphi\,g_{\mathbb S^k}.                   \label{eq:unified-base-3-12}
\end{equation}
 Here $A_{\mathrm{ang}}$ is smooth and positive, with
 $A_{\mathrm{ang}}(\pm\pi/2)=w$, and $\sec_{g_{\mathrm{ang},m}}>1$.  With respect to the normal
pointing out of
$B_r^{\angle}(o_{z_0})$, every principal curvature of
$S_r^{\angle}(o_{z_0})$ lies in $(0,1]$.

\item There are $\ell,\lambda>0$ and a Ricci-positive metric
\begin{equation}
g_{\mathrm{neck},m}=\dd s^2+A(s,\varphi)^2\dd\varphi^2
          +B(s)^2\cos^2\varphi\,g_{\mathbb S^k}                 \label{eq:unified-base-3-13}
\end{equation}
on $[0,\ell]\times\mathbb S^m$.  At $s=0$, its induced metric is
$(\rho/\lambda)^2g_{\mathbb S^m}$ and all principal curvatures with
respect to the outward normal $-\partial_s$ equal $-\lambda$.  At
$s=\ell$, its induced metric is $g_{\mathrm{ang},m}$ and all principal curvatures
with respect to $\partial_s$ are greater than one.  Moreover,
$B'(s)>0$ and $B''(s)<0$ for $0\le s\le\ell$.
\end{enumerate}
The construction commutes with translations of the ambient $z$
coordinate and orthogonal transformations of 
$\mathbb S^k$.
\end{proposition}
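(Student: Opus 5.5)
The plan is to follow Perelman's construction \cite[Section~3 and Assertion]{Perelman1997}, working in the three layers (i), (ii), (iii) in order. Everything is written in coordinates invariant under $z$-translations and under $O(k+1)$ acting on $\mathbb S^k$, so the final equivariance claim comes for free.

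For (i), I will take the warping function $\mathcal R$ from Lemma~\ref{lem:concave-warping}. It is odd and smooth at $t=0$ with $\mathcal R'(0)=1$, concave, and satisfies $\mathcal R\le\sin t$, with $\mathcal R$ much smaller than $\sin t$ away from a tiny neighborhood of $0$; the parameter $w$ controls its size via $\mathcal R(r)\approx w\tan r$. Smoothness at the collapsed axis follows from $\mathcal R$ being odd with $\mathcal R'(0)=1$. For the metric $dt^2+\cos^2t\,dz^2+\mathcal R^2g_{\mathbb S^k}$, which is a doubly warped product, the sectional curvatures are
\[
1,\qquad -\frac{\mathcal R''}{\mathcal R},\qquad \frac{\mathcal R'\sin t}{\mathcal R\cos t},\qquad \frac{1-\mathcal R'^2}{\mathcal R^2}\ (k\ge2).
\]
For mixed planes I will use the fact that a doubly warped product has a curvature operator that is diagonal in the coordinate frame. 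This reduces positivity of $\sec$ to positivity of these numbers, which follow from $\mathcal R''<0$, $\mathcal R'>0$ on $[0,5r]$ and $|\mathcal R'|<1$. The constant factor $\cot^2 r$ only rescales.

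For (ii), on $S_r^\angle(o_{z_0})$ the relation $\cos t\cos(z-z_0)=\cos r$ lets me write $z$ as a function of $t\in[0,r]$. The $\mathbb S^k$ factor has radius $\cot r\,\mathcal R(t)$, and with $\cos\varphi=\mathcal R(t)/\mathcal R(r)$ this gives the term $w^2\cos^2\varphi\,g_{\mathbb S^k}$ once $w:=\cot r\,\mathcal R(r)$. That is how $w$ parametrizes $\mathcal R$. The $d\varphi^2$ coefficient $A_{\mathrm{ang}}$ comes from pulling back $dt^2+\cos^2t\,dz^2$ along the curve; it is independent of $z_0$ by translation invariance. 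Its value $w$ at the poles is exactly the smoothness condition there. The principal curvatures I will compare with those of the totally umbilic unit-sphere slice: in the $(t,z)$ direction the curve is a geodesic circle of radius $r$ in the round $\mathbb S^2$ factor (the metric $dt^2+\cos^2t\,dz^2$ is round), so after scaling by $\cot r$ the curvature is $\cot r\cdot\tan r=1$. In the $\mathbb S^k$ direction the curvature is $\mathcal R'\partial_\nu t/(\cot r\,\mathcal R)$, and this lies in $(0,1]$ by concavity, since $\mathcal R'/\mathcal R\le\cot t$. The bound $\sec>1$ then follows from the Gauss equation, using $\sec_{\mathrm{amb}}$ together with products of principal curvatures, plus the fact that the $\mathbb S^k$ fibers are tiny ($w\to0$), which makes the intrinsic curvature $(1-\text{slope}^2)/(w^2\cos^2\varphi)$ large.

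For (iii), I will build the neck as in Perelman's Assertion. Set $B(s)$ concave increasing from $\rho/\lambda$ to $w$, with $B'(0)=\lambda\rho/\lambda\cdot\lambda^{-1}$ scaled so the $s=0$ slice is round with all principal curvatures $-\lambda$. Set $A(s,\varphi)$ to interpolate from $(\rho/\lambda)$ to $A_{\mathrm{ang}}(\varphi)$, written as $A=B(s)\,a(s,\varphi)$ with $a$ deformed through metrics of positive curvature (convex combinations in the suitable sense, keeping $a=1$ at the poles). The Ricci terms are then controlled by Lemma~\ref{lem:gaussian-ricci-unified}. The large positive contributions $-B''/B$ and $(1-B'^2)/B^2$ dominate the errors from $\partial_sA$ and the mixed term, since $B$ is small (at most $w$) while the $s$-derivatives of $a$ are of order $1/\ell$. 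The condition $w^{m-1}<\rho^m$ is what allows $B$ to grow from $\rho/\lambda$ to $w$ over length $\ell$ while staying concave and keeping $B'<1$. Principal curvatures greater than $1$ at $s=\ell$ require $B'(\ell)/B(\ell)>1$ and $\partial_sA/A>1$, both attainable because $B(\ell)=w$ is small.

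The main obstacle is (iii): choosing $\lambda,\ell$ and the $s$-profile so that the normal Ricci component $-B''/B-\partial_s^2A/A$ stays positive while $A$ moves from a round profile to $A_{\mathrm{ang}}$. I would first try making $a(s,\cdot)$ vary only on an interval where $-B''/B$ is huge, choosing $B(s)=\rho\lambda^{-1}+\text{(concave)}$ with a logarithmic profile as in Theorem~\ref{thm:path-cylinder-unified}.
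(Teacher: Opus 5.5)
Your part (i) and your identification of the two principal curvatures in (ii) agree with the paper. The Ricci-positivity argument for the neck in (iii), however, has a genuine gap, and it sits exactly where the hypothesis $w^{m-1}<\rho^m$ is used. Writing $A=Ba$,
\[
 \Ric(\partial_s,\partial_s)=-m\frac{B''}{B}-2\frac{B'}{B}\frac{\partial_sa}{a}-\frac{\partial_s^2a}{a}.
\]
The two $a$-terms are of exactly the same order as $-mB''/B$: in the paper's variables all of them are multiples of $\beta(\Gamma'+2\Gamma/t)$ up to $O(\beta^2\Gamma^2)$. So neither the smallness of $B$ nor a slow deformation makes them negligible, and $(1-B'^2)/B^2$ does not enter this component at all.

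You also cannot place the variation of $a$ where $-B''/B$ is huge, because the boundary data cap the available concavity. The conditions $B(0)=\rho/\lambda$ and $B'(0)/B(0)=\lambda$ force $B'(0)=\rho$, and convexity $>1$ at $s=\ell$ forces $B'(\ell)>w$. Hence $\int_0^\ell(-B''/B')\,\dd s<\log(\rho/w)$, while $a$ must grow by the factor $u_*=\max A_{\mathrm{ang}}/w>\rho/w$, which is of order $1/w$.

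The paper resolves this competition by moving $a=f_u=1+(u-1)\eta$ and the scale $v$ together, with $u'/u=\alpha\beta\Gamma$, $v'/v=-\beta\Gamma$ and a slow logarithmic $\Gamma$. The leading term of $\Ric(\partial_t,\partial_t)$ is then $\beta(m-\alpha u\eta/f_u)(\Gamma'+2\Gamma/t)$, which is positive because $\alpha<m$. An admissible $\alpha\in(\alpha_0,m)$, with $\alpha_0=\log u_*/\log(\rho/w)$, exists precisely because $u_*w<1$ (from $\sec_{g_{\mathrm{ang},m}}>1$ at a maximum of $A_{\mathrm{ang}}$) and $w^{m-1}<\rho^m$. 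So that hypothesis is not what lets $B$ grow concavely with $B'<1$; that is possible for any $\rho/\lambda<w$. It is what makes the normal Ricci curvature positive.

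The tangential components need a second ingredient that you do not supply. By the Gauss equation they are positive only if the intrinsic curvature of the slice beats the product of its principal curvatures, roughly $(B'/B)^2$. This means essentially $\sec(a^2\dd\varphi^2+\cos^2\varphi\,g_{\mathbb S^k})>(B')^2$; in the paper's normalization it is $\sec_{\widetilde g_{u,v}}>1$, Perelman's cone-over-$\sec>1$ mechanism. Deforming $a$ merely through positively curved metrics does not give this, because the left side must fall from $1$ to $w^2\sec_{g_{\mathrm{ang},m}}$ in step with $B'$ falling from $\rho$ to just above $w$. That is why Lemma~\ref{lem:angular-slice-interpolation} builds the coupled path $\widetilde g_{u,\rho u^{-1/\alpha_0}}$ and proves by a monotonicity argument that it keeps $\sec>1$. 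It is also why at $s=\ell$ you need $B'(\ell)/w$ slightly above $1$ yet within the room left by the strict inequality $\sec_{g_{\mathrm{ang},m}}>1$, rather than just $w$ small.

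A smaller gap of the same nature is in (ii). For the $(E,V)$-planes, which are the only ones when $m=2$, the Gauss equation gives $\sec_{g_{\mathrm{ang},m}}(E,V)=\sec_{g_{\mathrm{amb},m}}(E,V)+(\mathcal R'/\mathcal R)\tan t$. The last term is at most $1$, and both terms are unchanged if $\mathcal R$ is multiplied by a constant, so tiny fibers do not help. You need the quantitative properties of $\mathcal R$: $-\mathcal R''/\mathcal R\ge2/r^2$ on $(0,r^2/2]$, and $\mathcal R'/\mathcal R\ge\cot(t+r^4/4)$ together with \eqref{eq:unified-base-3-14} on $[r^2/2,r]$. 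Likewise, $\mathcal R'/\mathcal R\le\cot t$ follows from $-\mathcal R''/\mathcal R\ge1$ by Sturm comparison, not from concavity, which only gives $\mathcal R'/\mathcal R\le1/t$.
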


The neck construction normalizes the two incoming boundary components,
but the metric induced on $\partial_oP_m$ is merely positively curved.
To replace it by an exactly round metric without losing either Ricci
positivity or the involutive symmetry, we use the following equivariant
rounding lemma.

\begin{lemma}
\label{lem:unified-round-path}
Let $m\in\{2,3\}$, let $\gamma$ be a metric of positive sectional
curvature on $\mathbb S^m$, and let a finite group $\Gamma$ act isometrically
on $(\mathbb S^m,\gamma)$.  There is a smooth path
$(\gamma_s)_{s\in[0,1]}$ of $\Gamma$-invariant metrics such that
\[
 \gamma_0=\gamma,\qquad \Ric_{\gamma_s}>0\quad(0\le s\le1),
 \qquad \gamma_1\text{ has constant positive sectional curvature}.
\]
The path can be chosen constant for $s$ near $0$ and near $1$.
\end{lemma}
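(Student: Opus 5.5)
The plan is to run normalized Ricci flow from $\gamma$ and use the invariance of the flow under isometries. Two cases are handled separately, as the introduction anticipates: Hamilton's surface theorem \cite{Hamilton1988Surfaces} for $m=2$ and Hamilton's three-manifold theorem \cite{Hamilton1982} for $m=3$.

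\emph{Case $m=3$.} Since $\sec_\gamma>0$, in particular $\Ric_\gamma>0$. By \cite{Hamilton1982}, the volume-normalized Ricci flow $\tilde\gamma(\tau)$ with $\tilde\gamma(0)=\gamma$ exists for all $\tau\ge0$. Positive Ricci curvature is preserved along it, and $\tilde\gamma(\tau)$ converges smoothly and exponentially to a metric $\tilde\gamma_\infty$ of constant positive sectional curvature. The Ricci flow is diffeomorphism-equivariant and solutions on closed manifolds are unique. For each $\phi\in\Gamma$, the pullback $\phi^*\tilde\gamma(\tau)$ is therefore again a normalized flow starting at $\phi^*\gamma=\gamma$, so it equals $\tilde\gamma(\tau)$. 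Passing to the limit, $\tilde\gamma_\infty$ is also $\Gamma$-invariant.

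\emph{Case $m=2$.} Here $\Ric=K\gamma$, so positive Ricci curvature means $K>0$. Hamilton's surface theorem says that on $\mathbb S^2$ with $K>0$ the normalized flow exists for all time, remains positively curved, and converges to a round metric. Invariance under $\Gamma$ follows from uniqueness exactly as in the case $m=3$. Alternatively, one could use the uniformization path $e^{2(1-s)u}\gamma_{\mathrm{round}}$, but the curvature of that path is not obviously positive, so I would rely on the flow instead.

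\emph{Building the path.} The flow gives a continuous family on $[0,\infty]$, and I reparametrize it to $[0,1]$. The main obstacle is the endpoint $s=1$: the reparametrized path must be smooth in $s$ at $\tau=\infty$, and it must be constant near both ends. To handle this I would proceed in three steps.
\begin{enumerate}[label=\textup{(\arabic*)}]
\item Stop the flow at a large finite time $\tau_0$. By smooth convergence, $\tilde\gamma(\tau_0)$ is $C^k$-close to $\tilde\gamma_\infty$.
\item Join the two metrics linearly by $(1-\theta)\tilde\gamma(\tau_0)+\theta\tilde\gamma_\infty$. Each metric in this segment is $\Gamma$-invariant, and it is $C^2$-close to $\tilde\gamma_\infty$. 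Since $\Ric>0$ is an open condition in the $C^2$ topology, every metric in the segment has positive Ricci curvature once $\tau_0$ is large.
\item Concatenate the flow segment on $[0,\tau_0]$ with the linear segment, using a smooth monotone reparametrization $s\mapsto\tau(s)$ whose derivatives all vanish at the junctions and endpoints. This makes the path smooth and constant near $s=0$ and $s=1$. It also repairs the fact that the flow is only smooth in time for $\tau>0$: because $\tau(s)$ is flat at $0$, the composite is still smooth, given that the flow is continuous up to $\tau=0$ together with time-derivative control. If needed, one can first replace $\gamma$ by $\tilde\gamma(\tau_1)$ for a small $\tau_1$ after a short linear segment of the same kind.
\end{enumerate}
Every metric in the concatenated path is $\Gamma$-invariant, each has $\Ric>0$, and the final metric $\gamma_1=\tilde\gamma_\infty$ has constant positive sectional curvature. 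This is exactly the path asserted in Lemma~\ref{lem:unified-round-path}.
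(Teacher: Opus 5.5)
Your argument is correct. For $m=3$ it is essentially the paper's proof: Hamilton's theorem, $\Gamma$-invariance from naturality and uniqueness, stopping the flow at a large time, an affine segment to the limit inside a $C^2$-neighborhood where $\Ric>0$, and a reparametrization that is constant near the ends.

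For $m=2$ you take a different route. You run the same flow-plus-affine-segment scheme. This needs the (true) fact that the normalized surface flow preserves positive curvature, which follows from the maximum principle applied to $\partial_t R=\Delta R+R(R-r)$. The paper uses the flow only to produce a $\Gamma$-invariant constant-curvature metric $g_*$ conformal to $\gamma$. It then writes $\gamma=e^{2u}g_*$ and takes exactly the conformal path you set aside as ``not obviously positive.'' Its positivity is in fact immediate, because $e^{2v}\sec_{e^{2v}g}=\sec_g-\Delta_g v$ is affine in $v$. Hence
\[
 \sec_{e^{2su}g_*}=e^{-2su}\bigl((1-s)\sec_{g_*}+s\,e^{2u}\sec_\gamma\bigr)>0 .
\]
The paper's route thus gives an explicit path whose positivity is a one-line identity. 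It needs neither curvature preservation along the flow nor the stop-and-interpolate step. The flow is still used there, to make the round conformal representative $\Gamma$-invariant, which uniformization alone does not immediately supply. Your route has the advantage of treating both dimensions with a single argument.

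Finally, your concern about smoothness at $\tau=0$ is unnecessary. Ricci flow with smooth initial data on a closed manifold is smooth up to $t=0$. So the preliminary linear segment is harmless but superfluous, and the flat reparametrization is needed only to make the path constant near the ends.
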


\begin{proof}
Suppose first that $m=2$.  The main convergence theorem in Hamilton's
paper on the Ricci flow of surfaces \cite{Hamilton1988Surfaces} states
that the normalized Ricci flow starting at $\gamma$ converges smoothly
to a metric $g_*$ of constant positive Gaussian curvature.  Because the
two-dimensional Ricci flow is conformal, $g_*$ lies in the conformal
class of $\gamma$.  The normalized flow is
natural under pullback and has a unique solution; since every element of
$\Gamma$ preserves its initial metric, it preserves the entire flow and its
limit $g_*$.  Write $\gamma=e^{2u}g_*$.  The invariance of $\gamma$ and
$g_*$ implies that $u$ is $\Gamma$-invariant.

For $0\le s\le1$, set $h_s=e^{2su}g_*$.  The conformal curvature
formula and the identity
$e^{2u}\sec_\gamma=\sec_{g_*}-\Delta_{g_*}u$ give
\begin{equation}
\sec_{h_s}=e^{-2su}\bigl((1-s)\sec_{g_*}
                         +s e^{2u}\sec_\gamma\bigr)>0.           \label{eq:unified-base-3-71}
\end{equation}
Thus $h_s$ joins $g_*$ to $\gamma$ through metrics of positive Gaussian
curvature.  Reversing the parameter and composing it with a smooth
nondecreasing function $[0,1]\to[0,1]$ which is constant near both
endpoints gives the required path from $\gamma$ to $g_*$.

Now suppose that $m=3$.  Positive sectional curvature implies positive
Ricci curvature, so the main theorem of Hamilton's work
\cite{Hamilton1982} applies to the volume-normalized Ricci flow starting
at $\gamma$.  It gives a solution $g(t)$ for all $t\ge0$, preserves
positive Ricci curvature, and gives smooth convergence to a metric
$g_\infty$ of constant positive sectional curvature.  As in the
two-dimensional case, naturality and uniqueness imply that every
$g(t)$ and $g_\infty$ are $\Gamma$-invariant.

Choose $T$ so large that $g(T)$ lies in a $\Gamma$-invariant
$C^2$-neighborhood of $g_\infty$ on which Ricci curvature is positive.
The affine segment
\[
 (1-s)g(T)+s g_\infty,\qquad 0\le s\le1,
\]
then consists of $\Gamma$-invariant Ricci-positive metrics.  Concatenate the
flow segment $(g(t))_{0\le t\le T}$ with this affine segment, and
reparameterize both pieces so that they are constant near their
endpoints.  This gives the asserted smooth path.
\end{proof}

We now assemble the preceding ingredients.  The angular region supplies
a ball with two disjoint balls removed, the involution exchanges the two
incoming boundary components, the necks give them the prescribed round
geometry, and the path cylinder rounds the outgoing boundary.

\begin{theorem}
\label{thm:unified-directed-base}
Let $m\in\{2,3\}$, and let
$(P_m;\partial_oP_m,I_+,I_-)$ be the fixed topological model defined in
Section~\ref{sec:double-cable}.  For every $R_*>0$ and
$\varepsilon_*>0$, $P_m$ admits a metric $h_m$ with
$\Ric_{h_m}>0$ and an isometric
involution $\sigma_m:P_m\to P_m$ such that:
\begin{enumerate}[label=\textup{(\roman*)}]
\item $\sigma_m(I_+)=I_-$ and $\sigma_m(I_-)=I_+$;
\item $(I_+,h_m|_{I_+})$ and $(I_-,h_m|_{I_-})$ are round
$m$-spheres of radius $R_*$; if $\II_{I_\pm}$ denotes their second
fundamental form with respect to the outward unit normal of $P_m$, then
\begin{equation}
\II_{I_\pm}>-\varepsilon_*h_m|_{I_\pm};     \label{eq:unified-base-3-73}
\end{equation}
\item $(\partial_oP_m,h_m|_{\partial_oP_m})$ is round, and its second
fundamental form with respect to the outward unit normal is positive
definite;
\item after identifying the round outer boundary with a standard round
$\mathbb S^m$,
\begin{equation}
\sigma_m|_{\partial_oP_m}=\exp(\pi K_m).        \label{eq:unified-base-3-74}
\end{equation}
\end{enumerate}
Here the maps $\exp(\pi K_m)$ are defined in
\eqref{eq:unified-base-3-1}; they are the half-turn of $\mathbb S^2$
about an axis and the antipodal map of $\mathbb S^3$, respectively.
\end{theorem}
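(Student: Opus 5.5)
We assemble $h_m$ from three pieces glued with Lemma~\ref{lem:relative-gluing-unified}: an angular ball carrying the metric of Proposition~\ref{prop:unified-neck}(i), two necks from Proposition~\ref{prop:unified-neck}(iii), and an equivariant path cylinder from Theorem~\ref{thm:path-cylinder-unified}. We work with $G:=\mathbb Z/2$ acting on $\mathcal Q$ by $(t,z,\theta)\mapsto(t,-z,\iota\theta)$, where $\iota\in O(k+1)$ is chosen below. This map preserves $g_{\mathrm{amb},m}$ and the construction commutes with $z$-translations and orthogonal maps of $\mathbb S^k$, so it is an isometry.

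Fix $0<r<r_*$ and $z_0$ with $2r<z_0<$ small, say $z_0=2r$. Then the angular balls $B_r^\angle(o_{\pm z_0})$ are disjoint and lie inside the angular ball $B_{4r}^\angle(o_0)$. The involution $\sigma$ exchanges $o_{z_0}$ and $o_{-z_0}$. Set
\[
X:=B_{4r}^\angle(o_0)\setminus\bigl(\operatorname{int}B_r^\angle(o_{z_0})\cup\operatorname{int}B_r^\angle(o_{-z_0})\bigr).
\]
Since $\sec>0$, this region has $\Ric>0$, and it is diffeomorphic to $P_m$. The diffeomorphism is compatible with the involutions up to isotopy, which must be checked against the model $R_\pi$.

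Because $\sigma$ fixes $o_0$, the outer sphere $S_{4r}^\angle(o_0)$ is $\sigma$-invariant and strictly convex. This follows by the same computation as in Proposition~\ref{prop:unified-neck}(ii) (with $4r$ in place of $r$), since $\varrho_0$ behaves like a distance function with positively curved level sets.

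On each inner sphere $S_r^\angle(o_{\pm z_0})$, the outward normal of $X$ points into the small ball. Hence $\II_X=-\II_{\mathrm{ball}}\ge -1$ there. We glue the neck of Proposition~\ref{prop:unified-neck}(iii) at its $s=\ell$ end, where the induced metric is $g_{\mathrm{ang},m}$ and the principal curvatures are $>1$. The sum of second fundamental forms is then positive, and Lemma~\ref{lem:relative-gluing-unified} applies. The gluing is made $G$-equivariant by attaching the neck at $o_{z_0}$ and its $\sigma$-image at $o_{-z_0}$, with collars chosen disjoint. This is legitimate because the angular metric in latitude coordinates is independent of $z_0$, and $\sigma$ reverses $\varphi$ while acting by $\iota$ on $\mathbb S^k$. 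We must check that $\varphi\mapsto-\varphi$ is an isometry of $g_{\mathrm{neck},m}$ (true if $A(s,\cdot)$ is even), or else first compose with this flip.

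The new inner boundaries are round of radius $\rho/\lambda$, with $\II=-\lambda$ times the metric. After scaling the whole space by $c=R_*\lambda/\rho$, they have radius $R_*$ and
\[
\II=-(\lambda/c)\,h=-(\rho/R_*)\,h .
\]
Choosing $\rho<R_*\varepsilon_*$ gives (ii) and (i); scaling preserves every other property.

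For the outer boundary we apply Lemma~\ref{lem:unified-round-path} to the positively curved, $G$-invariant metric on $S_{4r}^\angle(o_0)$, obtaining a $G$-invariant path to a round metric. Theorem~\ref{thm:path-cylinder-unified} then yields a $G$-equivariant Ricci-positive cylinder. Its $\{0\}$ end has the outer metric and $\II>-\delta_{\mathrm{cyl}}$, and its far end is round, scaled by $\Lambda_{\mathrm{cyl}}$, with $\II>0$.

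Gluing this cylinder along the outer sphere is allowed once $\delta_{\mathrm{cyl}}$ is smaller than the minimal convexity of the outer sphere. The gluing is equivariant by Lemma~\ref{lem:relative-gluing-unified}. Only the order of scalings needs care: we scale at the end, and the round outer boundary stays round and convex. This gives (iii).

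For (iv), the round limit metric carries the $G$-action by isometries, so $\sigma$ acts as an order-two isometry of a round $\mathbb S^m$. We must then check that it is conjugate in $O(m+1)$ to $\exp(\pi K_m)$. This is the main obstacle: we have to choose $\iota$ and identify fixed-point sets.

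On the outer sphere, $\sigma$ fixes the points with $z=0$ and $\iota\theta=\theta$.

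For $m=2$ ($k=1$), take $\iota=\mathrm{id}$. Then the fixed set on the sphere is $\{z=0\}$, and on the angular sphere, where $\mathbb S^1$ collapses at $t=4r$... A different choice is needed here: take $\iota=-\mathrm{id}$ on $\mathbb S^1$. The fixed set is then $\{z=0,\ \theta\text{ at collapsed points}\}$, i.e. the two poles $\varphi=0$ of the collapsed latitude. An orientation-preserving involution of $\mathbb S^2$ with two fixed points is a half-turn.

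For $m=3$, take $\iota=-\mathrm{id}$ on $\mathbb S^2$, which gives a free action. A free involution of a round $\mathbb S^3$ whose quotient is $\mathbb{RP}^3$ is conjugate to the antipodal map, and this is consistent with $\exp(\pi K_3)$.

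One further check is that this $\sigma$ on $X$ matches the model involution $R_\pi$ on $P_m$ up to the identification. This is needed later for the screw quotient, and we verify it by comparing the two actions on a meridian disk.
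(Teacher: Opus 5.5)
Your construction follows the paper's: remove $\operatorname{int}B_r^\angle(o_{\pm2r})$ from $B_{4r}^\angle(o_0)$, attach a neck to $S_r^\angle(o_{2r})$ and its $\sigma$-image to $S_r^\angle(o_{-2r})$ via Lemma~\ref{lem:relative-gluing-unified}, rescale by $c=R_*\lambda/\rho$, and round the outer sphere with an equivariant path cylinder. Here $\rho<R_*\varepsilon_*$, together with $\rho<1$ and $\pi\rho<r$, which Proposition~\ref{prop:unified-neck} also requires. These steps are sound. You do not need $A(s,\cdot)$ to be even: gluing the second neck by $\sigma\circ\Psi_+$ is automatically an isometry, and the second fundamental forms match because $\sigma$ is an isometry of $X$. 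The genuine error is in (iv) for $m=2$, exactly where you placed the main obstacle: your choice of $\iota$ is wrong.

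The $\mathbb S^1$-factor collapses at $t=0$, not at $t=4r$.
\begin{itemize}
\item On $S_{4r}^\angle(o_0)$, the locus $\{z=0\}$ is the equatorial circle $\{t=4r,\ z=0\}\times\mathbb S^1$, where nothing collapses. So your ``two poles $\varphi=0$'' do not exist: $\varphi=0$ is that equator.
\item The two collapsed poles are the points with $t=0$, $z=\pm4r$, and $z\mapsto-z$ interchanges them.
\item Hence with $\iota=-\mathrm{id}$ (rotation of $\mathbb S^1$ by $\pi$), the map $(t,z,p)\mapsto(t,-z,-p)$ has no fixed point on the outer sphere.
\item Equivalently, its differential at $o_0$ is $(-1)\oplus(-I_2)=-I_3$ on $T_{o_0}\cong\mathbb R_z\oplus\mathbb R^2$. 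By naturality of $\exp_{o_0}$, it is the antipodal map of $\mathbb S^2$.
\end{itemize}
That map is free and orientation-reversing; it also makes $\sigma$ orientation-reversing on $P_2$. No orthogonal identification can turn it into $\exp(\pi K_2)$, which fixes two points.

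The correct choice is a reflection of $\mathbb S^1$, e.g.\ $(p_1,p_2)\mapsto(p_1,-p_2)$, which is the paper's $\iota_2$. Then:
\begin{itemize}
\item $d\iota_2|_{o_0}=\operatorname{diag}(-1,1,-1)$ has determinant $+1$ and a one-dimensional $+1$-eigenspace;
\item the outer sphere has exactly two fixed points, the points of its equator with $p_2=0$;
\item after rounding, the invariant orthogonal involution with one-dimensional $+1$-eigenspace is conjugate to $(-I_2)\oplus(1)=\exp(\pi K_2)$.
\end{itemize}

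Your $m=3$ choice, $\iota=-\mathrm{id}$ on $\mathbb S^2$, agrees with the paper; there, freeness alone forces the orthogonal involution to be $-I_4$.

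Finally, drop the proposed comparison with the model involution $R_\pi$. The theorem asks only for (i)--(iv) on the fixed manifold $P_m$. Moreover, if ``matches'' means conjugate, it is false for $m=3$: $R_\pi$ fixes a circle in $\partial D^4$, whereas $\exp(\pi K_3)$ is free.
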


\begin{proof}
Choose $0<r<r_*$, where $r_*$ is the constant in
Proposition~\ref{prop:unified-neck}, and then choose
\begin{equation}
0<\rho<\min\{1,r/\pi,R_*\varepsilon_*\}.            \label{eq:unified-base-3-75}
\end{equation}
In particular, $\pi\rho<r$.  Choose $w>0$ sufficiently small that
$w^{m-1}<\rho^m$, and apply Proposition~\ref{prop:unified-neck}.
In the local ambient manifold of that proposition, write
\[
 y=o_0,\qquad y_+=o_{2r},\qquad y_-=o_{-2r}.
\]
Thus $y$ and $y_\pm$ are the points obtained by collapsing, respectively,
$\{0\}\times\{0\}\times\mathbb S^k$ and
$\{0\}\times\{\pm2r\}\times\mathbb S^k$.  Define
\[
 \Omega_r:=B_{4r}^{\angle}(y)\setminus
   \left(\operatorname{int}B_r^{\angle}(y_+)
          \sqcup\operatorname{int}B_r^{\angle}(y_-)\right).
\]
Its three boundary components are
\[
 \Sigma_o=S_{4r}^{\angle}(y),\qquad
 \Sigma_+=S_r^{\angle}(y_+),\qquad
 \Sigma_-=S_r^{\angle}(y_-).
\]

We first verify the topology of $\Omega_r$.  For the unscaled warped
metric $ \dd t^2+\cos^2t\,\dd z^2+\mathcal R(t)^2g_{\mathbb S^k}$ in \eqref{eq:unified-base-3-11},
projection onto the $(t,z)$-factor with metric
$\dd t^2+\cos^2t\,\dd z^2$ is length nonincreasing, while every radial
geodesic in that factor has a horizontal lift.  Hence its distance from
$y=o_0$ is the function $\varrho_0$ in the definition of an angular
ball, where
\begin{equation}
\cos\varrho_0=\cos t\cos z.              \label{eq:unified-base-3-76}
\end{equation}
Replacing $z$ by $z\mp2r$ gives the corresponding distance formula
from $y_\pm$.  The factor $\cot^2r$ in
\eqref{eq:unified-base-3-11} multiplies all these distances by
$\cot r$.  Thus the angular balls of angular radii $r$ and $4r$ are
normal balls for $g_{\mathrm{amb},m}$ of Riemannian radii
$r\cot r$ and $4r\cot r$, respectively.  Since the center distances
are $2r\cot r$, $2r\cot r$, and $4r\cot r$, the two closed inner
balls are disjoint, and each lies in the interior of the outer ball.
Consequently, the boundary-decomposed manifold
$(\Omega_r;\Sigma_o,\Sigma_+,\Sigma_-)$ is diffeomorphic to the fixed
model $(P_m;\partial_oP_m,I_+,I_-)$ from
Section~\ref{sec:double-cable}.

We next record the geometry of the outer boundary before any gluing.
With respect to the outward unit normal of $B_{4r}^{\angle}(y)$, the
meridional principal curvature of $\Sigma_o$ is
\begin{equation}
\frac{\cot(4r)}{\cot r}>0,                \label{eq:unified-base-3-77}
\end{equation}
and every vertical principal curvature is
\begin{equation}
\frac1{\cot r}\frac{\mathcal R'(t)}{\mathcal R(t)}
       \frac{\sin t\cos z}{\sin(4r)}>0.                           \label{eq:unified-base-3-78}
\end{equation}
At the two points where the $\mathbb S^k$-factor collapses, the
expression in \eqref{eq:unified-base-3-78} has limit
\eqref{eq:unified-base-3-77}.  Thus $\Sigma_o$ is smooth and strictly
convex.

The ambient metric is invariant under the involution
\begin{equation}
\iota_2(t,z,(p_1,p_2))=(t,-z,(p_1,-p_2)),\qquad
 \iota_3(t,z,p)=(t,-z,-p),                                      \label{eq:unified-base-3-79}
\end{equation}
where $(p_1,p_2)\in\mathbb S^1$ in the first formula and
$p\in\mathbb S^2$ in the second.  This isometry fixes $y$, preserves
$\Sigma_o$, and exchanges $\Sigma_+$ with $\Sigma_-$.

Take two isometric copies of the neck
$([0,\ell]\times\mathbb S^m,g_{\mathrm{neck},m})$ from
Proposition~\ref{prop:unified-neck}.  Identify their $s=\ell$
boundaries with $\Sigma_+$ and $\Sigma_-$ using the latitude coordinate
in \eqref{eq:unified-base-3-12}, and choose the second identification to
be the image of the first under $\iota_m$.  Along each $\Sigma_\pm$, the
outward second fundamental form of $\Omega_r$ is at least
$-g_{\mathrm{ang},m}$: its outward normal is the negative of the normal
pointing out of the removed angular ball.  At the $s=\ell$ end of each
neck, the outward second fundamental form is strictly greater than
$g_{\mathrm{ang},m}$.  The sum of the two forms is positive definite.

Apply Lemma~\ref{lem:relative-gluing-unified} simultaneously at the two
boundary components.  Attaching the two neck cylinders does not change
the diffeomorphism type, so transport the resulting
Ricci-positive metric to the fixed manifold $P_m$.  We denote this
metric by $h_m$, identify $\Sigma_o$ with $\partial_oP_m$, and identify
the two exposed $s=0$ ends with $I_+$ and $I_-$.  The metric remains
unchanged near all three exposed boundary components.  Equivariance in
the gluing lemma transports $\iota_m$ to an isometric involution
$\sigma_m$ of $P_m$ which exchanges the two necks and hence exchanges
$I_+$ and $I_-$.

At each exposed inner end the radius is $\rho/\lambda$ and the outward
principal curvatures are $-\lambda$.  Multiply the entire metric by
\begin{equation}
c^2,\qquad
                         c=\frac{R_*\lambda}{\rho}.                \label{eq:unified-base-3-80}
\end{equation}
Under this homothety, lengths are multiplied by $c$ and principal
curvatures by $c^{-1}$.  Hence both inner radii become $R_*$, while
every inner principal curvature becomes
\[
 -\frac{\lambda}{c}=-\frac{\rho}{R_*}>-\varepsilon_*
\]
by \eqref{eq:unified-base-3-75}.  This proves
\eqref{eq:unified-base-3-73}.  The outer boundary remains strictly
convex.

It remains to make the outer metric exactly round.  The sectional
curvatures of the ambient metric near $\Sigma_o$ and the principal
curvatures of $\Sigma_o$ are positive.  The Gauss equation shows that
the metric induced on $\partial_oP_m=\Sigma_o$ has positive sectional
curvature.  Apply Lemma~\ref{lem:unified-round-path} to this metric and
the order-two group generated by $\sigma_m|_{\partial_oP_m}$.  This
gives a $\sigma_m$-invariant Ricci-positive path from the induced metric
to a round metric.

Apply Theorem~\ref{thm:path-cylinder-unified} to this path, choosing
$\delta_{\mathrm{cyl}}>0$ smaller than the least principal curvature of
$\partial_oP_m$.  The second fundamental form at the initial boundary
of the resulting cylinder is greater than
$-\delta_{\mathrm{cyl}}g_0$.  Its sum with the outward second fundamental form of
$\partial_oP_m$ is therefore positive definite.  Lemma
\ref{lem:relative-gluing-unified} attaches this cylinder to
$\partial_oP_m$.  Both constructions are equivariant, so $\sigma_m$
extends isometrically across the cylinder.  The gluing leaves the metric
unchanged near $I_+\sqcup I_-$, and its new outer boundary is round and
strictly convex.  Attaching this cylinder does not change the diffeomorphism type of $P_m$.

Finally, we identify the involution on the round outer boundary.  At
the fixed point $y=o_0$, the tangent space splits naturally as
\[
                         T_y\cong\mathbb R_z\oplus\mathbb R^m,
\]
and
\begin{equation}
d\iota_2|_y=\operatorname{diag}(-1,1,-1),\qquad
 d\iota_3|_y=-I_4.                                                \label{eq:unified-base-3-81}
\end{equation}
The first map is conjugate to $(-I_2)\oplus(1)$, and the second is
$-I_4$; by \eqref{eq:unified-base-3-1}, these are the linear models of
$\exp(\pi K_2)$ and $\exp(\pi K_3)$.  Since $\Sigma_o$ is the
$g_{\mathrm{amb},m}$-distance sphere of radius $4r\cot r$ centered at
$y$, naturality of the exponential map gives
\[
 \iota_m(\exp_y v)=\exp_y(d\iota_m|_y v).
\]
Thus the involution on the original outer boundary is conjugate to
$\exp(\pi K_m)$.

The outer cylinder does not change the underlying action on
$\mathbb S^m$, while its terminal metric is round.  The terminal action
is therefore an orthogonal involution with the same fixed-point set:
two points for $m=2$ and the empty set for $m=3$.  Its $+1$-eigenspace
has dimension one in the first case and dimension zero in the second.
After choosing an orthogonal identification of the terminal round sphere
with the unit round sphere, the
action is $(-I_2)\oplus(1)$ for $m=2$ and $-I_4$ for $m=3$.  This proves
\eqref{eq:unified-base-3-74} and completes the construction.
\end{proof}

\begin{remark}
\label{rem:unified-differences}
In this construction, the dependence on
the dimension is confined to the following four points, all determined
by the multiplicity $k=m-1$.
\begin{enumerate}[label=\textup{(\arabic*)}]
\item A two-plane tangent to the $\mathbb S^k$-factor exists only when
$m=3$.  Its sectional curvature in the ambient metric, the angular
sphere, the slice metric, and the neck metric is given, respectively,
by \eqref{eq:unified-base-3-18}, \eqref{eq:unified-base-3-21},
\eqref{eq:unified-base-3-30}, and \eqref{eq:unified-base-3-42}.
\item The waist hypothesis is $w<\rho^2$ for $m=2$ and
$w^2<\rho^3$ for $m=3$.  In both cases it says exactly
$\alpha_0<m$.
\item The mixed Ricci component carries the factor $k$, while the
leading coefficient in \eqref{eq:unified-base-3-61} is
$m-\alpha u\eta/f_u$.  The inequality $\alpha<m$ makes this coefficient
positive in both dimensions and yields the determinant estimate
\eqref{eq:unified-base-3-63}.
\item The involution on the collapsing factor is a reflection of
$\mathbb S^1$ for $m=2$ and the antipodal map of $\mathbb S^2$ for
$m=3$.  The induced maps on the outer sphere are, respectively,
$\exp(\pi K_2)$ and $\exp(\pi K_3)$.  Lemma
\ref{lem:unified-round-path} uses Hamilton's surface Ricci-flow theorem
\cite{Hamilton1988Surfaces} in dimension two and his
three-dimensional convergence theorem \cite{Hamilton1982} in dimension
three.
\end{enumerate}
No other step of the local ambient construction, neck interpolation,
smoothing argument, rescaling, or directed-base argument depends on the
dimension.
\end{remark}

\section{The Ricci-positive screw quotient}
\label{sec:unified-cable}

This section constructs the screw quotient $W_m$ defined in (\ref{def:screw-quotient}) from the directed base of
Theorem~\ref{thm:unified-directed-base}.  A circle warp first supplies
the required boundary second fundamental forms.  Taking the quotient
then preserves the outgoing boundary and combines the two incoming
boundary components into one.  We
then identify both quotient boundaries with
$\mathbb S^1\times\mathbb S^m$ and compute their metrics and circle
classes explicitly.

Fix \(m\in\{2,3\}\), \(R_*>0\), and \(\varepsilon_*>0\).  Let
\((P_m,h_m,\sigma_m)\) be supplied by
Theorem~\ref{thm:unified-directed-base}.  Thus
\[
             \partial P_m=\partial_oP_m\sqcup I_+\sqcup I_-,
\]
the involution $\sigma_m$ exchanges $I_+$ and $I_-$, and the two incoming
boundary components are round $m$-spheres of radius $R_*$ satisfying
\[
 \II_{I_\pm}>-\varepsilon_*h_m|_{I_\pm}
\]
with respect to the outward unit normal.  Denote the radius of the
strictly convex round boundary $\partial_oP_m$ by $R_o$.  Under the
round identification in Theorem~\ref{thm:unified-directed-base},
\[
 \sigma_m|_{\partial_oP_m}=\exp(\pi K_m).
\]

\subsection{The invariant circle warp}

The boundary values of the following warping function determine the
circle lengths in the quotient, while its outward logarithmic
derivatives determine the circle-direction components of the second
fundamental forms.

\begin{lemma}\label{lem:un-warp}
Let \((P_m,h_m,\sigma_m)\) be furnished by
Theorem~\ref{thm:unified-directed-base}.  Given \(F>0\) and
\(\delta_*>0\), there is a positive \(\sigma_m\)-invariant function \(f\)
and a constant \(\alpha>0\)
such that
\[
                         G=h_m+f^2\dd t^2
\]
has \(\Ric_G>0\), and
\begin{align}
 f|_{\partial_oP_m}&=F(1+\alpha),&
 \partial_\nu\log f|_{\partial_oP_m}&>0,
 \label{eq:un-warp-outer}\\
 f|_{I_\pm}&=F,&
 \partial_\nu\log f|_{I_\pm}&>-\delta_*.
 \label{eq:un-warp-inner}
\end{align}
Moreover, if \(X,Y\) are tangent to a boundary component of \(P_m\),
then the second fundamental form of the
corresponding boundary of \((\mathbb R\times P_m,G)\), with respect to
the lift of the outward unit normal \(\nu\), satisfies
\begin{equation}\label{eq:un-warp-second-fundamental-form}
 \II_G(X,Y)=\II_{h_m}(X,Y),\qquad
 \II_G(f^{-1}\partial_t,f^{-1}\partial_t)=\partial_\nu\log f,
 \qquad \II_G(X,f^{-1}\partial_t)=0.
\end{equation}
\end{lemma}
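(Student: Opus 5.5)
The plan is to take $f$ as a small $\sigma_m$-invariant perturbation of the constant $F$, namely $f=F(1+\alpha u)$, where $u$ is a superharmonic function on $(P_m,h_m)$ with prescribed boundary values. The curvature of the warped product is then controlled by the standard formulas for $G=h_m+f^2\dd t^2$. Since $f$ is independent of $t$, for $X,Y$ tangent to $P_m$ these formulas read
\[
 \Ric_G(X,Y)=\Ric_{h_m}(X,Y)-\frac{\Hess_{h_m}f(X,Y)}{f},\qquad
 \Ric_G(X,\partial_t)=0,\qquad
 \Ric_G(f^{-1}\partial_t,f^{-1}\partial_t)=-\frac{\Delta_{h_m}f}{f}.
\]
One can also read these off from Lemma~\ref{lem:gaussian-ricci-unified}, viewing $t$-slices, or from O'Neill's formulas. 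So $\Ric_G>0$ reduces to two conditions: $\Hess f<f\Ric_{h_m}$ and $\Delta f<0$. Compactness of $P_m$ gives $\Ric_{h_m}\ge\mu h_m$ for some $\mu>0$.

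To construct $u$, I first let $v$ be the $h_m$-harmonic function with $v=1$ on $\partial_oP_m$ and $v=0$ on $I_+\sqcup I_-$. I then let $w$ solve $\Delta_{h_m}w=-1$ with $w=0$ on all of $\partial P_m$. Both exist and are smooth up to the boundary by standard elliptic theory. Both are $\sigma_m$-invariant, since $\sigma_m$ is an isometry preserving the boundary data and the Dirichlet solutions are unique.

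By the maximum principle $0\le v\le1$, and by the Hopf lemma $\partial_\nu v>0$ on $\partial_oP_m$, where $v$ attains its maximum. By compactness, $\partial_\nu v\ge c>0$ there. I set $u=v+\eta w$ with $\eta>0$ so small that $\eta\sup_{\partial_oP_m}|\partial_\nu w|<c/2$. Then $u$ has the same boundary values as $v$, satisfies $\Delta u=-\eta<0$, and has $\partial_\nu u>0$ on the outer boundary.

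With $f=F(1+\alpha u)$, the boundary values in \eqref{eq:un-warp-outer}--\eqref{eq:un-warp-inner} hold exactly, and $\partial_\nu\log f=\alpha\partial_\nu u/(1+\alpha u)$. This is positive on $\partial_oP_m$. On $I_\pm$ it equals $\alpha\,\partial_\nu u$, which exceeds $-\delta_*$ once $\alpha<\delta_*/(1+\sup|\nabla u|)$. Next, $\Delta f=-F\alpha\eta<0$, so the $\partial_t$ component is positive. Since $f\ge F$ (because $u\ge0$, as $u\ge v\ge 0$ with $w\ge0$), we have $\Hess f/f\le\alpha\|\Hess u\|_{C^0}$. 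This is below $\mu/2$ for $\alpha$ small. With the mixed term vanishing, $\Ric_G>0$ follows.

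The step needing the most care is the simultaneous requirement of strict superharmonicity and strict positivity of the outer normal derivative. The perturbation $v+\eta w$ handles this, and the obstacle is essentially only bookkeeping of the order of choices: first $\eta$, then $\alpha$.

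Finally, for \eqref{eq:un-warp-second-fundamental-form}, I lift $\nu$ to be $t$-independent. It remains a unit normal, since the boundary of $\mathbb R\times P_m$ is $\mathbb R\times\partial P_m$ and $G$ is orthogonal. The Koszul formula gives $\nabla^G_X\nu=\nabla^{h_m}_X\nu$ for $X$ tangent to $P_m$, and $\nabla^G_{\partial_t}\nu=(\nu\log f)\,\partial_t$. Pairing with $Y$ and with $f^{-1}\partial_t$ yields the three identities: the $P_m$-block is $\II_{h_m}$, the circle entry is $\partial_\nu\log f$, and the mixed terms vanish because $G(\partial_t,Y)=0$.
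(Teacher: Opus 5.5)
Your proposal is correct and is essentially the paper's proof. The paper also takes $f=F(1+\alpha\psi+\alpha^2\chi)$, where $\psi$ is harmonic with $\psi=1$ on $\partial_oP_m$ and $\psi=0$ on $I_\pm$, and $\chi$ solves $\Delta_{h_m}\chi=-1$ with $\chi|_{\partial P_m}=0$; it uses the same uniqueness, Hopf-lemma and warped-product arguments, and the only cosmetic difference is that the paper couples your two small parameters as $\eta=\alpha$ rather than fixing $\eta$ first and then shrinking $\alpha$.
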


\begin{proof}
Let \(\psi\) be the solution of
\[
 \Delta_{h_m}\psi=0,\quad \psi|_{\partial_oP_m}=1,
 \quad \psi|_{I_+\cup I_-}=0,
\]
and let \(\chi\) be the solution of
\[
 \Delta_{h_m}\chi=-1,\quad \chi|_{\partial P_m}=0.
\]
Since \(\sigma_m\) is an isometry, preserves \(\partial_oP_m\), and
interchanges \(I_+\) and \(I_-\), the functions
\(\psi\circ\sigma_m\) and \(\chi\circ\sigma_m\) solve the same Dirichlet
problems as \(\psi\) and \(\chi\), respectively.  Uniqueness therefore gives
\[
                       \psi\circ\sigma_m=\psi,
                 \qquad \chi\circ\sigma_m=\chi.
\]
The strong maximum principle gives \(0<\psi<1\) and \(\chi>0\) in
\(P_m^\circ\).  Applying the Hopf boundary lemma at the maximum and
minimum boundary components of \(\psi\), and at the boundary minimum of
\(\chi\), gives
\[
 \psi_\nu>0\ \hbox{on }\partial_oP_m,\qquad
 \psi_\nu<0\ \hbox{on }I_\pm,\qquad
 \chi_\nu<0\ \hbox{on }\partial P_m.
\]

For \(\alpha>0\), define
\[
                   f=F(1+\alpha\psi+\alpha^2\chi).
\]
This function is positive and \(\sigma_m\)-invariant.  Since \(\chi=0\) on
\(\partial P_m\), its boundary values are
\[
 f|_{\partial_oP_m}=F(1+\alpha),
 \qquad f|_{I_\pm}=F.
\]
On the outer boundary,
\[
 \partial_\nu\log f
 =\frac{\alpha(\psi_\nu+\alpha\chi_\nu)}{1+\alpha}.
\]
Because \(\psi_\nu\) has a positive minimum and \(\chi_\nu\) is bounded on
the compact boundary \(\partial_oP_m\), this expression is positive for
all sufficiently small \(\alpha>0\).
On either inner boundary,
\[
 \partial_\nu\log f
 =\alpha\psi_\nu+\alpha^2\chi_\nu.
\]
The last expression converges uniformly to zero as \(\alpha\to0\).
Thus, after decreasing \(\alpha\), it is greater than \(-\delta_*\) on
both \(I_+\) and \(I_-\).

It remains to verify the Ricci inequality.  For vectors \(X,Y\) tangent
to \(P_m\), the Ricci tensor of the one-dimensional warped product is
\begin{align*}
 \Ric_G(X,Y)
 &=\Ric_{h_m}(X,Y)-f^{-1}\Hess_{h_m}f(X,Y),\\
 \Ric_G(X,f^{-1}\partial_t)&=0,\\
 \Ric_G(f^{-1}\partial_t,f^{-1}\partial_t)
 &=-f^{-1}\Delta_{h_m}f.
\end{align*}
Here
\[
 \Hess_{h_m}f=F\alpha
       \bigl(\Hess_{h_m}\psi+\alpha\Hess_{h_m}\chi\bigr),
 \qquad
 -\Delta_{h_m}f=F\alpha^2>0.
\]
Because \(P_m\) is compact and \(\Ric_{h_m}>0\), the first displayed
Ricci expression is positive definite for all sufficiently small
\(\alpha>0\).  The vertical expression is strictly positive, and the
mixed expression vanishes.  Hence \(\Ric_G>0\).

Finally, the standard Levi--Civita formulas for a one-dimensional
warped product give
\[
 \nabla_X\nu=\nabla^{h_m}_X\nu,
 \qquad
 \nabla_{f^{-1}\partial_t}\nu
   =(\partial_\nu\log f)f^{-1}\partial_t.
\]
Their tangential components are exactly the three identities in
\eqref{eq:un-warp-second-fundamental-form}.
\end{proof}

\subsection{The screw quotient and its product parametrization of the boundary}

Fix $T>0$, and let $\mathcal T$ and $W_m$ be the isometry and screw
quotient defined in (\ref{def:screw-quotient}).  Equip $W_m$ with the metric
induced by $G$.  The transformation
$\mathcal T$ is an isometry because $h_m$ and $f$ are
$\sigma_m$-invariant.  On $\mathbb R\times\partial_oP_m$ its monodromy
is $\exp(\pi K_m)$.  It exchanges
$\mathbb R\times I_+$ and $\mathbb R\times I_-$, and its first return to
$\mathbb R\times I_+$ is
\[
 \mathcal T^2(t,x)=(t+2T,x).
\]
Consequently the quotient has one outer and one inner boundary
component; the outer circle has period $T$, whereas the inner circle
has period $2T$.  The following product metrics make this distinction
precise.

For \(\ell,\rho>0\) and \(\omega\in\mathbb R\), define a metric
\(g_{\ell,\rho,\omega}\) on
\(\mathbb S^1\times\mathbb S^m\), where \(\theta\) has period \(2\pi\),
by prescribing the squared norm of each tangent vector:
\begin{equation}\label{eq:un-boundary-metric-definition}
 g_{\ell,\rho,\omega}(b\partial_\theta+Y,b\partial_\theta+Y)
 :=\ell^2b^2+\rho^2
   \lvert Y+b\omega K_m(y)\rvert_{g_{\mathbb S^m}}^2,
 \qquad Y\in T_y\mathbb S^m.
\end{equation}
This definition is global even when \(K_m\) has zeros.

\begin{lemma}
\label{lem:un-boundary-components}
The boundary of \(W_m\) is the disjoint union
\[
                         \partial W_m=\partial_oW_m\sqcup\partial_iW_m,
\]
where
\begin{align*}
 \partial_oW_m
 &:=(\mathbb R\times\partial_oP_m)/
   \bigl((t,x)\sim(t+T,\sigma_mx)\bigr),\\
 \partial_iW_m
 &:=(\mathbb R\times(I_+\sqcup I_-))/
   \bigl((t,x)\sim(t+T,\sigma_mx)\bigr).
\end{align*}
Use the round identifications
\(\partial_oP_m\cong\mathbb S^m_{R_o}\) and
\(I_+\cong\mathbb S^m_{R_*}\) fixed above.  Then the maps
\begin{align}
 \Phi_o:\mathbb S^1\times\mathbb S^m&\longrightarrow\partial_oW_m,
 &\Phi_o(\theta,y)
   &:=\left[\frac{T\theta}{2\pi},
       \exp\left(\frac\theta2K_m\right)y\right],
 \label{eq:un-outer-product-map}\\
 \Phi_i:\mathbb S^1\times\mathbb S^m&\longrightarrow\partial_iW_m,
 &\Phi_i(\theta,y)&:=\left[\frac{T\theta}{\pi},y\right]
 \label{eq:un-inner-product-map}
\end{align}
are diffeomorphisms.  They are the boundary product markings of $W_m$.
Moreover,
\begin{align}
 \Phi_o^*(G|_{\partial_oW_m})&=g_{\ell_o,R_o,1/2},&
 \ell_o&=\frac{TF(1+\alpha)}{2\pi},
 \label{eq:un-outer-boundary-metric}\\
 \Phi_i^*(G|_{\partial_iW_m})&=g_{\ell_i,R_*,0},&
 \ell_i&=\frac{2TF}{2\pi}.
 \label{eq:un-inner-boundary-metric}
\end{align}
In particular,
\begin{equation}\label{eq:un-axial-ratio}
                         \ell_o=\frac{1+\alpha}{2}\ell_i.
\end{equation}
Finally, \(\pi_1(W_m)\cong\mathbb Z\).  For any \(y\in\mathbb S^m\),
the loop \(\theta\mapsto\Phi_o(\theta,y)\) represents a generator, while
\(\theta\mapsto\Phi_i(\theta,y)\) represents the square of that
generator.
\end{lemma}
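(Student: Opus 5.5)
The proof splits into four steps: the boundary decomposition, the two product maps, the pulled-back metrics, and the fundamental group.

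\emph{Boundary decomposition.} Since $\langle\mathcal T\rangle\cong\mathbb Z$ acts freely and properly discontinuously on $\mathbb R\times P_m$, the boundary of $W_m$ is the image of $\mathbb R\times\partial P_m$. This set splits into the $\mathcal T$-invariant pieces $\mathbb R\times\partial_oP_m$ and $\mathbb R\times(I_+\sqcup I_-)$. The map $\mathcal T$ swaps $\mathbb R\times I_\pm$, so the second piece has connected quotient. That quotient is identified with $(\mathbb R\times I_+)/\langle\mathcal T^2\rangle=(\mathbb R/2T\mathbb Z)\times I_+$.

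\emph{The product maps $\Phi_o,\Phi_i$.} For $\Phi_i$, this identification makes it the obvious diffeomorphism $\theta\mapsto T\theta/\pi\in\mathbb R/2T\mathbb Z$. For $\Phi_o$, first check well-definedness. Replacing $\theta$ by $\theta+2\pi$ gives $[t+T,\exp(\pi K_m)\exp(\tfrac\theta2K_m)y]$, which equals $\mathcal T$ applied to the original point because $\sigma_m|_{\partial_oP_m}=\exp(\pi K_m)$ commutes with $\exp(\tfrac\theta2 K_m)$. Next, the lift $(\theta,y)\mapsto(T\theta/2\pi,\exp(\tfrac\theta2K_m)y)$ is a diffeomorphism $\mathbb R\times\mathbb S^m\to\mathbb R\times\partial_oP_m$ that intertwines the deck transformations. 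It therefore descends to a diffeomorphism.

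\emph{The boundary metrics.} The metric $G$ restricted to $\mathbb R\times\partial_oP_m$ is $R_o^2g_{\mathbb S^m}+F^2(1+\alpha)^2\dd t^2$, since $f$ is constant there. A tangent vector $b\partial_\theta+Y$ maps under $d\Phi_o$ to $\tfrac{bT}{2\pi}\partial_t+d\exp(\tfrac\theta2K_m)(Y+\tfrac b2K_m(y))$. The map $\exp(\tfrac\theta2K_m)$ is an isometry, so the squared norm is $\ell_o^2b^2+R_o^2|Y+\tfrac b2K_my|^2$, which is $g_{\ell_o,R_o,1/2}$. The inner boundary is identical with $\omega=0$, $f=F$, and $\tfrac{\dd t}{\dd\theta}=T/\pi$, giving $\ell_i=TF/\pi$. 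The ratio \eqref{eq:un-axial-ratio} then follows by division.

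\emph{Fundamental group.} Part (iv) of Proposition~\ref{prop:double-cable} shows that $W_m$ is diffeomorphic to $C_m$. Concretely, $W_m$ is the mapping torus of $\sigma_m$, and $\sigma_m$ is isotopic to $\phi=R_\pi|_{P_m}$ through diffeomorphisms preserving the boundary pieces.

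This identification is the main obstacle: I must ensure that, under the diffeomorphism $P_m\cong$ model from Theorem~\ref{thm:unified-directed-base}, $\sigma_m$ corresponds to $\phi$ up to isotopy. I would avoid this and instead argue directly. The base $P_m$ is simply connected, since $m\ge2$ and $P_m$ is a disk minus two disks. The mapping torus fibration $P_m\to W_m\to\mathbb R/T\mathbb Z$ then gives $\pi_1(W_m)\cong\mathbb Z$ via the projection $p:[t,x]\mapsto t$. Under $p_*$, the outer loop $\theta\mapsto\Phi_o(\theta,y)$ has $t$ running over $[0,T]$, so its degree is $1$ and it is a generator. The inner loop runs over $[0,2T]$, so its degree is $2$; as in Proposition~\ref{prop:double-cable}(iii), abelianness removes basepoint dependence, and it represents the square of the generator.
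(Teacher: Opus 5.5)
Your proposal is correct and follows essentially the same route as the paper: the same boundary decomposition, the same well-definedness check and differential computation for $\Phi_o$ and $\Phi_i$ (using that $f$ is constant on each boundary piece), and the same fibration over $\mathbb R/T\mathbb Z$ with fiber $P_m$, where $\pi_1(P_m)=0$ lets you read off degrees $1$ and $2$ for the outer and inner loops. The two differences are cosmetic: you obtain $\Phi_o$ by descending an equivariant lift instead of writing its inverse explicitly, and you rightly drop the detour through Proposition~\ref{prop:double-cable}(iv), which the paper does not use either.
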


\begin{proof}
The action of \(\mathbb Z\) on \(\mathbb R\times P_m\) generated by
\((t,x)\mapsto(t+T,\sigma_m x)\) is free and properly discontinuous.
A fundamental domain is \([0,T]\times P_m\), with
\((T,x)\) identified with \((0,\sigma_m x)\).  Thus \(W_m\) is compact,
and the Ricci-positive metric \(G\) descends to it.

The action preserves \(\mathbb R\times\partial_oP_m\) and preserves
\(\mathbb R\times(I_+\sqcup I_-)\), while interchanging the two summands
in the latter set.  Taking their quotients gives precisely the two
boundary components stated in the lemma.

We first verify \eqref{eq:un-outer-product-map}.  Replacing \(\theta\) by
\(\theta+2\pi\)
changes the displayed representative to
\[
 \left[\frac{T\theta}{2\pi}+T,
   \sigma_m\exp\left(\frac\theta2K_m\right)y\right],
\]
which represents the same point in the quotient.  Conversely, every
class in \(\partial_oW_m\) has a representative with \(0\le t<T\);
setting \(\theta=2\pi t/T\) and applying
\(\exp(-\theta K_m/2)\) to its sphere coordinate gives the inverse of
\(\Phi_o\).  Hence \(\Phi_o\) is a diffeomorphism.

For tangent vectors \(b\partial_\theta+Y\) and
\(c\partial_\theta+Z\), the differential of
\(\Phi_o\), after applying the inverse of the
sphere isometry \(\exp(\theta K_m/2)\), gives
\[
 \frac{Tb}{2\pi}\partial_t+Y+\frac b2K_m,
 \qquad
 \frac{Tc}{2\pi}\partial_t+Z+\frac c2K_m.
\]
On \(\partial_oP_m\), Lemma~\ref{lem:un-warp} gives
\(f=F(1+\alpha)\), while the sphere metric is
\(R_o^2g_{\mathbb S^m}\).  The pullback of \(G\) is therefore
\[
 \left(\frac{TF(1+\alpha)}{2\pi}\right)^2bc
 +R_o^2g_{\mathbb S^m}
   \left(Y+\frac b2K_m,Z+\frac c2K_m\right),
\]
which proves \eqref{eq:un-outer-boundary-metric}.

For the inner boundary, fix the round identification
\(I_+\cong\mathbb S^m_{R_*}\), and use \(\sigma_m:I_+\to I_-\) for the
corresponding identification of \(I_-\).  Every equivalence class in
the quotient of \(\mathbb R\times(I_+\sqcup I_-)\) has a representative
in \(\mathbb R\times I_+\).  Two representatives in
\(\mathbb R\times I_+\) determine the same class precisely when their
first coordinates differ by an even multiple of \(T\), because one
application of the defining identification moves \(I_+\) to \(I_-\),
whereas two applications return to \(I_+\).  Thus \(\Phi_i\) in
\eqref{eq:un-inner-product-map} is a well-defined
diffeomorphism.  Since \(f=F\) and the sphere metric is
\(R_*^2g_{\mathbb S^m}\) on \(I_+\), its pullback metric is
\[
 \left(\frac{TF}{\pi}\right)^2\dd\theta^2
   +R_*^2g_{\mathbb S^m}=g_{\ell_i,R_*,0},
 \qquad \ell_i=\frac{2TF}{2\pi}.
\]
This proves \eqref{eq:un-inner-boundary-metric}, and
\eqref{eq:un-axial-ratio} follows immediately.

It remains to identify the two circle classes.  By the definition of
the fixed model $P_m$ in Section~\ref{sec:double-cable}, attaching an
\((m+1)\)-disk to each of \(I_+\) and \(I_-\) produces $D^{m+1}$.
Since \(m\ge2\), the attaching spheres \(I_+\) and
\(I_-\) are simply connected.  Applying the van Kampen theorem
at the two attachments therefore shows that
\(\pi_1(P_m)=0\).

The quotient map makes \(W_m\) a fiber bundle over the circle, with
projection
\[
 W_m\longrightarrow\mathbb R/T\mathbb Z,
 \qquad [t,x]\longmapsto[t],
\]
and fiber \(P_m\).  Its homotopy exact sequence gives an isomorphism
\(\pi_1(W_m)\cong\pi_1(\mathbb R/T\mathbb Z)\cong\mathbb Z\).  Under
\(\Phi_o\), the loop obtained by holding \(y\) fixed projects once around
\(\mathbb R/T\mathbb Z\), and hence represents a generator.  Under
\(\Phi_i\), the corresponding loop changes \(t\) by \(2T\), so it projects
twice around the base and represents the square of the first loop.
\end{proof}

\section{The holonomy connector}
\label{sec:holonomy-connector}

Fix \(m\in\{2,3\}\).  We now construct the holonomy connector $\mathcal C$ defined in (\ref{def:holonomy-connector}) and prove that its metric has positive Ricci
curvature.  It joins the twisted outer boundary of the screw quotient to
the untwisted product boundary required by the next block.  By
Lemma~\ref{lem:un-boundary-components}, the outer boundary of
\(W_m\) has metric \(g_{\ell_o,R_o,1/2}\).  The product identification
\(\Phi_o\) reduces the geometric problem to a path in the family
\(g_{\ell,\rho,\omega}\) on
\(\mathbb S^1\times\mathbb S^m\): the functions \(\ell\) and \(\rho\)
specify the circle and sphere scales, while \(\omega\) specifies the
holonomy.

We require the twist to change from \(\omega(0)=1/2\) to
\(\omega(s_*)=0\), while the sphere radius returns to its initial value
at \(s=s_*\).  We first compute the Ricci tensor and the two boundary
second fundamental forms exactly.  We then choose \(\omega\) by a
weighted least-energy principle, which eliminates the axial--sphere
Ricci components, and choose \(\ell\) and \(\rho\) so that the positive
axial and spherical terms dominate the twisting errors.  Finally, we
verify the second fundamental form inequality needed to attach the
left boundary of $\mathcal C$ to \(\partial_oW_m\).

Let \(s_*>0\), let \(\ell,\rho:[0,s_*]\to(0,\infty)\), and let
\(\omega:[0,s_*]\to\mathbb R\) be smooth.  Set
\[
 \mathcal C:=[0,s_*]\times\mathbb S^1\times\mathbb S^m
\]
and define on $\mathcal C$
\begin{equation}\label{eq:un-connector}
 g_{\mathcal C}=\dd s^2+g_{\ell(s),\rho(s),\omega(s)}.
\end{equation}
When
\[
 \omega(0)=\frac12,\qquad \omega(s_*)=0,
 \qquad \rho(s_*)=\rho(0),
\]
we call $(\mathcal C,g_{\mathcal C})$ a \emph{holonomy connector}: its
left boundary has half-turn holonomy, its right boundary is an untwisted
product, and the sphere radii of the two boundary metrics agree.
Thus, for \(Y\in T_y\mathbb S^m\),
\[
 g_{\mathcal C}(b\partial_\theta+Y,b\partial_\theta+Y)
 =\ell^2b^2+\rho^2
   \lvert Y+b\omega K_m(y)\rvert_{g_{\mathbb S^m}}^2.
\]
Define the logarithmic derivatives and the normalized twist rate by
\begin{equation}\label{eq:un-log-derivatives}
 \mathfrak p=\frac{\ell'}{\ell},
 \qquad \mathfrak q=\frac{\rho'}{\rho},
 \qquad \vartheta=\frac{\rho\omega'}{\ell}.
\end{equation}
Let \(U_1,\ldots,U_m\) be a local
\(g_{\mathbb S^m}\)-orthonormal frame.  The coordinate column of the
Killing field \(K_m\) in this frame is
\[
 K_{m,i}=g_{\mathbb S^m}(K_m,U_i),
 \qquad
 \mathbf K_m=(K_{m,1},\ldots,K_{m,m})^{\mathsf T}.
\]
Then
\(\lvert\mathbf K_m\rvert^2
=\lvert K_m\rvert_{g_{\mathbb S^m}}^2\le1\), and the following frame is
\(g_{\mathcal C}\)-orthonormal:
\begin{equation}\label{eq:un-adapted-frame}
 E_0=\partial_s,
 \qquad
 E_1=\ell^{-1}(\partial_\theta-\omega K_m),
 \qquad
 E_{i+1}=\rho^{-1}U_i\quad(1\le i\le m).
\end{equation}

\begin{lemma}
\label{lem:un-ricci}
Write
\(\Ric_{\alpha\beta}:=\Ric_{g_{\mathcal C}}(E_\alpha,E_\beta)\).  In the frame
\eqref{eq:un-adapted-frame}, the potentially nonzero components not
determined by symmetry are
\begin{align}
 \Ric_{00}
  &=-(\mathfrak p'+\mathfrak p^2)-m(\mathfrak q'+\mathfrak q^2)
    -\frac{\vartheta^2}{2}\lvert\mathbf K_m\rvert^2,
  \label{eq:un-ric00}\\
 \Ric_{11}
  &=-\bigl[\mathfrak p'+\mathfrak p(\mathfrak p+m\mathfrak q)\bigr]
    -\frac{\vartheta^2}{2}\lvert\mathbf K_m\rvert^2,
  \label{eq:un-ric11}\\
 \Ric_{i+1,j+1}
  &=\left\{\frac{m-1}{\rho^2}
    -\bigl[\mathfrak q'+\mathfrak q(\mathfrak p+m\mathfrak q)\bigr]\right\}\delta_{ij}
    +\frac{\vartheta^2}{2}K_{m,i}K_{m,j},
  \label{eq:un-ricij}\\
 \Ric_{1,i+1}
  &=-\frac12\bigl[\vartheta'+(m+1)\mathfrak q\vartheta\bigr]K_{m,i}
   =-\frac{K_{m,i}}{2\rho^{m+1}}
       \left(\frac{\rho^{m+2}\omega'}{\ell}\right)'.
  \label{eq:un-ric1i}
\end{align}
Here \(1\le i,j\le m\), and
\[
 \Ric_{0\alpha}=0\qquad(1\le\alpha\le m+1).
\]
With respect to the unit normal \(\partial_s\),
the second fundamental form of the hypersurface
\(\{s\}\times\mathbb S^1\times\mathbb S^m\) satisfies
\begin{align}
 \II_s(E_1,E_1)&=\mathfrak p,\nonumber\\
 \II_s(E_1,E_{i+1})&=\frac{\vartheta}{2}K_{m,i},
 \label{eq:un-shape}\\
 \II_s(E_{i+1},E_{j+1})&=\mathfrak q\delta_{ij}.\nonumber
\end{align}
These identities hold for every local orthonormal frame on
\(\mathbb S^m\), including at the zeros of \(K_m\), which occur when
\(m=2\).
\end{lemma}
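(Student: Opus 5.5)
The plan is to compute $\Ric_{g_{\mathcal C}}$ via Lemma~\ref{lem:gaussian-ricci-unified} applied to $g_{\mathcal C}=\dd s^2+h(s)$ with $h(s)=g_{\ell(s),\rho(s),\omega(s)}$. This reduces everything to three inputs: the intrinsic Ricci tensor of each slice metric $g_{\ell,\rho,\omega}$ on $\mathbb S^1\times\mathbb S^m$, the shape operator $L=\tfrac12\dot h$, and $\dot L$.

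\emph{Step 1: the shape operator.} The first step is to compute $\dot h$ in the frame \eqref{eq:un-adapted-frame}. Writing $h(b\partial_\theta+Y)=\ell^2b^2+\rho^2|Y+b\omega K_m|^2$ and differentiating in $s$ gives
\[
 \dot h=2\ell\ell'b^2+2\rho\rho'|Y+b\omega K_m|^2+2\rho^2\omega' b\,g_{\mathbb S^m}(Y+b\omega K_m,K_m).
\]
Evaluating on $E_1$ (so $b=\ell^{-1}$ and $Y=-\ell^{-1}\omega K_m$, hence $Y+b\omega K_m=0$) and on $E_{i+1}=\rho^{-1}U_i$ yields \eqref{eq:un-shape} directly: $\mathfrak p$ on $E_1$, $\mathfrak q\delta_{ij}$ on the sphere block, and cross term $\tfrac12\rho\omega'\ell^{-1}K_{m,i}=\tfrac{\vartheta}{2}K_{m,i}$. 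Because $g_{\mathbb S^m}(K_m,U_i)$ is defined pointwise, these identities hold at zeros of $K_m$ as well.

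\emph{Step 2: the intrinsic slice Ricci tensor.} For fixed $s$, $g_{\ell,\rho,\omega}$ is the product $\ell^2\dd\theta^2+\rho^2g_{\mathbb S^m}$ pulled back by the diffeomorphism $(\theta,y)\mapsto(\theta,\exp(\theta\omega K_m)y)$. This map is well defined only up to periodicity, but it is a local isometry, which suffices for curvature. So the intrinsic Ricci tensor is $\frac{m-1}{\rho^2}$ on the sphere block and zero otherwise, in the frame $E_1,E_{i+1}$; this works because $E_1$ corresponds to $\ell^{-1}\partial_\theta$ and $E_{i+1}$ to $\rho^{-1}$ times the transported $U_i$.

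\emph{Step 3: assembling the components.} Next I will plug into \eqref{eq:gaussian-normal}--\eqref{eq:gaussian-tangential}, or equivalently use $\Ric(X,Y)=\Ric_h-\dot L+2Lh^{-1}L-(\tr L)L$ with $\dot L$ computed in the moving orthonormal frame. Set $\operatorname{tr}L=\mathfrak p+m\mathfrak q$ and $|L|^2=\mathfrak p^2+m\mathfrak q^2+\tfrac{\vartheta^2}{2}|\mathbf K_m|^2$. Then $\Ric_{00}=-\tr_h\dot L_{\text{coord}}+\ldots$ becomes $-\partial_s\tr L-|L|^2$, which gives \eqref{eq:un-ric00} since $(\log\ell)''=\mathfrak p'$. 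For the tangential components I need $\dot L$ in coordinates, converted to the frame. The $E_1E_1$ entry gives $-\mathfrak p'-\mathfrak p^2+2(\mathfrak p^2+\tfrac{\vartheta^2}{4}|\mathbf K|^2)-\ldots$. This bookkeeping is where the quadratic $\vartheta^2$ terms either cancel or survive as $\mp\tfrac{\vartheta^2}{2}$, matching \eqref{eq:un-ric11} and \eqref{eq:un-ricij}.

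\emph{Main obstacle: the mixed components.} I expect the main obstacle to be the mixed terms. For $\Ric_{0\alpha}$ I will use the Codazzi formula \eqref{eq:gaussian-mixed}: $\tr L$ is constant on slices, and $\divsymb_hL$ vanishes because $L$ is built from $g_{\mathbb S^m}$, the Killing field $K_m$, and the parallel-on-slice direction. The relevant fact is that the divergence of $K_m^\flat$ is zero, and that $\nabla K_m^\flat$ is skew so its contraction against the symmetric parts vanishes. For $\Ric_{1,i+1}$, the term $-\dot L(E_1,E_{i+1})$ contributes $-\tfrac12(\vartheta'+\ldots)K_{m,i}$, the terms $2Lh^{-1}L-(\tr L)L$ contribute $-\tfrac12(\ldots)\mathfrak q\vartheta K_{m,i}$ combinations, and the intrinsic term contributes nothing. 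Carefully tracking the frame rescaling by $\ell$ and $\rho$ should produce the coefficient $(m+1)\mathfrak q\vartheta$. The final identity $\vartheta'+(m+1)\mathfrak q\vartheta=\rho^{-(m+1)}(\rho^{m+1}\vartheta)'$, together with $\rho^{m+1}\vartheta=\rho^{m+2}\omega'/\ell$, then gives \eqref{eq:un-ric1i}.
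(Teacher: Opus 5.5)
Your architecture matches the paper's proof: the shape operator obtained by differentiating $g_{\ell,\rho,\omega}$, the intrinsic Ricci tensor via the local isometry $(\theta,y)\mapsto(\theta,\exp(\omega\theta K_m)y)$ to the product, the Riccati and Gauss formulas, and Codazzi plus the Killing property for $\Ric_{0\alpha}$. The gap is in Step~3. All of the $\vartheta$-dependence of the lemma is decided there, and you leave it at ``should produce.'' The one piece you do write down is also wrong, and you need to fix it.

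\emph{Where it fails.} The identity $\Ric(X,Y)=\Ric_h-\dot L+2Lh^{-1}L-(\tr L)L$ holds only when $\dot L$ is evaluated on $s$-independent coordinate vectors. So your suggestion to compute $\dot L$ ``in the moving orthonormal frame'' does not work. The frame vector $E_1=\ell^{-1}(\partial_\theta-\omega K_m)$ depends on $s$ through both $\ell$ and $\omega$:
\[
\partial_sE_1=-\mathfrak pE_1-\vartheta\textstyle\sum_iK_{m,i}E_{i+1}.
\]
You can see the damage in your $E_1E_1$ line. There the slot for $-\dot L(E_1,E_1)$ is $-\mathfrak p'-\mathfrak p^2=-\ell''/\ell$. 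With that value you would get $\Ric_{11}=-\mathfrak p'-m\mathfrak p\mathfrak q+\frac{\vartheta^2}{2}\lvert\mathbf K_m\rvert^2$. This has the wrong sign on the twist term and is missing $-\mathfrak p^2$.

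\emph{Correct values.} Freeze the coordinate components at the given $s$. Then
\[
\dot L(E_1,E_1)=\mathfrak p'+2\mathfrak p^2+\vartheta^2\lvert\mathbf K_m\rvert^2,\qquad
\dot L(E_1,E_{i+1})=\tfrac12\bigl[\vartheta'+(\mathfrak p+3\mathfrak q)\vartheta\bigr]K_{m,i},\qquad
\dot L(E_{i+1},E_{j+1})=(\mathfrak q'+2\mathfrak q^2)\delta_{ij}.
\]
Substituting these gives \eqref{eq:un-ric11}--\eqref{eq:un-ric1i} directly. In the mixed entry the $\mathfrak p\vartheta$ terms cancel ($-\frac12+1-\frac12=0$). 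The $\mathfrak q\vartheta$ terms combine to $-\frac12(3-2+m)=-\frac{m+1}{2}$.

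\emph{How the paper does it.} It does the equivalent bookkeeping in the frame. It combines $\partial_sE_1$ and $\partial_sE_{i+1}=-\mathfrak qE_{i+1}$ with $\II_s^\sharp$ to get the normal rotation $\nabla_{E_0}E_1=-\frac{\vartheta}{2}\sum_iK_{m,i}E_{i+1}$ and $\nabla_{E_0}E_{i+1}=\frac{\vartheta}{2}K_{m,i}E_1$. From this it computes $\nabla_{E_0}\II_s$. It then uses $\Ric(X,Y)=\Ric_h(X,Y)-(\nabla_{E_0}\II_s)(X,Y)-(\tr\II_s)\II_s(X,Y)$. This rotation is exactly what produces $-\frac{\vartheta^2}{2}\lvert\mathbf K_m\rvert^2$ in $\Ric_{11}$ and $+\frac{\vartheta^2}{2}K_{m,i}K_{m,j}$ in the sphere block. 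Either version is fine, but one of them has to actually be carried out.

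\emph{Minor point on the Codazzi step.} Skewness of $\nabla K_m^\flat$ only gives $\divsymb K_m=0$. The other term of $\divsymb(\dd\theta\odot K_m^\flat)$ is a multiple of $\nabla_{E_1}K_m^\flat$. It vanishes for a different reason: $K_m$ is $\theta$-independent in the product picture. Equivalently, $E_1$ is parallel on the slice and orthogonal to the Killing field $K_m$, so $(\nabla_{E_1}K_m^\flat)(Y)=-(\nabla_YK_m^\flat)(E_1)=0$.
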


\begin{proof}
We fix \(s\).  Since the flow of \(K_m\) consists of isometries of the
unit round sphere, on every coordinate neighborhood the local
diffeomorphism
\[
 (\theta,y)\longmapsto
 \bigl(\theta,\exp(\omega(s)\theta K_m)y\bigr)
\]
pulls \(\ell^2\dd\theta^2+\rho^2g_{\mathbb S^m}\) back to
\(g_{\ell,\rho,\omega}\).  Therefore the intrinsic Ricci tensor of the
\(s\)-slice
satisfies
\begin{equation}\label{eq:un-slice-ricci}
 \Ric_{g_{\ell,\rho,\omega}}(E_1,E_1)=0,
 \qquad
 \Ric_{g_{\ell,\rho,\omega}}(E_{i+1},E_{j+1})
 =\frac{m-1}{\rho^2}\delta_{ij},
\end{equation}
and its axial--sphere components vanish.

In the Gaussian coordinate \(s\), the second fundamental form with
normal \(\partial_s\) is one half of the \(s\)-derivative of the slice
metric.  Evaluating this derivative on the frame
\eqref{eq:un-adapted-frame} gives
\begin{align*}
 \frac12(\partial_s g_{\ell,\rho,\omega})(E_1,E_1)
 &=\frac{\ell'}{\ell}=\mathfrak p,\\
 \frac12(\partial_s g_{\ell,\rho,\omega})(E_1,E_{i+1})
 =\frac{\rho\omega'}{2\ell}K_{m,i}
 &=\frac{\vartheta}{2}K_{m,i},\\
 \frac12(\partial_s g_{\ell,\rho,\omega})(E_{i+1},E_{j+1})
 =\frac{\rho'}{\rho}\delta_{ij}
 &=\mathfrak q\delta_{ij},
\end{align*}
which proves \eqref{eq:un-shape}.  The Gaussian-coordinate identity
\(\nabla_{\partial_s}X=\partial_sX+\II_s^\sharp X\) for tangential
vector fields will now be applied to the frame
\eqref{eq:un-adapted-frame}.  Extend \(U_i\) independently of \(s\).
Then
\[
 \partial_sE_1=-\mathfrak p E_1-\vartheta\sum_i K_{m,i}E_{i+1},
 \qquad
 \partial_sE_{i+1}=-\mathfrak q E_{i+1},
\]
whereas \eqref{eq:un-shape} gives
\[
 \II_s^\sharp E_1=\mathfrak p E_1+\frac{\vartheta}{2}
   \sum_i K_{m,i}E_{i+1},
 \qquad
 \II_s^\sharp E_{i+1}
 =\frac{\vartheta}{2}K_{m,i}E_1+\mathfrak q E_{i+1}.
\]
Adding the corresponding terms yields
\begin{equation}\label{eq:un-normal-rotation}
 \nabla_{E_0}E_1=-\frac{\vartheta}{2}\sum_i K_{m,i}E_{i+1},
 \qquad
 \nabla_{E_0}E_{i+1}=\frac{\vartheta}{2}K_{m,i}E_1.
\end{equation}
Consequently,
\begin{align*}
 (\nabla_{E_0}\II_s)(E_1,E_1)
 &=\mathfrak p'+\frac{\vartheta^2}{2}\lvert\mathbf K_m\rvert^2,\\
 (\nabla_{E_0}\II_s)(E_{i+1},E_{j+1})
 &=\mathfrak q'\delta_{ij}-\frac{\vartheta^2}{2}K_{m,i}K_{m,j},\\
 (\nabla_{E_0}\II_s)(E_1,E_{i+1})
 &=\frac12\bigl[\vartheta'+(\mathfrak q-\mathfrak p)\vartheta\bigr]K_{m,i}.
\end{align*}
Indeed, the definition of the covariant derivative of a symmetric
two-tensor and \eqref{eq:un-normal-rotation} give
\begin{align*}
 (\nabla_{E_0}\II_s)(E_1,E_1)
 &=\mathfrak p'
   -2\II_s\left(-\frac{\vartheta}{2}
       \sum_r K_{m,r}E_{r+1},E_1\right)
 =\mathfrak p'+\frac{\vartheta^2}{2}\lvert\mathbf K_m\rvert^2,\\
 (\nabla_{E_0}\II_s)(E_{i+1},E_{j+1})
 &=\mathfrak q'\delta_{ij}
   -\II_s\left(\frac{\vartheta}{2}K_{m,i}E_1,E_{j+1}\right)
   -\II_s\left(E_{i+1},\frac{\vartheta}{2}K_{m,j}E_1\right)\\
 &=\mathfrak q'\delta_{ij}-\frac{\vartheta^2}{2}K_{m,i}K_{m,j},\\
 (\nabla_{E_0}\II_s)(E_1,E_{i+1})
 &=\frac{\vartheta'}2 K_{m,i}
   -\II_s\left(-\frac{\vartheta}{2}
       \sum_r K_{m,r}E_{r+1},E_{i+1}\right)
   -\II_s\left(E_1,\frac{\vartheta}{2}K_{m,i}E_1\right)\\
 &=\frac12\bigl[\vartheta'+(\mathfrak q-\mathfrak p)\vartheta\bigr]K_{m,i}.
\end{align*}
Moreover,
\[
 \operatorname{tr}_{g_{\ell,\rho,\omega}}\II_s=\mathfrak p+m\mathfrak q,
 \qquad
 |\II_s|^2=\mathfrak p^2+m\mathfrak q^2
   +\frac{\vartheta^2}{2}\lvert\mathbf K_m\rvert^2.
\]
The coefficient \(\vartheta^2/2\) is the sum of the two equal contributions
from the symmetric axial--sphere entries of \(\II_s\).

For vectors \(X,Y\) tangent to the \(s\)-slice, the
identities in Lemma~\ref{lem:gaussian-ricci-unified} can be written as
\begin{align*}
 \Ric_{g_{\mathcal C}}(E_0,E_0)
 &=-\partial_s
   (\operatorname{tr}_{g_{\ell,\rho,\omega}}\II_s)-|\II_s|^2,\\
 \Ric_{g_{\mathcal C}}(X,Y)
 &=\Ric_{g_{\ell,\rho,\omega}}(X,Y)
  -(\nabla_{E_0}\II_s)(X,Y)
  -(\operatorname{tr}_{g_{\ell,\rho,\omega}}\II_s)\II_s(X,Y).
\end{align*}
For the normal component this gives
\[
 \Ric_{00}
 =-(\mathfrak p'+m\mathfrak q')
  -\left(\mathfrak p^2+m\mathfrak q^2
          +\frac{\vartheta^2}{2}\lvert\mathbf K_m\rvert^2\right),
\]
which is \eqref{eq:un-ric00}.  For the tangential components,
\eqref{eq:un-slice-ricci} and
the three preceding identities for \(\nabla_{E_0}\II_s\)
give
\begin{align*}
 \Ric_{11}
 &=0-\left(\mathfrak p'
       +\frac{\vartheta^2}{2}\lvert\mathbf K_m\rvert^2\right)
   -(\mathfrak p+m\mathfrak q)\mathfrak p,\\
 \Ric_{i+1,j+1}
 &=\frac{m-1}{\rho^2}\delta_{ij}
   -\left(\mathfrak q'\delta_{ij}
          -\frac{\vartheta^2}{2}K_{m,i}K_{m,j}\right)
   -(\mathfrak p+m\mathfrak q)\mathfrak q\delta_{ij},\\
 \Ric_{1,i+1}
 &=0-\frac12\bigl[\vartheta'+(\mathfrak q-\mathfrak p)\vartheta\bigr]K_{m,i}
   -\frac12(\mathfrak p+m\mathfrak q)\vartheta K_{m,i}\\
 &=-\frac12\bigl[\vartheta'+(m+1)\mathfrak q\vartheta\bigr]K_{m,i}.
\end{align*}
These are \eqref{eq:un-ric11}--\eqref{eq:un-ric1i}.

It remains to compute the components containing one \(E_0\).  Under the
local diffeomorphism used above, the slice metric becomes the product
metric on \(\mathbb S^1_\ell\times\mathbb S^m_\rho\), and the second
fundamental form becomes
\[
 \mathfrak p\ell^2\dd\theta^2+\mathfrak q\rho^2g_{\mathbb S^m}
 +\frac{\rho^2\omega'}2\left(
   \dd\theta\otimes g_{\mathbb S^m}(K_m,\mathord\cdot)
   +g_{\mathbb S^m}(K_m,\mathord\cdot)\otimes\dd\theta\right).
\]
The first two summands are parallel on the product.  The divergence of
the last summand is zero: \(\dd\theta\) is parallel, \(K_m\) is
independent of \(\theta\), and
\(\operatorname{div}_{g_{\mathbb S^m}}K_m=0\) because \(K_m\) is a
Killing field.  Since
\(\operatorname{tr}_{g_{\ell,\rho,\omega}}\II_s
=\mathfrak p+m\mathfrak q\) is constant on each slice,
the contracted Codazzi identity
\[
 \Ric_{g_{\mathcal C}}(E_0,\mathord\cdot)
 =\operatorname{div}_{g_{\ell,\rho,\omega}}\II_s
  -\dd(\operatorname{tr}_{g_{\ell,\rho,\omega}}\II_s)
\]
implies \(\Ric_{g_{\mathcal C}}(E_0,E_\alpha)=0\) for every \(\alpha\ge1\).
Finally,
\[
 \frac1{\rho^{m+1}}
 \left(\frac{\rho^{m+2}\omega'}{\ell}\right)'
 =\frac1{\rho^{m+1}}(\rho^{m+1}\vartheta)'
 =\vartheta'+(m+1)\mathfrak q\vartheta,
\]
which proves the second expression in \eqref{eq:un-ric1i}.
\end{proof}

Equation~\eqref{eq:un-ric1i} isolates the only Ricci components coupling
the circle and sphere directions.  They vanish identically precisely
when \(\rho^{m+2}\omega'/\ell\) is constant.  This condition is the
Euler--Lagrange equation of the natural weighted twisting energy below.
The variational characterization determines \(\omega\) uniquely once
\(\ell,\rho\) and the two endpoint values of \(\omega\) have been
prescribed.

\begin{lemma}\label{lem:least-energy-twist}
Let $\ell,\rho\in C^\infty([0,s_*],(0,\infty))$ and let
$\omega_-,\omega_+\in\mathbb R$.  Among all
$\omega\in H^1([0,s_*])$ with $\omega(0)=\omega_-$ and
$\omega(s_*)=\omega_+$, the weighted Dirichlet energy functional
\begin{equation}\label{eq:weighted Dirichlet energy}
\mathcal E(\omega)=\int_0^{s_*}
 \frac{\rho(s)^{m+2}}{\ell(s)}\bigl(\omega'(s)\bigr)^2\,\dd s
\end{equation}
has the unique minimizer
\[
 \omega_*(s)=\omega_-+(\omega_+-\omega_-)
 \frac{\displaystyle\int_0^s
        \frac{\ell(t)}{\rho(t)^{m+2}}\,\dd t}
      {\displaystyle\int_0^{s_*}
        \frac{\ell(t)}{\rho(t)^{m+2}}\,\dd t}.
\]
Equivalently, it is the unique solution of
\[
 \left(\frac{\rho^{m+2}\omega'}{\ell}\right)'=0.
\]
\end{lemma}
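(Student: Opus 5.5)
This is a one-dimensional weighted Dirichlet problem, and the plan is to prove it by a direct completion-of-squares argument rather than by invoking abstract calculus of variations. Set
\[
 w(s):=\frac{\rho(s)^{m+2}}{\ell(s)},\qquad
 J:=\int_0^{s_*}\frac{\dd t}{w(t)}=\int_0^{s_*}\frac{\ell(t)}{\rho(t)^{m+2}}\,\dd t .
\]
Since $\ell,\rho$ are smooth and positive on the compact interval, $w$ is smooth with $0<c\le w\le C$, so $J\in(0,\infty)$. The function $\omega_*$ in the statement is then smooth, satisfies the two boundary conditions, and has $w\,\omega_*'\equiv(\omega_+-\omega_-)/J=:\kappa$, which is constant. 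Hence $\omega_*$ solves $(w\omega')'=0$.

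For minimality, take any admissible $\omega\in H^1$ and write $\omega=\omega_*+\eta$, where $\eta\in H^1_0([0,s_*])$. Expanding the energy gives
\[
 \mathcal E(\omega)=\mathcal E(\omega_*)+2\int_0^{s_*} w\,\omega_*'\,\eta'\,\dd s+\int_0^{s_*} w\,(\eta')^2\,\dd s .
\]
The cross term equals $2\kappa\int_0^{s_*}\eta'\,\dd s=2\kappa\bigl(\eta(s_*)-\eta(0)\bigr)=0$, using absolute continuity of $H^1$ functions in one dimension. The last term is nonnegative and, because $w>0$, vanishes only when $\eta'=0$ almost everywhere. That forces $\eta$ to be constant, and the boundary values then give $\eta\equiv0$. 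This yields both minimality and uniqueness of the minimizer.

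For the equivalence, suppose a smooth (or merely $H^1$ weak) solution of $(w\omega')'=0$ satisfies the boundary conditions. Then $w\omega'=\kappa'$ for some constant $\kappa'$, so $\omega(s)=\omega_-+\kappa'\int_0^s w^{-1}$. Imposing $\omega(s_*)=\omega_+$ gives $\kappa'=\kappa$, so $\omega=\omega_*$. In the weak formulation, "$w\omega'$ is constant" follows from the du Bois-Reymond lemma. Conversely, we have already checked that $\omega_*$ solves the equation.

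None of these steps is a serious obstacle. The only points needing care are that the cross term vanishes for $H^1_0$ perturbations, and that the du Bois-Reymond lemma is available if the equation is read weakly.
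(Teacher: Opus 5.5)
Your proposal is correct and follows essentially the same route as the paper. The paper also writes $\omega=\omega_*+\eta$ with $\eta\in H_0^1([0,s_*])$, notes that the cross term vanishes because $\rho^{m+2}\omega_*'/\ell$ is constant, and concludes minimality and uniqueness from the remaining term $\int_0^{s_*}\frac{\rho^{m+2}}{\ell}(\eta')^2\,\dd s$. You are slightly more careful in two places: you show explicitly that this term vanishing forces $\eta\equiv0$, and you read the Euler--Lagrange equation weakly, whereas the paper obtains it by formally integrating the first variation by parts.
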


\begin{proof}
For every $\eta\in H_0^1([0,s_*])$, the first variation at $\omega$ in
the direction $\eta$ is
\[
 D\mathcal E_\omega[\eta]
 =2\int_0^{s_*}\frac{\rho^{m+2}}\ell\omega'\eta'\,\dd s
 =-2\int_0^{s_*}
   \left(\frac{\rho^{m+2}\omega'}\ell\right)'\eta\,\dd s.
\]
Thus the Euler--Lagrange equation is the displayed differential
equation in the statement.  Integrating it and imposing the two
endpoint values gives the stated formula for $\omega_*$.  For every
$\eta\in H_0^1([0,s_*])$,
\[
 \mathcal E(\omega_*+\eta)
 =\mathcal E(\omega_*)+
  \int_0^{s_*}\frac{\rho^{m+2}}\ell(\eta')^2\,\dd s,
\]
because the cross term vanishes.  Hence $\omega_*$ is the unique
minimizer.  By \eqref{eq:un-ric1i}, the same equation makes every
axial--sphere Ricci component vanish.
\end{proof}

With the mixed Ricci components eliminated, it remains to choose the two
scale functions.  The next proposition chooses \(\ell\) so that the
axial Ricci terms have a uniform positive lower bound, and chooses
\(\rho\) to return to its initial value while having small logarithmic
derivatives of opposite signs at the two boundary components.  The
least-energy condition then determines \(\omega\) uniquely.

\begin{proposition}
\label{prop:un-connector}
Let \(\ell_0,\rho_0>0\).  Choose
\[
 0<\mathfrak p_0<\frac1{\rho_0},
 \qquad
 \Lambda_{\mathrm{con}}\ge
 \max\left\{2,\frac{40\rho_0}{\ell_0}\right\},
\]
and put
\begin{equation}\label{eq:un-connector-parameters}
 \mathfrak p_1=\frac{\mathfrak p_0}{\Lambda_{\mathrm{con}}^2},
 \qquad
 s_*=\frac{\Lambda_{\mathrm{con}}^2-1}{2\mathfrak p_0}.
\end{equation}
Define, for \(0\le s\le s_*\),
\begin{align}
 \ell(s)&=\ell_0\sqrt{1+2\mathfrak p_0s},
 \label{eq:un-ell-profile}\\
 \rho(s)&=\rho_0\exp\left(
 -\frac{\mathfrak p_1}{100s_*}s(s_*-s)\right),
 \label{eq:un-rho-profile}\\
\omega(s)&=\frac12\left(
  1-\frac{\displaystyle\int_0^s
               \frac{\ell(t)}{\rho(t)^{m+2}}\dd t}
             {\displaystyle\int_0^{s_*}
               \frac{\ell(t)}{\rho(t)^{m+2}}\dd t}
 \right).
 \label{eq:un-omega-profile}
\end{align}
Then the metric \eqref{eq:un-connector} has \(\Ric>0\), and its
boundary values are
\[
\begin{array}{c|c|c|c}
 &\ell&\rho&\omega\\ \hline
 s=0&\ell_0&\rho_0&1/2\\
 s=s_*&\Lambda_{\mathrm{con}}\ell_0&\rho_0&0.
\end{array}
\]
With respect to the outward normals of $\mathcal C$ and the frame
\eqref{eq:un-adapted-frame}, the matrices of the boundary second
fundamental forms are
\begin{align}
 \II^{\mathrm{out}}_0&=
 \begin{pmatrix}
  -\mathfrak p_0&-\frac{\vartheta(0)}2\mathbf K_m^{\mathsf T}\\[2pt]
  -\frac{\vartheta(0)}2\mathbf K_m&
     \frac{\mathfrak p_1}{100}\operatorname{Id}_m
 \end{pmatrix},
 \label{eq:un-II-left}\\
 \II^{\mathrm{out}}_{s_*}&=
 \begin{pmatrix}
  \mathfrak p_1&\frac{\vartheta(s_*)}2\mathbf K_m^{\mathsf T}\\[2pt]
  \frac{\vartheta(s_*)}2\mathbf K_m&
     \frac{\mathfrak p_1}{100}\operatorname{Id}_m
 \end{pmatrix}.
 \label{eq:un-II-right}
\end{align}
In particular, the right boundary is strictly convex:
\[
 \II^{\mathrm{out}}_{s_*}>0.
\]
\end{proposition}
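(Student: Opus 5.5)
The proof will be a direct verification from Lemma~\ref{lem:un-ricci}. Since \eqref{eq:un-omega-profile} is exactly the minimizer $\omega_*$ of Lemma~\ref{lem:least-energy-twist} with $\omega_-=1/2$, $\omega_+=0$, the axial--sphere components \eqref{eq:un-ric1i} vanish identically. Together with $\Ric_{0\alpha}=0$, the Ricci tensor in the frame \eqref{eq:un-adapted-frame} is block diagonal, with blocks $\Ric_{00}$, $\Ric_{11}$ and the $m\times m$ sphere block. It therefore suffices to show each block is positive, using only the scalar quantities $\mathfrak p,\mathfrak q,\vartheta$ and $\lvert\mathbf K_m\rvert\le1$. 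This works pointwise, including at zeros of $K_m$.

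\emph{Step 1: the profiles.} From \eqref{eq:un-ell-profile}, $\mathfrak p=\mathfrak p_0/(1+2\mathfrak p_0s)$ and $\mathfrak p'=-2\mathfrak p^2$, so $\mathfrak p'+\mathfrak p^2=-\mathfrak p^2$. By \eqref{eq:un-connector-parameters}, $1+2\mathfrak p_0s_*=\Lambda_{\mathrm{con}}^2$, which gives $\ell(s_*)=\Lambda_{\mathrm{con}}\ell_0$ and $\mathfrak p$ decreasing from $\mathfrak p_0$ to $\mathfrak p_1$; hence $\mathfrak p\ge\mathfrak p_1$ throughout. From \eqref{eq:un-rho-profile},
\[
\mathfrak q=-\frac{\mathfrak p_1}{100s_*}(s_*-2s),\qquad \mathfrak q'=\frac{\mathfrak p_1}{50s_*}.
\]
Thus $\lvert\mathfrak q\rvert\le\mathfrak p_1/100$, with $\mathfrak q(0)=-\mathfrak p_1/100$ and $\mathfrak q(s_*)=\mathfrak p_1/100$. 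Moreover $\rho(0)=\rho(s_*)=\rho_0$. Since $\mathfrak p_1s_*<1/2$, the exponent in \eqref{eq:un-rho-profile} is at most $1/800$, so $\rho_0e^{-1/800}\le\rho\le\rho_0$. Using $s_*=(\Lambda_{\mathrm{con}}^2-1)/(2\mathfrak p_0)$ and $\Lambda_{\mathrm{con}}\ge2$, one gets $\mathfrak q'\le\tfrac{4}{75}\mathfrak p_1^2$.

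\emph{Step 2: the twist rate.} This is the main point, and it is where the hypothesis $\Lambda_{\mathrm{con}}\ge40\rho_0/\ell_0$ enters. Differentiating \eqref{eq:un-omega-profile} gives
\[
\vartheta=\frac{\rho\omega'}{\ell}=-\frac{1}{2\rho^{m+1}\int_0^{s_*}\ell\rho^{-(m+2)}\,\dd t}.
\]
Bound the integral from below by $\rho_0^{-(m+2)}\int_0^{s_*}\ell=\rho_0^{-(m+2)}\ell_0(\Lambda_{\mathrm{con}}^3-1)/(3\mathfrak p_0)$. Using $\rho\ge\rho_0e^{-1/800}$, this gives
\[
\lvert\vartheta\rvert\lesssim\frac{3\rho_0\mathfrak p_0}{2\ell_0\Lambda_{\mathrm{con}}^3}\le\frac{\mathfrak p_0}{25\Lambda_{\mathrm{con}}^2}=\frac{\mathfrak p_1}{25},
\]
so $\vartheta^2/2\le\mathfrak p_1^2/1250$. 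I expect getting these constants to close honestly (the $e^{(m+1)/800}$ factor and $\Lambda_{\mathrm{con}}^3-1$ versus $\Lambda_{\mathrm{con}}^3$) to be the only delicate bookkeeping. If the margin is tight, I would use $\Lambda_{\mathrm{con}}^3-1\ge\tfrac78\Lambda_{\mathrm{con}}^3$.

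\emph{Step 3: positivity and boundary data.} Substituting into \eqref{eq:un-ric00}--\eqref{eq:un-ricij}:
\[
\Ric_{00}\ge\mathfrak p^2-m\mathfrak q'-m\mathfrak q^2-\tfrac{\vartheta^2}{2}\ge\mathfrak p_1^2\bigl(1-\tfrac{4}{25}-\tfrac{3}{10^4}-\tfrac1{1250}\bigr)>0.
\]
Next, $\Ric_{11}=\mathfrak p^2-m\mathfrak p\mathfrak q-\tfrac{\vartheta^2}{2}\lvert\mathbf K_m\rvert^2\ge\mathfrak p^2(1-\tfrac3{100})-\tfrac{\mathfrak p_1^2}{1250}>0$. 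The sphere block is at least
\[
\Bigl(\frac{m-1}{\rho^2}-\mathfrak q'-\lvert\mathfrak q\rvert(\mathfrak p+m\lvert\mathfrak q\rvert)\Bigr)\operatorname{Id}_m,
\]
since $\tfrac{\vartheta^2}{2}\mathbf K_m\mathbf K_m^{\mathsf T}\ge0$. This is positive because $1/\rho^2\ge1/\rho_0^2>\mathfrak p_0^2$ dominates all the $O(\mathfrak p_0^2/100)$ terms. The boundary table follows from Steps~1--2 and \eqref{eq:un-omega-profile}. The matrices \eqref{eq:un-II-left}--\eqref{eq:un-II-right} come from \eqref{eq:un-shape}: at $s=0$ the outward normal is $-\partial_s$, so all signs flip, yielding entries $-\mathfrak p_0$ and $-\mathfrak q(0)=\mathfrak p_1/100$. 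At $s_*$ the entries are $\mathfrak p_1$ and $\mathfrak p_1/100$.

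Finally, $\II^{\mathrm{out}}_{s_*}>0$ follows from a Schur complement. We need
\[
\tfrac{\mathfrak p_1}{100}-\tfrac{\vartheta(s_*)^2}{4\mathfrak p_1}\lvert\mathbf K_m\rvert^2>0,
\]
i.e.\ $\vartheta(s_*)^2<\mathfrak p_1^2/25$, which holds with room by Step~2. Here too the constant $40$ is what makes it close.
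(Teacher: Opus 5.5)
Your proposal is correct and follows the paper's proof essentially step for step. The shared structure is: block-diagonality of $\Ric$ from Lemma~\ref{lem:un-ricci} together with the least-energy twist; the bounds $\mathfrak p_1\le\mathfrak p\le\mathfrak p_0$, $\lvert\mathfrak q\rvert\le\mathfrak p_1/100$ and $\mathfrak q'\le\frac4{75}\mathfrak p_1^2$; the twist bound coming from $\Lambda_{\mathrm{con}}\ge40\rho_0/\ell_0$; and a Schur complement at $s_*$. Your uniform estimate $m\mathfrak p\lvert\mathfrak q\rvert\le\frac3{100}\mathfrak p^2$ just streamlines the paper's case split on the sign of $\mathfrak q$.

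The one slip is the constant you were worried about. Keeping the factor $e^{(m+1)/800}\Lambda_{\mathrm{con}}^3/(\Lambda_{\mathrm{con}}^3-1)\le\frac87e^{1/200}$ gives $\lvert\vartheta\rvert\le\frac{3e^{1/200}}{70}\mathfrak p_1\approx0.043\,\mathfrak p_1$, which is slightly larger than $\mathfrak p_1/25$. Every later inequality, including $\vartheta(s_*)^2<\mathfrak p_1^2/25$, still holds with ample room.
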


\begin{proof}
We first record the contribution of the axial profile.  From
\eqref{eq:un-ell-profile},
\[
 \mathfrak p(s)=\frac{\mathfrak p_0}{1+2\mathfrak p_0s},
 \qquad
 \mathfrak p'=-2\mathfrak p^2,
 \qquad
 \mathfrak p_1\le\mathfrak p(s)\le\mathfrak p_0.
\]
Indeed, \(1+2\mathfrak p_0s_*=\Lambda_{\mathrm{con}}^2\), so
\[
 \ell(s_*)=\Lambda_{\mathrm{con}}\ell_0,
 \qquad
 \mathfrak p(s_*)=
 \frac{\mathfrak p_0}{\Lambda_{\mathrm{con}}^2}=\mathfrak p_1.
\]
In particular,
\[
 -\bigl(\mathfrak p'+\mathfrak p^2\bigr)=\mathfrak p^2
 \ge\mathfrak p_1^2,
\]
which will dominate the error terms below.

We next consider the spherical radius.  Equation
\eqref{eq:un-rho-profile} gives
\[
 \mathfrak q(s)=\frac{\mathfrak p_1}{100}
 \left(\frac{2s}{s_*}-1\right).
\]
Its integral over \([0,s_*]\) is zero.  Thus \(\rho(s_*)=\rho_0\),
while \(s(s_*-s)\le s_*^2/4\) and
\eqref{eq:un-connector-parameters} give
\begin{equation}\label{eq:un-rho-bound}
 \rho(s_*)=\rho_0,
 \qquad
 e^{-(\Lambda_{\mathrm{con}}^2-1)/
       (800\Lambda_{\mathrm{con}}^2)}\rho_0
 \le\rho(s)\le\rho_0.
\end{equation}
Moreover,
\[
 |\mathfrak q|\le\frac{\mathfrak p_1}{100},
 \qquad
 \mathfrak q'=
 \frac{\Lambda_{\mathrm{con}}^2}
 {25(\Lambda_{\mathrm{con}}^2-1)}\mathfrak p_1^2
\le\frac{4}{75}p_1^2
 <\frac{3}{50}p_1^2.
\]
When \(\mathfrak q\ge0\), necessarily
\(s\in[s_*/2,s_*]\), and direct substitution yields
\[
 \frac{\mathfrak p\mathfrak q}{\mathfrak p_1^2/100}
 =\frac{\Lambda_{\mathrm{con}}^2(2s/s_*-1)}
 {1+(\Lambda_{\mathrm{con}}^2-1)s/s_*}\le1.
\]
When \(\mathfrak q\le0\), we instead use
\[
 \mathfrak p+m\mathfrak q
 \ge\left(1-\frac3{100}\right)\mathfrak p_1>0,
 \qquad
 \mathfrak q(\mathfrak p+m\mathfrak q)\le0.
\]

After \(\ell\) and \(\rho\) have been fixed, the twist is determined by
\eqref{eq:un-omega-profile}.  Differentiating that formula gives
\[
 \frac{\rho^{m+2}\omega'}{\ell}
 =-\frac12\left(\int_0^{s_*}
       \frac{\ell(t)}{\rho(t)^{m+2}}\dd t\right)^{-1}.
\]
The right-hand side is independent of \(s\); hence
\begin{equation}\label{eq:un-mixed-cancellation}
 \left(\frac{\rho^{m+2}\omega'}{\ell}\right)'=0.
\end{equation}
It follows from \eqref{eq:un-ric1i} that
\(\Ric_{1,i+1}=0\) for every \(1\le i\le m\).  The normalization in
\eqref{eq:un-omega-profile} also gives
\(\omega(0)=1/2\) and \(\omega(s_*)=0\).

Since \(\rho\le\rho_0\),
\[
 \int_0^{s_*}\frac{\ell(s)}{\rho(s)^{m+2}}\dd s
 \ge\frac1{\rho_0^{m+2}}\int_0^{s_*}\ell(s)\dd s
 =\frac{\ell_0(\Lambda_{\mathrm{con}}^3-1)}
 {3\mathfrak p_0\rho_0^{m+2}}.
\]
By \eqref{eq:un-log-derivatives} and
\eqref{eq:un-omega-profile},
\[
 \vartheta(s)=-\frac1{2\rho(s)^{m+1}}
 \left(\int_0^{s_*}\frac{\ell(t)}{\rho(t)^{m+2}}\dd t\right)^{-1}.
\]
Since \(m+1\le4\), the lower bound for \(\rho\) in
\eqref{eq:un-rho-bound} and the choice
\(\Lambda_{\mathrm{con}}\ge40\rho_0/\ell_0\) imply
\begin{align}
 \frac{|\vartheta|}{\mathfrak p_1}
 &\le\frac{3e^{1/200}}2
       \frac{\rho_0}{\ell_0}
       \frac{\Lambda_{\mathrm{con}}^2}
            {\Lambda_{\mathrm{con}}^3-1}\le\frac{3e^{1/200}}{80}
       \frac{\Lambda_{\mathrm{con}}^3}
            {\Lambda_{\mathrm{con}}^3-1}\le\frac{3e^{1/200}}{70}<\frac{1}{10}..
 \label{eq:un-twist-rate-bound}
\end{align}
Here we used \(\Lambda_{\mathrm{con}}\ge2\), for which
\(\Lambda_{\mathrm{con}}^3/
(\Lambda_{\mathrm{con}}^3-1)\le8/7\).

We now substitute these estimates into the formulas of
Lemma~\ref{lem:un-ricci}.  The remaining terms are measured relative to
the positive scale \(\mathfrak p_1^2\).  Substituting
\(\mathfrak p'=-2\mathfrak p^2\) into
\eqref{eq:un-ric11} gives
\[
 \Ric_{11}=\mathfrak p^2-m\mathfrak p\mathfrak q
            -\frac{\vartheta^2}{2}|\mathbf K_m|^2.
\]
If \(\mathfrak q\le0\), its middle term is nonnegative.  If
\(\mathfrak q\ge0\), we use the estimate
\(\mathfrak p\mathfrak q\le\mathfrak p_1^2/100\) proved above.
Since \(m\le3\) and \(|K_m|\le1\),
\begin{align*}
 \Ric_{11}
 &\ge\left(
  1-\frac{3}{100}-\frac{1}{200}
 \right)p_1^2
 >\frac{19}{20}p_1^2.
\end{align*}
Here and below the displayed numerical constants are uniform for
\(m=2,3\).  Similarly, \eqref{eq:un-ric00} becomes
\begin{align*}
 \Ric_{00}
 &=\mathfrak p^2-m\mathfrak q'-m\mathfrak q^2
   -\frac{\vartheta^2\lvert\mathbf K_m\rvert^2}{2}\ge
 \left(
  1-\frac{9}{50}-\frac{3}{10^4}-\frac{1}{200}
 \right)p_1^2
 >
 \frac45p_1^2.
\end{align*}
For the restriction of \(\Ric\) to
\(\operatorname{span}\{E_2,\ldots,E_{m+1}\}\), first suppose that
\(\mathfrak q\le0\).  The final inequality in the spherical-profile
calculation shows that
\[
 \mathfrak q'+\mathfrak q(\mathfrak p+m\mathfrak q)
 \le\mathfrak q'.
\]
If \(\mathfrak q\ge0\), then the preceding bounds on
\(\mathfrak p\mathfrak q\) and \(|\mathfrak q|\) give
\[
 \mathfrak q(\mathfrak p+m\mathfrak q)
 \le\left(\frac1{100}+\frac3{10^4}\right)\mathfrak p_1^2.
\]
Consequently, in both cases,
\[
q'+q(p+mq)
 <
 \frac{3}{40}p_1^2
 \le
 \frac{3}{640}p_0^2
 <
 \frac{1}{200\rho_0^2}.
\]
Here we used \(\Lambda_{\mathrm{con}}\ge2\) and
\(\mathfrak p_0<1/\rho_0\).  Since \(m-1\ge1\)
and \(\rho\le\rho_0\), the scalar multiple of the identity in
\eqref{eq:un-ricij} is therefore larger than
\(\frac{199}{200\rho_0^2}\); the remaining term
\((\vartheta^2/2)\mathbf K_m\otimes\mathbf K_m\) is nonnegative.

Lemma~\ref{lem:un-ricci} gives
\(\Ric(E_0,E_\alpha)=0\) for \(1\le\alpha\le m+1\), while
\eqref{eq:un-mixed-cancellation} gives
\(\Ric(E_1,E_{i+1})=0\) for \(1\le i\le m\).  Thus \(\Ric\) is block
diagonal with respect to
\[
 \mathbb RE_0\oplus\mathbb RE_1
 \oplus\operatorname{span}\{E_2,\ldots,E_{m+1}\},
\]
and the preceding estimates prove \(\Ric_{g_{\mathcal C}}>0\).

Finally,
\(\mathfrak q(0)=-\mathfrak p_1/100\) and
\(\mathfrak q(s_*)=\mathfrak p_1/100\).  Apply
\eqref{eq:un-shape} with outward normal \(-\partial_s\) on the left and
\(+\partial_s\) on the right to obtain
\eqref{eq:un-II-left}--\eqref{eq:un-II-right}.  The spherical block in
\eqref{eq:un-II-right} is
\((\mathfrak p_1/100)\operatorname{Id}_m>0\).  Its Schur complement is
bounded below by
\[
 p_1-\frac{100}{p_1}\frac{\vartheta(s_*)^2}{4}
 >
 \left(1-\frac14\right)p_1
 =\frac34p_1>0,
\]
where we used \eqref{eq:un-twist-rate-bound} and
\(\lvert\mathbf K_m\rvert\le1\).  Thus
\(\II^{\mathrm{out}}_{s_*}>0\).
\end{proof}

It remains to attach the left boundary of $\mathcal C$ to the twisted
outer boundary of \(W_m\).  Using \(\Phi_o\) on \(\partial_oW_m\) and
the standard product coordinates on
\(\{0\}\times\mathbb S^1\times\mathbb S^m\), both boundaries carry the
metric \(g_{\ell_0,\rho_0,1/2}\).
Lemma~\ref{lem:relative-gluing-unified} therefore reduces the gluing
problem to proving that the sum of their outward second fundamental
forms is positive definite.  At the left boundary, the outward normal of $\mathcal C$
contributes a negative axial term, a positive spherical term, and an
axial--sphere term controlled by \eqref{eq:un-twist-rate-bound}.  We use
the following elementary form of the Schur-complement criterion.  For a
symmetric quadratic form on
\[
 \mathbb RE_1\oplus
 \operatorname{span}\{E_2,\ldots,E_{m+1}\}
\]
with positive lower bounds on its two diagonal restrictions, positivity
follows if the square of half the mixed coefficient is smaller than the
product of those two lower bounds.

\begin{lemma}
\label{lem:un-left-boundary-gluing}
Identify the outer boundary of a quotient \(W_m\), using \(\Phi_o\),
with the boundary
\(\{0\}\times\mathbb S^1\times\mathbb S^m\) of $\mathcal C$ in
Proposition~\ref{prop:un-connector}.  Suppose that the common induced
metric is \(g_{\ell_0,\rho_0,1/2}\).  With respect to the corresponding
axial--sphere splitting, assume that the outward second fundamental
form of \(W_m\) satisfies
\[
 \II_{W_m}(E_1,E_1)\ge\delta_{\mathrm{ax}}>0,\qquad
 \II_{W_m}|_{T\mathbb S^m}>0,\qquad
 \II_{W_m}(E_1,Y)=0
 \quad\text{for all }Y\in T\mathbb S^m.
\]
If
\[
 0<\mathfrak p_0<
 \min\left\{\frac{\delta_{\mathrm{ax}}}{2},
             \frac1{\rho_0}\right\},
\]
then the sum of the two outward second fundamental forms is positive
definite.
\end{lemma}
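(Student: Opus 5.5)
The plan is to write both outward second fundamental forms in the common orthonormal frame $E_1,E_2,\dots,E_{m+1}$ of the boundary metric $g_{\ell_0,\rho_0,1/2}$ and add them. The sum will then be controlled by the Schur-complement criterion stated just before the lemma.

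First, the connector's contribution is exactly \eqref{eq:un-II-left} from Proposition~\ref{prop:un-connector}. Its axial entry is $-\mathfrak p_0$, its spherical block is $(\mathfrak p_1/100)\operatorname{Id}_m$, and its mixed column is $-\tfrac{\vartheta(0)}2\mathbf K_m$. By hypothesis, the contribution of $W_m$ has axial entry at least $\delta_{\mathrm{ax}}$, a positive definite spherical block, and vanishing mixed entries. The sum $S$ therefore satisfies
\[
 S(E_1,E_1)\ge\delta_{\mathrm{ax}}-\mathfrak p_0>\tfrac{\delta_{\mathrm{ax}}}{2},
 \qquad
 S|_{T\mathbb S^m}>\tfrac{\mathfrak p_1}{100}\operatorname{Id}_m .
\]
The mixed term is $S(E_1,Y)=-\tfrac{\vartheta(0)}2\langle\mathbf K_m,Y\rangle$, whose coefficient has absolute value at most $|\vartheta(0)|/2$ because $|\mathbf K_m|\le1$. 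Here the condition $\mathfrak p_0<\delta_{\mathrm{ax}}/2$ produces the axial lower bound. The condition $\mathfrak p_0<1/\rho_0$ is only needed so that Proposition~\ref{prop:un-connector} applies, with $\Lambda_{\mathrm{con}}\ge\max\{2,40\rho_0/\ell_0\}$ as there.

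Next, I apply the criterion: positivity follows once $(|\vartheta(0)|/2)^2<\tfrac{\delta_{\mathrm{ax}}}{2}\cdot\tfrac{\mathfrak p_1}{100}$. By \eqref{eq:un-twist-rate-bound}, $|\vartheta(0)|<\mathfrak p_1/10$, so the left side is below $\mathfrak p_1^2/400$. It therefore suffices to have $\mathfrak p_1/4<\delta_{\mathrm{ax}}/2$, that is, $\mathfrak p_1<2\delta_{\mathrm{ax}}$. This holds because $\mathfrak p_1=\mathfrak p_0/\Lambda_{\mathrm{con}}^2\le\mathfrak p_0/4<\delta_{\mathrm{ax}}/8$.

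I expect the main obstacle to be the bookkeeping of frames and signs rather than any estimate. One must check that, under $\Phi_o$, the frame $E_1=\ell^{-1}(\partial_\theta-\tfrac12K_m)$, $E_{i+1}=\rho^{-1}U_i$ is an orthonormal frame splitting the boundary into the axial direction and $T\mathbb S^m$ in the same way on both sides. This follows because both boundaries carry the same metric $g_{\ell_0,\rho_0,1/2}$ in the same coordinates. One must also check that on the $W_m$ side the axial direction $E_1$ is the $G$-unit circle direction $f^{-1}\partial_t$. Since $E_1$ is $g_{\ell_0,\rho_0,1/2}$-orthogonal to $T\mathbb S^m$ and the lift of $T\mathbb S^m$ under $\Phi_o$ is $T\partial_oP_m$, this holds. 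With that identification the hypotheses on $\II_{W_m}$ are meaningful, and they are exactly what \eqref{eq:un-warp-second-fundamental-form} delivers. The remaining care is to use the sign convention of the outward normal $-\partial_s$ at $s=0$ in \eqref{eq:un-II-left}, which is already built into that formula.
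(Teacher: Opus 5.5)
Your proof is correct and follows the paper's argument: you add \eqref{eq:un-II-left} to the block-diagonal $\II_{W_m}$, bound the mixed coefficient by $|\vartheta(0)|$ using $|\mathbf K_m|\le1$ and \eqref{eq:un-twist-rate-bound}, and apply the two-block Schur-complement criterion. The differences are cosmetic. You bound the axial entry below by $\delta_{\mathrm{ax}}/2$, whereas the paper uses $4\mathfrak p_1$. Your check that $E_1$ corresponds to $f^{-1}\partial_t$ under $\Phi_o$ is a correct justification that the paper leaves implicit.
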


\begin{proof}
At \(s=0\), the outward normal of $\mathcal C$ is \(-\partial_s\).
For \(X=xE_1+Y\), where \(Y\) is tangent to the sphere factor,
\eqref{eq:un-II-left}, the hypotheses on \(W_m\), and
\(|\mathbf K_m|\le1\) give
\begin{equation}
 \bigl(\II_{W_m}+\II^{\mathrm{out}}_0\bigr)(X,X)
 \ge
 (\delta_{\mathrm{ax}}-\mathfrak p_0)x^2
 -|\vartheta(0)|\,|x|\,|Y|
 +\frac{\mathfrak p_1}{100}|Y|^2.
 \label{eq:un-left-quadratic-lower}
\end{equation}
The parameter assumptions imply
\[
 \delta_{\mathrm{ax}}>2\mathfrak p_0
 =2\Lambda_{\mathrm{con}}^2\mathfrak p_1
 \ge8\mathfrak p_1.
\]
Consequently,
\[
 (\delta_{\mathrm{ax}}-\mathfrak p_0)
 \frac{\mathfrak p_1}{100}
 >\frac{1}{25}\mathfrak p_1^2.
\]
On the other hand, \eqref{eq:un-twist-rate-bound} gives
\[
 \frac{\vartheta(0)^2}{4}
 <
 \frac{p_1^2}{400}.
\]
Thus the square of half the mixed coefficient in
\eqref{eq:un-left-quadratic-lower} is smaller than the product of its
two diagonal coefficients.  The Schur-complement criterion proves that
the right-hand side of \eqref{eq:un-left-quadratic-lower}, and hence the
sum of the two second fundamental forms, is positive definite.
\end{proof}

\section{The building block and infinite telescope}
\label{sec:infinite-telescope}

Recall that a building block in our construction is a compact cobordism with boundary
product markings whose incoming and outgoing product circles satisfy
$[\lambda_{\mathrm{in}}]=[\lambda_{\mathrm{out}}]^2$.  The purpose of
this section is to pass from the local geometric
constructions of the preceding sections to the complete noncompact
manifolds in Theorem~\ref{thm:main}.  We first combine the screw quotient
with a holonomy connector to obtain such a compact Ricci-positive block with untwisted
product boundaries.  The second fundamental form of its incoming
boundary is bounded below by an arbitrarily small negative multiple of
the induced metric, its outgoing boundary is strictly convex, and the
incoming product circle represents the square of the corresponding
outgoing generator.  We then attach the Ricci-positive initial cap and
attach the blocks successively along a ray.  Central subcylinders that
are disjoint from all smoothing collars ensure local finiteness and completeness of
the resulting metric, while the square relation at each stage
determines the fundamental group.

The product parametrization of the boundary in the next proposition are furnished by
Lemma~\ref{lem:un-boundary-components}\hspace{0.25em}and
Proposition~\ref{prop:un-connector}.

\begin{proposition}
\label{prop:square-root-block}
Fix \(m\in\{2,3\}\).  Given
\[
 \ell_I>0,\qquad \rho_I>0,\qquad \delta>0,
 \qquad s_{\min}>0,
\]
there are a compact connected \((m+2)\)-manifold \(B_m\), a metric
\(g_B\) with \(\Ric_{g_B}>0\), and boundary product markings
\[
 \Phi_I:\mathbb S^1\times\mathbb S^m\longrightarrow\partial_iB_m,
 \qquad
 \Phi_O:\mathbb S^1\times\mathbb S^m\longrightarrow\partial_oB_m
\]
with the following properties.

\begin{enumerate}[label=\textup{(\roman*)}]
\item The boundary has exactly the two components
\[
 \partial B_m=\partial_iB_m\sqcup\partial_oB_m,
\]
and
\[
 \Phi_I^*(g_B|_{\partial_iB_m})=g_{\ell_I,\rho_I,0},
 \qquad
 \Phi_O^*(g_B|_{\partial_oB_m})=g_{\ell_O,\rho_O,0}
\]
for some \(\ell_O,\rho_O>0\).

\item With respect to the outward normals,
\[
 \II_{\partial_iB_m}>-\delta\,g_B|_{\partial_iB_m},
 \qquad
 \II_{\partial_oB_m}>0.
\]

\item
\[
 \pi_1(B_m)\cong\mathbb Z=\langle a\rangle.
\]
For any \(y\in\mathbb S^m\), the outgoing product circle
\(\theta\mapsto\Phi_O(\theta,y)\) represents \(a\), whereas the incoming
product circle \(\theta\mapsto\Phi_I(\theta,y)\) represents \(a^2\).
After the loops are transported to one basepoint, these elements are
independent of the transporting paths because \(\pi_1(B_m)\) is
abelian.

\item The block contains a holonomy connector
\[
 \mathcal C=[0,s_*]\times\mathbb S^1\times\mathbb S^m,
 \qquad s_*\ge s_{\min},
\]
whose metric is exactly the metric of
Proposition~\ref{prop:un-connector} on
\[
 [s_*/4,s_*]\times\mathbb S^1\times\mathbb S^m.
\]
In particular, the central subcylinder
\begin{equation}\label{def: central-subcylinder}
 [s_*/4,3s_*/4]\times\mathbb S^1\times\mathbb S^m
\end{equation}
is disjoint from the support of every gluing modification.
\end{enumerate}
\end{proposition}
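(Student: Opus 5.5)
Set $B_m:=W_m\cup_{\Phi_o}\mathcal C$: glue the holonomy connector of Proposition~\ref{prop:un-connector} to the outer boundary of the screw quotient, using Lemma~\ref{lem:relative-gluing-unified} with the collar on the connector side taken inside $[0,s_*/4)\times\mathbb S^1\times\mathbb S^m$. The inner boundary of $W_m$ becomes $\partial_iB_m$. The right end $\{s_*\}\times\mathbb S^1\times\mathbb S^m$ becomes $\partial_oB_m$. The markings are $\Phi_I:=\Phi_i$ from Lemma~\ref{lem:un-boundary-components} and $\Phi_O(\theta,y):=(s_*,\theta,y)$.

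The parameters are fixed in order, matching the prescribed incoming data.
\begin{enumerate}[label=\textup{(\arabic*)}]
\item Apply Theorem~\ref{thm:unified-directed-base} with $R_*:=\rho_I$ and $\varepsilon_*:=\delta$, obtaining $(P_m,h_m,\sigma_m)$ and the outer radius $R_o$.
\item Apply Lemma~\ref{lem:un-warp} with $\delta_*:=\delta$ and any $F>0$. Then choose $T:=\pi\ell_I/F$, so that \eqref{eq:un-inner-boundary-metric} gives $\Phi_i^*G=g_{\ell_I,\rho_I,0}$.
\item By \eqref{eq:un-warp-second-fundamental-form}, the outward second fundamental form of $\partial_iW_m$ is block diagonal. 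Its sphere block is $>-\delta$ and its axial entry is $\partial_\nu\log f>-\delta$, so $\II_{\partial_iB_m}>-\delta g_B$. This gives (ii) for the incoming boundary; the gluing does not touch a neighborhood of it.
\item On $\partial_oW_m$, the form \eqref{eq:un-warp-second-fundamental-form} is block diagonal in the frame $E_1=\ell^{-1}(\partial_\theta-\tfrac12K_m)$, $E_{i+1}=\rho^{-1}U_i$. I must check that this frame corresponds under $\Phi_o$ to the lift of $f^{-1}\partial_t$ and the sphere directions. This follows from the computation of $d\Phi_o$ in the proof of Lemma~\ref{lem:un-boundary-components}: $E_1$ pulls back to a multiple of $\partial_t$. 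With $\delta_{\mathrm{ax}}:=\min\partial_\nu\log f|_{\partial_oP_m}>0$, the sphere block is positive and the mixed terms vanish.
\item Set $\ell_0:=\ell_o$ and $\rho_0:=R_o$, and choose $\mathfrak p_0<\min\{\delta_{\mathrm{ax}}/2,1/\rho_0\}$. Choose $\Lambda_{\mathrm{con}}\ge\max\{2,40\rho_0/\ell_0\}$ large enough that $s_*=(\Lambda_{\mathrm{con}}^2-1)/(2\mathfrak p_0)\ge s_{\min}$.
\end{enumerate}
Both boundary metrics equal $g_{\ell_0,\rho_0,1/2}$. Lemma~\ref{lem:un-left-boundary-gluing} then yields the convexity hypothesis of Lemma~\ref{lem:relative-gluing-unified}, so $g_B$ has $\Ric>0$ and coincides with $g_{\mathcal C}$ on $[s_*/4,s_*]$. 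This gives (iv). Proposition~\ref{prop:un-connector} gives $\Phi_O^*g_B=g_{\Lambda_{\mathrm{con}}\ell_0,\rho_0,0}$ and $\II^{\mathrm{out}}_{s_*}>0$, which completes (i) and (ii).

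For (iii), $\mathcal C$ deformation retracts onto its left end, so $B_m\simeq W_m$ and $\pi_1(B_m)\cong\pi_1(W_m)\cong\mathbb Z=\langle a\rangle$, with $a$ the class of $\theta\mapsto\Phi_o(\theta,y)$ by Lemma~\ref{lem:un-boundary-components}. The outgoing circle $\theta\mapsto(s_*,\theta,y)$ is freely homotopic in $\mathcal C$, via $s$, to $(0,\theta,y)$, which is identified with $\Phi_o(\theta,y)$; hence it represents $a$. The incoming circle $\Phi_i(\theta,y)$ represents $a^2$ by the same lemma. Since the group is abelian, basepoint transport is irrelevant.

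The twist in the connector does not affect any of these classes, because the identification is literally $\Phi_o$ in product coordinates.

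The main obstacle I expect is step~(4): the frame bookkeeping showing that $\II_{W_m}$ on $\partial_oW_m$, pulled back by $\Phi_o$, has exactly the block form required in Lemma~\ref{lem:un-left-boundary-gluing}. In the twisted metric the sphere directions $U_i$ are not orthogonal to $\partial_\theta$, so I will verify it by computing $d\Phi_o(E_1)=\ell^{-1}\cdot\frac{T}{2\pi}\partial_t$, which equals $f^{-1}\partial_t$ since $\ell_o=TF(1+\alpha)/2\pi$. The rest is routine.
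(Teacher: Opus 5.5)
Your proposal is correct and follows the paper's proof essentially step for step: the same directed base with $R_*=\rho_I$, the warp and screw quotient with $TF/\pi=\ell_I$, the connector with $\mathfrak p_0<\min\{\delta_{\mathrm{ax}}/2,1/\rho_0\}$, $\Lambda_{\mathrm{con}}\ge\max\{2,40\rho_0/\ell_0\}$ and $s_*\ge s_{\min}$, gluing supported in $[0,s_*/4)$, and the retraction $B_m\simeq W_m$ for (iii). The only deviations are harmless: taking $\varepsilon_*=\delta_*=\delta$ instead of strictly smaller values still works because the inequalities supplied by Theorem~\ref{thm:unified-directed-base} and Lemma~\ref{lem:un-warp} are strict, and your check that $d\Phi_o(E_1)=f^{-1}\partial_t$ correctly supplies the block-diagonal claim that the paper states without computation.
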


\begin{proof}
Apply Theorem~\ref{thm:unified-directed-base} with prescribed inner
radius \(R_*=\rho_I\) and with
\[
 0<\varepsilon_*<\delta.
\]
Apply Lemma~\ref{lem:un-warp} with a number
\(0<\delta_*<\delta\), obtaining
\(f=F(1+\alpha\psi+\alpha^2\chi)\).
Choose \(T,F>0\) so that
\[
 \frac{TF}{\pi}=\ell_I.
\]
Form the quotient \(W_m\).  Lemma~\ref{lem:un-boundary-components}
gives
\[
 \Phi_i^*(G|_{\partial_iW_m})=g_{\ell_I,\rho_I,0},
 \qquad
 \Phi_o^*(G|_{\partial_oW_m})
 =g_{\ell_{\mathrm{tw}},\rho_{\mathrm{tw}},1/2},
\]
where
\[
 \ell_{\mathrm{tw}}=\frac{TF(1+\alpha)}{2\pi},
 \qquad \rho_{\mathrm{tw}}=R_o.
\]
On \(\partial_iW_m\), the sphere restriction of the outward second
fundamental form is greater than \(-\varepsilon_*g\), its axial entry is
greater than \(-\delta_*\), and its axial--sphere block is zero.  Hence
\[
 \II_{\partial_iW_m}>-\delta\,G|_{\partial_iW_m}.
\]
Indeed, the form is block diagonal in the axial--sphere splitting, and
each diagonal block has all eigenvalues greater than \(-\delta\).
On \(\partial_oW_m\), the sphere restriction is positive definite, the
axial--sphere block is zero, and the axial principal curvature is
positive.  Compactness therefore gives \(\delta_{\mathrm{ax}}>0\) such
that the latter is at least \(\delta_{\mathrm{ax}}\).

Choose
\[
 0<\mathfrak p_0<
 \min\left\{\frac{\delta_{\mathrm{ax}}}{2},
                  \frac1{\rho_{\mathrm{tw}}}\right\}.
\]
Then choose \(\Lambda_{\mathrm{con}}\) so large that
\[
 \Lambda_{\mathrm{con}}\ge
 \max\left\{2,\frac{40\rho_{\mathrm{tw}}}{\ell_{\mathrm{tw}}}\right\},
 \qquad
 s_*=\frac{\Lambda_{\mathrm{con}}^2-1}{2\mathfrak p_0}
 \ge s_{\min}.
\]
Proposition~\ref{prop:un-connector} gives a Ricci-positive holonomy connector
whose left metric is
\(g_{\ell_{\mathrm{tw}},\rho_{\mathrm{tw}},1/2}\) and whose right metric is
\[
 g_{\Lambda_{\mathrm{con}}\ell_{\mathrm{tw}},\rho_{\mathrm{tw}},0}.
\]
Lemma~\ref{lem:un-left-boundary-gluing} verifies the strict second
fundamental form inequality at \(\partial_oW_m\).  Apply
Lemma~\ref{lem:relative-gluing-unified}, with its support in the collar
$[0,s_*/4)\times\mathbb S^1\times\mathbb S^m$ of $\mathcal C$.  The resulting manifold is
\[
 B_m=W_m\cup_{\Phi_o\circ\iota_0^{-1}}\mathcal C,
 \qquad
 \iota_0(\theta,y)=(0,\theta,y),
\]
with incoming boundary \(\partial_iW_m\), outgoing boundary
\(\{s_*\}\times\mathbb S^1\times\mathbb S^m\), and
\[
 \ell_O=\Lambda_{\mathrm{con}}\ell_{\mathrm{tw}},
 \qquad \rho_O=\rho_{\mathrm{tw}}.
\]
Define the boundary product markings in the statement by
\[
 \Phi_I:=\Phi_i,\qquad
 \Phi_O:=\iota_{s_*},\qquad
 \iota_{s_*}(\theta,y)=(s_*,\theta,y).
\]
The final assertion of Proposition~\ref{prop:un-connector} gives
\(\II_{\partial_oB_m}>0\).

Collapsing the holonomy connector in the \(s\)-direction gives a deformation
retraction \(B_m\to W_m\).  Lemma~\ref{lem:un-boundary-components}
therefore gives
\[
 \pi_1(B_m)\cong\pi_1(W_m)\cong\mathbb Z,
\]
and identifies the incoming circle with \(a^2\).  Under the same
retraction the outgoing circle becomes the outer circle of \(W_m\), so
it represents \(a\).  If two paths are used to move a boundary loop to
a fixed basepoint, the resulting elements are conjugate and hence equal
because the group is abelian.  The support choice above proves the
final assertion.
\end{proof}

Proposition~\ref{prop:square-root-block} provides the block used at each
stage of the construction.  To begin the successive attachments, we need
a compact
Ricci-positive manifold whose boundary has the same untwisted product
form, is strictly convex, and whose circle factor bounds a disk in the
interior.  The following elementary doubly warped cap has precisely
these properties.

\begin{lemma}
\label{lem:un-cap}
For \(m\in\{2,3\}\) and arbitrary \(\ell,\rho>0\), the manifold
\(D^2\times\mathbb S^m\) carries a Ricci-positive metric whose boundary is the
product \(\mathbb S^1_\ell\times\mathbb S^m_\rho\) and is strictly convex.
\end{lemma}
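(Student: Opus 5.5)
I would realize the cap as a doubly warped product
\[
 g_{\mathrm{cap}}=\dd r^2+a(r)^2\dd\theta^2+b(r)^2g_{\mathbb S^m},
 \qquad 0\le r\le r_1,
\]
on $D^2\times\mathbb S^m$. Here $(r,\theta)$ are polar coordinates on $D^2$, so the circle factor collapses at $r=0$ and the sphere factor stays of positive radius. Smoothness at the axis requires
\[
 a(0)=0,\qquad a'(0)=1,\qquad a \text{ odd},\qquad b(0)>0,\qquad b \text{ even}.
\]
For such metrics the Ricci tensor is diagonal in the frame $\partial_r,\ a^{-1}\partial_\theta,\ b^{-1}U_i$, with components
\[
 \Ric_{rr}=-\frac{a''}{a}-m\frac{b''}{b},\qquad
 \Ric_{\theta\theta}=-\frac{a''}{a}-m\frac{a'b'}{ab},
\]
\[
 \Ric_{ii}=-\frac{b''}{b}-\frac{a'b'}{ab}+(m-1)\frac{1-b'^2}{b^2}.
\]
These follow from Lemma~\ref{lem:gaussian-ricci-unified}, or equally from Lemma~\ref{lem:un-ricci} with $\omega\equiv0$, $\mathfrak p=a'/a$ and $\mathfrak q=b'/b$. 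The boundary $r=r_1$ has second fundamental form with eigenvalues $a'/a$ and $b'/b$, so strict convexity means
\[
 a'(r_1)>0,\qquad b'(r_1)>0.
\]

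For concrete choices, take $a(r)=\epsilon\sin(r/\epsilon)$ on $[0,r_1]$ with $r_1<\pi\epsilon/2$, so that $a'>0$ and $-a''/a=\epsilon^{-2}$. Take $b$ to be an even, increasing, concave function: for instance $b(r)=\rho_1\bigl(1+\eta(1-\cos r)\bigr)$ with small $\eta>0$ restricted to $r\le r_1<\pi$, or simply $b=\rho_1\sqrt{1+\eta r^2}$ over a short interval. Then $b'>0$ on $(0,r_1]$ and $|b''|,|b'|\lesssim\eta\rho_1$.

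The positivity check goes component by component.
\begin{itemize}
\item $\Ric_{rr}\ge\epsilon^{-2}-m|b''|/b$, which is positive once $\eta$ is small relative to $\epsilon^{-2}$.
\item $\Ric_{\theta\theta}=\epsilon^{-2}-m\,a'b'/(ab)$. Near $r=0$ we have $a'/a\sim1/r$ and $b'\sim b''(0)r$, so the ratio stays bounded by $C\eta$. Hence this component is again positive for small $\eta$.
\item $\Ric_{ii}$ is dominated by $(m-1)/b^2\ge1/b^2$, because $m\ge2$; all other terms are $O(\eta)$. The only non-obvious term is $-a'b'/(ab)$, which is bounded by $C\eta$ uniformly, including at $r=0$.
\end{itemize}
So taking $\eta$ small, after fixing $\epsilon$ and $r_1$ and with $\rho_1=1$, gives $\Ric>0$ everywhere. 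The regularity at $r=0$ is automatic from the parity conditions.

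The main obstacle is matching the prescribed boundary radii $\ell$ and $\rho$ simultaneously while keeping convexity. Constant rescaling fixes only the ratio $a(r_1)/b(r_1)$. I first normalize $b(r_1)=1$, so the ratio that must be achieved is $\ell/\rho$. Since $a(r_1)=\epsilon\sin(r_1/\epsilon)$ ranges over $(0,\epsilon)$ as $r_1$ ranges over $(0,\pi\epsilon/2)$, and $\epsilon$ is free, any value of $a(r_1)$ is attainable. The estimates above hold uniformly: $\eta$ is chosen after $\epsilon$ and $r_1$ are fixed, and then $\rho_1$ is adjusted so that $b(r_1)=1$, which changes $b$ only by a factor $1+O(\eta)$.

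Finally, multiply the whole metric by $\rho^2$. This makes the boundary $\mathbb S^1_\ell\times\mathbb S^m_\rho$. Convexity and positive Ricci curvature are scale invariant, which completes the construction.
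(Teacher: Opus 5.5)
Your proposal is correct and is essentially the paper's proof. The paper also uses a doubly warped metric on $D^2\times\mathbb S^m$, with a sine profile $2\ell\sin(s/(2\ell))$ on $[0,\pi\ell/3]$ for the collapsing circle; this is your $\epsilon=2\ell$, $r_1=\pi\epsilon/6$, so it hits $\ell$ directly rather than rescaling at the end. Its sphere profile is a slightly increasing even function $\rho\exp(\gamma(s^2-s_{\mathrm{cap}}^2))$, and it gets $\Ric>0$ by perturbing from the $\gamma=0$ product, whose Ricci components $1/(4\ell^2)$, $1/(4\ell^2)$, $(m-1)/\rho^2$ play the role of your explicit component bounds.

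Two small slips. First, an even $b$ that is increasing on $(0,r_1]$ cannot be concave near $r=0$; both your examples are convex there, though concavity is never used. Second, with the normalization $b(r_1)=1$, the $1-\cos r$ example restricted to $r_1<\pi$ only reaches ratios $\ell/\rho<\pi$. The $\sqrt{1+\eta r^2}$ profile works on any interval, so that is the one to use.
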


\begin{proof}
Set
\[
 s_{\mathrm{cap}}=\frac{\pi\ell}{3},
 \qquad
 \ell_{\mathrm{cap}}(s)=2\ell\sin\left(\frac{s}{2\ell}\right),
\]
and, for \(\gamma_{\mathrm{cap}}>0\), put
\[
 \rho_{\mathrm{cap}}(s)
 =\rho\exp\left(\gamma_{\mathrm{cap}}
       (s^2-s_{\mathrm{cap}}^2)\right).
\]
On
\([0,s_{\mathrm{cap}}]\times\mathbb S^1\times\mathbb S^m\), consider
\begin{equation}\label{eq:un-cap-metric}
 g_{\mathrm{cap}}(\gamma_{\mathrm{cap}})
 =\dd s^2+\ell_{\mathrm{cap}}(s)^2\dd\theta^2
       +\rho_{\mathrm{cap}}(s)^2g_{\mathbb S^m}.
\end{equation}
At \(s=0\),
\begin{align*}
 \ell_{\mathrm{cap}}(s)
 &=s-\frac{s^3}{24\ell^2}+O(s^5),\\
 \rho_{\mathrm{cap}}(s)
 &=\rho_{\mathrm{cap}}(0)
   \left(1+\gamma_{\mathrm{cap}}s^2+O(s^4)\right).
\end{align*}
Thus the circle-radius function has the required odd expansion with
derivative one, and the sphere-radius function has an even expansion.
Consequently, \eqref{eq:un-cap-metric} extends smoothly across the
center of the \(D^2\)-factor.

The Ricci curvatures on a unit radial vector, a unit circle vector, and
a unit vector tangent to \(\mathbb S^m\) are, respectively,
\begin{align*}
 &-\frac{\ell_{\mathrm{cap}}''}{\ell_{\mathrm{cap}}}
   -m\frac{\rho_{\mathrm{cap}}''}{\rho_{\mathrm{cap}}},\qquad
 -\frac{\ell_{\mathrm{cap}}''}{\ell_{\mathrm{cap}}}
   -m\frac{\ell_{\mathrm{cap}}'\rho_{\mathrm{cap}}'}
           {\ell_{\mathrm{cap}}\rho_{\mathrm{cap}}},\\
 &\frac{m-1}{\rho_{\mathrm{cap}}^2}
       -\frac{\rho_{\mathrm{cap}}''}{\rho_{\mathrm{cap}}}
       -(m-1)\left(
          \frac{\rho_{\mathrm{cap}}'}{\rho_{\mathrm{cap}}}\right)^2
       -\frac{\ell_{\mathrm{cap}}'\rho_{\mathrm{cap}}'}
              {\ell_{\mathrm{cap}}\rho_{\mathrm{cap}}}.
\end{align*}
When \(\gamma_{\mathrm{cap}}=0\), these three quantities are
\[
 \frac1{4\ell^2},\qquad
 \frac1{4\ell^2},\qquad
 \frac{m-1}{\rho^2}.
\]
They are strictly positive because \(m\ge2\).  As
\(\gamma_{\mathrm{cap}}\to0\), the metrics
\eqref{eq:un-cap-metric} converge smoothly on the compact manifold
\(D^2\times\mathbb S^m\) to the product metric obtained at
\(\gamma_{\mathrm{cap}}=0\).  Hence
\(\Ric_{g_{\mathrm{cap}}(\gamma_{\mathrm{cap}})}>0\) for every
sufficiently small \(\gamma_{\mathrm{cap}}>0\).

At \(s=s_{\mathrm{cap}}\),
\[
 \ell_{\mathrm{cap}}(s_{\mathrm{cap}})=\ell,
 \qquad
 \rho_{\mathrm{cap}}(s_{\mathrm{cap}})=\rho.
\]
Thus the induced boundary metric is
\(\ell^2\dd\theta^2+\rho^2g_{\mathbb S^m}\).  With respect to the
outward unit normal \(+\partial_s\), the boundary principal curvatures
in the circle and sphere directions are
\[
 \frac{\ell_{\mathrm{cap}}'(s_{\mathrm{cap}})}
      {\ell_{\mathrm{cap}}(s_{\mathrm{cap}})}
 =\frac{\sqrt3}{2\ell}>0,
 \qquad
 \frac{\rho_{\mathrm{cap}}'(s_{\mathrm{cap}})}
      {\rho_{\mathrm{cap}}(s_{\mathrm{cap}})}
 =2\gamma_{\mathrm{cap}}s_{\mathrm{cap}}>0,
\]
where the second value has multiplicity \(m\).  Therefore the boundary
is strictly convex.
\end{proof}

We now start our construction. First,
we fix a sufficiently small $\gamma_{\mathrm{cap}}>0$ as in the proof and
write $g_{\mathrm{cap}}:=g_{\mathrm{cap}}(\gamma_{\mathrm{cap}})$.  The
Ricci-positive manifold
$(D^2\times\mathbb S^m,g_{\mathrm{cap}})$ is the initial cap introduced
in the introduction.  For every $y\in\mathbb S^m$, its boundary product
circle bounds the disk $D^2\times\{y\}$.

We now use the initial cap to begin the construction and attach the building
blocks inductively.  At each stage, we choose the parameter in the
estimate \(\II_{\partial_iB_m}>-\delta g\) to be smaller than half the
least outward principal curvature of the preceding block.  The relative
gluing lemma therefore applies at every interface.
Meanwhile, the holonomy-connector lengths are chosen to tend rapidly to
infinity.  In each block, the central subcylinder defined in (\ref{def: central-subcylinder}) between the levels
$s_j/4$ and $3s_j/4$ is disjoint from every smoothing collar.

Fix \(\ell_0,\rho_0>0\), and take the initial cap from
Lemma~\ref{lem:un-cap} with boundary metric \(g_{\ell_0,\rho_0,0}\).
Let \(\lambda_{\mathrm{cap}}>0\) be its least outward boundary principal
curvature.  Apply Proposition~\ref{prop:square-root-block} with this
incoming metric and with
\[
 0<\delta_0<\frac{\lambda_{\mathrm{cap}}}{2},
 \qquad s_{\min}=4,
\]
and denote the resulting block by \(B_0\).  Under the boundary product
markings, the sum of the two outward second fundamental forms is
bounded below by
\((\lambda_{\mathrm{cap}}-\delta_0)g>0\).
Lemma~\ref{lem:relative-gluing-unified} therefore joins the initial cap to
\(B_0\).

Inductively, suppose the outgoing boundary of \(B_j\) has product metric
\[
 g_{\ell_{j+1},\rho_{j+1},0}
\]
and least outward principal curvature \(\lambda_j>0\).  Construct
\(B_{j+1}\) with this incoming metric and with
\[
 0<\delta_{j+1}<\frac{\lambda_j}{2},
 \qquad s_{\min}=2^{j+3}.
\]
The identity in the product coordinates is an isometry of the boundary
metrics, and the sum of the outward second fundamental forms is bounded
below by \((\lambda_j-\delta_{j+1})g>0\).  The relative gluing lemma
therefore applies.  Notice that no uniform positive lower bound for
\(\lambda_j\) is needed.

Let
\begin{equation}\label{def:telescope}
 M=(D^2\times\mathbb S^m)\cup B_0\cup B_1\cup B_2\cup\cdots
\end{equation}
denote the manifold obtained after all these identifications.  We call
this union the \emph{infinite telescope}.  For each
\(j\ge0\), write
\[
 \mathcal C_j=[0,s_j]\times\mathbb S^1\times\mathbb S^m
 \subset B_j
\]
for the holonomy connector supplied by
Proposition~\ref{prop:square-root-block}.  The preceding choices give
\[
 s_j\ge2^{j+2}.
\]
The internal smoothing in \(B_j\) is supported to the left of
\(\{s_j/4\}\times\mathbb S^1\times\mathbb S^m\).  At the outgoing
boundary of \(B_j\), choose the support of the next gluing inside the
collar
\[
 (3s_j/4,s_j]\times\mathbb S^1\times\mathbb S^m.
\]
Consequently, we may denote the central subcylinder of $B_j$ by
\begin{equation}\label{eq:un-central-slab}
 \mathcal P_j
 :=[s_j/4,3s_j/4]\times\mathbb S^1\times\mathbb S^m
 \subset\mathcal C_j.
\end{equation}
The final metric on $\mathcal P_j$ is exactly the holonomy-connector
metric of Proposition~\ref{prop:un-connector}.

Set
\[
 \Sigma_j=\{s_j/2\}\times\mathbb S^1\times\mathbb S^m
 \subset\mathcal C_j,
\]
and denote the two closed subcylinders cut out by $\Sigma_j$ by
\[
 \mathcal C_j^-
 :=[0,s_j/2]\times\mathbb S^1\times\mathbb S^m,
 \qquad
\mathcal C_j^+
 :=[s_j/2,s_j]\times\mathbb S^1\times\mathbb S^m.
\]
Let \(M_N\) be the closure of the component of
\(M\setminus\Sigma_N\) containing the initial cap.  The dual graph of the
initial cap and the blocks is a ray, so \(\Sigma_N\) separates a finite initial
segment from the connected infinite tail.  Hence
\begin{equation}\label{eq:telescope-exhaustion}
 \partial M_N=\Sigma_N,\qquad
 M_N\subset\operatorname{int}M_{N+1},\qquad
 \bigcup_{N\ge0}\operatorname{int}M_N=M,
\end{equation}
and \(M\setminus M_N\) is connected.  Since \(M_N\) is contained in the
union of the initial cap and finitely many compact blocks, it is compact.

The open sets \(\operatorname{int}M_N\) form a nested cover of \(M\).
Every compact subset is therefore contained in one of them and meets
only finitely many gluing collars.  Thus the smoothing operations are
locally finite and define a smooth metric on \(M\) with
\(\Ric_M>0\).

This completes the geometric assembly, and then we check the topology
of \(M\).  The hypersurfaces \(\Sigma_N\) separate the infinite
telescope into compact initial segments with connected complements,
which identifies the end structure.  Applying van Kampen's theorem to
the same exhaustion then records the square relation introduced by each
successive block.
The complement \(M\setminus M_N\) is the connected tail beginning at
the midpoint of \(\mathcal C_N\).  The exhaustion
\eqref{eq:telescope-exhaustion} consequently proves that \(M\) has one
end.

For \(j<N\), let \(a_j\) be the generator represented by the outgoing
product circle of \(B_j\).  Let \(a_N\) be represented by a product
circle in \(\Sigma_N\).  The product structure of the holonomy connector
\(\mathcal C_N\) identifies this
class with the outgoing generator of \(B_N\).
For \(0\le j<N\), Proposition~\ref{prop:square-root-block} says that
the incoming product circle of \(B_{j+1}\) represents \(a_{j+1}^2\).
Therefore the boundary identification between \(B_j\) and \(B_{j+1}\)
gives
\[
 a_j=a_{j+1}^2.
\]
The initial cap is simply connected, and its boundary product circle bounds
the \(D^2\)-factor.  Since this circle is glued to the incoming circle
of \(B_0\), it gives \(a_0^2=1\).  The subcylinder
$\mathcal C_N^-$ deformation retracts to its left boundary.  Repeated
 applications of the Seifert--van Kampen theorem therefore give
\begin{align}
 \pi_1(M_N)
 &=\left\langle a_0,\ldots,a_N\ \middle|\
   a_0^2=1,\ a_j=a_{j+1}^2\ (0\le j<N)\right\rangle\notag\\
 &\cong\langle a_N\mid a_N^{2^{N+1}}=1\rangle
 \cong\mathbb Z/2^{N+1}\mathbb Z.
 \label{eq:un-finite-pi1}
\end{align}
Under these identifications, inclusion sends
\[
 \pi_1(M_N)\longrightarrow\pi_1(M_{N+1}),
 \qquad [k]\longmapsto[2k].
\]
This homomorphism is injective.  Every loop and every homotopy of loops
has compact image and is contained in some \(M_N\).  Hence
\begin{equation}\label{eq:un-prufer}
 \pi_1(M)
 =\varinjlim_N\bigl(\mathbb Z/2^{N+1}\mathbb Z,[k]\mapsto[2k]\bigr)
 =\CPrufer.
\end{equation}

It remains to verify that the resulting Ricci-positive metric is
complete.  This is where the quantitative lower bounds on the
holonomy-connector lengths enter: every curve escaping through the
telescope must cross $\mathcal P_j$ for every sufficiently large $j$.
The sum of the corresponding lower bounds diverges.  We conclude by
recording orientability and completing the proof of the main theorem.

Let \(\zeta\) be a piecewise smooth curve joining
\(D^2\times\mathbb S^m\) to \(M\setminus M_N\).  It crosses
$\mathcal P_j$ for every $0\le j<N$.  On each such slab the metric
contains the orthogonal summand \(\dd s^2\).  Parameterizing a crossing
by \(t\), its \(s\)-coordinate traverses
\([s_j/4,3s_j/4]\), so the portion of \(\zeta\) in that slab has length
at least
\[
 \int|s'(t)|\,\dd t\ge\frac{s_j}{2}.
\]
The slabs $\mathcal P_j$ are pairwise disjoint.  Summing these contributions
and then taking the infimum over \(\zeta\) gives
\[
 d(D^2\times\mathbb S^m,M\setminus M_N)
 \ge\frac12\sum_{j=0}^{N-1}s_j
 \ge\frac12\sum_{j=0}^{N-1}2^{j+2}\longrightarrow\infty.
\]
Fix \(x\in M\) and \(R>0\), and choose
\(p\in D^2\times\mathbb S^m\).  For all sufficiently large \(N\),
\[
 d(D^2\times\mathbb S^m,M\setminus M_N)>R+d(p,x).
\]
For \(y\in M\setminus M_N\), the triangle inequality gives
\[
 d(x,y)\ge d(p,y)-d(p,x)>R.
\]
Consequently, \(\overline{B}_R(x)\subset M_N\).  Since \(M_N\) is
compact, every closed bounded metric ball is compact.  Thus the length
metric is proper, and the Hopf--Rinow theorem implies geodesic
completeness.

Finally, \(P_m\) is connected and orientable.  The orientation sign of
the diffeomorphism \(\sigma_m\) is therefore constant, so it is enough
to compute the sign of its differential at one fixed point retained in
the directed-base construction.  At that point,
\[
 d\sigma_2=\operatorname{diag}(-1,1,-1),
 \qquad d\sigma_3=-\operatorname{Id}_{\mathbb R^4},
\]
and both determinants are \(+1\).  Thus each screw quotient, holonomy
connector, and initial cap is orientable.  Since the dual gluing graph is
a ray, their
orientations can be chosen successively so that every boundary
identification reverses boundary orientations.  Hence \(M\) is
orientable.

\begin{proof}[Proof of Theorem~\ref{thm:main}]
For \(m\in\{2,3\}\), the preceding locally finite construction gives a
smooth one-ended \((m+2)\)-manifold with \(\Ric>0\).  Equation
\eqref{eq:un-prufer} identifies its fundamental group with \(\CPrufer\),
and the estimates on the slabs $\mathcal P_j$ prove completeness.  The
cases \(m=2\)
and \(m=3\) have dimensions four and five, respectively.
\end{proof}

\section{Further properties of the constructed manifolds}
\label{sec:further-properties}

Fix \(m\in\{2,3\}\), set \(n=m+2\), and let
\((M^n,g)\) be one of the complete Ricci-positive manifolds constructed
in Section~7.  This section describes both the topology of its
universal cover and the large-scale geometry arising from the free
connector parameters.  Writing \(\widehat M_N\) for the distinguished
lift of the \(N\)-th compact manifold, we first prove that
\[
 \widehat M_N
 \cong
 (D^2\times\mathbb S^m)
 \#^{\,2^{N+1}-1}
 (\mathbb S^2\times\mathbb S^m),
\]
and pass to the limit to obtain
\[
 \widetilde M
 \cong
 \mathbb R^{m+2}\#_{\infty}
 (\mathbb S^2\times\mathbb S^m).
\]
We then show that, for every prescribed sequence
\(e_j\in[1,3/2]\), the connector parameters can be chosen so that
there exist sequences \(r_j,\widetilde r_j\to\infty\) satisfying
\[
 \frac{\log\operatorname{Vol}_M(B_{r_j}(p))}{\log r_j}
 =e_j+o(1),
 \qquad
 \frac{\log\operatorname{Vol}_{\widetilde M}
 (B_{\widetilde r_j}(\widetilde p))}
 {\log\widetilde r_j}
 =e_j+o(1).
\]
Finally, we prove that every admissible metric has
\[
 \operatorname{AVR}(M)=0,
 \qquad
 \limsup_{r\to\infty}
 \frac{\operatorname{Vol}_M(B_r(p))}{r}
 =\infty.
\]
For the universal cover, the asymptotic volume ratio vanishes for every
parameter choice when \(m=2\), and for the alternating parameter
choices when \(m=2\) or \(m=3\).  For arbitrary parameter choices when
\(m=3\), its value is not determined by the present argument.

\subsection{Topology of the universal cover}
\label{subs:topology-of-the-universal-cover}

Let \(q:\widetilde M\to M\) be the universal covering map.  Fix a point
\(p\) in the initial cap and a point \(\widetilde p\in q^{-1}(p)\).  For
\(N\ge0\), let \(\widehat M_N\) be the component of \(q^{-1}(M_N)\)
containing \(\widetilde p\), where \(M_N\) is defined in
\eqref{eq:telescope-exhaustion}.  We also write \(\widehat M_{-1}\) for
the component of the preimage of the initial cap containing
\(\widetilde p\); thus \(\widehat M_{-1}\cong D^2\times\mathbb S^m\).

We first make the boundary parametrizations used below explicit.  Fix
the round identifications
\(\iota_o:\mathbb S^m\to\partial_oP_m\) and
\(\iota_+:\mathbb S^m\to I_+\) used in
Lemma~\ref{lem:un-boundary-components}, and put
\(\iota_-:=\sigma_m\circ\iota_+\).  By
Theorem~\ref{thm:unified-directed-base},
\begin{equation}\label{eq:universal-boundary-parametrizations}
 \sigma_m\circ\iota_o
 =\iota_o\circ\exp(\pi K_m),
 \qquad
 \sigma_m\circ\iota_+=\iota_-.
\end{equation}
If \(T_j\) is the period of the screw quotient in \(B_j\), its outer
and inner boundary product parametrizations are therefore
\begin{equation}\label{eq:universal-screw-boundary-maps}
 \begin{aligned}
  \Phi_{o,j}(e^{\mathrm i\theta},y)
  &=\left[\frac{T_j\theta}{2\pi},
    \iota_o\!\left(\exp\left(\frac\theta2K_m\right)y\right)\right],\\
  \Phi_{i,j}(e^{\mathrm i\theta},y)
  &=\left[\frac{T_j\theta}{\pi},\iota_+(y)\right].
 \end{aligned}
\end{equation}
The first identity in \eqref{eq:universal-boundary-parametrizations}
makes \(\Phi_{o,j}\) well defined, and the second shows that the first
return to \(I_+\) occurs after translation by \(2T_j\).  The holonomy
connector is a cylinder with the displayed product coordinate, so it
transports the circle and sphere coordinates in
\eqref{eq:universal-screw-boundary-maps} to every intermediate slice,
including \(\Sigma_j\), and to the outgoing boundary of \(B_j\).

The following elementary calculation is the only surgery result needed
in the proof.

\begin{lemma}\label{lem:universal-two-core}
Let
\[
 Y_\pm\cong
 (D^2\times\mathbb S^m)
 \mathbin{\#}^{\,r_\pm}(\mathbb S^2\times\mathbb S^m),
 \qquad r_\pm\ge0,
\]
where the connected sums are disjoint from fixed product collars of
\(\partial Y_\pm\).  In the fixed product coordinates, suppose the maps
from \(\partial Y_\pm\) to
\(\mathbb S^1\times I_\pm\subset\mathbb S^1\times P_m\), after
composition with \(\theta\)-independent product diffeomorphisms extending
over \(Y_\pm\), have the form
\begin{equation}\label{eq:universal-attaching-maps}
 \varphi_\pm(e^{\mathrm i\theta},y)
 =\bigl(e^{\mathrm i\theta},\alpha_\pm(\theta)y\bigr),
 \qquad
 \alpha_\pm:\mathbb S^1\longrightarrow SO(m+1).
\end{equation}
Then
\begin{equation}\label{eq:universal-two-core-result}
 Y_+\cup_{\varphi_+}(\mathbb S^1\times P_m)
 \cup_{\varphi_-}Y_-
 \cong
 (D^2\times\mathbb S^m)
 \mathbin{\#}^{\,r_++r_-}(\mathbb S^2\times\mathbb S^m)
 \mathbin{\#}S(\xi),
\end{equation}
where \(S(\xi)\to\mathbb S^2\) is the oriented
\(\mathbb S^m\)-bundle determined by the boundary identification
\begin{equation}\label{eq:universal-clutching-map}
 (e^{\mathrm i\theta},y)
 \longmapsto
 \bigl(e^{-\mathrm i\theta},
  \alpha_-(\theta)\alpha_+(\theta)^{-1}y\bigr).
\end{equation}
In particular, if \([\alpha_+]=[\alpha_-]\) in
\(\pi_1SO(m+1)\), then \(S(\xi)\cong\mathbb S^2\times\mathbb S^m\).
If \(\alpha_+=\alpha_-\) as maps, their common loop is transported to
the boundary of the \(D^2\times\mathbb S^m\) factor on the right-hand
side of \eqref{eq:universal-two-core-result}.
Moreover, if a diffeomorphism on \(Y_+\) has already been fixed and is
a product on its boundary collar, the diffeomorphism in
\eqref{eq:universal-two-core-result} can be chosen to agree with it on
all of \(Y_+\).
\end{lemma}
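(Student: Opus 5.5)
We will first reduce to the case $r_\pm=0$. The connected summands $\#^{r_\pm}(\mathbb S^2\times\mathbb S^m)$ are performed in balls disjoint from the fixed product collars of $\partial Y_\pm$. The gluing maps $\varphi_\pm$ only involve those collars, so the summands can be carried through the whole construction unchanged. They contribute exactly $\#^{r_++r_-}(\mathbb S^2\times\mathbb S^m)$ to the right-hand side of \eqref{eq:universal-two-core-result}. For the same reason, any diffeomorphism already fixed on $Y_+$ that is a product on its collar can be kept on $Y_+$: every subsequent modification will be supported in the collar of $\partial Y_+$ and in $\mathbb S^1\times P_m$. From now on, $Y_\pm=D^2\times\mathbb S^m$. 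We also compose with the given $\theta$-independent product diffeomorphisms, so that $\varphi_\pm$ has exactly the form \eqref{eq:universal-attaching-maps}.

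The second step is to absorb $Y_+$ together with the base. Regard $P_m$ as $D^{m+1}\setminus\operatorname{int}D^{m+1}_+$ with the ball $D^{m+1}_-$ removed. The first piece is a collar $\mathbb S^m\times[0,1]$ joining $I_+$ to $\partial_oP_m$. Hence $\mathbb S^1\times P_m$ is the product collar $\mathbb S^1\times\mathbb S^m\times[0,1]$ with the interior of the tube $\mathbb S^1\times D^{m+1}_-$ removed. Here the tube core $\mathbb S^1\times\{-\varepsilon e_1\}$ is parallel to the circle factor. Gluing this collar to $Y_+=D^2\times\mathbb S^m$ via $\varphi_+$ produces a manifold diffeomorphic to $D^2\times\mathbb S^m$, because attaching a collar does not change the diffeomorphism type. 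The diffeomorphism is the identity on $Y_+$ and is the shear $(\theta,x)\mapsto(\theta,\alpha_+(\theta)x)$ across the collar. The drilled tube then becomes a tube around a circle $c$ pushed slightly into the interior, parallel to $\partial D^2\times\{y_0\}$, with a normal framing twisted by $\alpha_+$. Consequently,
\[
 Y_+\cup_{\varphi_+}(\mathbb S^1\times P_m)\cong(D^2\times\mathbb S^m)\setminus\operatorname{int}\nu(c).
\]
The circle $c$ bounds the embedded disk $D^2\times\{y_0\}$, shrunk slightly. Therefore $c$ lies in an embedded $(m+2)$-ball $\beta$ in the interior, as an unknotted circle.

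The third step identifies the gluing of $Y_-$ as a surgery. Gluing $Y_-=D^2\times\mathbb S^m$ into $\partial\nu(c)$ via $\varphi_-$ is a surgery on the trivially embedded circle $c\subset\beta$. Such a surgery produces a connected sum with the $\mathbb S^m$-bundle over $\mathbb S^2$ obtained from two copies of $D^2\times\mathbb S^m$. This bundle is clutched by the composite of the two boundary identifications. The disk $D^2\times\{y_0\}$ bounding $c$ is glued with opposite orientation to the $D^2$-factor of $Y_-$, which produces the orientation reversal $e^{\mathrm i\theta}\mapsto e^{-\mathrm i\theta}$. The sphere part of the clutching is $\alpha_-(\theta)\alpha_+(\theta)^{-1}$. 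This gives \eqref{eq:universal-clutching-map} and
\[
 (D^2\times\mathbb S^m)\# S(\xi).
\]
Since $\pi_1SO(m+1)\cong\mathbb Z/2$ classifies oriented $\mathbb S^m$-bundles over $\mathbb S^2$ with structure group $SO(m+1)$, equal classes $[\alpha_+]=[\alpha_-]$ give a null-homotopic clutching and hence the trivial bundle $\mathbb S^2\times\mathbb S^m$.

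For the final transport statement, suppose $\alpha_+=\alpha_-=\alpha$. Then the shear used in the second step maps the outer boundary $\mathbb S^1\times\partial_oP_m$ of the base onto $\partial(D^2\times\mathbb S^m)$, taking the loop $\theta\mapsto(\theta,\alpha(\theta)y)$ to the boundary circle of the surviving $D^2\times\mathbb S^m$ factor. This is possible because the surgery happens inside $\beta$, away from the boundary collar.

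The main obstacle I expect is the framing bookkeeping in the third step. One must check carefully that the normal framing of $c$ inherited from the collar shear, compared with the framing induced by $\varphi_-$, is exactly $\alpha_-\alpha_+^{-1}$ and not its inverse or a conjugate. One must also get the orientation reversal right. I would handle this by writing the tube $\mathbb S^1\times D^{m+1}_-$ explicitly in the collar coordinates. The sheared framing is then $\theta\mapsto\alpha_+(\theta)$ applied to the constant framing, and the gluing composite can be read off directly. Both the inverse and the conjugate lie in the same class of the abelian group $\pi_1SO(m+1)$, so the $\mathbb S^2\times\mathbb S^m$ conclusion is insensitive to that ambiguity. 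The explicit formula \eqref{eq:universal-clutching-map}, however, requires the careful computation.
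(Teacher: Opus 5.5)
Your proposal is correct and follows essentially the paper's own argument (its ``equivalently'' surgery paragraph). The surgery on $\mathbb S^1\times\{p_+\}$ absorbs $Y_+$ and the collar into a copy of $D^2\times\mathbb S^m$, and the second circle bounds the product annulus capped by a meridional disk, so surgery on this unknotted circle with relative framing $\alpha_-\alpha_+^{-1}$ produces the $S(\xi)$ summand, while the connected-sum factors and the prescribed map on $Y_+$ are carried along untouched. One detail to tidy: a shear that is constant in the collar parameter moves the core of the drilled tube through the $\alpha_+$-orbit of the point corresponding to $p_-$, so $c$ is not literally parallel; either isotope it (using $\pi_1\mathbb S^m=0$ and $m+2\ge4$), or choose the shear to fix that point at its level, which is possible since $\pi_1SO(m)\to\pi_1SO(m+1)$ is onto. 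Since only the homotopy class of the framing loop matters, this does not affect the conclusion.
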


\begin{proof}
Relative to the boundary components \(I_+\sqcup I_-\), the manifold
\(P_m\) is obtained from
\[
 (I_+\sqcup I_-)\times[0,1]
\]
by attaching a single \(1\)-handle \(D^1\times D^m\) along
\(\mathbb S^0\times D^m\).  Consequently, the nonproduct part of
\(\mathbb S^1\times P_m\) is the round handle
\[
 \mathbb S^1\times D^1\times D^m.
\]
Equipping \(\mathbb S^1\) with one \(0\)-cell and one \(1\)-cell
decomposes this round handle into an ordinary index-one handle
\(D^1\times D^{m+1}\) and an ordinary index-two handle
\(D^2\times D^m\).  The index-one handle joins \(Y_+\) to \(Y_-\),
while the index-two handle is attached along the boundary of its
standard core disk.

Equivalently, fill the two deleted disks in \(P_m\) and regard the two
attachments as framed surgeries on the parallel circles
\[
 \mathbb S^1\times\{p_+\},
 \qquad
 \mathbb S^1\times\{p_-\}
 \subset \mathbb S^1\times D^{m+1}.
\]
First perform the surgery corresponding to \(Y_+\).  The product of
\(\mathbb S^1\) with a straight arc from \(p_-\) to \(p_+\), capped
off along its \(p_+\)-boundary by a meridional disk introduced by the
first surgery, is precisely the standard core disk of the index-two
handle above.  Transporting the stabilized normal framing along the
product annulus gives \(\alpha_+\), whereas the framing prescribed for
the second surgery is \(\alpha_-\).  The relative framing of the second
surgery is therefore
\[
 \alpha_-\alpha_+^{-1}.
\]

A regular neighborhood of this core disk, together with the second
surgery piece, is diffeomorphic to the complement of an
\((m+2)\)-disk in the sphere bundle obtained by gluing two copies of
\(D^2\times\mathbb S^m\) via
\eqref{eq:universal-clutching-map}.  Reintroducing the connected-sum
factors, whose supports were chosen away from the fixed collars, gives
\eqref{eq:universal-two-core-result}.  If
\([\alpha_+]=[\alpha_-]\) in \(\pi_1SO(m+1)\), then the relative
clutching loop is null-homotopic, and hence the resulting sphere bundle
is diffeomorphic to \(\mathbb S^2\times\mathbb S^m\).

All the preceding handle modifications are supported in
\(\mathbb S^1\times P_m\) and on the outward sides of the two fixed
collars.  They can therefore be carried out without changing any
prescribed product diffeomorphism on \(Y_+\), proving the final
assertion.  The first surgery also shows that, when
\(\alpha_+=\alpha_-\), their common framing is carried to the
remaining boundary, whereas the closed sphere-bundle summand depends
only on the relative loop \(\alpha_-\alpha_+^{-1}\).  This proves the
preceding assertion.
\end{proof}

\begin{theorem}\label{thm:universal-cover-topology}
For every \(N\ge0\),
\[
 \widehat M_N\cong
 (D^2\times\mathbb S^m)
 \mathbin{\#}^{\,2^{N+1}-1}
 (\mathbb S^2\times\mathbb S^m).
\]
Consequently,
\[
 \widetilde M\cong
 \mathbb R^{m+2}\mathbin{\#}_{\infty}
 (\mathbb S^2\times\mathbb S^m).
\]
In particular, \(\widetilde M\) has one end and is simply connected at
infinity.
\end{theorem}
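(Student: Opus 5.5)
Induction on $N$, with Lemma~\ref{lem:universal-two-core} supplying the inductive step. First I describe $\widehat M_N$ as a union of pieces. Since $\pi_1(M_N)\cong\mathbb Z/2^{N+1}\mathbb Z$ injects into $\pi_1(M)$ (the direct-limit maps are injective), $\widehat M_N$ is the universal cover of $M_N$, hence a $2^{N+1}$-sheeted cover. In it, each screw quotient $W_j$ ($j\le N$) lifts to copies of $\mathbb R\times P_m$ modulo $\mathcal T^{2^{N+1-j}}$. The class $a_j$ has order $2^{j+1}$ in $\pi_1(M_N)$, and the $\mathcal T$-period generates $a_j$. Each lifted block is therefore a mapping torus of $\sigma_m^{2^{N+1-j}}=\operatorname{id}$ (an even power for $j\le N$), that is, $\mathbb S^1\times P_m$, and there are $2^{N-j}$ such copies. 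The preimage of the initial cap consists of $2^{N+1}$ copies of $D^2\times\mathbb S^m$, since $a_0$ has order $2$ in $\pi_1(M_{-1})$ modulo the cap relation. Concretely, the cap's boundary circle lifts to closed curves, and its $\pi_1$ is trivial, so it lifts homeomorphically. The holonomy connectors lift to product cylinders and only reparametrize collars. The dual graph of $\widehat M_N$ is a rooted binary tree of depth $N+1$: the root is the single copy of the top block $\mathbb S^1\times P_m$, and the leaves are the cap copies.

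Second, I compute the attaching maps in product coordinates. Using \eqref{eq:universal-screw-boundary-maps}, the outer boundary of a lifted copy of $W_j$ is parametrized by $\theta\mapsto[\,T_j\theta\cdot 2^{N-j}/\pi,\ \iota_o(\exp(2^{N-j}\theta K_m)y)]$ after unwinding the cover. The twist $\omega=1/2$ accumulates to $\exp(2^{N-j}\theta K_m)$ over the lifted circle, so the outer map has the form \eqref{eq:universal-attaching-maps} with $\alpha(\theta)=\exp(c\theta K_m)$, $c\in\mathbb Z$. The connector lifts with twist interpolating down to $0$. On the lift, this becomes an isotopy of the loop $\alpha$ within loops in $SO(m+1)$ (the lifted holonomy $e^{2^{N-j}\omega(s)\theta K_m}$ closes only at endpoint values, so I instead view the connector plus $W_j$ boundary together as giving a product diffeomorphism rel homotopy class). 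The two children of any node arise from the two inner sheets $I_\pm$, which are exchanged by the deck transformation $\mathcal T^{2^{N-j}}$ composed with a lift of $\sigma_m$. Hence their attaching loops $\alpha_+$ and $\alpha_-$ are conjugate by the fixed isometry $\sigma_m$ between $I_+$ and $I_-$, and in particular they have equal classes in $\pi_1SO(m+1)\cong\mathbb Z/2$. I expect this to be the main obstacle: verifying that $[\alpha_+]=[\alpha_-]$ with the paper's explicit normalizations $\iota_-=\sigma_m\circ\iota_+$, and arranging that after composing with $\theta$-independent product diffeomorphisms the maps are genuinely of the form \eqref{eq:universal-attaching-maps}. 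I would try first to show that the lifted deck symmetry carries the whole subtree over $I_+$ diffeomorphically onto the subtree over $I_-$, compatibly with boundary markings. This forces $\alpha_-=\alpha_+$ exactly, which would also give the statement on transport of the common loop.

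Third, I run the induction over the tree from the leaves up, with $Y_\pm$ the two subtrees below a node. The hypothesis is that a subtree of depth $d$ is diffeomorphic to $(D^2\times\mathbb S^m)\#^{2^{d}-1}(\mathbb S^2\times\mathbb S^m)$, with its boundary loop standard. Lemma~\ref{lem:universal-two-core} then gives $r_++r_-+1=2(2^d-1)+1=2^{d+1}-1$ summands, because $S(\xi)\cong\mathbb S^2\times\mathbb S^m$ when the classes agree. At depth $N+1$ this yields the stated formula for $\widehat M_N$; the connector half $\mathcal C_N^-$ is just a collar. The final assertion of the lemma lets me choose the diffeomorphisms nested, so that $\widehat M_N\cong(D^2\times\mathbb S^m)\#^{2^{N+1}-1}(\mathbb S^2\times\mathbb S^m)$ extends the one chosen for $\widehat M_{N-1}$. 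Since $\widetilde M=\bigcup\widehat M_N$, I take the direct limit of these nested diffeomorphisms. The $D^2\times\mathbb S^m$ cores exhaust to $\mathbb R^{m+2}$ (an increasing union of collared $D^2\times\mathbb S^m$ in which each core is a standard $D^{m+2}$-neighborhood after the added handles), giving $\mathbb R^{m+2}\#_\infty(\mathbb S^2\times\mathbb S^m)$. One-endedness and simple connectivity at infinity follow because the complements of the $\widehat M_N$ are connected, with boundaries $\mathbb S^1\times\mathbb S^m$ bounding simply connected tails (by van Kampen, $m\ge2$).
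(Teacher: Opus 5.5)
Your outline follows the paper's proof. $\widehat M_N$ is the universal cover of $M_N$, and its pieces form a rooted binary tree of lifted screw quotients $\mathbb S^1\times P_m$ with $2^{N+1}$ cap copies as leaves. Lemma~\ref{lem:universal-two-core} is applied at each node, with $\alpha_+=\alpha_-$ coming from the deck symmetry that exchanges the two subtrees, and its relative clause gives nested diffeomorphisms. (The paper runs the same induction top-down, attaching two copies of $\widehat M_{N-1}$ to the lifted block of $B_N$.) Several details need fixing, though.

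First, the copy of $W_j$ in $\widehat M_N$ is $(\mathbb R\times P_m)/\langle\mathcal T^{2^{j+1}}\rangle$, because $a_j$ has order $2^{j+1}$. Its outer boundary therefore carries the rotation $\exp(2^j\theta K_m)$. Your exponents $2^{N+1-j}$ and $2^{N-j}$ are inconsistent with having $2^{N-j}$ copies in a $2^{N+1}$-sheeted cover. Second, the holonomy connector is topologically just the product collar, glued by $\Phi_o$ on one side and by the identity on the other. The twist $\omega$ lives only in the metric, so no isotopy of loops through the connector is needed.

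The step you flag as the main obstacle is settled essentially as you guess, but with one correction. With $\iota_-=\sigma_m\circ\iota_+$, the deck element acting on the node as $\mathcal T$ sends $(\theta,y)\in\mathbb S^1\times I_+$ to $(\theta+\pi/2^j,y)\in\mathbb S^1\times I_-$. So it is a circle translation, not a conjugation by $\sigma_m$, and it does not preserve the markings. Because the loop lies in the one-parameter subgroup $\exp(\mathbb R K_m)$, the translation only multiplies it by a constant. A $\theta$-independent product diffeomorphism of the second subtree removes that constant, giving $\alpha_+=\alpha_-$ exactly, as the lemma's transport clause requires.

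For the same reason, your inductive hypothesis cannot say that the boundary loop is ``standard'' in the sense of trivial. The recurrence $\alpha_N=\exp(2^N\theta K_m)\alpha_{N-1}$ gives $\alpha_N=\exp((2^{N+1}-1)\theta K_m)$, which is the nonzero element of $\pi_1SO(3)$ when $m=2$. This is harmless only because the closed summand depends on $\alpha_-\alpha_+^{-1}$ alone.

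Finally, the passage to $\widetilde M$ needs an explicit nested model $Y_r\subset\operatorname{int}Y_{r+1}$. In that model, each old residual $D^2\times\mathbb S^m$ is completed by a complementary half into an $\mathbb S^2\times\mathbb S^m$ summand, and a fresh residual half is added. An increasing union of copies of $D^2\times\mathbb S^m$, each collared in the next, is $\mathbb R^2\times\mathbb S^m$, not $\mathbb R^{m+2}$. So ``the cores exhaust $\mathbb R^{m+2}$'' is not the mechanism that identifies the limit with $\mathbb R^{m+2}\mathbin{\#}_\infty(\mathbb S^2\times\mathbb S^m)$.
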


\begin{proof}
We first determine the finite lifted regions and their boundary
parametrizations.  By \eqref{eq:un-finite-pi1},
\(\pi_1(M_N)\cong\mathbb Z/2^{N+1}\mathbb Z\), and its inclusion into
\(\pi_1(M)\) is injective.  The restriction of \(q\) is therefore the
universal finite covering
\begin{equation}\label{eq:universal-finite-cover}
 q:\widehat M_N\longrightarrow M_N,
 \qquad \deg(q|_{\widehat M_N})=2^{N+1}.
\end{equation}
In particular, \(\widehat M_N\) is compact and simply connected.

Consider the lift of the screw quotient in \(B_N\) which occurs in
\(\widehat M_N\).  Its product identification is
\begin{equation}\label{eq:universal-cyclic-lift}
 \frac{\mathbb R\times P_m}
 {(t,x)\sim(t+2^{N+1}T_N,x)}
 \cong\mathbb S^1\times P_m,
 \qquad
 (e^{\mathrm i\theta},x)
 \longmapsto
 \left[\frac{2^{N+1}T_N\theta}{2\pi},x\right].
\end{equation}
The two incoming boundary components have no
\(\theta\)-dependent sphere rotation.  Substituting
\(2^{N+1}\theta\) for the circle parameter in the first formula of
\eqref{eq:universal-screw-boundary-maps} shows that the outgoing
boundary has the exact rotation
\begin{equation}\label{eq:universal-lifted-twist}
 \theta\longmapsto\exp(2^N\theta K_m).
\end{equation}

Set
\begin{equation}\label{eq:universal-boundary-framing}
 \alpha_{-1}(\theta)=\operatorname{Id},
 \qquad
 \alpha_N(\theta)
 =\exp\bigl((2^{N+1}-1)\theta K_m\bigr)
 \quad(N\ge0).
\end{equation}
We prove by induction, beginning with
\(\widehat M_{-1}=D^2\times\mathbb S^m\), that
\begin{equation}\label{eq:universal-finite-core}
 \widehat M_N\cong
 (D^2\times\mathbb S^m)
 \mathbin{\#}^{\,2^{N+1}-1}
 (\mathbb S^2\times\mathbb S^m),
\end{equation}
and that its outgoing boundary is parametrized by
\((e^{\mathrm i\theta},y)\mapsto
(e^{\mathrm i\theta},\alpha_N(\theta)y)\) relative to the residual
\(D^2\times\mathbb S^m\) factor.

At level \(N\), the preimage of \(M_{N-1}\) in
\(\widehat M_N\) has two components, each diffeomorphic to
\(\widehat M_{N-1}\).  They are attached to the two incoming
boundaries of the product in \eqref{eq:universal-cyclic-lift}.  On the
first component use the circle parameter \(\theta\).  The natural
parameter on the second differs by
\(\theta\mapsto\theta-\pi/2^N\).  Since \(\alpha_{N-1}\) lies in the
one-parameter subgroup generated by \(K_m\), this translation multiplies
it by a constant orthogonal transformation.  Compose only the second
component with the corresponding \(\theta\)-independent product
diffeomorphism, which extends over its residual
\(D^2\times\mathbb S^m\) factor.  We leave the first component and the
product identification \eqref{eq:universal-cyclic-lift} fixed.  Thus
both maps have exactly the form \eqref{eq:universal-attaching-maps} with
\(\alpha_+=\alpha_-=\alpha_{N-1}\), and no constant factor is introduced
into the outgoing parametrization.

Lemma~\ref{lem:universal-two-core} now gives one new untwisted
\(\mathbb S^2\times\mathbb S^m\) factor.  The two copies of
\(\widehat M_{N-1}\) contribute \(2(2^N-1)\) factors, so their total
number is \(2^{N+1}-1\).  The common incoming framing passes to the
unfilled boundary, while the relative framing alone determines the
closed factor.  Combining it with \eqref{eq:universal-lifted-twist}
gives the exact recurrence
\begin{equation}\label{eq:universal-framing-recurrence}
 \alpha_N(\theta)
 =\exp(2^N\theta K_m)\alpha_{N-1}(\theta)
 =\exp\bigl((2^{N+1}-1)\theta K_m\bigr).
\end{equation}
This proves \eqref{eq:universal-finite-core} with its stated boundary
parametrization.  Notice that for \(m=2\) the common loop
\(\alpha_N\) represents the nonzero element of \(\pi_1SO(3)\), because
\(2^{N+1}-1\) is odd.  This does not create a twisted sphere bundle:
the clutching loop in \eqref{eq:universal-clutching-map} is the quotient
of two identical loops.  For \(m=3\), the simultaneous rotations in the
two orthogonal planes give a null-homotopic loop in \(SO(4)\).

We next pass to the full universal cover.  The inclusions
\(M_N\subset\operatorname{int}M_{N+1}\) imply
\(\widehat M_N\subset\operatorname{int}\widehat M_{N+1}\).  Moreover,
\begin{equation}\label{eq:universal-cover-exhaustion}
 \widetilde M=\bigcup_{N\ge-1}\widehat M_N.
\end{equation}
Indeed, any \(\widetilde x\in\widetilde M\) lies in a translate
\(\gamma\widehat M_j\) for some \(j\).  Equation
\eqref{eq:un-prufer} gives
\(\pi_1(M)=\bigcup_{N\ge0}\langle a_N\rangle\), so for some
\(N\ge j\) the deck transformation \(\gamma\) belongs to
\(\langle a_N\rangle\).  This subgroup is precisely the stabilizer of
\(\widehat M_N\).  Hence
\(\gamma\widehat M_j\subset\gamma\widehat M_N=\widehat M_N\), which
proves \eqref{eq:universal-cover-exhaustion}.

It remains to identify this nested union.  Split each copy of
\(\mathbb S^2\times\mathbb S^m\) into two copies of
\(D^2\times\mathbb S^m\) along
\(\mathbb S^1\times\mathbb S^m\).  Starting with
\(Y_0=D^2\times\mathbb S^m\), complete its boundary with one punctured
half of the first copy and attach the other punctured half of the next
copy along the resulting spherical boundary.  Repeating this operation
gives compact manifolds
\[
 Y_r\cong
 (D^2\times\mathbb S^m)
 \mathbin{\#}^{\,r}(\mathbb S^2\times\mathbb S^m),
 \qquad
 Y_r\subset\operatorname{int}Y_{r+1}.
\]
Give the complementary half which completes each sphere product the
same boundary parametrization as the preceding residual half.  Its
relative clutching loop is then trivial.  The boundary parametrization
of the next residual half is independent of this attachment; at
\(r=2^{N+1}-1\), choose it to be the one determined by \(\alpha_N\).

The relative assertion in Lemma~\ref{lem:universal-two-core} now gives,
inductively, diffeomorphisms
\[
 F_N:\widehat M_N\longrightarrow Y_{2^{N+1}-1},
 \qquad
 F_N|_{\widehat M_{N-1}}=F_{N-1},
\]
which are product maps on the boundary collars.  Consequently the maps
\(F_N\) define a smooth diffeomorphism from \(\widetilde M\) onto
\(\bigcup_{r\ge0}Y_r\).

To identify the latter union with the stated proper connected sum,
choose pairwise disjoint balls tending to infinity along a proper ray
in \(\mathbb R^{m+2}\), and replace the \(r\)-th ball by a punctured
copy of \(\mathbb S^2\times\mathbb S^m\).  Splitting every inserted
copy into its two \(D^2\times\mathbb S^m\) halves identifies this
usual exhaustion, piece by piece and relative to the boundary collars,
with \((Y_r)\).  Therefore
\[
 \bigcup_{r\ge0}Y_r
 \cong
 \mathbb R^{m+2}\mathbin{\#}_{\infty}
 (\mathbb S^2\times\mathbb S^m).
\]
The complementary regions of this exhaustion are connected.  They are
also simply connected: each punctured half is simply connected for
\(m\ge2\), and van Kampen's theorem applies successively across the
common \(\mathbb S^1\times\mathbb S^m\) and
\(\mathbb S^{m+1}\) boundaries.  Thus the union has one end and is
simply connected at infinity.
\end{proof}

\subsection{Flexible volume growth}
For the $j$-th holonomy connector
\[
 \mathcal C_j=[0,s_j]\times\mathbb S^1\times\mathbb S^m,
\]
let $\ell_j,\rho_j,\omega_j$ denote the profiles appearing in the metric
\eqref{eq:un-connector} on $\mathcal C_j$.  Denote by
$\mathfrak p_{0,j}$ and $\Lambda_j$ the parameters corresponding to
$\mathfrak p_0$ and $\Lambda_{\mathrm{con}}$ in
Proposition~\ref{prop:un-connector}, and set
\[
 \mathfrak p_{1,j}
 =\frac{\mathfrak p_{0,j}}{\Lambda_j^2},
 \qquad
 s_j
 =\frac{\Lambda_j^2-1}{2\mathfrak p_{0,j}}.
 \tag{8.13}\label{eq:indexed-connector-parameters}
\]
Write
\[
 g_{\mathcal C_j}
 :=
 \dd s^2+g_{\ell_j(s),\rho_j(s),\omega_j(s)}
\]
for the model metric on $\mathcal C_j$.  Its radius profiles are
\[
 \ell_j(s)
 =\ell_j(0)\sqrt{1+2\mathfrak p_{0,j}s},
 \qquad
 \rho_j(s)
 =\rho_j(0)
 \exp\left(
   -\frac{\mathfrak p_{1,j}}{100s_j}s(s_j-s)
 \right).
 \tag{8.14}\label{eq:indexed-connector-functions}
\]
Recall from \eqref{eq:un-central-slab} that
$\mathcal P_j=[s_j/4,3s_j/4]\times\mathbb S^1\times\mathbb S^m$ is the
protected central slab of $B_j$.  The final metric on $\mathcal P_j$
agrees exactly with $g_{\mathcal C_j}$.  For
$0\le a<b\le1$, abbreviate
\[
 \mathcal C_j[a,b]
 :=
 [as_j,bs_j]\times\mathbb S^1\times\mathbb S^m.
\]

\begin{lemma}
\label{lem:protected-connector-volume}
For $\Lambda\ge2$ and $0\le x\le1$, define
\[
 \mathcal I_m(\Lambda,x)
 :=
 \int_0^x
 \sqrt{1+(\Lambda^2-1)t}\,
 \exp\left(
   -\frac{m(\Lambda^2-1)}{200\Lambda^2}t(1-t)
 \right)\dd t.
 \tag{8.15}\label{eq:dimensionless-volume-function}
\]
For every $0\le a<b\le1$, the model metric $g_{\mathcal C_j}$ satisfies
\[
 \begin{aligned}
 \operatorname{Vol}_{g_{\mathcal C_j}}
 \bigl(\mathcal C_j[a,b]\bigr)
 &=
 2\pi\operatorname{Vol}(\mathbb S^m)\,
 \ell_j(0)\rho_j(0)^m s_j \times
 \bigl(
   \mathcal I_m(\Lambda_j,b)
   -\mathcal I_m(\Lambda_j,a)
 \bigr).
 \end{aligned}
 \tag{8.16}\label{eq:protected-connector-volume}
\]
The same identity holds for the final metric whenever
$1/4\le a<b\le3/4$.

For fixed $0\le a<b\le1$,
\[
 \begin{aligned}
 \lim_{\Lambda\to\infty}&
 \frac{
   \mathcal I_m(\Lambda,b)-\mathcal I_m(\Lambda,a)
 }{\Lambda}
 =
 C_{m,a,b},\\
 C_{m,a,b}
 &:=
 \int_a^b
 \sqrt t\,
 \exp\left(-\frac{m}{200}t(1-t)\right)\dd t
 \in(0,\infty).
 \end{aligned}
 \tag{8.17}\label{eq:dimensionless-volume-asymptotic}
\]
Equivalently,
\[
 \mathcal I_m(\Lambda,b)-\mathcal I_m(\Lambda,a)
 =
 \Lambda\bigl(C_{m,a,b}+o(1)\bigr).
\]
Consequently, if $1/4\le a<b\le3/4$ are fixed and
$\Lambda_j\to\infty$, then
\[
 \operatorname{Vol}
 \bigl(\mathcal C_j[a,b]\bigr)
 \sim
 \pi\operatorname{Vol}(\mathbb S^m)\,C_{m,a,b}\,
 \ell_j(0)\rho_j(0)^m
 \frac{\Lambda_j^3}{\mathfrak p_{0,j}}.
 \tag{8.18}\label{eq:connector-volume-scale}
\]
In particular, this volume is comparable, with constants depending
only on $m,a,b$, to
\[
 \ell_j(0)\rho_j(0)^m
 \frac{\Lambda_j^3}{\mathfrak p_{0,j}}.
\]
\end{lemma}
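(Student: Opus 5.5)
The plan is to compute the volume form of $g_{\mathcal C_j}$ directly from \eqref{eq:un-boundary-metric-definition} and then rescale. In the orthonormal frame \eqref{eq:un-adapted-frame}, the dual coframe is
\[
\dd s,\qquad \ell\,\dd\theta,\qquad \rho\,(\text{orthonormal coframe of }g_{\mathbb S^m}\text{ shifted by }\omega\,\dd\theta\text{ terms}).
\]
The shear $Y\mapsto Y+b\omega K_m$ is unipotent in the $(\partial_\theta,T\mathbb S^m)$ splitting, so it has determinant one. Equivalently, the local diffeomorphism $(\theta,y)\mapsto(\theta,\exp(\omega\theta K_m)y)$ used in the proof of Lemma~\ref{lem:un-ricci} preserves the product volume. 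Hence
\[
\dd\operatorname{Vol}_{g_{\mathcal C_j}}=\ell_j(s)\rho_j(s)^m\,\dd s\,\dd\theta\,\dd\operatorname{vol}_{\mathbb S^m},
\]
and integrating over the $\mathbb S^1\times\mathbb S^m$ factors gives
\[
\operatorname{Vol}(\mathcal C_j[a,b])=2\pi\operatorname{Vol}(\mathbb S^m)\int_{as_j}^{bs_j}\ell_j(s)\rho_j(s)^m\,\dd s.
\]

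Next substitute $s=s_jt$ into \eqref{eq:indexed-connector-functions}. By \eqref{eq:indexed-connector-parameters}, $2\mathfrak p_{0,j}s_j=\Lambda_j^2-1$, so $\sqrt{1+2\mathfrak p_{0,j}s}=\sqrt{1+(\Lambda_j^2-1)t}$. For the sphere factor,
\[
\frac{\mathfrak p_{1,j}}{100s_j}s(s_j-s)=\frac{\mathfrak p_{1,j}s_j}{100}t(1-t)=\frac{\Lambda_j^2-1}{200\Lambda_j^2}t(1-t),
\]
and raising $\rho_j$ to the power $m$ multiplies the exponent by $m$. This yields exactly the integrand of \eqref{eq:dimensionless-volume-function}, with Jacobian $s_j$, which proves \eqref{eq:protected-connector-volume}. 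The statement for the final metric follows from Proposition~\ref{prop:square-root-block}(iv) together with the support choices at the outgoing collar (inside $(3s_j/4,s_j]$): these make the final metric coincide with $g_{\mathcal C_j}$ on $\mathcal P_j=\mathcal C_j[1/4,3/4]$.

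For \eqref{eq:dimensionless-volume-asymptotic}, divide the integrand by $\Lambda$. Then $\Lambda^{-1}\sqrt{1+(\Lambda^2-1)t}\to\sqrt t$ uniformly on $[0,1]$, since the difference is bounded by $\Lambda^{-1}$. Likewise the exponent coefficient $\frac{m(\Lambda^2-1)}{200\Lambda^2}\to\frac m{200}$ uniformly, and for $\Lambda\ge2$ the integrand is bounded by $\sqrt{2}$. Uniform convergence (or dominated convergence) then gives the limit $C_{m,a,b}$. This constant is finite and positive because its integrand is continuous and positive on $(a,b]$.

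Finally, \eqref{eq:connector-volume-scale} combines the two results. We have
\[
s_j\Lambda_j=\frac{(\Lambda_j^2-1)\Lambda_j}{2\mathfrak p_{0,j}}\sim\frac{\Lambda_j^3}{2\mathfrak p_{0,j}},
\]
so the prefactor becomes $\pi\operatorname{Vol}(\mathbb S^m)C_{m,a,b}$. The comparability claim then holds with constants depending only on $m,a,b$. For the nonasymptotic version one uses the uniform bounds: for $t\in[a,b]$ and $\Lambda\ge2$, the quantity $\Lambda^{-1}\sqrt{1+(\Lambda^2-1)t}$ lies between $\sqrt{3t/4}$ and $\sqrt{2}$, and the exponential lies in $[e^{-m/800},1]$.

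The only real step needing care is the determinant-one observation for the twisted metric $g_{\ell,\rho,\omega}$. Everything else is substitution, so I expect no serious obstacle.
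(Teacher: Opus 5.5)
Your proposal is correct and follows essentially the same route as the paper. The triangular shear has determinant one, which gives the density $\ell_j(s)\rho_j(s)^m\,\dd s\,\dd\theta\,\dd V_{g_{\mathbb S^m}}$. The substitution $s=s_jt$, using $2\mathfrak p_{0,j}s_j=\Lambda_j^2-1$ and $\mathfrak p_{1,j}s_j=(\Lambda_j^2-1)/(2\Lambda_j^2)$, produces $\mathcal I_m$; dominated convergence gives the limit; and $s_j\Lambda_j\sim\Lambda_j^3/(2\mathfrak p_{0,j})$ gives the asymptotic. Your explicit two-sided bounds on $[a,b]$ are a small bonus, since they give the comparability for every $\Lambda_j\ge2$ rather than only as $\Lambda_j\to\infty$. (In fact the normalized integrand is bounded by $1$, not just by $\sqrt2$.)
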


\begin{proof}
Relative to the circle direction and a frame tangent to
$\mathbb S^m$, the twist in
$g_{\ell_j(s),\rho_j(s),\omega_j(s)}$ is represented by a triangular
change of basis with determinant one.  It therefore does not affect the
volume density, and
\[
 \dd V_{g_{\mathcal C_j}}
 =
 \ell_j(s)\rho_j(s)^m\,
 \dd s\,\dd\theta\,\dd V_{g_{\mathbb S^m}}.
\]
Substituting $s=ts_j$ and using
\eqref{eq:indexed-connector-parameters} and
\eqref{eq:indexed-connector-functions}, we obtain
\[
 \begin{aligned}
 \operatorname{Vol}_{g_{\mathcal C_j}}
 \bigl(\mathcal C_j[a,b]\bigr)
 &=
 2\pi\operatorname{Vol}(\mathbb S^m)\,
 \ell_j(0)\rho_j(0)^m s_j\\
 &\quad\times
 \int_a^b
 \sqrt{1+(\Lambda_j^2-1)t}\,
 \exp\left(
   -\frac{m(\Lambda_j^2-1)}{200\Lambda_j^2}t(1-t)
 \right)\dd t,
 \end{aligned}
\]
which is \eqref{eq:protected-connector-volume}.

To prove \eqref{eq:dimensionless-volume-asymptotic}, divide the
integrand in \eqref{eq:dimensionless-volume-function} by $\Lambda$.
It becomes
\[
 \sqrt{
   t+\frac{1-t}{\Lambda^2}
 }\,
 \exp\left(
   -\frac{m}{200}
   \left(1-\frac1{\Lambda^2}\right)t(1-t)
 \right).
\]
This converges pointwise on $[0,1]$ to
\[
 \sqrt t\,
 \exp\left(-\frac{m}{200}t(1-t)\right).
\]
Moreover, the normalized integrand is bounded above by one for every
$\Lambda\ge2$ and $0\le t\le1$.  The dominated convergence theorem
therefore gives
\eqref{eq:dimensionless-volume-asymptotic}.

Finally, recall that
\[
 s_j
 =
 \frac{\Lambda_j^2}{2\mathfrak p_{0,j}}
 \left(1-\frac1{\Lambda_j^2}\right).
\]
Combining this identity with
\eqref{eq:protected-connector-volume} and
\eqref{eq:dimensionless-volume-asymptotic} yields
\eqref{eq:connector-volume-scale}.  Since the final metric agrees with
$g_{\mathcal C_j}$ on $\mathcal P_j$, the same formulas
remain valid there without any smoothing error.
\end{proof}

The two free parameters of the holonomy connector govern complementary geometric
scales.  The parameter \(\Lambda_j\) prescribes the expansion factor of
the circle radius, while, once \(\Lambda_j\) has been fixed,
\(\mathfrak p_{0,j}\) determines the longitudinal length.  By choosing
these parameters inductively, one can prescribe logarithmic
volume-growth exponents along suitable sequences of radii.

\begin{proposition}
\label{prop:volume-growth-flexibility}
Let \((\mathfrak e_j)_{j\ge1}\) be any sequence with
\[
 \mathfrak e_j\in\left[1,\frac32\right].
\]
The holonomy-connector parameters may be chosen so that there exist strictly
increasing sequences of radii \(r_j,\widetilde r_j\to\infty\) satisfying
\[
 \frac{\log\operatorname{Vol}_M(B_{r_j}(p))}{\log r_j}
 =\mathfrak e_j+o(1),
 \qquad
 \frac{\log\operatorname{Vol}_{\widetilde M}
 (B_{\widetilde r_j}(\widetilde p))}
 {\log\widetilde r_j}
 =\mathfrak e_j+o(1).
 \tag{8.19}\label{eq:prescribed-volume-exponents}
\]
In particular, by taking
\[
 \mathfrak e_{2j}=1,
 \qquad
 \mathfrak e_{2j+1}=\frac32,
\]
one obtains
\[
 \liminf_{r\to\infty}
 \frac{\log\operatorname{Vol}_M(B_r(p))}{\log r}=1,
 \qquad
 \limsup_{r\to\infty}
 \frac{\log\operatorname{Vol}_M(B_r(p))}{\log r}
 \ge\frac32,
 \tag{8.20}\label{eq:base-growth-oscillation}
\]
and
\[
 \liminf_{r\to\infty}
 \frac{\log\operatorname{Vol}_{\widetilde M}
 (B_r(\widetilde p))}{\log r}=1,
 \qquad
 \limsup_{r\to\infty}
 \frac{\log\operatorname{Vol}_{\widetilde M}
 (B_r(\widetilde p))}{\log r}
 \ge\frac32.
 \tag{8.21}\label{eq:cover-growth-oscillation}
\]
\end{proposition}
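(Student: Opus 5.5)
The plan is to read off the volume growth directly from the protected slabs $\mathcal P_j$. There the metric is exactly $g_{\mathcal C_j}$, whose volume density is $\ell_j(s)\rho_j(s)^m$ by Lemma~\ref{lem:protected-connector-volume}. The parameters $\mathfrak p_{0,j}$ and $\Lambda_j$ are chosen inductively in $j$. When stage $j$ is reached, the following quantities are already fixed by the earlier stages: the incoming data $\ell_j(0),\rho_j(0)$, the constraint $\mathfrak p_{0,j}<\min\{\delta_{\mathrm{ax}}/2,1/\rho_j(0)\}$ from Proposition~\ref{prop:square-root-block}, the distance $D_j$ from $p$ to the slice $\{s_j/4\}\subset\mathcal C_j$, and the volume $V_j$ of everything before that slice. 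Shrinking $\mathfrak p_{0,j}$ and enlarging $\Lambda_j$ (and hence $s_j$) only strengthens the hypotheses of Proposition~\ref{prop:un-connector} and Lemma~\ref{lem:un-left-boundary-gluing}, so this freedom is available.

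\emph{Step 1: comparing balls with sublevel sets.} For $s\in[s_j/4,3s_j/4]$, write $\Omega_j(s)$ for the region before the slice $\{s\}\times\mathbb S^1\times\mathbb S^m$. The orthogonal $\dd s^2$ summand, as in the completeness argument of Section~\ref{sec:infinite-telescope}, shows that every point beyond that slice is at distance at least $D_j+(s-s_j/4)$ from $p$. Conversely, every point of $\Omega_j(s)$ lies within $D_j+(s-s_j/4)+d_j$ of $p$, where $d_j$ bounds the diameters of the earlier compact region and of the slices. The slice diameter is at most $\pi\ell_j(s)+\pi\rho_j(0)=O\bigl(\ell_j(0)\sqrt{1+2\mathfrak p_{0,j}s}\bigr)$, which is $o(s)$. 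Setting $\sigma=s-s_j/4$, this gives
\[
 \Omega_j\bigl(r-D_j-d_j\bigr)\subset B_r(p)\subset\Omega_j(r-D_j),
\]
so $\operatorname{Vol}B_r(p)=V_j+\int \ell_j\rho_j^m\,\dd s\cdot 2\pi\operatorname{Vol}(\mathbb S^m)$ up to a shift of the upper limit by $o(r)$.

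\emph{Step 2: the two regimes and an intermediate value.} Since $\rho_j$ is pinched between fixed multiples of $\rho_j(0)$ by \eqref{eq:un-rho-bound}, the volume of $\Omega_j(s)\setminus\Omega_j(s_j/4)$ is comparable to $\ell_j(0)\rho_j(0)^m\,\mathfrak p_{0,j}^{-1}\bigl[(1+2\mathfrak p_{0,j}s)^{3/2}-(1+\mathfrak p_{0,j}s_j/2)^{3/2}\bigr]$. This has two regimes.
\begin{enumerate}[label=\textup{(\alph*)}]
\item In the linear range, where $\mathfrak p_{0,j}s\ll1$, the volume is $\asymp\ell_j(0)\rho_j(0)^m\,\sigma$. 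We choose $\mathfrak p_{0,j}$ so small that there is a radius $r'_j$ with $D_j,V_j,\log(\ell_j(0)\rho_j(0)^m)$ all $o(\log r'_j)$ and the slice still in this range. Then $\log\operatorname{Vol}B_{r'_j}/\log r'_j=1+o(1)$.
\item In the square-root range, at $s=3s_j/4$ with $\Lambda_j$ huge, Lemma~\ref{lem:protected-connector-volume} gives volume $\asymp\ell_j(0)\rho_j(0)^m\Lambda_j^3/\mathfrak p_{0,j}\asymp\ell_j(0)\rho_j(0)^m\sqrt{\mathfrak p_{0,j}}\,s_j^{3/2}$. Having fixed $\mathfrak p_{0,j}$, we take $\Lambda_j$ so large that $\log s_j$ dominates the logarithms of all fixed constants. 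The ratio at $r''_j=D_j+s_j/2$ is then $3/2+o(1)$.
\end{enumerate}
The function $r\mapsto\log\operatorname{Vol}B_r(p)/\log r$ is continuous, so the intermediate value theorem gives $r_j\in[r'_j,r''_j]$ with ratio exactly $\mathfrak e_j$ up to the $o(1)$ errors. Choosing $r'_{j+1}>r''_j$ makes the sequence increasing.

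\emph{Step 3: the universal cover.} In $\widehat M_j$, the lift of $\mathcal C_j$ is a single connected cylinder with circle factor multiplied by $2^{j+1}$, by \eqref{eq:universal-cyclic-lift}. The volume density and slice diameters therefore acquire the factor $2^{j+1}$, which is a constant fixed before $\mathfrak p_{0,j},\Lambda_j$ are chosen. The points beyond the lifted slab are separated from $\widetilde p$ by the same $\dd s$ argument, using \eqref{eq:universal-cover-exhaustion}. Running the same choices simultaneously (absorbing $2^{j+1}$ into the constants) produces the radii $\widetilde r_j$.

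\emph{Step 4: the liminf statements.} For the alternating sequence, the liminf equal to $1$ needs the matching lower bound. This comes from Yau's linear lower volume growth for complete noncompact manifolds with $\Ric\ge0$, applied to $M$ and to $\widetilde M$. The limsup bound $\ge3/2$ is immediate from the odd-indexed radii.

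I expect the main obstacle to be Step 1 on $\widetilde M$: ensuring that the ball does not reach around through other deck translates. The lifted slice diameter grows like $2^{j+1}\ell_j(s)$, so the order of choices must keep this $o(s_j)$, and that has to be checked carefully against the constraints on $\mathfrak p_{0,j}$ and $\Lambda_j$.
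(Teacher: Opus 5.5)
\textbf{There is a genuine gap in Step 2(a).} Step 1 compares balls with sublevel sets only for $s\in[s_j/4,3s_j/4]$. It also treats $D_j$ and $V_j$, measured up to the slice $\{s_j/4\}$, as fixed before $(\mathfrak p_{0,j},\Lambda_j)$ are chosen. They are not fixed. Both $D_j\ge s_j/4$ and $V_j\ge\operatorname{Vol}(\mathcal C_j[0,1/4])\asymp \ell_j(0)\rho_j(0)^m s_j\,\mathcal I_m(\Lambda_j,1/4)$ grow with $s_j=(\Lambda_j^2-1)/(2\mathfrak p_{0,j})$.

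Worse, the linear range never meets the protected slab. For $s\ge s_j/4$ one has $2\mathfrak p_{0,j}s\ge(\Lambda_j^2-1)/4\ge 3/4$. As $r$ sweeps across the slab, $r$ and $\operatorname{Vol}B_r(p)$ each change only by factors bounded independently of $j$, because $\mathcal I_m(\Lambda,3/4)/\mathcal I_m(\Lambda,1/4)$ is bounded for $\Lambda\ge2$. So $\log\operatorname{Vol}B_r(p)/\log r$ is constant up to $O(1/\log s_j)$ on the slab, and the intermediate value theorem has nothing to interpolate there. The linear regime actually lives at $s\lesssim 1/\mathfrak p_{0,j}=2s_j/(\Lambda_j^2-1)$, inside $[0,s_j/4)$. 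That is exactly where Proposition~\ref{prop:square-root-block} places the left gluing modification, and it does not guarantee the connector metric there.

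\textbf{How to repair it, and how the paper proceeds.} Your intermediate-value scheme can be saved as follows.
\begin{enumerate}[label=\textup{(\roman*)}]
\item Re-run Lemma~\ref{lem:relative-gluing-unified} in the block construction with the left collar inside, say, $[0,1)$, so that the connector metric is exact on $[1,s_j]$.
\item Measure $D_j$ and $V_j$ from the left boundary of $\mathcal C_j$. These are then fixed up to $O(1)$.
\item Read off regime (a) at $s\approx\epsilon/\mathfrak p_{0,j}$ and regime (b) near $s\approx 3s_j/4$. In (b), bound the volume before the slab by the whole connector volume.
\end{enumerate}
The paper avoids this by staying at the single scale $s_j$ inside $\mathcal P_j$ and coupling the parameters. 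For $\mathfrak e_j>1$ it sets $\mathfrak p_{0,j}=\varepsilon_j\Lambda_j^{-(3-2\mathfrak e_j)/(\mathfrak e_j-1)}$. Then $s_j\asymp\Lambda_j^{1/(\mathfrak e_j-1)}$ and the connector volume is $\asymp\Lambda_j^{\mathfrak e_j/(\mathfrak e_j-1)}$. For $\mathfrak e_j=1$ it fixes $\Lambda_j$ and lets $\mathfrak p_{0,j}\downarrow0$. This route requires no change to the gluing collars.

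\textbf{What to keep.} Step 4 is correct. Yau's linear lower bound gives $\liminf\ge1$, which the paper uses without comment. Your wrap-around worry for $\widetilde M$ is harmless: $B_r(\widetilde p)$ is a connected subset of $q^{-1}(B_r(p))$ containing $\widetilde p$. It therefore lies in the component through $\widetilde p$ of $q^{-1}(\Omega)$, for any region $\Omega\supset B_r(p)$ ending at a slice of $\mathcal C_j$. That component is the compact $2^{j+1}$-fold cover of $\Omega$.
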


\begin{proof}
Before choosing the parameters of the \(j\)-th holonomy connector, let
\(\mathcal X_j\subset M\) be the compact region ending at its left
boundary, and let \(\widehat{\mathcal X}_j\) be the component of
\(q^{-1}(\mathcal X_j)\) containing
\(\widetilde p\).  At this stage,
\[
 \operatorname{Vol}(\mathcal X_j),\quad
 \operatorname{diam}(\mathcal X_j),\quad
 \operatorname{Vol}(\widehat{\mathcal X}_j),\quad
 \operatorname{diam}(\widehat{\mathcal X}_j),
 \quad \ell_j(0),\quad \rho_j(0)
\]
are already fixed.  Proposition~\ref{prop:un-connector} imposes only an
upper bound on \(\mathfrak p_{0,j}\) and a lower bound on \(\Lambda_j\).

Suppose first that \(1<\mathfrak e_j\le3/2\).  Choose an admissible
constant
\(\varepsilon_j>0\) and set
\[
 \mathfrak p_{0,j}
 =
 \varepsilon_j
 \Lambda_j^{-(3-2\mathfrak e_j)/(\mathfrak e_j-1)}.
 \tag{8.22}\label{eq:parameter-choice-q}
\]
The definitions of \(s_j\) and the volume scale
\eqref{eq:connector-volume-scale} then give the
exact identities
\[
 \begin{aligned}
 s_j
 &=
 \frac{1-\Lambda_j^{-2}}{2\varepsilon_j}\,
 \Lambda_j^{1/(\mathfrak e_j-1)},\\
 \ell_j(0)\rho_j(0)^m
 \frac{\Lambda_j^3}{\mathfrak p_{0,j}}
 &=
 \frac{\ell_j(0)\rho_j(0)^m}{\varepsilon_j}\,
 \Lambda_j^{\mathfrak e_j/(\mathfrak e_j-1)}.
 \end{aligned}
 \tag{8.23}\label{eq:parameter-scaling-q}
\]
Thus, with \(j\) fixed and \(\Lambda_j\to\infty\),
\[
 \log s_j
 =
 \frac{1}{\mathfrak e_j-1}\log\Lambda_j+O_j(1),
 \qquad
 \log\left(
 \ell_j(0)\rho_j(0)^m
 \frac{\Lambda_j^3}{\mathfrak p_{0,j}}
 \right)
 =
 \frac{\mathfrak e_j}{\mathfrak e_j-1}\log\Lambda_j+O_j(1).
\]
We now choose \(\Lambda_j\) sufficiently large that all previously fixed
volume, diameter, and boundary-scale terms contribute at most
\(\log s_j/(j+1)\) on the logarithmic scale.  We also require
\[
 \frac{
 \operatorname{diam}(\mathcal X_j)
 +\operatorname{diam}(\widehat{\mathcal X}_j)
 +2^{j+1}\bigl(\Lambda_j\ell_j(0)+\rho_j(0)\bigr)}
 {s_j}
 \le\frac1{j+1},
 \qquad
 \frac{j+1}{\log s_j}\le\frac1{j+1}.
 \tag{8.24}\label{eq:scale-separation}
\]
These requirements are compatible because
\[
 \frac{1}{\mathfrak e_j-1}\ge2,
\]
so \(s_j\) grows at least quadratically in \(\Lambda_j\).

Let
\[
 \Sigma_j
 =
 \left\{\frac{s_j}{2}\right\}
 \times\mathbb S^1\times\mathbb S^m
\]
be the midpoint slice of the \(j\)-th holonomy connector, and let \(M_j\) be the
compact region bounded by \(\Sigma_j\) that contains the initial cap.
Define
\[
 d_j^-=\min_{y\in\Sigma_j}d(p,y),
 \qquad
 d_j^+=\max_{y\in\Sigma_j}d(p,y),
 \qquad
 r_j=d_j^++\frac{s_j}{8}.
\]
The intrinsic diameter of \(\Sigma_j\) satisfies
\[
 d_j^+-d_j^-
 \le
 2\pi\bigl(\Lambda_j\ell_j(0)+\rho_j(0)\bigr).
 \tag{8.25}\label{eq:slice-distance-spread}
\]
Moreover, throughout the subcylinder
$\mathcal C_j[1/2,3/4]\subset\mathcal P_j$, the $s$-coordinate is a unit-speed distance
coordinate.  Hence every point \(y\) on the slice at parameter \(s\)
satisfies
\[
 d_j^-+s-\frac{s_j}{2}
 \le d(p,y)
 \le d_j^++s-\frac{s_j}{2}.
 \tag{8.26}\label{eq:connector-distance-comparison}
\]
It follows that
\[
 \left\{
 \frac{s_j}{2}<s<\frac{5s_j}{8}
 \right\}
 \subset B_{r_j}(p)
 \subset
 M_j\cup
 \left\{
 \frac{s_j}{2}<s<
 \left(
 \frac58+\frac{d_j^+-d_j^-}{s_j}
 \right)s_j
 \right\}.
 \tag{8.27}\label{eq:ball-connector-sandwich}
\]
By \eqref{eq:scale-separation}, the right-hand cutoff lies in
$\mathcal P_j$ for all sufficiently large \(j\).

Since
\[
 \frac{s_j}{4}
 \le d_j^-
 \le \operatorname{diam}(\mathcal X_j)+\frac{s_j}{2},
\]
equations \eqref{eq:scale-separation} and
\eqref{eq:slice-distance-spread} imply
\[
 \log r_j=\log s_j+o(\log s_j).
\]
Lemma~\ref{lem:protected-connector-volume}, applied to the subcylinder
on the left-hand side of
\eqref{eq:ball-connector-sandwich}, gives the corresponding lower volume
bound.  The volume of the entire unsmoothed holonomy connector is bounded above
by a uniform multiple of
\[
 \ell_j(0)\rho_j(0)^m
 \frac{\Lambda_j^3}{\mathfrak p_{0,j}}.
\]
The smoothing regions may be chosen to contribute less than this
quantity.  Consequently, the fixed volume of \(\mathcal X_j\), the full
holonomy-connector volume, and the two subcylinders in
\eqref{eq:ball-connector-sandwich} yield
\[
 \log\operatorname{Vol}_M(B_{r_j}(p))
 =
 \mathfrak e_j\log s_j+o(\log s_j).
\]
Since \(\log r_j=\log s_j+o(\log s_j)\), we obtain
\[
 \frac{\log\operatorname{Vol}_M(B_{r_j}(p))}
 {\log r_j}
 =\mathfrak e_j+o(1).
\]

We next treat the endpoint \(\mathfrak e_j=1\).  Fix any admissible value
\(\Lambda_j\ge2\) and let \(\mathfrak p_{0,j}\downarrow0\).  Since
\(\Lambda_j\) is fixed, we have the exact identity
\[
 s_j
 =
 \frac{\Lambda_j^2-1}{2}\,
 \mathfrak p_{0,j}^{-1}.
\]
Moreover, for every fixed $1/4\le a<b\le3/4$,
Lemma~\ref{lem:protected-connector-volume} gives
\[
 \operatorname{Vol}\bigl(
 \{a s_j\le s\le b s_j\}\subset\mathcal C_j
 \bigr)
 =
 C_j(a,b)\,\mathfrak p_{0,j}^{-1},
\]
where
\[
 \begin{aligned}
 C_j(a,b)
 &:=
 \pi\operatorname{Vol}(\mathbb S^m)\,
 \ell_j(0)\rho_j(0)^m(\Lambda_j^2-1)\cdot
 \bigl(
 \mathcal I_m(\Lambda_j,b)
 -
 \mathcal I_m(\Lambda_j,a)
 \bigr)>0
\end{aligned}
\]
is independent of \(\mathfrak p_{0,j}\).  Consequently,
\[
 \log s_j
 =
 -\log\mathfrak p_{0,j}+O_j(1),
 \qquad
 \log\operatorname{Vol}\bigl(
 \{a s_j\le s\le b s_j\}
 \bigr)
 =
 -\log\mathfrak p_{0,j}+O_j(1).
\]
After choosing \(\mathfrak p_{0,j}\) sufficiently small, the same
scale-separation, distance, and volume comparisons therefore yield
\[
 \frac{\log\operatorname{Vol}_M(B_{r_j}(p))}
 {\log r_j}
 =1+o(1).
\]

It remains to verify the corresponding estimates on the universal
cover.  The component of the preimage of the \(j\)-th holonomy connector
adjacent to \(\widehat{\mathcal X}_j\) is the \(2^{j+1}\)-fold cyclic
cover in the circle direction.  Let \(\widetilde\Sigma_j\) be the lift
of the midpoint
slice bounding the component that contains \(\widetilde p\), and define
\[
 \widetilde d_j^-=
 \min_{y\in\widetilde\Sigma_j}d(\widetilde p,y),
 \qquad
 \widetilde d_j^+=
 \max_{y\in\widetilde\Sigma_j}d(\widetilde p,y),
 \qquad
 \widetilde r_j=
 \widetilde d_j^++\frac{s_j}{8}.
\]
The protected volume is multiplied by \(2^{j+1}\), and the analogue of
\eqref{eq:slice-distance-spread} acquires the same factor in its circle
term.  The smoothing-volume estimates lift with the same degree.
Because
\[
 \log 2^{j+1}=(j+1)\log2=o(\log s_j)
\]
by \eqref{eq:scale-separation}, this finite covering degree is
negligible on the logarithmic scale.  Repeating the preceding ball
comparison therefore gives
\[
 \frac{\log\operatorname{Vol}_{\widetilde M}
 (B_{\widetilde r_j}(\widetilde p))}
 {\log\widetilde r_j}
 =\mathfrak e_j+o(1).
\]
By increasing the successive scales if necessary, both
\((r_j)\) and \((\widetilde r_j)\) may be taken to be strictly
increasing.  The alternating choice
\(\mathfrak e_{2j}=1\) and \(\mathfrak e_{2j+1}=3/2\) now yields
\eqref{eq:base-growth-oscillation} and
\eqref{eq:cover-growth-oscillation}.
\end{proof}

We conclude with two consequences for the asymptotic volume ratio.  For
a complete \(n\)-manifold \(X\) with nonnegative Ricci curvature, set
\[
 \operatorname{AVR}(X)
 :=
 \lim_{r\to\infty}
 \frac{\operatorname{Vol}(B_r(x))}
 {\operatorname{Vol}(B_r^{\mathbb R^n})}.
\]
By the Bishop--Gromov theorem, this limit exists and is independent of
the base point \(x\) \cite[Chapter~9]{Petersen2016}.

\begin{proposition}
\label{prop:volume-consequences}
For every admissible choice of the holonomy-connector parameters,
\[
 \operatorname{AVR}(M)=0,
 \qquad
 \limsup_{r\to\infty}
 \frac{\operatorname{Vol}_M(B_r(p))}{r}
 =\infty.
 \tag{8.28}\label{eq:base-avr-linear}
\]
If \(m=2\), then every admissible choice also satisfies
\[
 \operatorname{AVR}(\widetilde M)=0.
 \tag{8.29}\label{eq:cover-avr-dim4}
\]
For either \(m=2\) or \(m=3\), the alternating construction in
Proposition~\ref{prop:volume-growth-flexibility} satisfies
\[
 \operatorname{AVR}(M)
 =
 \operatorname{AVR}(\widetilde M)
 =0.
 \tag{8.30}\label{eq:alternating-avr}
\]
For arbitrary choices of the holonomy-connector parameters when \(m=3\), the
value of \(\operatorname{AVR}(\widetilde M)\) is not determined by the
present argument.
\end{proposition}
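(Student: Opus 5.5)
The plan is to prove every assertion by contradiction with the fact that $\pi_1(M)\cong\CPrufer$ is infinite and not finitely generated, using finite-generation theorems recalled in the introduction. The only geometric input from Section~8 is the alternating construction of Proposition~\ref{prop:volume-growth-flexibility}, which is needed for the last assertion. Throughout, $(M,g)$ is complete with $\Ric>0$ (Theorem~\ref{thm:main}), and so is $(\widetilde M,q^*g)$. Hence Bishop--Gromov applies on both, and both asymptotic volume ratios are well defined and independent of the base point.

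For $\operatorname{AVR}(M)=0$, I will invoke Anderson's theorem and Li's theorem \cite{AndersonTopology1990,Li1986}: a complete manifold with $\Ric\ge0$ and Euclidean volume growth has finite fundamental group. Since $\CPrufer$ is infinite by \eqref{eq:un-prufer}, $\operatorname{AVR}(M)>0$ is impossible. This holds for every admissible choice of parameters.

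For the superlinear assertion in \eqref{eq:base-avr-linear}, argue by contradiction. Suppose $\limsup_{r\to\infty}\operatorname{Vol}_M(B_r(p))/r<\infty$, that is, $M$ has linear volume growth. Sormani's theorem \cite{SormaniDiameter} then gives sublinear, and in particular small linear, diameter growth. Her finite-generation theorem under that hypothesis would then make $\pi_1(M)$ finitely generated. This contradicts the observation after the definition of $\CPrufer$ that this group is not finitely generated. The main obstacle is that I must check that the exact hypotheses of the cited results are met: $\Ric\ge0$, completeness, and the precise form of the linear-growth hypothesis. If that citation chain turns out to be insufficient, I would fall back on the direct slab computation. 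By Lemma~\ref{lem:protected-connector-volume}, $\operatorname{Vol}(\mathcal P_j)/s_j$ is comparable to $\ell_j(0)\rho_j(0)^m\Lambda_j$. I would then try to show that this ratio is unbounded, using that the circle length grows by the factor $(1+\alpha)\Lambda_j/2\ge1$ across each block. I expect controlling the sphere radii $\rho_j(0)$ from below to be the delicate point of that route.

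For \eqref{eq:cover-avr-dim4}, take $m=2$, so $\dim M=4$. If $\operatorname{AVR}(\widetilde M)>0$, the universal cover would have Euclidean volume growth. Huang--Huang \cite{HuangHuang2025} then forces $\pi_1(M)$ to be finitely generated, again a contradiction. Finally, for \eqref{eq:alternating-avr}, use the radii $\widetilde r_{2j}\to\infty$ from Proposition~\ref{prop:volume-growth-flexibility}, for which $\mathfrak e_{2j}=1$. Along these radii, $\operatorname{Vol}_{\widetilde M}(B_{\widetilde r_{2j}}(\widetilde p))=\widetilde r_{2j}^{\,1+o(1)}$, so this volume is $o(\widetilde r_{2j}^{\,n})$ with $n=m+2\ge4$. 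Since the limit defining $\operatorname{AVR}(\widetilde M)$ exists, it equals its value along this subsequence, namely $0$. The same argument with the radii $r_{2j}$, or simply the first assertion, gives $\operatorname{AVR}(M)=0$. The closing remark about arbitrary parameters when $m=3$ asserts nothing further and needs no proof.
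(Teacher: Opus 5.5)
Your proposal is correct. For $\operatorname{AVR}(M)=0$, for \eqref{eq:cover-avr-dim4} and for \eqref{eq:alternating-avr}, it coincides with the paper's argument:
\begin{itemize}
\item Anderson--Li applied against the infinitude of $\CPrufer$;
\item Huang--Huang in dimension four;
\item the subsequence with $\mathfrak e_{2j}=1$, combined with Bishop--Gromov monotonicity (equivalently, existence of the limit).
\end{itemize}

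The one genuine difference is the superlinear-growth assertion in \eqref{eq:base-avr-linear}. The paper invokes Navarro--Pan--Zhu \cite{NavarroPanZhu2024}. Their theorem states that a complete manifold with $\Ric>0$ and at most linear volume growth has finitely generated fundamental group. This is a single citation, and it uses the strict positivity of Ricci curvature that $M$ has.

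You instead use Sormani's classical two-step chain:
\begin{enumerate}
\item $\Ric\ge0$ together with linear volume growth gives sublinear diameter growth;
\item small linear diameter growth gives finite generation.
\end{enumerate}
Your route needs only $\Ric\ge0$, at the cost of combining two separate results. The first implication is the content of Sormani's earlier paper on Busemann functions and minimal volume growth, not of \cite{SormaniDiameter} itself, so you should cite both.

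The hypotheses you were worried about are met. Linear volume growth in Sormani's sense is precisely $\limsup_{r\to\infty}\operatorname{Vol}_M(B_r(p))/r<\infty$, which is the negation of the assertion. Sublinear diameter growth trivially satisfies the smallness threshold in the finite-generation theorem.

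Your fallback slab computation is unnecessary. As you suspected, it would require tracking $\ell_j(0)$ and $\rho_j(0)$ across blocks, and the construction does not control these. It also could not easily deliver the conclusion for every admissible choice of parameters, which the finite-generation argument gives for free.
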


\begin{proof}
A theorem of Anderson and Li states that a complete manifold with
nonnegative Ricci curvature and positive asymptotic volume ratio has
finite fundamental group \cite{AndersonTopology1990,Li1986}.  Since
\(\pi_1(M)\cong\CPrufer\) is infinite, it follows immediately that
\(\operatorname{AVR}(M)=0\).

Navarro, Pan, and Zhu proved that a complete manifold with positive
Ricci curvature and at most linear volume growth has finitely generated
fundamental group \cite{NavarroPanZhu2024}.  Since \(\pi_1(M)\) is not
finitely generated,
\(\operatorname{Vol}_M(B_r(p))\) cannot be bounded above by \(Cr\) for
any constant \(C\).  Equivalently,
\[
 \limsup_{r\to\infty}
 \frac{\operatorname{Vol}_M(B_r(p))}{r}
 =\infty.
\]

When \(m=2\), the universal cover is four-dimensional.  A theorem of
Huang and Huang asserts that if the universal cover of a complete
four-manifold with nonnegative Ricci curvature has Euclidean volume
growth, then its fundamental group is finitely generated.  This is
impossible here \cite{HuangHuang2025}, and hence
\(\operatorname{AVR}(\widetilde M)=0\).

Finally, for the alternating choice of parameters, the even subsequence
in Proposition~\ref{prop:volume-growth-flexibility} satisfies
\[
 \operatorname{Vol}_M(B_{r_{2j}}(p))
 =r_{2j}^{\,1+o(1)},
 \qquad
 \operatorname{Vol}_{\widetilde M}
 (B_{\widetilde r_{2j}}(\widetilde p))
 =\widetilde r_{2j}^{\,1+o(1)}.
\]
Since \(\dim M=m+2\ge4\), the corresponding Bishop--Gromov volume ratios
tend to zero along this subsequence
\cite[Chapter~9]{Petersen2016}.  Their monotonicity then implies
\[
 \operatorname{AVR}(M)
 =
 \operatorname{AVR}(\widetilde M)
 =0.
\]
\end{proof}

\appendix
\section{Perelman's neck construction}\label{app:perelman-neck}
In this appendix we prove Proposition~\ref{prop:unified-neck}.  The
argument follows Perelman's neck construction
\cite[Assertion and Section~3]{Perelman1997}, with the quantitative
normalization and equivariance required in
Section~\ref{sec:unified-directed-base} recorded explicitly.  The proof
has three steps.  Lemmas~\ref{lem:concave-warping}--
\ref{lem:angular-latitude-form} construct the positively curved angular
spheres and determine their boundary geometry.
Lemma~\ref{lem:angular-slice-interpolation} and
Lemma~\ref{lem:variable-angular-ricci} introduce a path of slice metrics
and compute the curvature of the associated neck.  Finally,
Lemma~\ref{lem:slow-logarithmic-neck} and
Lemma~\ref{lem:neck-endpoint-normalization} choose a sufficiently slow
interpolation, prove positivity of its Ricci tensor, and normalize the
two ends.

We begin with the auxiliary function \(\mathcal R\).  Its construction
is a quantitative version of \cite[Section~3]{Perelman1997}.

\begin{lemma}\label{lem:concave-warping}
There is $r_0>0$ such that, for every $0<r<r_0$, there is $w_0(r)>0$
with the following property.  For any $0<w<w_0(r)$, there is a smooth
function $\mathcal R:[0,5r]\to\mathbb R$ satisfying
\begin{align}
 \mathcal R(0)&=0,& \mathcal R'(0)&=1,&
 \mathcal R^{(2j)}(0)&=0\quad(j\ge1),                              \label{eq:unified-base-3-2}\\
 \mathcal R&>0,&0<\mathcal R'&<1,&\mathcal R''&<0
                         \quad\text{on }(0,5r],                    \label{eq:unified-base-3-3}\\
 -\frac{\mathcal R''}{\mathcal R}&\ge \frac2{r^2}
                         &&\text{on }(0,r^2/2],&
 -\frac{\mathcal R''}{\mathcal R}&\ge1
                         \quad\text{on }(0,5r],                    \label{eq:unified-base-3-4}\\
 \cot r\,\mathcal R(r)&=w.                                        \label{eq:unified-base-3-5}
\end{align}
Moreover,
\begin{equation}
\mathcal R(t)=\frac{w\tan r}{\sin(r+r^4/4)}
                  \sin(t+r^4/4)                                  \label{eq:unified-base-3-6}
\end{equation}
on a neighborhood of $[r,5r]$ in $[0,5r]$.
\end{lemma}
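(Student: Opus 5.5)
The plan is to construct $\mathcal R$ as the solution of a linear Sturm--Liouville problem
\[
 \mathcal R''=-k(t)\,\mathcal R,\qquad \mathcal R(0)=0,\quad \mathcal R'(0)=1,
\]
with a smooth coefficient $k\ge1$ to be chosen. Every curvature condition in \eqref{eq:unified-base-3-2}--\eqref{eq:unified-base-3-4} then becomes a condition on $k$, plus positivity of $\mathcal R$ and $\mathcal R'$. I would take $k\equiv K$ on $[0,t_1]$, where $K\ge 2/r^2$ is a large constant and $t_1\ge r^2/2$, and $k\equiv1$ on a neighborhood of $[r,5r]$. On the transition interval, $k$ should interpolate monotonically from $K$ down to $1$.

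Since $k$ is constant near $0$, the solution there is $K^{-1/2}\sin(\sqrt K\,t)$. This function is odd, so all even derivatives vanish at $0$, which gives \eqref{eq:unified-base-3-2}. It also satisfies $-\mathcal R''/\mathcal R=K\ge 2/r^2$ on $(0,r^2/2]$, giving the first inequality in \eqref{eq:unified-base-3-4}. On the region where $k=1$, the solution has the form $A\sin(t+\phi)$, which is the shape required in \eqref{eq:unified-base-3-6}.

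The point of taking $K$ large is the following. Because $\mathcal R$ is increasing, $\mathcal R(r)\ge\mathcal R(r^2/2)$. The target value $\mathcal R(r)=w\tan r$ is tiny, so the initial sine must flatten very early. A large $K$ makes $K^{-1/2}\sin(\sqrt K t)$ rise only to height about $K^{-1/2}$ before its slope nearly vanishes. The transition to $k=1$ must begin strictly before the first critical point $\pi/(2\sqrt K)$, so that $\mathcal R'>0$ persists.

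Once $k\ge1$, concavity and positivity are automatic as long as $\mathcal R>0$ and $\mathcal R'>0$. Since $\mathcal R''<0$ and $\mathcal R'(0)=1$, we also get $\mathcal R'<1$ on $(0,5r]$. On the final region, $\mathcal R'>0$ reduces to $5r+\phi<\pi/2$, which holds for $r<r_0$ small. On the transition region, I would keep $\sqrt K\,t$ inside $(0,\pi/2)$ and use a Sturm comparison argument to exclude a critical point. This gives \eqref{eq:unified-base-3-3} and the second inequality in \eqref{eq:unified-base-3-4}.

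The remaining task, which I expect to be the main obstacle, is a two-parameter shooting argument. The solution on $[r,5r]$ is $A(\kappa)\sin(t+\phi(\kappa))$, where $\kappa$ collects the parameters of $k$: the constant $K$ and the switching time $t_s\in(t_1,\pi/(2\sqrt K))$. We need exactly $\phi=r^4/4$, and then $A=w\tan r/\sin(r+r^4/4)$, which is equivalent to \eqref{eq:unified-base-3-5}.

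The phase is $\phi=\operatorname{arccot}\bigl(\mathcal R'/\mathcal R\bigr)-t$ evaluated at the end of the transition. Suppose the transition profile is fixed up to translation. As $t_s$ moves toward $\pi/(2\sqrt K)$, the ratio $\mathcal R'/\mathcal R$ at the switch decreases from about $1/t_s$ toward $0$. Hence $\phi$ sweeps continuously through an interval, and for $r$ small that interval contains $r^4/4$. The intermediate value theorem then selects $t_s=t_s(K)$ with phase exactly $r^4/4$.

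With the phase fixed, the amplitude is comparable to $K^{-1/2}$, tends to $0$ as $K\to\infty$, and depends continuously on $K$. So every sufficiently small amplitude is attained, and therefore every $w\in(0,w_0(r))$. One point still needs checking: the constraint $t_1\ge r^2/2$ must remain compatible with $t_s<\pi/(2\sqrt K)$. I would handle this by letting the $k=K$ region persist until $r^2/2$ and only then starting the switch, which requires $K\lesssim r^{-4}$. If that restricts the amplitude too much, the fallback is to add a third intermediate constant $k$-plateau as an extra free parameter.
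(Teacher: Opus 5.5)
There is a genuine gap, at exactly the point you flagged at the end, and it defeats your design rather than being a technicality. If $k\equiv K$ on $[0,t_1]\supset[0,r^2/2]$, then $\mathcal R'=\cos(\sqrt K\,t)>0$ there forces $x:=\sqrt K\,r^2/2<\pi/2$. So $K<\pi^2/r^4$ can never tend to infinity, and
\[
 \mathcal R(r)\ge\mathcal R(r^2/2)=\frac{r^2}{2}\,\frac{\sin x}{x}>\frac{r^2}{\pi},
 \qquad\text{hence}\qquad
 w=\cot r\,\mathcal R(r)>\frac{r^2\cot r}{\pi}.
\]
Your scheme therefore never reaches small $w$, whereas the lemma demands every $w\in(0,w_0(r))$.

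The phase matching fails as well. Write $\mathcal R=P\sin\theta$, $\mathcal R'=P\cos\theta$ with $\theta(0)=0$. Then
\[
 \theta'=1+(k-1)\sin^2\theta,
 \qquad
 (\log P)'=-(k-1)\sin\theta\cos\theta .
\]
Hence $\theta-t$ is nondecreasing, and the terminal phase is
\[
\int_0^T(k-1)\sin^2\theta\,\dd t\ \ge\ (K-1)\int_0^{r^2/2}\sin^2t\,\dd t\ \approx\ (K-1)r^6/24 .
\]
This exceeds $r^4/4$ once $K$ is larger than about $6/r^2$. Your intermediate-value step tacitly uses switching times with $\sqrt K\,t_s\ll1$, where $\mathcal R'/\mathcal R\approx1/t_s$. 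The constraint $t_s\ge r^2/2$ rules these out for large $K$. In the remaining range $K\lesssim6/r^2$, the solution stays $O(r^2)$-close to $\sin t$ in $C^1$, and $w$ comes out close to $1$.

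The missing idea is visible in the two formulas above. The phase budget $r^4/4$ is only three times the amount $\approx r^4/12$ that $k\ge2/r^2$ on $(0,r^2/2]$ already forces. A large drop of $\log P$ must therefore come from a concentrated, delta-like spike of $-\mathcal R''/\mathcal R$ at a point $\xi\ll r^4$, where $\theta$ is tiny and $\cot\theta$ is huge. Elsewhere on $(0,r^2/2]$ the coefficient stays at $2/r^2$.

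The paper builds this in by matching backwards.
\begin{itemize}
\item With $q=2/r^2$ on $[0,r^2/2]$ and $q=1$ shortly afterwards, it takes the solution $u$ equal to $\sin(t+r^4/4)$ near $[r,5r]$. It checks that $u(0)\approx r^4/6>0$ and $u,u'>0$.
\item It takes $v$ with $v(0)=0$, $v'(0)=1$. The constant Wronskian $v'u-vu'=u(0)>0$ makes $v/u$ increase from $0$. So each small $\omega$ determines a unique $\xi$ with $v(\xi)=\omega u(\xi)$.
\item Gluing $v$ to $\omega u$ at $\xi$ creates a concave corner with slope jump $-u(0)/u(\xi)$.
\item Mollifying with an even kernel, normalized so that solutions of $y''+\frac{2}{r^2}y=0$ are reproduced, yields $\mathcal R''+\frac{2}{r^2}\mathcal R\le0$ near $\xi$.
\end{itemize}
The exact phase is built into $u$. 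A single parameter $\xi\downarrow0$ then produces every small $\omega=w\tan r/\sin(r+r^4/4)$, and no two-parameter shooting is needed.

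Your three-plateau fallback could work only if the large constant occupies an initial interval of length $O(K^{-1/2})$, followed by a return to $2/r^2$. Even then, you would still have to prove that phase and amplitude can be matched simultaneously.
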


\begin{proof}
Set
\[
 a=\frac{r^2}{2},\qquad
 b=\frac{r^4}{4},\qquad
 \kappa=\frac{\sqrt2}{r}.
\]
We first construct a positive solution $u_0$ with $u_0=\sin(t+b)$ on $[a,5r]$. Consider the piecewise constant coefficient
\[
 q_0(t)=
 \begin{cases}
  \kappa^2,&0\le t\le a,\\
  1,&a<t\le5r,
 \end{cases}
\]
and let \(u_0\) be the \(C^1\) solution of
\[
 u_0''+q_0u_0=0
\]
which equals \(\sin(t+b)\) on \([a,5r]\).  We will show in the following that $u_0>0$ on $[0,5r]$. Matching its value and
derivative at \(t=a\) gives
\begin{align*}
 u_0(0)
 &=\sin(a+b)\cos(\kappa a)
   -\kappa^{-1}\cos(a+b)\sin(\kappa a)
   =\frac{r^4}{6}+O(r^6),\\
 u_0'(0)
 &=\kappa\sin(a+b)\sin(\kappa a)
   +\cos(a+b)\cos(\kappa a)
   =1+\frac{r^2}{4}+O(r^4).
\end{align*}
After decreasing \(r_0\), we have
\[
 u_0(0)>0,\qquad u_0'(0)>0,\qquad
 \kappa^2\ge1,\qquad a<r,\qquad
 \kappa a=\frac r{\sqrt2}<\frac{\pi}{2},
 \qquad 5r+b<\frac{\pi}{2}.
\]
On \([0,a]\),
\[
 u_0(t)=u_0(0)\cos(\kappa t)
        +\kappa^{-1}u_0'(0)\sin(\kappa t),
\]
which yields \(u_0>0\).  Hence \(u_0''=-\kappa^2u_0<0\), and
\(u_0'(a)=\cos(a+b)>0\), which also implies \(u_0'>0\) on \([0,a]\).
On \([a,5r]\), positivity of \(u_0\) and \(u_0'\) follows from
\(5r+b<\pi/2\).

Choose \(T\in(a,r)\) sufficiently close to \(a\) and 
\(q\in C^\infty([0,5r])\), so that
\[
 q=\kappa^2\ \text{on }[0,a],\qquad
 1\le q\le\kappa^2\ \text{on }[a,T],\qquad
 q=1\ \text{on }[T,5r].
\]
The $T$ and function $q$ can be chosen so that
\[
 \|q-q_0\|_{L^1([0,5r])}
 \le \kappa^2(T-a)
\]
is arbitrarily small.  Let \(u\) solve
\[
 u''+qu=0,\qquad
 u(T)=\sin(T+b),\qquad u'(T)=\cos(T+b).
\]
Writing the equation as a first-order system, and applying Grönwall's
inequality, we obtain
\[
 \|(u,u')-(u_0,u_0')\|_{C^0([0,T])}
 \le C(r)\|q-q_0\|_{L^1([0,T])}.
\]
Thus, after taking \(T-a\) sufficiently small, we have
\begin{equation}
 u>0,\qquad u'>0\quad\text{on }[0,5r],
 \qquad
 u(t)=\sin(t+b)\quad\text{on }[T,5r].
 \label{eq:warping-background-solution}
\end{equation}

Let \(v\) be the solution
\[
 v''+qv=0,\qquad v(0)=0,\qquad v'(0)=1.
\]
Since \(q=\kappa^2\) on \([0,a]\),
\begin{equation}
 v(t)=\kappa^{-1}\sin(\kappa t)
 \qquad(0\le t\le a).
 \label{eq:warping-pole-solution}
\end{equation}
In particular, \(v(t),v'(t)>0\) on \((0,a]\).

We next match the values of \(v\) with a small multiple of \(u\).
The Wronskian
\[
 W(t)=v'(t)u(t)-v(t)u'(t)
\]
is constant, this is because
\[
 W'=v''u-vu''=(-qv)u-v(-qu)=0.
\]
Consequently,
\[
 W(t)=W(0)=u(0)>0,\qquad
 \left(\frac vu\right)'
 =\frac{v'u-vu'}{u^2}
 =\frac{u(0)}{u^2}>0.
\]
Set
\[
 \omega_*=\frac{v(a/4)}{u(a/4)}>0,\qquad
 w_0(r)=\omega_*\frac{\sin(r+b)}{\tan r}>0.
\]
For \(0<w<w_0(r)\), denote
\[
 \omega=\frac{w\tan r}{\sin(r+b)}\in(0,\omega_*).
\]
The strict monotonicity of \(v/u\) gives a unique
\(\xi\in(0,a/4)\) satisfying
\begin{equation}
 v(\xi)=\omega u(\xi).
 \label{eq:warping-value-match}
\end{equation}
At point $\xi$, 
\begin{align}
 D
 &=v'(\xi)-\omega u'(\xi)\notag\\
 &=\frac{v'(\xi)u(\xi)-v(\xi)u'(\xi)}{u(\xi)}
 =\frac{u(0)}{u(\xi)}>0.
 \label{eq:warping-derivative-drop}
\end{align}

Define the continuous piecewise-smooth function
\[
 \mathcal R_{\mathrm{pt}}(t):=
 \begin{cases}
  v(t),&0\le t\le\xi,\\
  \omega u(t),&\xi\le t\le5r.
 \end{cases}
\]
By definition, it is positive function on \((0,5r]\), and both one-sided derivatives are
positive.  Its derivative at $\xi$ has the jump
\[
 \mathcal R_{\mathrm{pt}}'(\xi+)
 -\mathcal R_{\mathrm{pt}}'(\xi-)=-D.
\]
Since \(\mathcal R_{\mathrm{pt}}\) is continuous, its distributional
first derivative has no Dirac mass.  Hence, on the constant-coefficient
interval \((0,a)\),
\begin{equation}
 \left(\frac{\dd^2}{\dd t^2}+\kappa^2\right)
 \mathcal R_{\mathrm{pt}}
 =-D\,\delta_\xi,
 \label{eq:warping-corner-distribution}
\end{equation}
in the distributional sense.
Choose an even function
\[
 \varphi\in C_c^\infty((-1,1)),\qquad
 \varphi\ge0,\qquad \int_{\mathbb R}\varphi\,\dd s=1,
\]
and let \(\varphi_h(s)=h^{-1}\varphi(s/h)\).  Choose \(h>0\) small enough so
that
\[
 [\xi-3h,\xi+3h]\subset(0,a),
 \qquad \kappa h<\frac{\pi}{2}.
\]
Define
\[
 c_h=\int_{\mathbb R}
       \varphi_h(s)\cos(\kappa s)\,\dd s>0.
\]
If \(y''+\kappa^2y=0\), then
\[
 y(t-s)=y(t)\cos(\kappa s)
        -\kappa^{-1}y'(t)\sin(\kappa s),
\]
and the evenness of \(\varphi_h\) gives
\begin{equation}
 \int_{\mathbb R}\varphi_h(s)y(t-s)\,\dd s=c_hy(t).
 \label{eq:warping-convolution-eigenfunction}
\end{equation}

On the open cover
\[
 \{|t-\xi|<2h\}\cup\{|t-\xi|>h\}
\]
of \([0,5r]\), define
\[
 \mathcal R(t)=
 \begin{cases}
 c_h^{-1}\displaystyle\int_{\mathbb R}
       \varphi_h(s)\mathcal R_{\mathrm{pt}}(t-s)\,\dd s,
       &|t-\xi|<2h,\\[2mm]
 \mathcal R_{\mathrm{pt}}(t),&|t-\xi|>h.
 \end{cases}
\]
If \(|t-\xi|<2h\) and \(|s|<h\), then
\[
 t-s\in(\xi-3h,\xi+3h)\subset(0,a).
\]
On the overlap annulus
\(h<|t-\xi|<2h\), \eqref{eq:warping-convolution-eigenfunction} shows that the two
definitions agree.  Therefore \(\mathcal R\) is smooth.
\eqref{eq:warping-corner-distribution} then yields, on $ \{|t-\xi|<2h\}$,
\begin{equation}
 \mathcal R''+\kappa^2\mathcal R
 =-\frac{D}{c_h}\varphi_h(t-\xi)\le0.
 \label{eq:warping-smoothed-corner}
\end{equation}
Moreover, since \(\mathcal R_{\mathrm{pt}}\) is continuous, its first
distributional derivative is the positive piecewise
function
\[
 \mathcal R_{\mathrm{pt}}'
 =
 \begin{cases}
  v',&t<\xi,\\
  \omega u',&t>\xi.
 \end{cases}
\]
The nonnegativity of \(\varphi_h\) therefore gives
\[
 \mathcal R>0,\qquad \mathcal R'>0
 \quad\text{ on } \{|t-\xi|<2h\}.
\]
The same positivity statements hold when
$\{|t-\xi|\geq 2h\}$ by the definition of $\mathcal R$.

On $ \{|t-\xi|\geq 2h\}$, 
\[
 -\frac{\mathcal R''}{\mathcal R}=q.
\]
In $ \{|t-\xi|<2h\}$, \eqref{eq:warping-smoothed-corner} gives
\[
 -\frac{\mathcal R''}{\mathcal R}
 =\kappa^2+
   \frac{D\,\varphi_h(t-\xi)}{c_h\mathcal R(t)}
 \ge\kappa^2=\frac2{r^2}.
\]
Consequently,
\[
 -\frac{\mathcal R''}{\mathcal R}\ge\frac2{r^2}
 \quad\text{on }(0,r^2/2],
 \qquad
 -\frac{\mathcal R''}{\mathcal R}\ge1
 \quad\text{on }(0,5r].
\]
Since \(\mathcal R>0\), the above inequalities imply
\(\mathcal R''<0\).  Hence \(\mathcal R'\) is strictly decreasing, and
\(\mathcal R'(0)=1\) gives
\[
 0<\mathcal R'(t)<1,\qquad0<t\le5r.
\]

Since the smoothing procedure is supported away from \(0\),
\eqref{eq:warping-pole-solution} gives
\[
 \mathcal R(t)=\frac r{\sqrt2}
 \sin\left(\frac{\sqrt2}{r}t\right)
\]
near \(t=0\), which proves all the identities in
\eqref{eq:unified-base-3-2}.  It is also supported to the left of
\(T<r\).  By \eqref{eq:warping-background-solution},
\[
 \mathcal R(t)
 =\omega\sin(t+b)
 =\frac{w\tan r}{\sin(r+r^4/4)}
   \sin(t+r^4/4)
\]
on \([T,5r]\), proving \eqref{eq:unified-base-3-6}.  Evaluating at
\(t=r\) gives
\[
 \cot r\,\mathcal R(r)=w,
\]
which is \eqref{eq:unified-base-3-5}.  This completes the proof.
\end{proof}
Next, we prove the ambient metric $g_{\mathrm{amb},m}$ is smooth and has positive sectional curvature near
\(B_{4r}^{\angle}(o_0)\).
\begin{lemma}
\label{lem:neck-ambient-curvature}
There exists a constant \(r_{\mathrm{amb}}>0\) with the following
property.  Let \(m\in\{2,3\}\), \(k=m-1\), and
\(0<r<r_{\mathrm{amb}}\), and let \(\mathcal R\) be the function
constructed in Lemma~\ref{lem:concave-warping}.  Then the metric
\eqref{eq:unified-base-3-11} is smooth and has positive sectional curvature near
\(B_{4r}^{\angle}(o_0)\).
\end{lemma}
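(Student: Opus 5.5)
The metric \eqref{eq:unified-base-3-11} is, up to the constant factor \(\cot^2r\) (which does not affect smoothness or the sign of sectional curvature), the doubly warped product
\[
 g=\dd t^2+\cos^2t\,\dd z^2+\mathcal R(t)^2g_{\mathbb S^k}
\]
on \([0,\pi/2)\times\mathbb R\times\mathbb S^k\), with the \(\mathbb S^k\)-factor collapsed at \(t=0\). I will establish smoothness at \(t=0\) and then positivity of all sectional curvatures for \(0<t\le 5r\). The region \(B_{4r}^{\angle}(o_0)\) is contained in \(\{t\le 4r\}\), because \(\varrho_0\ge t\) follows from \(\cos\varrho_0=\cos t\cos z\le\cos t\). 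Hence it suffices to work on \(\{0\le t<5r\}\).

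\emph{Smoothness.} Near \(t=0\) the metric is \(\dd t^2+\mathcal R(t)^2g_{\mathbb S^k}\) plus the \(z\)-term \(\cos^2t\,\dd z^2\). By \eqref{eq:unified-base-3-2}, \(\mathcal R\) is odd to all orders with \(\mathcal R'(0)=1\); in fact, from the proof of Lemma~\ref{lem:concave-warping}, \(\mathcal R(t)=\kappa^{-1}\sin(\kappa t)\) exactly near \(0\). The standard criterion for warped products over a point therefore makes \(\dd t^2+\mathcal R^2g_{\mathbb S^k}\) a smooth metric on \(\mathbb R^{k+1}\) near the origin. The function \(\cos^2 t\) is even in \(t\) and hence a smooth function of \(|x|^2\), so adding \(\cos^2t\,\dd z^2\) preserves smoothness. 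Thus the metric is smooth on \(\mathbb R^{m+1}\times\mathbb R_z\) near the axis.

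\emph{Curvature.} I will use the orthonormal frame \(\partial_t\), \(Z=\sec t\,\partial_z\), and \(V_i=\mathcal R^{-1}U_i\), where \(U_i\) is a frame on \(\mathbb S^k\). For a doubly warped product \(\dd t^2+\phi^2\dd z^2+\mathcal R^2g_{\mathbb S^k}\), the curvature operator is diagonal on the wedge products of these frame vectors, with eigenvalues
\[
 -\frac{\phi''}{\phi}=1,\qquad -\frac{\mathcal R''}{\mathcal R},\qquad -\frac{\phi'\mathcal R'}{\phi\mathcal R}=\frac{\tan t\,\mathcal R'}{\mathcal R},\qquad \frac{1-\mathcal R'^2}{\mathcal R^2}\ (k\ge2).
\]
These correspond, respectively, to the planes \(\partial_t\wedge Z\), \(\partial_t\wedge V_i\), \(Z\wedge V_i\), and \(V_i\wedge V_j\). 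Diagonality holds because the base is one-dimensional and each fiber is a space form; I will verify this via the Koszul formula, or by citing O'Neill's formulas. Since the curvature operator is diagonal with these eigenvalues, it suffices for all of them to be positive for every \(0<t\le 5r\), since a positive curvature operator implies positive sectional curvature. The relevant bounds are:
\begin{itemize}
\item \(-\mathcal R''/\mathcal R\ge1\) by \eqref{eq:unified-base-3-4};
\item \(\tan t\,\mathcal R'/\mathcal R>0\) since \(0<\mathcal R'\), \(\mathcal R>0\), and \(0<t<\pi/2\), using \eqref{eq:unified-base-3-3};
\item \((1-\mathcal R'^2)/\mathcal R^2>0\) since \(0<\mathcal R'<1\).
\end{itemize}

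\emph{Behavior at the axis.} As \(t\to0\), the limits of these eigenvalues must remain positive so that positivity extends to the axis. Using the exact sine form near \(0\):
\begin{itemize}
\item \(-\mathcal R''/\mathcal R=\kappa^2\);
\item \(\tan t\,\mathcal R'/\mathcal R\to1\);
\item \((1-\mathcal R'^2)/\mathcal R^2=\kappa^2\).
\end{itemize}
The curvature operator is continuous, so it is positive definite on the closed region \(\{t\le 4r\}\). Choosing \(r_{\mathrm{amb}}\le r_0\) so that \(5r<\pi/2\) and Lemma~\ref{lem:concave-warping} applies completes the argument.

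\emph{Main obstacle.} The step I expect to be the main obstacle is justifying diagonality of the curvature operator, specifically that the mixed terms \(R(\partial_t,Z,Z,V)\) and related components vanish. If this is problematic, I will instead compute sectional curvatures of general planes directly. Since the metric is locally a warped product of \(\dd t^2+\cos^2t\,\dd z^2\) with the round fiber, I can apply the standard formula for \(\sec\) of mixed planes in a warped product \(B\times_{\mathcal R}F\):
\[
 \sec = -\frac{\Hess_B\mathcal R}{\mathcal R}\ \text{on horizontal directions},\qquad \text{positive base curvature }1 \text{ for base planes}.
\]
Here \(\Hess_B\mathcal R=\mathcal R''\,\dd t^2+\cos t\sin t\,\mathcal R'\,\dd z^2\), and \(-\Hess_B\mathcal R\) is positive definite by the bounds above. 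General planes then decompose via the O'Neill formulas, with no cross terms beyond these Hessian entries, which are diagonal in \((t,z)\).
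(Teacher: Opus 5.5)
Your proposal is correct and follows essentially the paper's own argument. The ingredients are the same:
\begin{itemize}
\item the four frame curvatures $1$, $-\mathcal R''/\mathcal R$, $\tan t\,\mathcal R'/\mathcal R$ and $(1-\mathcal R'^2)/\mathcal R^2$, made positive by \eqref{eq:unified-base-3-3}--\eqref{eq:unified-base-3-4};
\item smoothness at the axis from \eqref{eq:unified-base-3-2};
\item the containment $B_{4r}^{\angle}(o_0)\subset\{t\le 4r\}$ from $\varrho_0\ge t$;
\item invariance of signs under the constant factor $\cot^2 r$.
\end{itemize}

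Your explicit justification of the diagonal curvature operator and of the positive limits at $t=0$ fills in points the paper leaves implicit. There are two small slips:
\begin{itemize}
\item Near the axis the model is $\mathbb R^{k+1}\times\mathbb R_z$, not $\mathbb R^{m+1}\times\mathbb R_z$.
\item In your fallback paragraph, $\Hess_B\mathcal R(\partial_z,\partial_z)=-\cos t\sin t\,\mathcal R'$. It is this minus sign that makes $-\Hess_B\mathcal R$ positive definite.
\end{itemize}
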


\begin{proof}
Choose $r_{\mathrm{amb}}>0$ small enough so that $r_{\mathrm{amb}}<\min\{r_0, \pi/10\}$ for $r_0$ defined in Lemma~\ref{lem:concave-warping}, and whenever
 $0<r<r_{\mathrm{amb}}$,
\begin{equation}
\cot r\tan(r^2/2)<\frac1{\sqrt2},\qquad
 \frac{\tan(t+r^4/4)}{\tan t}<1+\tan^2r
 \quad\text{for }r^2/2\le t\le r.                                  \label{eq:unified-base-3-14}
\end{equation}
Such a choice is possible: \(\cot r\tan(r^2/2)\) tends to zero as
\(r\to0\), while, uniformly for \(r^2/2\le t\le r\),
\[
\frac{\tan(t+r^4/4)}{\tan t}
 =1+\frac{r^4}{4}\frac{\sec^2t}{\tan t}
  +O\!\left(\frac{r^8}{t^2}\right)
 \le1+\frac{r^2}{2}+O(r^4)
 \le1+\tan^2r
\]
for all sufficiently small \(r\).  Let \(\mathcal R\) be the auxiliary function constructed in
Lemma~\ref{lem:concave-warping}, and consider the unscaled
metric
\[
 \widehat g_{\mathrm{amb},m}
 =\dd t^2+\cos^2t\,\dd z^2+\mathcal R(t)^2g_{\mathbb S^k}.
\]
For orthonormal vertical vectors $V,W\in T\mathbb S^k$, the possible
sectional curvatures are
\begin{align}
 \sec_{\widehat g_{\mathrm{amb},m}}
   \bigl(\partial_t,(\cos t)^{-1}\partial_z\bigr)&=1,             \label{eq:unified-base-3-15}\\
 \sec_{\widehat g_{\mathrm{amb},m}}(\partial_t,V)
   &=-\frac{\mathcal R''}{\mathcal R},                            \label{eq:unified-base-3-16}\\
 \sec_{\widehat g_{\mathrm{amb},m}}
   \bigl((\cos t)^{-1}\partial_z,V\bigr)
 &=\frac{\mathcal R'}{\mathcal R}\tan t,                         \label{eq:unified-base-3-17}\\
 \sec_{\widehat g_{\mathrm{amb},m}}(V,W)
   &=\frac{1-(\mathcal R')^2}{\mathcal R^2}.                     \label{eq:unified-base-3-18}
\end{align}
The \eqref{eq:unified-base-3-18} occurs only when $m=3$.  
\eqref{eq:unified-base-3-3}--\eqref{eq:unified-base-3-4} proved in Lemma \ref{lem:concave-warping} make all these sectional curvatures positive, and
the conditions in \eqref{eq:unified-base-3-2} are exactly the smoothness conditions
for collapsing the unit-round $\mathbb S^k$ factor at $t=0$.
Since
\[
 \varrho_0(t,z)=\arccos(\cos t\cos z)\geq t,
\]
one has
\[
 B_{4r}^{\angle}(o_0)\subset\{0\leq t\leq4r\}
 \Subset\{0\leq t<5r\}.
\]
Hence, the metric is defined and smooth on a neighborhood of $B_{4r}^{\angle}(o_0)$.
Finally, scaling the metric by the positive constant \(\cot^2r\)
divides all sectional curvatures by \(\cot^2r\), without changing their
signs.  This proves the lemma.
\end{proof}
The next lemma gives the required sectional-curvature lower bound on
the angular spheres and computes their principal curvatures.
\begin{lemma}
\label{lem:angular-sphere-geometry}
Under the hypotheses of Lemma~\ref{lem:neck-ambient-curvature}, for
every \(z_0\in\mathbb R\), the hypersurface
\(S_r^{\angle}(o_{z_0})\) is smooth. The induced metric
\(g_{\mathrm{ang},m}\) satisfies
\[
                         \sec_{g_{\mathrm{ang},m}}>1 .
\]
All principal curvatures for the normal pointing out of
\(B_r^{\angle}(o_{z_0})\) belong to \((0,1]\), and the principal curvature of the meridional curve, i.e. the curve $(t, z, \theta_0)$ in $S_r^{\angle}(o_{z_0})$ with fixed $\theta_0\in \mathbb S^k$, is exactly \(1\).
\end{lemma}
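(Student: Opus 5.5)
Since $\cot^2r$ is a constant factor, I would first prove everything for the unscaled metric $\widehat g=\dd t^2+\cos^2t\,\dd z^2+\mathcal R(t)^2g_{\mathbb S^k}$ and rescale at the end. Rescaling multiplies principal curvatures by $\tan r$ and sectional curvatures by $\tan^2r$, so the targets for $\widehat g$ are:
\begin{itemize}
\item the meridional principal curvature of the unscaled angular sphere equals $\cot r$, and all others lie in $(0,\cot r]$;
\item the intrinsic sectional curvatures of the unscaled sphere exceed $\cot^2r$.
\end{itemize}
By translation invariance in $z$ I can take $z_0=0$.

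\textbf{Smoothness.} The $(t,z)$-factor $\dd t^2+\cos^2t\,\dd z^2$ is the round unit sphere in latitude coordinates. On it, $\varrho_0=\arccos(\cos t\cos z)$ is the spherical distance to the equatorial point $(0,0)$. Its level set $\{\varrho_0=r\}$ in the half $t\ge0$ is a semicircle of geodesic curvature $\cot r$ meeting the axis $t=0$ orthogonally. The angular sphere is obtained by rotating this curve in the $\mathbb S^k$-factor. Since the curve hits $t=0$ orthogonally, and $\mathcal R$ is odd at $0$ by \eqref{eq:unified-base-3-2}, the standard warped-product smoothness criterion shows the hypersurface is a smooth $\mathbb S^m$.

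\textbf{Principal curvatures.} The hypersurface is $\{\varrho_0=r\}\times_{\mathcal R}\mathbb S^k$.
\begin{itemize}
\item The meridional direction is tangent to the totally geodesic $(t,z)$-slice. Its principal curvature is the planar geodesic curvature $\cot r$, which becomes exactly $1$ after scaling.
\item Each $\mathbb S^k$ direction has principal curvature $(\mathcal R'/\mathcal R)\,\langle\nabla t,\nu\rangle$. Here $\nu$ is the outward normal, equal to the gradient of $\varrho_0$ in the base. From spherical trigonometry, $\langle\nabla t,\nu\rangle=\partial_\nu t=\sin t\cos z/\sin r$, the same expression as in \eqref{eq:unified-base-3-78}. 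This is positive for $t>0$, and at $t=0$ it tends to the meridional value.
\item To get the upper bound $\le\cot r$ I need $\frac{\mathcal R'(t)}{\mathcal R(t)}\sin t\cos z\le\cos r$ on the sphere, using $\cos t\cos z=\cos r$. This reduces to $\frac{\mathcal R'}{\mathcal R}\tan t\le 1$.
\end{itemize}

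I would prove $\frac{\mathcal R'}{\mathcal R}\tan t\le1$ on $[0,r]$ in three regions.
\begin{itemize}
\item On $[T,r]$, formula \eqref{eq:unified-base-3-6} gives $\mathcal R'/\mathcal R=\cot(t+b)\le\cot t$, so the inequality holds.
\item On $[0,r^2/2]$, concavity gives $\mathcal R'/\mathcal R\le1/t$. Then $\frac{\mathcal R'}{\mathcal R}\tan t\le \tan t/t$. This is only just above $1$, so it must be sharpened. I would use $-\mathcal R''/\mathcal R\ge 2/r^2$, whose Sturm comparison with $\kappa^{-1}\sin\kappa t$ gives $\mathcal R'/\mathcal R\le\kappa\cot\kappa t$. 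The product $\kappa\cot(\kappa t)\tan t$ is then below $1$, since $\kappa\ge1$.
\item On the middle range, Sturm comparison with $-\mathcal R''/\mathcal R\ge1$ from a point where $\mathcal R'/\mathcal R\le\cot(\cdot)$ holds propagates the bound $\mathcal R'/\mathcal R\le\cot t$. The margins in \eqref{eq:unified-base-3-14} exist to make this comparison go through.
\end{itemize}
This establishes that all principal curvatures lie in $(0,1]$ after scaling.

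\textbf{Intrinsic curvature.} I would apply the Gauss equation, which gives $\sec_{\mathrm{ang}}(X,Y)=\sec_{\mathrm{amb}}(X,Y)+\kappa_X\kappa_Y$ for principal directions. The ambient term is nonnegative by Lemma~\ref{lem:neck-ambient-curvature}. The essential contribution comes from products of principal curvatures: the meridional curvature, scaled, is $1$.
\begin{itemize}
\item \emph{Meridian–fiber planes.} The product term is $\frac{\mathcal R'}{\mathcal R}\partial_\nu t\cdot\cot r$ before scaling. The ambient term contributes $-\mathcal R''/\mathcal R$ times the squared $\partial_t$-component, plus $\frac{\mathcal R'}{\mathcal R}\tan t$ times the $z$-component. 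This total must exceed $\cot^2r$. Near $t=0$ the bound $-\mathcal R''/\mathcal R\ge2/r^2>\cot^2r$ helps. Elsewhere I would compute exactly using \eqref{eq:unified-base-3-6} and $\cos t\cos z=\cos r$.
\item \emph{Fiber–fiber planes} (only when $m=3$). Here $(1-\mathcal R'^2)/\mathcal R^2$ plus the squared fiber principal curvature equals $\frac{1-\mathcal R'^2(1-(\partial_\nu t)^2)}{\mathcal R^2}$. Since $\mathcal R(r)$ is tiny ($=w\tan r$), this is huge near the core, but it must still be checked near the poles.
\end{itemize}

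\textbf{Main obstacle.} I expect the hardest step to be this sectional-curvature lower bound $>1$ near the two poles $t\to0$, where meridian and fiber curvatures coincide. There the inequality is essentially an equality to leading order, and the strictness must come from the enhanced concavity $-\mathcal R''/\mathcal R\ge2/r^2$ on $(0,r^2/2]$ combined with the first inequality of \eqref{eq:unified-base-3-14}.
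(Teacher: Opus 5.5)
Your smoothness argument and the principal-curvature part are fine. The three-region proof of $\frac{\mathcal R'}{\mathcal R}\tan t\le1$ works. A single Sturm comparison of $\mathcal R$ with $\sin t$ from $t=0$, using only $-\mathcal R''/\mathcal R\ge1$, already gives $\mathcal R'/\mathcal R\le\cot t$ and $\mathcal R\le\sin t$ on $(0,r]$; \eqref{eq:unified-base-3-14} is not needed there.

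\textbf{The gap: meridian--fiber planes.} The problem is the bound $\sec_{g_{\mathrm{ang},m}}>1$ on these planes. Before scaling, the Gauss equation gives
\[
 \sec(E,V)=-\frac{\mathcal R''}{\mathcal R}\bigl(1-\cot^2r\tan^2t\bigr)
 +\frac{\mathcal R'}{\mathcal R}\tan t\,\cot^2r\tan^2t
 +\cot^2r\,\frac{\mathcal R'}{\mathcal R}\tan t .
\]
The last (product) term is at most $\cot^2r$, precisely because of the upper bound you proved. So the strict excess over $\cot^2r$ cannot come from the products of principal curvatures; it needs a \emph{lower} bound on $\mathcal R'/\mathcal R$.

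You get such a bound only where $\mathcal R$ is explicit, i.e.\ where \eqref{eq:unified-base-3-6} holds. That is $[T,5r]$ with $T>r^2/2$, and from the statement of Lemma~\ref{lem:concave-warping} alone, only a neighborhood of $[r,5r]$. On $(r^2/2,T)$ you know only $-\mathcal R''/\mathcal R\ge1$. There the first term can contribute only about $1$, while the target $\cot^2r$ is of order $r^{-2}$. You would need $\frac{\mathcal R'}{\mathcal R}\tan t\ge 1-O(r^2)$, and nothing in your proposal supplies it. Monotonicity of $\mathcal R$ and $\mathcal R'$ only yields $\mathcal R'/\mathcal R\ge\cot(r+r^4/4)$, i.e.\ $\frac{\mathcal R'}{\mathcal R}\tan t$ of order $t/r$.

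\textbf{The missing idea: a backward Riccati comparison.} Set $h=\mathcal R'/\mathcal R-\cot(t+r^4/4)$. It satisfies
\[
h'+\bigl(\mathcal R'/\mathcal R+\cot(t+r^4/4)\bigr)h=1+\mathcal R''/\mathcal R\le0,
\]
with $h(r)=0$ by \eqref{eq:unified-base-3-6}. Integrating from $t$ to $r$ gives $\mathcal R'/\mathcal R\ge\cot(t+r^4/4)$ on all of $[r^2/2,r]$. Now use $-\mathcal R''/\mathcal R\ge1\ge\frac{\mathcal R'}{\mathcal R}\tan t$ to replace the first coefficient. The three terms then combine to at least
\[
\frac{\mathcal R'}{\mathcal R}\tan t\,(1+\cot^2r)\ge\cot(t+r^4/4)\tan t\,(1+\cot^2r),
\]
and this exceeds $\cot^2r$ exactly by the second inequality in \eqref{eq:unified-base-3-14}.

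\textbf{Where the difficulty actually lies.} Your diagnosis of the main obstacle is inverted. For $t\le r^2/2$ there is ample slack, not near-equality. The first inequality in \eqref{eq:unified-base-3-14} gives $(\cot r\tan t)^2<1/2$, so the first term alone is at least $1/r^2>\cot^2r$. The delicate range is $[r^2/2,r]$.

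\textbf{Fiber--fiber planes ($m=3$).} You leave the poles unchecked here. They follow from $\mathcal R\le\sin t$ and $\mathcal R'/\mathcal R\le\cot t$:
\[
 \mathcal R^{-2}-\Bigl(\frac{\mathcal R'}{\mathcal R}\Bigr)^2\bigl(1-\cot^2r\tan^2t\bigr)
 \ge\frac{1}{\sin^2t}-\cot^2t\,\bigl(1-\cot^2r\tan^2t\bigr)=1+\cot^2r .
\]
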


\begin{proof}
Fix $z_0\in\mathbb R$ and consider $S_r^{\angle}(o_{z_0})$.
The projection to the
round metric $\dd t^2+\cos^2t\,\dd z^2$ is length nonincreasing, and its
radial geodesics lift with constant $\mathbb S^k$-coordinate.  Hence, on
$|z-z_0|<\pi$, $\varrho_{z_0}$ is also the distance from $o_{z_0}$ for
$\widehat g_{\mathrm{amb},m}$.  Since the angular sphere $S_r^{\angle}(o_{z_0})$ is the level set
$\{\varrho_{z_0}=r\}$, its outward unit normal is
$\nu=\nabla\varrho_{z_0}$.  Fix $z$, differentiate the identity
$\cos\varrho_{z_0}(t,z)=\cos t\cos(z-z_0)$ with respect to the ambient
coordinate $t$. This gives
\[
 -\sin\varrho_{z_0}\,\partial_t\varrho_{z_0}
   =-\sin t\cos(z-z_0).
\]
On $S_r^{\angle}(o_{z_0})$ we have $\varrho_{z_0}=r$ and
$\cos(z-z_0)=\cos r/\cos t$.  Because $\partial_t$ is a unit vector,
$\partial_t\varrho_{z_0}
=\langle\nabla\varrho_{z_0},\partial_t\rangle$; hence
\begin{equation}
\langle\nu,\partial_t\rangle=\cot r\tan t.                       \label{eq:unified-base-3-19}
\end{equation}
We now specify the tangent vectors used below.  Away from the two poles
of the angular sphere, let $E$ be a unit vector spanning
\[
 TS_r^{\angle}(o_{z_0})\cap\vspan\{\partial_t,\partial_z\},
\]
and let $V,W$ be orthonormal vectors tangent to the $\mathbb S^k$
factor, with orthonormality measured using
$\widehat g_{\mathrm{amb},m}$.  Since $\varrho_{z_0}$ is independent of
the $\mathbb S^k$-coordinate, these vertical vectors are also tangent to
the angular sphere.  Thus the orthogonal splitting is
\[
 TS_r^{\angle}(o_{z_0})
   =\vspan\{E\}\oplus T\mathbb S^k.
\]
In particular, $E,V,W$ are all unit tangent vectors to the same angular
sphere: $E$ is the meridional direction, whereas $V,W$ are vertical
directions.  The vector $W$ is needed only when $k=2$, equivalently
$m=3$.  The formulas at the two poles are understood by smooth
extension.
The Gauss equation gives the sectional curvatures of the metric
induced by $\widehat g_{\mathrm{amb},m}$:
\begin{align}
 \sec_{\widehat g_{\mathrm{amb},m}|_{S_r^{\angle}(o_{z_0})}}(E,V)
 &=-\frac{\mathcal R''}{\mathcal R}
       \bigl(1-\cot^2r\tan^2t\bigr)
   +\frac{\mathcal R'}{\mathcal R}\tan t\,\cot^2r\tan^2t
   +\cot^2r\frac{\mathcal R'}{\mathcal R}\tan t,                 \label{eq:unified-base-3-20}\\
 \sec_{\widehat g_{\mathrm{amb},m}|_{S_r^{\angle}(o_{z_0})}}(V,W)
 &=\mathcal R^{-2}\left(
      1-(\mathcal R')^2\bigl(1-\cot^2r\tan^2t\bigr)\right),       \label{eq:unified-base-3-21}
\end{align}
where \eqref{eq:unified-base-3-21} occurs only for $m=3$.  Let
\(q=-\mathcal R''/\mathcal R\geq1\).  Then
\[
 \bigl(\mathcal R'\sin t-\mathcal R\cos t\bigr)'
   =(1-q)\mathcal R\sin t\leq0 .
\]
This expression tends to zero as \(t\downarrow0\).  Hence
\[
 \left(\frac{\mathcal R}{\sin t}\right)'
 =\frac{\mathcal R'\sin t-\mathcal R\cos t}{\sin^2t}\leq0,
\]
which gives
\begin{equation}
\mathcal R\le\sin t,\qquad
 \frac{\mathcal R'}{\mathcal R}\le\cot t,\qquad
 \mathcal R'\le\cos t.                                          \label{eq:unified-base-3-22}
\end{equation}
For $t\le r^2/2$, equation \eqref{eq:unified-base-3-19} and the first inequality in
\eqref{eq:unified-base-3-14} give $\langle\nu,\partial_t\rangle<1/\sqrt2$.
Therefore
\[
 -\frac{\mathcal R''}{\mathcal R}
       \bigl(1-\langle\nu,\partial_t\rangle^2\bigr)
 \ge\frac1{r^2}>\cot^2r,
\]
so the first term of \eqref{eq:unified-base-3-20} already has the required lower
bound.  On \([r^2/2,r]\), set
\[
 h(t):=\frac{\mathcal R'(t)}{\mathcal R(t)}
             -\cot(t+r^4/4).
\]
Then
\[
 h'(t)+\left(
 \frac{\mathcal R'(t)}{\mathcal R(t)}
             +\cot(t+r^4/4)\right)h(t)
 =\frac{\mathcal R''(t)}{\mathcal R(t)}+1\leq0.
\]
Equation \eqref{eq:unified-base-3-6} gives \(h(r)=0\).
After multiplication by the positive integrating factor, integration
from \(t\) to \(r\) gives \(h(t)\geq0\).  Therefore
\[
 \frac{\mathcal R'}{\mathcal R}\ge\cot(t+r^4/4).
\]
Moreover,
$-\mathcal R''/\mathcal R\ge1\ge
(\mathcal R'/\mathcal R)\tan t$ by \eqref{eq:unified-base-3-22}.  Substitution in
\eqref{eq:unified-base-3-20} now gives, for $r^2/2\le t\le r$,
\begin{equation}
\sec_{\widehat g_{\mathrm{amb},m}|_{S_r^{\angle}(o_{z_0})}}(E,V)
 \ge\cot(t+r^4/4)\tan t(1+\cot^2r)>\cot^2r,                      \label{eq:unified-base-3-23}
\end{equation}
where the strict inequality is the second inequality in
\eqref{eq:unified-base-3-14}.
When $m=3$, \eqref{eq:unified-base-3-19} and \eqref{eq:unified-base-3-22} also give
\begin{equation}
\sec_{\widehat g_{\mathrm{amb},m}|_{S_r^{\angle}(o_{z_0})}}(V,W)
 \ge\frac1{\sin^2t}
      -\cot^2t\bigl(1-\cot^2r\tan^2t\bigr)
 =1+\cot^2r>\cot^2r.                                              \label{eq:unified-base-3-24}
\end{equation}
Scaling by $\cot^2r$ divides these curvatures by $\cot^2r$, proving
$\sec_{g_{\mathrm{ang},m}}>1$.

We next compute the principal curvatures.  Denote
\[
 e_z=(\cos t)^{-1}\partial_z.
\]
Differentiating the defining equation of $\varrho_{z_0}$ with respect
to $z$, with $t$ fixed, and using \eqref{eq:unified-base-3-19} gives,
on $S_r^{\angle}(o_{z_0})$,
\[
 \nu=(\cot r\tan t)\partial_t
      +\frac{\sin(z-z_0)}{\sin r}\,e_z.
\]
The two coefficients have squared sum one.  Thus, up to sign, $E$ has the expression
\[
 E=\frac{\sin(z-z_0)}{\sin r}\,\partial_t
      -(\cot r\tan t)e_z.
\]
The metric $\dd t^2+\cos^2t\,\dd z^2$ has constant curvature one.  Its radial
distance function therefore satisfies
\[
 \Hess\varrho_{z_0}
   =\cot\varrho_{z_0}
      \bigl(\dd t^2+\cos^2t\,\dd z^2
            -\dd\varrho_{z_0}^{2}\bigr).
\]
Since $\dd\varrho_{z_0}(E)=0$ and $|E|=1$, the second fundamental form of
$E$ direction is
\[
 \II_{\widehat g_{\mathrm{amb},m}}(E,E)
   =\langle\nabla_E\nu,E\rangle
   =\Hess\varrho_{z_0}(E,E)=\cot r.
\]
For a unit vertical vector $V\in\mathbb S^k$, the warped-product connection formula
and the fact that $\mathcal R$ depends only on $t$ give
\[
 \nabla_V\nu=\nu(\log\mathcal R)V,
 \qquad
 \II_{\widehat g_{\mathrm{amb},m}}(V,V)
   =\nu(\log\mathcal R)
   =\frac{\mathcal R'}{\mathcal R}
      \langle\nu,\partial_t\rangle
   =\frac{\mathcal R'}{\mathcal R}\cot r\tan t.
\]
More generally, for vertical vectors $V,W$,
\[
 \II_{\widehat g_{\mathrm{amb},m}}(V,W)
   =\nu(\log\mathcal R)
      \widehat g_{\mathrm{amb},m}(V,W),
 \qquad
 \II_{\widehat g_{\mathrm{amb},m}}(E,V)=0.
\]
Thus the splitting
$TS_r^{\angle}(o_{z_0})=\vspan\{E\}\oplus T\mathbb S^k$
diagonalizes the second fundamental form.  The principal
curvature for $E$ direction is $\cot r$, while the vertical principal curvature is
$(\mathcal R'/\mathcal R)\cot r\tan t$ with multiplicity $k=m-1$.

Finally, set $g_{\mathrm{amb},m}=\cot^2r\,
\widehat g_{\mathrm{amb},m}$.  Under a constant rescaling
$g\mapsto c^2g$, the Levi--Civita connection is unchanged, while unit
vectors are divided by $c$; consequently every principal curvature is
divided by $c$.  Here $c=\cot r$.  The principal curvatures with respect
to $g_{\mathrm{amb},m}$ are therefore
\begin{equation}
\tan r\,\cot r=1,
 \qquad
 \tan r\,\frac{\mathcal R'}{\mathcal R}\cot r\tan t
   =\frac{\mathcal R'}{\mathcal R}\tan t\le1.                    \label{eq:unified-base-3-25}
\end{equation}
The vertical value is positive by \eqref{eq:unified-base-3-3}; at the
two collapsed poles it is understood by its positive smooth limit.
This proves the principal-curvature bounds. The lemma then follows.
\end{proof}

Next, we prove the induced metric $g_{\mathrm{ang},m}$ on \(S_r^{\angle}(o_{z_0})\) has the form \eqref{eq:unified-base-3-12} under latitude coordinate.
\begin{lemma}
\label{lem:angular-latitude-form}
Under the hypotheses of
Lemma~\ref{lem:angular-sphere-geometry},
\(S_r^{\angle}(o_{z_0})\cong\mathbb S^m\).  The relation
\begin{equation}\label{eq:latitude-form}
\cos\varphi=\frac{\mathcal R(t)}{\mathcal R(r)},\qquad
 \sgn\varphi=\sgn(z-z_0)
\end{equation}
define a global latitude coordinate
\(\varphi\in[-\pi/2,\pi/2]\).  In this coordinate, the induced metric $g_{\mathrm{ang},m}$
has the form \eqref{eq:unified-base-3-12}, where
\[
 A_{\mathrm{ang}}\in
 C^\infty([-\pi/2,\pi/2],(0,\infty)),\qquad
 A_{\mathrm{ang}}(\pm\pi/2)=w,
\]
\[
 \frac1\pi\int_{-\pi/2}^{\pi/2}
 A_{\mathrm{ang}}(\varphi)\,\dd\varphi=\cos r .
\]
The function \(A_{\mathrm{ang}}\) is independent of \(z_0\), and at
both poles
\begin{equation}\label{eq:pole expansion}
\frac{A_{\mathrm{ang}}(\varphi)}w
      =1+O(\cos^2\varphi).
\end{equation}

\end{lemma}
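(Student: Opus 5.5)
Our plan is to parametrize the angular sphere by its meridional curves in the $(t,z)$-plane and then change variables to the latitude $\varphi$. We work first with the unscaled metric $\widehat g_{\mathrm{amb},m}=\dd t^2+\cos^2t\,\dd z^2+\mathcal R(t)^2g_{\mathbb S^k}$ and multiply by $\cot^2r$ at the end.

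Since $\varrho_{z_0}$ does not depend on the $\mathbb S^k$-coordinate, the sphere $S_r^{\angle}(o_{z_0})$ is the product of the circle $\{\cos t\cos(z-z_0)=\cos r\}$ in the round $(t,z)$-hemisphere with $\mathbb S^k$, where the $\mathbb S^k$ factor collapses at the two points with $t=0$, $z-z_0=\pm r$. On this circle the variable $t$ runs over $[0,r]$ twice: once on $z>z_0$ and once on $z<z_0$. It attains its maximum $t=r$ at $z=z_0$, and it is monotone on each half. Because $\mathcal R$ is strictly increasing on $[0,r]$ with $\mathcal R(0)=0$, the relation $\cos\varphi=\mathcal R(t)/\mathcal R(r)$ with $\sgn\varphi=\sgn(z-z_0)$ is a bijection from each half-arc onto $[0,\pi/2]$ or $[-\pi/2,0]$. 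The poles correspond to $\varphi=\pm\pi/2$, and the point $z=z_0$ corresponds to $\varphi=0$. This exhibits $S_r^{\angle}(o_{z_0})$ as a doubly suspended $\mathbb S^k$, that is, as $\mathbb S^m$. The induced metric is $ds_E^2+\mathcal R(t)^2g_{\mathbb S^k}$, where $s_E$ is the arclength of the meridional curve. After rescaling it equals $\cot^2r\,ds_E^2+\cot^2r\,\mathcal R(r)^2\cos^2\varphi\,g_{\mathbb S^k}$. By \eqref{eq:unified-base-3-5} the second term is $w^2\cos^2\varphi\,g_{\mathbb S^k}$. We therefore set $A_{\mathrm{ang}}=\cot r\,|ds_E/d\varphi|$. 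Every quantity involved depends only on $z-z_0$, so $A_{\mathrm{ang}}$ is independent of $z_0$.

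Next we compute $A_{\mathrm{ang}}$ explicitly. The unit tangent $E$ from the proof of Lemma~\ref{lem:angular-sphere-geometry} satisfies $dt/ds_E=\langle E,\partial_t\rangle=\pm\sin(z-z_0)/\sin r$, and $\sin^2(z-z_0)=1-\cos^2r/\cos^2t$. Differentiating $\cos\varphi=\mathcal R(t)/\mathcal R(r)$ gives
\[
A_{\mathrm{ang}}(\varphi)=\cot r\,\frac{\mathcal R(r)\sin\varphi\,\sin r}{\mathcal R'(t)\sqrt{1-\cos^2r/\cos^2t}}.
\]
At $\varphi=0$ (that is, $t=r$) numerator and denominator both vanish. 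Both vanish to first order, since $\sin\varphi$ and $\sqrt{\cos^2t-\cos^2r}$ each behave like $\sqrt{r-t}$ there. So $A_{\mathrm{ang}}$ extends positively and smoothly, and we will check that the right variable near $\varphi=0$ is $\varphi$ itself. This is cleanest if we rewrite the expression through the meridian's own arclength angle: on the unit round sphere the circle of radius $r$ about $o_{z_0}$ has arclength $\sin r\,d\beta$, where $\beta$ is the polar angle about the center. The integral identity then follows by computing lengths. The meridional great half-circle from pole to pole has $g_{\mathrm{ang}}$-length $\int A_{\mathrm{ang}}\,d\varphi$. In the unscaled metric this length is half the circumference of a round circle of radius $r$, namely $\pi\sin r$. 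Multiplying by $\cot r$ gives $\pi\cos r$, which is the average stated in the lemma.

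The main obstacle is to establish smoothness and the pole behaviour: $A_{\mathrm{ang}}(\pm\pi/2)=w$ together with the expansion \eqref{eq:pole expansion}. Near a pole we have $t\to0$. There we use \eqref{eq:unified-base-3-2}, which gives the odd expansion $\mathcal R(t)=t+O(t^3)$, so $\mathcal R'=1+O(t^2)$. We also use the spherical geometry near the pole: the meridian meets $t=0$ transversally, with $\sin(z-z_0)\to\sin r$ and hence $dt/ds_E\to1$. Since $\cos\varphi=\mathcal R(t)/\mathcal R(r)$ is odd in $t$, we get $t$ as an odd smooth function of $\psi=\pi/2-|\varphi|$. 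Moreover $ds_E/dt=1+O(t^2)$ is even in $t$, so $A_{\mathrm{ang}}=\cot r\,\mathcal R(r)\,(1+O(\cos^2\varphi))=w(1+O(\cos^2\varphi))$. The evenness in $\cos\varphi$ is exactly the smoothness condition for the collapsing $\mathbb S^k$ in \eqref{eq:unified-base-3-12}, and we verify it by using the reflection $t\mapsto-t$ symmetry of the round circle through the pole. Near $\varphi=0$, smoothness follows by parametrizing the meridian by arclength and using $\mathcal R'(r)>0$, which makes $\cos\varphi$ a smooth function with a nondegenerate maximum in arclength.
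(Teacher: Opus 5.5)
Your proposal is correct and follows essentially the same route as the paper. Parametrizing the meridian by the latitude $\varphi$ gives the same explicit formula for $A_{\mathrm{ang}}$, and the singularity at $\varphi=0$ is removable. The pole value $w$ and the even expansion come from the odd expansion of $\mathcal R$; your parity argument in $\psi=\pi/2-|\varphi|$ is a slightly more explicit version of the paper's appeal to smoothness of the induced metric. The average $\cos r$ comes from the length $\pi\sin r$ of half the geodesic circle of radius $r$.

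Two wording slips should be fixed. That meridian is half of a small circle, not a ``great half-circle.'' Also, $[-\pi/2,\pi/2]\times\mathbb S^k$ with both ends collapsed is a single suspension of $\mathbb S^k$, not a double one.
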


\begin{proof}
 Since
 $\mathcal R'>0$ on $[0,r]$, the relation \eqref{eq:latitude-form}
 determines
 \begin{equation}\label{eq:def of t varphi}
    t(\varphi)=\mathcal R^{-1}
       \bigl(\mathcal R(r)\cos\varphi\bigr),
  \qquad -\frac\pi2\le\varphi\le\frac\pi2.
 \end{equation}
 The defining identity
 \[
  \cos r
  =\cos t(\varphi)\cos\bigl(z(\varphi)-z_0\bigr)
 \]
 then determines its \(z\)-coordinate by
 \begin{equation}\label{eq:def of z varphi}
     z(0)=z_0,
  \qquad
  z(\varphi)=z_0+\sgn(\varphi)
       \arccos\!\left(\frac{\cos r}{\cos t(\varphi)}\right)
       \quad (\varphi\ne0).
 \end{equation}
 Thus $(\varphi,p)\mapsto(t(\varphi),z(\varphi),p)$ parametrizes
 $S_r^{\angle}(o_{z_0})$, with the $\mathbb S^k$ factor collapsed at
 $\varphi=\pm\pi/2$.  Pulling back the ambient metric
 \eqref{eq:unified-base-3-11} gives
 \begin{equation}\label{eq:def of A ang}
    A_{\mathrm{ang}}(\varphi)
  :=\cot r\sqrt{t'(\varphi)^2
                +\cos^2t(\varphi)\,z'(\varphi)^2},
 \end{equation}
 and, by \eqref{eq:unified-base-3-5},
 \[
  \cot^2r\,\mathcal R(t(\varphi))^2g_{\mathbb S^k}
  =\cot^2r\,\mathcal R(r)^2\cos^2\varphi\,g_{\mathbb S^k}
  =w^2\cos^2\varphi\,g_{\mathbb S^k}.
 \]
 This proves the metric formula \eqref{eq:unified-base-3-12} and also
 shows directly that $A_{\mathrm{ang}}$ is independent of $z_0$.

 Differentiating \eqref{eq:def of t varphi} and
 \eqref{eq:def of z varphi} gives, when
 $0<|\varphi|<\pi/2$,
 \[
  t'(\varphi)
    =-\frac{\mathcal R(r)\sin\varphi}
            {\mathcal R'(t(\varphi))},
  \qquad
  z'(\varphi)
    =-\tan t(\varphi)\cot\bigl(z(\varphi)-z_0\bigr)t'(\varphi).
 \]
 Consequently,
 \[
  A_{\mathrm{ang}}(\varphi)
   =\cot r\,
     \frac{\mathcal R(r)|\sin\varphi|\cos t(\varphi)\sin r}
     {\mathcal R'(t(\varphi))
      \sqrt{\cos^2t(\varphi)-\cos^2r}}.
 \]
 The apparent singularity at $\varphi=0$ is removable.  Indeed, \eqref{eq:def of t varphi} and \eqref{eq:def of z varphi} give
 \[
  t(\varphi)
   =r-\frac{\mathcal R(r)}{2\mathcal R'(r)}\varphi^2+O(\varphi^4),
  \qquad
  z(\varphi)-z_0
   =\sqrt{\frac{\mathcal R(r)\tan r}{\mathcal R'(r)}}\,\varphi
      +O(\varphi^3).
 \]
 Hence $A_{\mathrm{ang}}$, defined by \eqref{eq:def of A ang}, is
 smooth and nondegenerate at $\varphi=0$.  At either pole,
 $\mathcal R(t)=t+O(t^3)$ gives
 $|t'(\varphi)|\longrightarrow\mathcal R(r)$, while the same defining
 identity gives $z'(\varphi)\longrightarrow0$.  Hence
 \[
  A_{\mathrm{ang}}(\pm\pi/2)=\cot r\,\mathcal R(r)=w.
 \]
 Smoothness of the rotationally symmetric induced metric at either
 collapsed pole gives the even expansion
 \[
   \frac{A_{\mathrm{ang}}(\varphi)}w
        =1+O(\cos^2\varphi).
 \]
 Finally, the meridional curve projects to a semicircle of radius
 $\sin r$ in the unit round two-sphere.  Its length is therefore
 $\pi\sin r$ before scaling and $\pi\cos r$ after scaling by
 $\cot^2r$.  Thus
\begin{equation}
\frac1\pi\int_{-\pi/2}^{\pi/2}A_{\mathrm{ang}}(\varphi)\,\dd\varphi=\cos r.    \label{eq:unified-base-3-26}
\end{equation}
\end{proof}
We next interpolate between $g_{\mathrm{ang},m}$ and a round metric
while retaining the required sectional-curvature lower bound.
\begin{lemma}\label{lem:angular-slice-interpolation}
Let \(m\in\{2,3\}\), \(k=m-1\), and suppose that
\[
 0<\rho<1,\qquad \pi\rho<r,\qquad
 w^{m-1}<\rho^m,
\]
and \(g_{\mathrm{ang},m}\) is the angular spherical metric in
Lemmas~\ref{lem:angular-sphere-geometry}
and~\ref{lem:angular-latitude-form}.  Define \(u_*\), \(\eta(\varphi)\),
\(\widetilde g_{u,v}\), $\alpha_0$, and \(v_0(u)\) by
\begin{equation}
u_*=\max_\varphi\frac{A_{\mathrm{ang}}(\varphi)}w,
 \qquad
 \eta(\varphi)
   =\frac{A_{\mathrm{ang}}(\varphi)/w-1}{u_*-1}.                  \label{eq:unified-base-3-27}
\end{equation}
\begin{equation}
\widetilde g_{u,v}
  =v^2\left(
       \bigl(1+(u-1)\eta(\varphi)\bigr)^2\dd\varphi^2
       +\cos^2\varphi\,g_{\mathbb S^k}\right),               \label{eq:unified-base-3-28}
\end{equation}
for $1\le u\le u_*$ and $v>0$, and
\begin{equation}
\alpha_0=\frac{\log u_*}{\log\rho-\log w},
 \qquad
 v_0(u)=\rho u^{-1/\alpha_0},\qquad 1\le u\le u_*.              \label{eq:unified-base-3-32}
\end{equation}

Then,
\[
 u_*>\frac{\rho}{w}>1,\qquad u_*w<1,\qquad
 1<\alpha_0<m,
\]
\[
 \widetilde g_{1,v_0(1)}=\rho^2g_{\mathbb S^m},\qquad
 \widetilde g_{u_*,v_0(u_*)}=g_{\mathrm{ang},m},
\]
and
\[
 \sec_{\widetilde g_{u,v_0(u)}}>1
 \qquad \text{ for } 1\leq u\leq u_*.
\]
\end{lemma}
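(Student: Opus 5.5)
The plan is to treat the statement as three separate claims: the numerical inequalities on \(u_*,\alpha_0\); the two endpoint identities; and the curvature bound along the path \(u\mapsto\widetilde g_{u,v_0(u)}\). The first two follow from Lemma~\ref{lem:angular-latitude-form}. The third is a computation for the rotationally symmetric metric \(v^2(B^2\dd\varphi^2+\cos^2\varphi\,g_{\mathbb S^k})\) with \(B=B_u:=1+(u-1)\eta\).

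\emph{Numerical inequalities.} By \eqref{eq:unified-base-3-26}, the mean of \(A_{\mathrm{ang}}/w\) over \([-\pi/2,\pi/2]\) is \(\cos r/w\). A nonconstant function has maximum strictly above its mean, and \(A_{\mathrm{ang}}/w\) is nonconstant because \(A_{\mathrm{ang}}(\pm\pi/2)=w<\cos r\). Hence \(u_*>\cos r/w\). Since \(\rho<r/\pi\) and \(r<r_*\) is small, \(\cos r>\rho\), so \(u_*>\rho/w\). Moreover \(w^{m-1}<\rho^m<1\) gives \(w<1\), and then \(w<\rho^{m/(m-1)}<\rho\), so \(\rho/w>1\).

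For \(u_*w<1\) we need \(\max A_{\mathrm{ang}}<1\). I would bound \(A_{\mathrm{ang}}\) from the explicit formula in the proof of Lemma~\ref{lem:angular-latitude-form}, using \(\mathcal R'\ge\cos(t+r^4/4)\,\mathcal R/\sin(t+r^4/4)\) and \(\mathcal R\le\sin t\) from \eqref{eq:unified-base-3-22}; this should give \(A_{\mathrm{ang}}=O(\cot r\cdot\sin r)\le 1\) up to a factor close to one. If that is too lossy, I would shrink \(w\) further relative to \(r\), which the statement permits.

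Given these, \(\alpha_0>1\) is equivalent to \(u_*>\rho/w\). The inequality \(\alpha_0<m\) is equivalent to \(u_*<(\rho/w)^m\). This follows from \(u_*<1/w\) together with \(1/w<\rho^m/w^m\), which is exactly \(w^{m-1}<\rho^m\).

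\emph{Endpoints.} At \(u=1\) we have \(B\equiv1\) and \(v_0=\rho\), which gives \(\rho^2g_{\mathbb S^m}\). At \(u=u_*\) the definition of \(\eta\) gives \(B=A_{\mathrm{ang}}/w\). Also \(v_0(u_*)=\rho\,u_*^{-1/\alpha_0}=\rho\,(w/\rho)=w\) by the definition of \(\alpha_0\). Comparing with \eqref{eq:unified-base-3-12} gives \(g_{\mathrm{ang},m}\).

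\emph{Curvature, the main obstacle.} The plan is to write the curvatures of \(\widetilde g_{u,v}\) explicitly as
\[
 K_{\mathrm{rad}}=\frac{B+\tan\varphi\,B'}{v^2B^3},\qquad
 K_{\mathrm{tan}}=\frac{1-\sin^2\varphi/B^2}{v^2\cos^2\varphi}\quad(m=3),
\]
both understood at the poles by smooth extension using \eqref{eq:pole expansion}. Put \(\lambda=(u-1)/(u_*-1)\) and \(a=A_{\mathrm{ang}}/w\), so that \(B=(1-\lambda)+\lambda a\) is affine in \(\lambda\). Then the numerator \(B+\tan\varphi\,B'\) is affine as well: its value at \(\lambda=0\) is \(1\), and at \(\lambda=1\) it exceeds \(w^2a^3\), since \(\sec_{g_{\mathrm{ang},m}}>1\).

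We must show this numerator exceeds \(v_0(u)^2B^3\). Since \(B\le u\) and \(v_0(u)^2=\rho^2u^{-2/\alpha_0}\), it suffices to compare \(\rho^2 u^{-2/\alpha_0}B^3\) against the affine interpolation of \(1\) and \(w^2a^3\). I expect to use convexity of \(\lambda\mapsto B^3\) together with the exponent bound \(1<\alpha_0<m\), together with \(\rho<1\), to control the intermediate values. The tangential term for \(m=3\) should be easier, since \(B\ge1\) on the relevant set. In the worst case I would prove instead that \(\log(v^2K)\) is a concave function of \(\log u\) and check its two endpoints.

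This interpolation inequality is where I expect the real difficulty. The choice \(v_0(u)=\rho u^{-1/\alpha_0}\) was presumably made precisely so that the required balance holds with room determined by \(\alpha_0<m\).
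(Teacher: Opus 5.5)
Your endpoint identities, the bound $u_*>\rho/w>1$, and the reduction of $1<\alpha_0<m$ to $u_*w<1$ are fine. However, two steps have genuine gaps.

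\emph{First gap: the inequality $u_*w<1$, i.e.\ $\max A_{\mathrm{ang}}<1$.}
\begin{itemize}
\item By \eqref{eq:unified-base-3-26} the mean of $A_{\mathrm{ang}}$ is already $\cos r$. Any upper bound must therefore be sharp to within the factor $1/\cos r=1+O(r^2)$.
\item The natural combination of your inequalities ($\mathcal R/\sin t$ nonincreasing and $\mathcal R'/\mathcal R\ge\cot(t+r^4/4)$) only gives $A_{\mathrm{ang}}\le\cos r\,\tan(t+r^4/4)/\tan t$. This is unbounded as $t\downarrow0$.
\item Shrinking $w$ is not permitted, since $w$ is given in the hypotheses. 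It would not help anyway: for $t\ge\xi+2h$ one has $\mathcal R=\omega u$ and $\cos\varphi=u(t)/u(r)$, so $A_{\mathrm{ang}}$ does not depend on $w$ outside a small polar region, and its maximum cannot be lowered that way.
\item The missing observation is short. Since $A_{\mathrm{ang}}(\pm\pi/2)=w<\cos r\le u_*w$, the maximum is attained at an interior point where $A_{\mathrm{ang}}'=0$. There the meridional curvature of $g_{\mathrm{ang},m}$ is exactly $A_{\mathrm{ang}}^{-2}=(u_*w)^{-2}$, and $\sec_{g_{\mathrm{ang},m}}>1$ gives $u_*w<1$.
\item Your radial formula has a sign error. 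It should read $K_{\mathrm{rad}}=(B-\tan\varphi\,B')/(v^2B^3)$, and this sign decides which case below is the easy one.
\end{itemize}

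\emph{Second gap, more serious: the bound $\sec_{\widetilde g_{u,v_0(u)}}>1$ is not proved, and the convexity route fails.}
\begin{itemize}
\item At the maximum point of $A_{\mathrm{ang}}$ one has $\eta=1$ and $\eta'=0$. Hence $B=u$ and $K_{\mathrm{rad}}=(uv_0(u))^{-2}$, so the real requirement is $uv_0(u)<1$.
\item Replacing $B^3$ by its chord $(1-\lambda)+\lambda u_*^3$ turns this, at $\lambda=1/2$, into roughly $2^{2/\alpha_0}(u_*w)^2<1$. That is false, since $u_*w\ge\cos r$. With your lower bound $w^2a^3$ for the endpoint numerator, the comparison fails by a factor of order $1/w$.
\item What works is the following. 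Since $\eta\le1$, $f_u\le u$. Also $uv_0(u)=\rho u^{1-1/\alpha_0}$ is increasing with terminal value $u_*w<1$; this uses $\alpha_0>1$. The bound $\alpha_0<m$ plays no role here and is needed only later for the neck.
\item Hence wherever $\eta'\tan\varphi<0$ the numerator exceeds $f_u$, and $\sec(E,V)>(v_0f_u)^{-2}\ge(uv_0(u))^{-2}>1$.
\item Where $\eta'\tan\varphi\ge0$, and for $\sec(V,W)$ when $m=3$, one shows that $u\mapsto\log\sec_{\widetilde g_{u,v_0(u)}}$ has no interior minimum. Its $u$-derivative is $(\alpha_0u)^{-1}$ times a bracket which, after splitting by the sign of $\eta$, is decreasing in $u$. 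So it changes sign at most from positive to negative, and the two endpoint values exceed one.
\item Your shortcut $B\ge1$ for $\sec(V,W)$ does not cover $\{\eta<0\}$. That set is nonempty: the explicit formula gives $A_{\mathrm{ang}}/w=1-\tfrac12\cos^2\varphi+\cdots$ near the poles. This is exactly where the monotonicity argument, and positivity of $f_u^2-\sin^2\varphi$, are needed.
\item Finally, $\log(v^2K)$ no longer sees $v_0$, so endpoint checks on it cannot give $K>1$. The quantity to analyze is $\log K$ along $v=v_0(u)$. The monotonicity argument gives quasi-concavity (no interior minimum), which suffices; concavity is not needed.
\end{itemize}
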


\begin{proof}
By \eqref{eq:unified-base-3-26},
$u_*\ge \cos r/w>\rho/w>1$.  Moreover,
$A_{\mathrm{ang}}>0$ implies
$-1/(u_*-1)<\eta\le1$.  Consequently,
$1+(u-1)\eta(\varphi)>0$ whenever $1\le u\le u_*$.
The pole expansion \eqref{eq:pole expansion} in
Lemma~\ref{lem:angular-latitude-form} gives
\begin{equation}\label{eq:expansion of eta}
 \eta=O(\cos^2\varphi),\qquad
 \eta'=O(\cos\varphi),\qquad
 \eta'\tan\varphi=O(1),\qquad
 \eta\tan\varphi=O(\cos\varphi).
\end{equation}

Thus $\widetilde g_{1,v}=v^2g_{\mathbb S^m}$, whereas
$\widetilde g_{u_*,w}=g_{\mathrm{ang},m}$.  Denote
\[
 f_u(\varphi)=1+(u-1)\eta(\varphi).
\]
The preceding pole expansions \eqref{eq:pole expansion} and \eqref{eq:expansion of eta} show that
\(\widetilde g_{u,v}\) is a smooth metric on \(\mathbb S^m\), for
every \((u,v)\in[1,u_*]\times(0,\infty)\), and depending
smoothly in \((u,v)\).
Let $E=(vf_u)^{-1}\partial_\varphi$, and $V,W$ be orthonormal
vectors tangent to the $\mathbb S^k$ factor.  The warped-product
curvature formulas give
\begin{align}
 \sec_{\widetilde g_{u,v}}(E,V)
 &=\frac1{v^2}\left(
      \frac1{f_u^2}
      -\frac{(u-1)\eta'\tan\varphi}{f_u^3}\right),                \label{eq:unified-base-3-29}\\
 \sec_{\widetilde g_{u,v}}(V,W)
 &=\frac1{v^2}\left(
      \frac1{\cos^2\varphi}
      -\frac{\tan^2\varphi}{f_u^2}\right),                       \label{eq:unified-base-3-30}
\end{align}
where \eqref{eq:unified-base-3-30} occurs only for $m=3$.  At the
poles, the expressions \eqref{eq:unified-base-3-29} and \eqref{eq:unified-base-3-30} are understood by their smooth limits.

If $\eta(\varphi)=1$, then
$A_{\mathrm{ang}}(\varphi)$ is maximal.  Since
$A_{\mathrm{ang}}(\pm\pi/2)=w<u_*w$, hence such a point is interior and
$\eta'(\varphi)=0$.  Formula \eqref{eq:unified-base-3-29} at
$(u,v)=(u_*,w)$ gives the curvature $\sec_{\widetilde g_{u,v}}(E,V)=(u_*w)^{-2}$, which should greater
than one by Lemma~\ref{lem:angular-sphere-geometry}.  Hence
\begin{equation}
u_*w<1.                              \label{eq:unified-base-3-31}
\end{equation}
The inequality $u_*>\rho/w$ gives $\alpha_0>1$, while
\eqref{eq:unified-base-3-10} and \eqref{eq:unified-base-3-31} give
\[
 \log u_*<-\log w<m(\log\rho-\log w).
\]
Thus
\begin{equation}
1<\alpha_0<m,                        \label{eq:unified-base-3-33}
\end{equation}
and $v_0(1)=\rho$, $v_0(u_*)=w$.  We claim that every sectional
curvature of $\widetilde g_{u,v_0(u)}$ is greater than one.

First observe that the numerator
\[
 1+(u-1)\bigl(\eta-\eta'\tan\varphi\bigr)
\]
in \eqref{eq:unified-base-3-29} is positive at $u=1$ and at $u=u_*$:
at the first endpoint it equals one, and at the second its positivity
follows from $\sec_{g_{\mathrm{ang},m}}>1$.  Since it is affine in
$u$, it is positive throughout $[1,u_*]$.  When $m=3$, the numerator
$f_u^2-\sin^2\varphi$ in \eqref{eq:unified-base-3-30} is also
positive.  If $\eta<0$, it decreases with $u$ and is positive at
$u=u_*$; if $\eta\ge0$, then $f_u\ge1$.

Differentiating \eqref{eq:unified-base-3-29} and \eqref{eq:unified-base-3-30} with respect to $u$ gives
\begin{align}
 \frac{\dd}{\dd u}\log\sec_{\widetilde g_{u,v_0(u)}}(E,V)
 &=\frac1{\alpha_0u}\left(
  2+\frac{\alpha_0u(\eta-\eta'\tan\varphi)}
           {1+(u-1)(\eta-\eta'\tan\varphi)}
   -\frac{3\alpha_0u\eta}{f_u}\right),                           \label{eq:unified-base-3-34}\\
 \frac{\dd}{\dd u}\log\sec_{\widetilde g_{u,v_0(u)}}(V,W)
 &=\frac2{\alpha_0u}\left(
  1+\frac{\alpha_0u\eta\sin^2\varphi}
          {f_u(f_u^2-\sin^2\varphi)}\right).                     \label{eq:unified-base-3-35}
\end{align}
For \eqref{eq:unified-base-3-35}, if $\eta\ge0$ the derivative is
nonnegative.  If $\eta<0$, its bracket is decreasing because
\[
 \frac{\dd}{\dd u}\frac{u}{f_u(f_u^2-\sin^2\varphi)}>0.
\]
Thus its sign can change only from positive to negative, so the
curvature $\sec_{\widetilde g_{u,v_0(u)}}(V,W)$ has no interior minimum.

For \eqref{eq:unified-base-3-34}, if
$\eta'\tan\varphi<0$, then $f_u\le u$ and
\begin{equation}
\sec_{\widetilde g_{u,v_0(u)}}(E,V)
 >\frac1{v_0(u)^2f_u^2}
 \ge\frac1{u^2v_0(u)^2}>1.                                      \label{eq:unified-base-3-36}
\end{equation}
Indeed, $uv_0(u)$ is increasing because $\alpha_0>1$, and its terminal
value is $u_*w<1$.  It remains to consider the case when
$\eta'\tan\varphi\ge0$.  If $\eta\ge0$, the right side of
\eqref{eq:unified-base-3-34} equals
\begin{equation}
\frac1{\alpha_0uf_u}\left(
 2(1-\eta)+2\eta u(1-\alpha_0)
  -\frac{\alpha_0u\eta'\tan\varphi}
         {1+(u-1)(\eta-\eta'\tan\varphi)}\right).                \label{eq:unified-base-3-37}
\end{equation}
The bracket is decreasing because
\[
 \frac{\dd}{\dd u}\left(
   \frac u{1+(u-1)(\eta-\eta'\tan\varphi)}\right)
 =\frac{1-\eta+\eta'\tan\varphi}
        {[1+(u-1)(\eta-\eta'\tan\varphi)]^2}\ge0.
\]
If $\eta<0$, \eqref{eq:unified-base-3-34} becomes
\begin{equation}
\frac1{\alpha_0u}\left(
 2-\frac{\alpha_0u}{f_u}\left(
  2\eta+\frac{\eta'\tan\varphi}
    {1+(u-1)(\eta-\eta'\tan\varphi)}\right)\right).             \label{eq:unified-base-3-38}
\end{equation}
At every zero of its bracket the last parenthesis is positive.  Also,
\[
 \left(\frac u{f_u}\right)'=\frac{1-\eta}{f_u^2}>0
\]
and
\[
 \frac{\dd}{\dd u}\left(
   \frac{\eta'\tan\varphi}
        {1+(u-1)(\eta-\eta'\tan\varphi)}\right)
 =\frac{\eta'\tan\varphi\,(\eta'\tan\varphi-\eta)}
        {[1+(u-1)(\eta-\eta'\tan\varphi)]^2}\ge0.
\]
Hence this bracket can cross zero only from positive to negative, so
the curvature $\sec_{\widetilde g_{u,v_0(u)}}(E,V)$ also has no interior minimum.  At  endpoints, both curvatures are greater than one: at $u=1$ the metric is the round
sphere of radius $\rho<1$, and at $u=u_*$ it is
$g_{\mathrm{ang},m}$.  This proves the claim.
\end{proof}

We next define the neck metric on $I\times \mathbb S^m$ and calculate its curvature.
\begin{lemma}\label{lem:variable-angular-ricci}
Let \(u_*\), \(\eta\), \(f_u=1+(u-1)\eta\), and
\(\widetilde g_{u,v}\) be the one introduced in
Lemma~\ref{lem:angular-slice-interpolation}.  Let
\(I\subset(0,\infty)\) be an interval and the functions
\[
 u:I\longrightarrow[1,u_*],\qquad v:I\longrightarrow(0,\infty)
\]
be smooth.  On \(I\times\mathbb S^m\), define the neck metric
\begin{equation}
\widehat g_{\mathrm{neck},m}=\dd t^2+t^2v(t)^2\left(
 f_{u(t)}(\varphi)^2\dd\varphi^2
 +\cos^2\varphi\,g_{\mathbb S^k}\right).                         \label{eq:unified-base-3-39}
\end{equation}
Then the following identities for sectional curvature and Ricci curvature
\eqref{eq:unified-base-3-40}--\eqref{eq:unified-base-3-47} hold.
All off-diagonal Ricci components vanish except possibly the
\((\partial_t,E)\)-component in
\eqref{eq:unified-base-3-45}.
\end{lemma}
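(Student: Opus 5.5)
The metric \eqref{eq:unified-base-3-39} has the form $\dd t^2+h(t)$ with
\[
 h(t)=t^2v(t)^2\bigl(f_{u(t)}(\varphi)^2\dd\varphi^2+\cos^2\varphi\,g_{\mathbb S^k}\bigr)
 =t^2\,\widetilde g_{u(t),v(t)}.
\]
I will therefore apply Lemma~\ref{lem:gaussian-ricci-unified} directly. I will work in the orthonormal frame
\[
 \partial_t,\qquad E=(tvf_u)^{-1}\partial_\varphi,\qquad V_i=(tv\cos\varphi)^{-1}U_i ,
\]
where $U_i$ is a $g_{\mathbb S^k}$-orthonormal frame. The slice is a doubly warped (rotationally symmetric) metric, so the frame $\{E,V_i\}$ diagonalizes both $h$ and $\dot h$. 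The second fundamental form $L=\tfrac12\dot h$ is then diagonal, with
\[
 L(E,E)=\frac1t+\frac{v'}{v}+\frac{u'\eta}{f_u},\qquad
 L(V_i,V_i)=\frac1t+\frac{v'}{v}.
\]
The mixed entries of $L$ vanish.

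The key steps, in order, are as follows.

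First, the intrinsic slice curvatures come from \eqref{eq:unified-base-3-29}--\eqref{eq:unified-base-3-30} after rescaling by $t^{-2}$. The tangential Ricci tensor $\Ric_h$ is diagonal in this frame. Its $E$-entry is $k$ times the $(E,V)$-sectional curvature. Its $V$-entry is the $(E,V)$-sectional curvature plus $(k-1)$ times the $(V,W)$-sectional curvature, and the latter term is present only for $m=3$.

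Second, I will compute $\ddot h$. Together with $L$, this gives the normal component $\Ric(\partial_t,\partial_t)=-\dot{\tr}L-|L|^2$ and the tangential components $\Ric_h-\dot L+2L^2-(\tr L)L$ in each diagonal direction. All of these are diagonal. The $u'$- and $u''$-terms enter only the $E$-row, through $\partial_t\log f_{u(t)}=u'\eta/f_u$.

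Third, the mixed component is $(\divsymb_hL)-\dd(\tr_hL)$. Because $L$ is diagonal with entries depending only on $(t,\varphi)$, its only possible nonzero pairing is with $E$. The $\mathbb S^k$-directions vanish by $O(k+1)$-symmetry, which also kills the $(E,V_i)$ and $(V_i,V_j)$ off-diagonal entries. Computing the $E$-pairing with the warped-product divergence, I expect it to equal $k\tan\varphi\,(tvf_u)^{-1}\,u'\eta/f_u$, up to sign. This is consistent with Remark~\ref{rem:unified-differences}, which says the mixed component carries the factor $k$. This expression is \eqref{eq:unified-base-3-45}.

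Finally, I will record the ambient sectional curvatures of the planes $(\partial_t,E)$, $(\partial_t,V)$, $(E,V)$ and $(V,W)$ by the Gauss equation and the Riccati identity $\Rm(\partial_t,X,X,\partial_t)=-\dot L+L^2$. This should give the list \eqref{eq:unified-base-3-40}--\eqref{eq:unified-base-3-47}.

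The main obstacle is bookkeeping rather than any conceptual difficulty. The delicate points are keeping the $u'$-dependence through $f_u$ and the $t$-dependence through $tv$ correctly separated, and checking smoothness at the poles $\varphi=\pm\pi/2$, where $\tan\varphi$ blows up. For the pole check I will use \eqref{eq:expansion of eta}: $\eta\tan\varphi=O(\cos\varphi)$ and $\eta'\tan\varphi=O(1)$ imply that every coefficient, including the mixed term, extends continuously. Orthogonality of the remaining off-diagonal entries follows from the reflection $\varphi\mapsto$ fixed and the $O(k+1)$-invariance.
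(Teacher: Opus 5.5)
Your proposal is correct and follows essentially the paper's route: the principal curvatures of the $t$-slices, the Gauss equation for the tangential planes, the normal-variation (Riccati) formula for the radial planes, contracted Codazzi for the mixed term, and tracing to get \eqref{eq:unified-base-3-46}--\eqref{eq:unified-base-3-47}. Two details are left loose. First, the sign you left open is fixed because $\dd(\tr_hL)(E)=E\bigl(\II_t(E,E)\bigr)$ cancels the same term in $(\divsymb_hL)(E)$, leaving $k\bigl(\II_t(E,E)-\II_t(V,V)\bigr)E(\log\cos\varphi)$ with $E(\log\cos\varphi)=-\tan\varphi/(tvf_u)$, which gives exactly the minus sign in \eqref{eq:unified-base-3-45}. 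Second, $u'$ (though not $u''$) also enters the $V$-row, through $\tr_hL$, i.e.\ through the term $-\II_t(E,E)\II_t(V,V)$ in $\sec(E,V)$.
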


\begin{proof}
Let $E=(tvf_u)^{-1}\partial_\varphi$, and let $V,W$ be orthonormal
vectors tangent to the $\mathbb S^k$ factor.  Here and throughout this
calculation, we write $u=u(t)$, $v=v(t)$, and
$f_u=f_{u(t)}(\varphi)$ for brevity.
The principal curvatures of the $t$-slices with respect to
$\partial_t$ are
\begin{equation}
\II_t(E,E)=\frac1t+\frac{v'}v+\frac{u'\eta}{f_u},
 \qquad
 \II_t(V,V)=\frac1t+\frac{v'}v.                                 \label{eq:unified-base-3-40}
\end{equation}
Since the metric induced on the $t$-slice is
$t^2\widetilde g_{u(t),v(t)}$, the Gauss equation then gives
\begin{align}
 \sec_{\widehat g_{\mathrm{neck},m}}(E,V)
   &=\frac1{t^2}\sec_{\widetilde g_{u,v}}(E,V)
           -\II_t(E,E)\II_t(V,V),                                      \label{eq:unified-base-3-41}\\
 \sec_{\widehat g_{\mathrm{neck},m}}(V,W)
   &=\frac1{t^2}\sec_{\widetilde g_{u,v}}(V,W)
           -\II_t(V,V)^2.                                               \label{eq:unified-base-3-42}
\end{align}
The second formula \eqref{eq:unified-base-3-42} occurs only for $m=3$.  The normal variation of the
slice metric gives
\begin{align}
 \sec_{\widehat g_{\mathrm{neck},m}}(\partial_t,V)
   &=-\frac{(tv)''}{tv},                                                 \label{eq:unified-base-3-43}\\
 \sec_{\widehat g_{\mathrm{neck},m}}(\partial_t,E)
   &=-\frac{\partial_t^2(tv f_u)}{tv f_u}\notag\\
   &=-\frac{(tv)''}{tv}
      -2\left(\frac1t+\frac{v'}v\right)\frac{u'\eta}{f_u}
      -\frac{u''\eta}{f_u}.                                           \label{eq:unified-base-3-44}
\end{align}
The only possibly nonzero mixed component of the Ricci tensor is
\begin{equation}
\Ric_{\widehat g_{\mathrm{neck},m}}(\partial_t,E)
 =-\frac{k\eta(\varphi)u'(t)\tan\varphi}
         {tv(t)f_{u(t)}(\varphi)^2}.                              \label{eq:unified-base-3-45}
\end{equation}
Smoothness at the poles gives
$\eta(\varphi)\tan\varphi=O(\cos\varphi)$, so this expression
extends smoothly to $\varphi=\pm\pi/2$.  Tracing the sectional
curvatures yields
\begin{equation}
\begin{aligned}
 \Ric_{\widehat g_{\mathrm{neck},m}}(\partial_t,\partial_t)
   &=\sec_{\widehat g_{\mathrm{neck},m}}(\partial_t,E)
     +k\sec_{\widehat g_{\mathrm{neck},m}}(\partial_t,V),\\
 \Ric_{\widehat g_{\mathrm{neck},m}}(E,E)
   &=\sec_{\widehat g_{\mathrm{neck},m}}(\partial_t,E)
     +k\sec_{\widehat g_{\mathrm{neck},m}}(E,V),\\
 \Ric_{\widehat g_{\mathrm{neck},m}}(V,V)
   &=\sec_{\widehat g_{\mathrm{neck},m}}(\partial_t,V)
     +\sec_{\widehat g_{\mathrm{neck},m}}(E,V)
     +(k-1)\sec_{\widehat g_{\mathrm{neck},m}}(V,W).
 \end{aligned}                                                     \label{eq:unified-base-3-46}
\end{equation}
The last identity is absent when $k=1$, and all off-diagonal components
other than \eqref{eq:unified-base-3-45} vanish.  In particular,
\eqref{eq:unified-base-3-43}--\eqref{eq:unified-base-3-44} imply
\begin{equation}
\Ric_{\widehat g_{\mathrm{neck},m}}(\partial_t,\partial_t)
 =-m\frac{(tv)''}{tv}
   -\frac{\eta}{f_u}\left(
       u''+2\left(\frac1t+\frac{v'}v\right)u'\right).           \label{eq:unified-base-3-47}
\end{equation}
\end{proof}
We then construct the family of neck metric $\widehat g_{\mathrm{neck},m}$ with positive Ricci curvature.
\begin{lemma}
\label{lem:slow-logarithmic-neck}
Under the hypotheses of
Lemma~\ref{lem:angular-slice-interpolation}, there is a family of metric,
parametrized by all sufficiently small \(\sigma>0\), of numbers
\(t_0<t_1(\sigma)\) and smooth functions
\[
 u_\sigma:[t_0,t_1(\sigma)]\longrightarrow[1,u_*],\qquad
 v_\sigma:[t_0,t_1(\sigma)]\longrightarrow(0,\infty)
\]
such that
\[
 u_\sigma(t_0)=1,\quad v_\sigma(t_0)=\rho,\qquad
 u_\sigma(t_1)=u_*,\quad v_\sigma(t_1)>w,
\]
\[
 t_1(\sigma)\longrightarrow\infty\quad(\sigma\downarrow0),
\qquad
\Ric_{\widehat g_{\mathrm{neck},m}}>0 ,
\]
where \(\widehat g_{\mathrm{neck},m}\) is defined by
\eqref{eq:unified-base-3-39} with
\(u=u_\sigma\) and \(v=v_\sigma\).
\end{lemma}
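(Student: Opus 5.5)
Construct the profiles on a logarithmic scale, following Perelman's slow-neck idea. Set $x=\sigma\log t$, fix $t_0>0$ (say $t_0=1$), and take
\[
 u_\sigma(t)=U(\sigma\log(t/t_0)),\qquad v_\sigma(t)=v_0\bigl(u_\sigma(t)\bigr)\,\mu\bigl(\sigma\log(t/t_0)\bigr),
\]
where $v_0(u)=\rho u^{-1/\alpha_0}$ is the curve of Lemma~\ref{lem:angular-slice-interpolation}. Here $U:[0,1]\to[1,u_*]$ is a fixed smooth nondecreasing function with $U(0)=1$, $U(1)=u_*$, constant near both endpoints. The factor $\mu$ is a fixed smooth function with $\mu(0)=1$ and $\mu$ slightly larger than $1$ at $x=1$. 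Its only role is to give $v_\sigma(t_1)>w$ while keeping $\mu-1$ small enough that $\sec_{\widetilde g_{u,v}}>1$ survives; this is an open condition on the compact set of slice metrics in Lemma~\ref{lem:angular-slice-interpolation}. Then $t_1(\sigma)=t_0e^{1/\sigma}\to\infty$. Every $t$-derivative of $u_\sigma$ or $\log v_\sigma$ is then $O(\sigma/t)$, and every second derivative is $O(\sigma/t^2)$.

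Next, substitute these bounds into the formulas of Lemma~\ref{lem:variable-angular-ricci}, expanding to first order in $\sigma$. By \eqref{eq:unified-base-3-40}, $\II_t(E,E)$ and $\II_t(V,V)$ equal $1/t+O(\sigma/t)$. So \eqref{eq:unified-base-3-41}--\eqref{eq:unified-base-3-42} give
\[
 \sec(E,V),\ \sec(V,W)\ \ge\ \frac{1}{t^2}\bigl(\sec_{\widetilde g_{u,v}}-1\bigr)-\frac{C\sigma}{t^2}\ \ge\ \frac{c}{t^2},
\]
using the uniform gap $\sec_{\widetilde g_{u,v}}\ge1+2c$. The normal terms need more care, because $(tv)''$ is of size $\sigma/t^2$ with a sign fixed by $v$. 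Write $v=e^{\psi(\sigma\log t)}$. Then
\[
 -\frac{(tv)''}{tv}=-\frac{\sigma\psi'+\sigma^2(\psi''+\psi'^2)}{t^2}.
\]
This is positive at order $\sigma$ provided $\psi'<0$, that is, $v$ is strictly decreasing in $\log t$. This holds along $v_0(u)$ once $U'>0$. To cover the constant stretches of $U$, I would instead make $U$ strictly increasing on the open interval, or add a small strictly decreasing factor to $\mu$, and put the bump of $\mu$ near the end into the allowed room.

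The main obstacle is $\Ric(\partial_t,\partial_t)$ in \eqref{eq:unified-base-3-47} together with the $\eta$-terms in \eqref{eq:unified-base-3-44}. The leading order-$\sigma/t^2$ coefficient is
\[
 -m\,\sigma\psi'+\frac{\eta}{f_u}\,\sigma U',
\]
coming from $u''+2u'/t\approx\sigma U'/t^2$, and it can have either sign where $\eta<0$. Along $v=v_0(u)$ one has $\psi'=-U'/(\alpha_0 U)$, so the coefficient becomes
\[
 \frac{\sigma U'}{u}\Bigl(\frac{m}{\alpha_0}-\frac{u\,\eta}{f_u}\Bigr).
\]
This is exactly the term Remark~\ref{rem:unified-differences} points to. Since $u\eta/f_u\le1$ (check $\eta\le1$, $f_u\ge$ the appropriate bound) and $\alpha_0<m$, the coefficient is positive. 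The same leading computation gives $\Ric(E,E)\approx\sigma(\ldots)/t^2+k\sec(E,V)>0$ and $\Ric(V,V)>0$.

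Finally, handle the mixed term \eqref{eq:unified-base-3-45}, which is $O(\sigma/t^2)$, by the block/Schur complement criterion. The product of the diagonal lower bounds is of order $\sigma\cdot1/t^4$, while the square of the mixed term is $O(\sigma^2/t^4)$, so positivity holds for small $\sigma$. This argument needs $u'\,\eta/(\cdots)$ and its $U'$ factor to appear in both the normal entry and the mixed entry, so that zeros of $U'$ cause no trouble; this is another reason to take $U'>0$ strictly inside the interval. At the pole, the smooth extension $\eta\tan\varphi=O(\cos\varphi)$ keeps all constants uniform.
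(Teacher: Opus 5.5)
Your architecture is the paper's: a slow logarithmic interpolation along $v\approx\rho u^{-1/\alpha_0}$, the Gauss equation for the tangential block, positivity of $\Ric(\partial_t,\partial_t)$ from $\alpha_0<m$ and $u\eta/f_u\le1$, and a Schur complement for the single mixed entry. Using $\sigma$ itself as the slowness, with $t_1=t_0e^{1/\sigma}$ instead of $\Gamma\sim L/(t(\log t)^2)$, would be a harmless variant.

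The gap is the overshoot factor $\mu$, which does not enter only through $\mu-1$. Because $\mu$ is a function of $x=\sigma\log(t/t_0)$, $\mu'$ enters $(tv)''/(tv)$ at order $\sigma/t^2$, the same order as your positive term. The true leading coefficient of $t^2\Ric(\partial_t,\partial_t)$ is therefore
\[
 \frac{\sigma U'}{U}\Bigl(\frac{m}{\alpha_0}-\frac{U\eta}{f_U}\Bigr)-m\sigma\frac{\mu'}{\mu}
\]
(your earlier display also has the wrong sign on the $\eta$-term). Since $U\eta/f_U=1$ wherever $\eta=1$, positivity requires $(\log\mu)'<\bigl(\frac{1}{\alpha_0}-\frac{1}{m}\bigr)(\log U)'$ pointwise. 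In particular $\mu$ may not increase where $U$ is flat: a bump of $\mu$ placed in a constant stretch of $U$ near the end makes $\Ric(\partial_t,\partial_t)<0$ there, and no $C^0$-smallness of $\mu-1$ prevents this. The slack in $\alpha_0<m$ has to pay both for positivity and for overshooting $w$. The clean choice is $\mu=U^{1/\alpha_0-1/\alpha}$ with $\alpha_0<\alpha<m$, i.e.\ $v=\rho u^{-1/\alpha}$. This gives the coefficient $\frac{\sigma U'}{U}\bigl(\frac{m}{\alpha}-\frac{U\eta}{f_U}\bigr)$ and $v(t_1)=\rho u_*^{-1/\alpha}>w$, and it is precisely the paper's choice $\alpha=\frac{1+\sigma}{1-\varepsilon_0}\alpha_0<m$ in \eqref{eq:unified-base-3-48}.

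Your leading-order bookkeeping also fails where $U'$ tends to zero. With $v=\rho u^{-1/\alpha}$ one has exactly
\[
 t^2\Ric(\partial_t,\partial_t)
 =\frac{\sigma U'+\sigma^2U''}{U}\Bigl(\frac{m}{\alpha}-\frac{U\eta}{f_U}\Bigr)
 -\frac{\sigma^2U'^2}{U^2}\Bigl(\frac{m}{\alpha}+\frac{m}{\alpha^2}-\frac{2}{\alpha}\frac{U\eta}{f_U}\Bigr),
\]
which is the analogue of \eqref{eq:unified-base-3-61}; the last bracket is positive because $U\eta/f_U\le1$ and $m\ge2$. Since $\sigma U'+\sigma^2U''=\sigma^2e^{-x/\sigma}\bigl(e^{x/\sigma}U'\bigr)'$, this expression is negative somewhere before $U'$ reaches zero. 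So taking $U$ strictly increasing only on the open interval with $U'(1)=0$, or letting $U$ become constant near $x=1$, breaks $\Ric>0$ near $x=1$. There the ``$O(\sigma^2)$'' terms are not small relative to $\sigma U'$.

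There are two ways to fix this:
\begin{enumerate}
\item Keep $U'\ge c>0$ up to $x=1$. The lemma does not ask for $u'(t_1)=0$; the paper likewise stops at $t_1$ with $\Gamma(t_1)>0$, and its bound \eqref{eq:unified-base-3-52} with $\chi'(0)>0$ handles the start.
\item Multiply $v$ by $e^{-\delta x}$ with $\delta>0$ fixed independently of $\sigma$ and $\delta<\bigl(\frac{1}{\alpha_0}-\frac{1}{\alpha}\bigr)\log u_*$. This adds a uniform $m\sigma\delta/t^2$ to $\Ric(\partial_t,\partial_t)$, absorbs every $O(\sigma^2)$ term, and preserves both $v(t_1)>w$ and the slice curvature bound.
\end{enumerate}
With these two repairs your Schur-complement argument goes through uniformly.
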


\begin{proof}
We write \(u=u_\sigma\) and \(v=v_\sigma\).  We now choose the
parameters and these two functions $u$, $v$, so that: the corresponding path
of slice metrics must remain positively curved, and the coefficient of
the leading term in the radial Ricci curvature must remain positive.
By compactness, there exists $\mu>0$, such that all sectional curvatures of
$\widetilde g_{u,v_0(u)}$, $1\le u\le u_*$, are at least
$1+2\mu$.  By Smooth dependence on the parameters and compactness, there exists
\(\delta_\alpha>0\), such that
\[
 \sec_{\widetilde g_{u,\rho u^{-1/a}}}\geq1+\mu
 \quad\text{for}\quad
 1\leq u\leq u_*,\quad
 \alpha_0\leq a\leq\alpha_0+\delta_\alpha .
\]
Choose $0<\varepsilon_0<1$ so that
\[
 \frac{\alpha_0}{1-\varepsilon_0}
 <\min\{m,\alpha_0+\delta_\alpha\}.
\]
Then choose $\sigma_0>0$ so that, for every
$0<\sigma\le\sigma_0$,
\begin{equation}
\alpha=\frac{1+\sigma}{1-\varepsilon_0}\alpha_0
 <\min\{m,\alpha_0+\delta_\alpha\},
 \qquad
 v_\alpha(u)=\rho u^{-1/\alpha},\qquad 1\le u\le u_*,           \label{eq:unified-base-3-48}
\end{equation}
and the sectional curvatures of
$\widetilde g_{u,v_\alpha(u)}$ are at least $1+\mu$.  The parameters
we chosen so far satisfy the following:
\begin{equation}
1<u_*<\frac1w,\qquad
 1<\alpha_0<\alpha<m,\qquad
 0<\varepsilon_0<1,\qquad 0<\sigma\le\sigma_0,\qquad \mu>0.      \label{eq:unified-base-3-49}
\end{equation}

Fix a cut-off function
\[
 \chi\in C^\infty([0,\infty),[0,1]),\qquad
 \chi(0)=0,\quad \chi'(0)>0,\quad \chi'\geq0,
\]
\[
 \chi(x)>0\ \text{ for } x>0,\qquad \chi(x)=1\ \text{ for } x\geq1.
\]
Choose \(t_0\) large, denote \(L=\log(2t_0)\), and define
\begin{equation}
\Gamma(t)=\frac{L}{t(\log t)^2}
 \chi\!\left(\frac{t-t_0}{t_0}\right),
 \qquad t\geq t_0.                                                \label{eq:unified-base-3-50}
\end{equation}
Set \(q(t)=L/[t(\log t)^2]\), then
\[
 \Gamma'=q'\chi+\frac q{t_0}\chi',\qquad
 \Gamma'+\frac{2\Gamma}{t}
 =q\left[
 \frac{\chi'}{t_0}
 +\chi\left(\frac1t-\frac2{t\log t}\right)\right].
\]
The continuous function
\[
 x\longmapsto (1+x)\chi'(x)+\frac12\chi(x)
\]
has a positive minimum on \([0,1]\).  Since \(\chi=1\) on
\([1,\infty)\), there are constants \(c,C>0\), independent of the constant
\(t_0\), such that
\begin{align}
 0\le\Gamma&\le\frac{CL}{t(\log t)^2}, \qquad
 |\Gamma'|\le\frac{CL}{t^2(\log t)^2},                           \label{eq:unified-base-3-51}\\
\Gamma'+\frac{2\Gamma}{t}
 &\ge\frac{cL}{t^2(\log t)^2}.                                   \label{eq:unified-base-3-52}
\end{align}
Moreover, since \(\chi=1\) for \(t\geq2t_0\),
\begin{equation}\label{eq:I t0 bound}
 1=\int_{2t_0}^{\infty}\frac{L}{t(\log t)^2}\,\dd t
 \leq I_{t_0}:=\int_{t_0}^{\infty}\Gamma(t)\,\dd t
 \leq\int_{t_0}^{\infty}\frac{L}{t(\log t)^2}\,\dd t
 =\frac{L}{\log t_0}\leq2
\end{equation}
after increasing \(t_0\).
Define
\begin{equation}
\beta=\frac{(1-\varepsilon_0)(\log\rho-\log w)}
              {I_{t_0}},                                         \label{eq:unified-base-3-53}
\end{equation}
and solve
\begin{equation}
\frac{u'}u=\alpha\beta\Gamma,\qquad
 \frac{v'}v=-\beta\Gamma,\qquad u(t_0)=1,\quad v(t_0)=\rho.    \label{eq:unified-base-3-54}
\end{equation}
The two differential equations force $v=\rho u^{-1/\alpha}$, so the
slice metrics follow the path fixed in \eqref{eq:unified-base-3-48}.
The definitions of \(\alpha\) and \(\beta\) give
\[
 \alpha\beta I_{t_0}=(1+\sigma)\log u_*.
\]
Since \(\Gamma>0\) on \((t_0,\infty)\), there is a unique \(t_1\)
such that \(u(t_1)=u_*\), and
\begin{equation}\label{eq:I t0 identity}
 \int_{t_1}^{\infty}\Gamma(t)\,\dd t
 =\frac{\sigma}{1+\sigma}I_{t_0}.
\end{equation}
For all sufficiently small \(\sigma\), the right-hand side of \eqref{eq:I t0 identity} is less than \(1\), whereas
\(\int_{2t_0}^{\infty}\Gamma\,\dd t=1\), hence \(t_1>2t_0\).
Equation \eqref{eq:unified-base-3-50} then yields
\[
 \frac{L}{\log t_1}
 =\frac{\sigma}{1+\sigma}I_{t_0},
\]
and therefore
\begin{equation}
v(t_1)=\rho u_*^{-1/\alpha}>w,\qquad
                         t_1\longrightarrow\infty
                         \quad(\sigma\downarrow0).                \label{eq:unified-base-3-55}
\end{equation}
We now restrict the metric \eqref{eq:unified-base-3-39} to
$[t_0,t_1]\times\mathbb S^m$.

The bounds \(1\leq I_{t_0}\leq2\) from \eqref{eq:I t0 bound} show that
\[
 0<c_\beta\leq\beta\leq C_\beta<\infty .
\]
Here \(c,C,c_i,C_i,c_\beta,C_\beta\) may depend on the fixed parameters
\[
 (m,r,\rho,w,\varepsilon_0,\chi),
\]
but are independent of sufficiently large \(t_0\), of
\(0<\sigma\leq\sigma_0\), and of \(t\in[t_0,t_1]\).
Equations \eqref{eq:unified-base-3-51}--\eqref{eq:unified-base-3-53} give
\begin{equation}
\beta\Gamma\le\frac{CL}{t(\log t)^2},\qquad
 \beta\left|\Gamma'+\frac{2\Gamma}{t}\right|
 \le\frac{CL}{t^2(\log t)^2},\qquad
 \beta^2\Gamma^2\le\frac{CL^2}{t^2(\log t)^4},                \label{eq:unified-base-3-56}
\end{equation}
and
\begin{equation}
\beta\left(\Gamma'+\frac{2\Gamma}{t}\right)
 \ge\frac{cL}{t^2(\log t)^2}.                                   \label{eq:unified-base-3-57}
\end{equation}
Substituting \eqref{eq:unified-base-3-54} into
\eqref{eq:unified-base-3-40} gives
\begin{equation}
\II_t(E,E)=\frac1t-\beta\Gamma
       +\frac{\alpha\beta\Gamma u\eta}{f_u},
 \qquad
 \II_t(V,V)=\frac1t-\beta\Gamma.                              \label{eq:unified-base-3-58}
\end{equation}
It follows from \eqref{eq:unified-base-3-56} and the boundedness of
$u\eta/f_u$ that every product of two slice principal curvatures is
$t^{-2}+O(L/[t^2(\log t)^2])$.  The sectional curvatures of
$\widetilde g_{u(t),v(t)}$ are at least $1+\mu$; hence after increasing $t_0$, the Gauss
equations \eqref{eq:unified-base-3-41}--\eqref{eq:unified-base-3-42}
give, 
\begin{equation}
\sec_{\widehat g_{\mathrm{neck},m}}(E,V)\ge\frac{c_0}{t^2},\qquad
 \sec_{\widehat g_{\mathrm{neck},m}}(V,W)\ge\frac{c_0}{t^2}\quad(m=3).      \label{eq:unified-base-3-59}
\end{equation}
whereas
\begin{equation}
|\sec_{\widehat g_{\mathrm{neck},m}}(\partial_t,E)|
 +|\sec_{\widehat g_{\mathrm{neck},m}}(\partial_t,V)|
 +|\Ric_{\widehat g_{\mathrm{neck},m}}(\partial_t,E)|
 \le\frac{C_0L}{t^2(\log t)^2}.                                 \label{eq:unified-base-3-60}
\end{equation}
Here the last estimate uses the boundedness of
$\eta\tan\varphi/f_u^2$.  Then the identity 
\eqref{eq:unified-base-3-46}, together with
\eqref{eq:unified-base-3-59}--\eqref{eq:unified-base-3-60}, imply that
$\Ric_{\widehat g_{\mathrm{neck},m}}(E,E)$ and
$\Ric_{\widehat g_{\mathrm{neck},m}}(V,V)$ are at least
$c_1/t^2$.

Finally, substituting \eqref{eq:unified-base-3-54} into \eqref{eq:unified-base-3-47} gives
\begin{equation}
\begin{aligned}
 \Ric_{\widehat g_{\mathrm{neck},m}}(\partial_t,\partial_t)
 &=\beta\left(m-\alpha\frac{u\eta}{f_u}\right)
          \left(\Gamma'+\frac{2\Gamma}{t}\right)\\
 &\quad-\beta^2\Gamma^2
        \left(m+\alpha(\alpha-2)\frac{u\eta}{f_u}\right).
\end{aligned}                                                     \label{eq:unified-base-3-61}
\end{equation}
If $\eta<0$, then $u\eta/f_u<0$; if $\eta\ge0$, then
$0\le u\eta/f_u\le1$.  Since $\alpha<m$, the first coefficient in
\eqref{eq:unified-base-3-61} has a positive uniform lower bound, while the second
is uniformly bounded.  Consequently,
\begin{equation}
\Ric_{\widehat g_{\mathrm{neck},m}}(\partial_t,\partial_t)
 \ge\frac{c_3L}{t^2(\log t)^2}
     -\frac{C_3L^2}{t^2(\log t)^4}>0.                             \label{eq:unified-base-3-62}
\end{equation}
Moreover,
\begin{equation}
\Ric_{\widehat g_{\mathrm{neck},m}}(\partial_t,\partial_t)
   \Ric_{\widehat g_{\mathrm{neck},m}}(E,E)
 -\Ric_{\widehat g_{\mathrm{neck},m}}(\partial_t,E)^2
 \ge\frac{c_4L}{t^4(\log t)^2}
     -\frac{C_4L^2}{t^4(\log t)^4}>0.                             \label{eq:unified-base-3-63}
\end{equation}
Thus the $2\times2$ Ricci block on
$\vspan\{\partial_t,E\}$ has positive first diagonal
entry and positive determinant, hence is positive definite.  The
vertical diagonal entry is positive and has no mixed components with
this block.  Therefore
$\Ric_{\widehat g_{\mathrm{neck},m}}>0$ for $m=2,3$.
\end{proof}

We now normalize the neck metric in the form \eqref{eq:unified-base-3-13} on
\([0,\ell]\times\mathbb S^m\). 
\begin{lemma}\label{lem:neck-endpoint-normalization}
Let \(\widehat g_{\mathrm{neck},m}\) be the family of neck metric 
constructed in Lemma~\ref{lem:slow-logarithmic-neck}.  For all
sufficiently small \(\sigma>0\), there exists
\(\kappa,\ell,\lambda>0\), such that, under the change of variables
\[
 s=\kappa(t-t_0),\qquad t=t_0+\frac{s}{\kappa},
\]
the metric \(\kappa^2\widehat g_{\mathrm{neck},m}\) has the form
\eqref{eq:unified-base-3-13} on
\([0,\ell]\times\mathbb S^m\).  The boundary metric on $\{0\}\times \mathbb S^m$ is
\((\rho/\lambda)^2g_{\mathbb S^m}\), with every outward
principal curvature equal to \(-\lambda\).  The boundary metric on $\{\ell\}\times \mathbb S^m$
is \(g_{\mathrm{ang},m}\), with every outward principal
curvature greater than \(1\), with
\[
                         B'(s)>0,\qquad B''(s)<0 .
\]
Such normalization is natural under simultaneous translations
\(z\mapsto z+c\), \(z_0\mapsto z_0+c\), and under orthogonal
transformations of \(\mathbb S^k\).
\end{lemma}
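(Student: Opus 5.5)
The plan is to make the rescaling explicit by reading off the two endpoint slices of \(\widehat g_{\mathrm{neck},m}\) from Lemma~\ref{lem:slow-logarithmic-neck}, and then to fix \(\kappa\) using the terminal end. At \(t=t_1\) we have \(u=u_*\), so \(f_{u_*}=A_{\mathrm{ang}}/w\) by \eqref{eq:unified-base-3-27}. The slice metric is therefore
\[
 t_1^2v(t_1)^2\bigl(f_{u_*}^2\dd\varphi^2+\cos^2\varphi\,g_{\mathbb S^k}\bigr)
 =\Bigl(\frac{t_1v(t_1)}{w}\Bigr)^2g_{\mathrm{ang},m}.
\]
I would set
\[
 \kappa=\frac{w}{t_1v(t_1)},\qquad \ell=\kappa(t_1-t_0),\qquad \lambda=\frac1{\kappa t_0}.
\]
With \(s=\kappa(t-t_0)\), the metric \(\kappa^2\widehat g_{\mathrm{neck},m}\) equals \(\dd s^2+A^2\dd\varphi^2+B^2\cos^2\varphi\,g_{\mathbb S^k}\). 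Here
\[
 B(s)=\kappa\,t\,v(t),\qquad A(s,\varphi)=\kappa\,t\,v(t)\,f_{u(t)}(\varphi),
\]
which is the form \eqref{eq:unified-base-3-13}. At \(s=\ell\) the induced metric is exactly \(g_{\mathrm{ang},m}\). At \(s=0\) we have \(u=1\) and \(v=\rho\), so the induced metric is \(\kappa^2t_0^2\rho^2g_{\mathbb S^m}=(\rho/\lambda)^2g_{\mathbb S^m}\).

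Next come the boundary second fundamental forms, using \eqref{eq:unified-base-3-58} and the fact that homothety by \(\kappa^2\) divides principal curvatures by \(\kappa\). At \(t_0\) we have \(\chi(0)=0\), so \(\Gamma(t_0)=0\), hence \(u'=v'=0\). All principal curvatures with respect to \(\partial_t\) then equal \(1/t_0\). For the outward normal \(-\partial_s\) they become \(-1/(\kappa t_0)=-\lambda\).

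At \(t_1\) the principal curvatures with respect to \(\partial_s\) are
\[
 \frac{t_1v(t_1)}{w}\cdot\frac1{t_1}\Bigl(1-t_1\beta\Gamma(t_1)\bigl(1-\alpha\,u_*\eta/f_{u_*}\bigr)\Bigr),
\]
together with the vertical analogue without the \(\eta\)-term. Here \(t_1\Gamma(t_1)=L/(\log t_1)^2\), and \(\beta\) and \(u\eta/f_u\) are uniformly bounded. By \eqref{eq:unified-base-3-55}, \(\log t_1\to\infty\) as \(\sigma\downarrow0\), so the bracket tends to \(1\). Meanwhile \(v(t_1)/w=\rho u_*^{-1/\alpha}/w\) converges to \((\rho/w)^{\varepsilon_0}>1\), using \(\alpha\to\alpha_0/(1-\varepsilon_0)\) and \(\alpha_0\log u_*^{-1}\)-type identities from \eqref{eq:unified-base-3-32}. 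Hence for small \(\sigma\) all terminal principal curvatures exceed \(1\).

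The main point needing care is getting a uniform strict margin here. The quantities \(\alpha\), \(\beta\) and \(t_1\) all depend on \(\sigma\), so I would track the limit of \(v(t_1)/w\) explicitly rather than merely use \(v(t_1)>w\).

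Monotonicity and concavity of \(B\) follow from the radial formulas. We have \(\dd B/\dd s=(tv)'=v(1-t\beta\Gamma)\), which is positive by \eqref{eq:unified-base-3-56} once \(t_0\) is large, since \(L/(\log t)^2\le L/(\log t_0)^2\to0\). For the second derivative, \(\dd^2B/\dd s^2=\kappa^{-1}(tv)''\) and
\[
 \frac{(tv)''}{tv}=-\beta\Bigl(\Gamma'+\frac{2\Gamma}{t}\Bigr)+\beta^2\Gamma^2 .
\]
This is negative by \eqref{eq:unified-base-3-57} versus \eqref{eq:unified-base-3-56}, exactly as in \eqref{eq:unified-base-3-62}. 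At \(t_0\) itself strictness uses \(\Gamma'(t_0)=q(t_0)\chi'(0)/t_0>0\).

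Naturality is immediate. The normalized metric is built only from \(\varphi\), \(g_{\mathbb S^k}\) and functions of \(t\); \(A_{\mathrm{ang}}\) is independent of \(z_0\) by Lemma~\ref{lem:angular-latitude-form}, and the identification with \(S_r^{\angle}(o_{z_0})\) uses only the latitude coordinate. Therefore translations in \(z\) and orthogonal maps of \(\mathbb S^k\) commute with every step.
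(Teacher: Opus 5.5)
Your proposal is correct and follows the paper's proof essentially step by step. It uses the same choices $\kappa=w/(t_1v(t_1))$, $\lambda=1/(\kappa t_0)$ and $\ell=\kappa(t_1-t_0)$, the same use of $\Gamma(t_0)=0$ and of $t_1\Gamma(t_1)=L/(\log t_1)^2\to0$ together with $v(t_1)/w\to(\rho/w)^{\varepsilon_0}=u_*^{\varepsilon_0/\alpha_0}>1$, and the same formulas for $(tv)'$ and $(tv)''/(tv)$ to get $B'>0$ and $B''<0$. One small slip: $\beta$ in \eqref{eq:unified-base-3-53} depends only on $t_0$, not on $\sigma$, which only makes your limit argument simpler.
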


\begin{proof}

Set
\begin{equation}
\kappa=\frac{w}{t_1v(t_1)}.                                     \label{eq:unified-base-3-64}
\end{equation}
Multiply $\widehat g_{\mathrm{neck},m}$ by $\kappa^2$.  Since
$f_{u_*}(\varphi)=A_{\mathrm{ang}}(\varphi)/w$ and
$\kappa t_1v(t_1)=w$, the metric induced
at $t=t_1$ is exactly $g_{\mathrm{ang},m}$.  At $t=t_0$, the identity
$f_1(\varphi)=1$ shows that the induced metric is round of radius
\begin{equation}
\kappa t_0\rho=\frac\rho\lambda,
 \qquad \lambda=\frac1{\kappa t_0}.                             \label{eq:unified-base-3-65}
\end{equation}
Since $\Gamma(t_0)=0$, then by \eqref{eq:unified-base-3-58}, all outward principal curvatures on $\{0\}\times \mathbb S^m$
are $-\lambda$.  Indeed, principal curvatures are divided by
\(\kappa\) under the constant rescaling
\(\widehat g\mapsto\kappa^2\widehat g\).  The  vertical
principal curvatures  on $\{\ell\}\times \mathbb S^m$ are
\begin{equation}
\frac{v(t_1)}{w}\bigl(1-\beta t_1\Gamma(t_1)\bigr),             \label{eq:unified-base-3-66}
\end{equation}
and the meridional one is 
\begin{equation}
\frac{v(t_1)}{w}\bigl(1-\beta t_1\Gamma(t_1)\bigr)+\frac{t_1v(t_1)}{w}
   \frac{\eta(\varphi)u'(t_1)}{f_{u_*}(\varphi)}.                \label{eq:unified-base-3-67}
\end{equation}
Because $u\eta/f_u$ is bounded on the compact parameter set, all 
principal curvatures on $\{\ell\}\times \mathbb S^m$ are bounded below by
\begin{equation}
\frac{v(t_1)}{w}\left[1-C\beta t_1\Gamma(t_1)\right].           \label{eq:unified-base-3-68}
\end{equation}
As $\sigma\downarrow0$,
\begin{equation}
\frac{v(t_1)}{w}\longrightarrow
       u_*^{\varepsilon_0/\alpha_0}>1,\qquad
 t_1\Gamma(t_1)=\frac{L}{(\log t_1)^2}\longrightarrow0.         \label{eq:unified-base-3-69}
\end{equation}
Thus \eqref{eq:unified-base-3-68} is greater than one after reducing $\sigma$.
Finally,
\begin{equation}
(tv)'=v(1-\beta t\Gamma)>0,\qquad
 \frac{(tv)''}{tv}
 =-\beta\left(\Gamma'+\frac{2\Gamma}{t}\right)
       +\beta^2\Gamma^2<0                                        \label{eq:unified-base-3-70}
\end{equation}
for large $t_0$.  Set
\[
 s=\kappa(t-t_0),\qquad \ell=\kappa(t_1-t_0),\qquad
 t=t_0+\frac{s}{\kappa},
\]
and choose
\[
 A(s,\varphi)=\kappa tv(t)f_{u(t)}(\varphi),
 \qquad B(s)=\kappa tv(t).
\]
In the $s$-coordinate, $\kappa^2\widehat g_{\mathrm{neck},m}$ is precisely
\eqref{eq:unified-base-3-13}, and \eqref{eq:unified-base-3-70} gives $B'>0$ and $B''<0$.
Every function used in the construction depends on the ambient
\(z\)-coordinate only through \(z-z_0\), and depends on the
\(\mathbb S^k\)-variable only through \(g_{\mathbb S^k}\).  This proves
the asserted naturality.
\end{proof}

\begin{proof}[Proof of Proposition~\ref{prop:unified-neck}]
Choose \(r_*=r_{\mathrm{amb}}\), where
\(r_{\mathrm{amb}}\) is introduced in
Lemma~\ref{lem:neck-ambient-curvature}.  For
\(0<r<r_*\), choose
\[
 0<w<w_0(r)
\]
sufficiently small that \(w^{m-1}<\rho^m\), which is
\eqref{eq:unified-base-3-10}.  Since
Lemma~\ref{lem:concave-warping} gives the desired auxiliary function
\(\mathcal R\), Lemma~\ref{lem:neck-ambient-curvature} then proves
\textup{(i)}.

By \eqref{eq:unified-base-3-5}, \(\mathcal R(r)=w\tan r\).
  Hence
Lemmas~\ref{lem:angular-sphere-geometry}
and~\ref{lem:angular-latitude-form} prove \textup{(ii)}.
Since \(r<\pi/10\) and \(\pi\rho<r\), we have
\[
                 0<\rho<\frac1{10}<\cos r .
\]
Together with \eqref{eq:unified-base-3-10}, these parameters satisfy
hypotheses of Lemma~\ref{lem:angular-slice-interpolation}.
Lemmas~\ref{lem:variable-angular-ricci}
and~\ref{lem:slow-logarithmic-neck} then produce a family of
Ricci-positive metrics $\widehat g_{\mathrm{neck},m}$.
Lemma~\ref{lem:neck-endpoint-normalization} proves all the
claims in \textup{(iii)}.

A simultaneous translation \(z\mapsto z+c\) and
\(z_0\mapsto z_0+c\) preserves the difference \(z-z_0\) and therefore
carries the construction centered at \(o_{z_0}\) isometrically to the
construction centered at \(o_{z_0+c}\).  Moreover, every metric and
auxiliary function depends on the \(\mathbb S^k\)-variable only through
\(g_{\mathbb S^k}\).  Hence the entire construction is natural under
translations in the \(z\)-direction and orthogonal transformations of
\(\mathbb S^k\).
\end{proof}

\bibliographystyle{amsplain}
\bibliography{references}

\end{document}